\def\eps{{\varepsilon}}
\def\Bbb E{\mathbb{E}}
\def\Bbb R{\mathbb{R}}
\newtheorem{definition}{Definition}[section]
\newtheorem{example}{Example}[section]
\newtheorem{corollary}{Corollary}[section]
\makeatletter \@addtoreset{equation}{section}
\newtheorem{lemma}{Lemma}[section]
\newtheorem{theorem}{Theorem}[section]
\newtheorem{proposition}{Proposition}[section]
\newtheorem{remark}{Remark}[section]
\font\tencmmib=cmmib10 \skewchar\tencmmib '60
\font\tenmsb=msbm10 
\def\Bbb#1{\hbox{\tenmsb#1}}
\def\lessim{\ \lower4pt\hbox{$
\buildrel{\displaystyle <}\over\sim$}\ }
\def\gessim{\ \lower4pt\hbox{$\buildrel{\displaystyle >}
\over\sim$}\ }
\def\eps{\varepsilon}
\def\go0{\to 0}
\def\leftitem#1{\item{\hbox to\parindent{\enspace#1\hfill}}}
\def\sg{\sigma}
\def\sg2{\sigma^2}
\def\__{_{\infty}}
\numberwithin{equation}{section} 
\newcommand{\1}{{\rm 1}\kern-0.24em{\rm I}}
\newtheorem{assumption}{Assumption}[section]
\begin{document}

\begin{frontmatter}
\title{Functional estimation in high-dimensional and infinite-dimensional models}
\runtitle{Functional estimation}

\begin{aug}
\author{\fnms{Vladimir} \snm{Koltchinskii}\thanksref{t1}\ead[label=e1]{vlad@math.gatech.edu}} 
and 
\author{\fnms{Minghao} \snm{Li}\ead[label=e2]{minghaoli@gatech.edu}}
\thankstext{t1}{Supported in part by NSF grant DMS-2113121}
\runauthor{Koltchinskii and Li}


\address{School of Mathematics\\
Georgia Institute of Technology\\
Atlanta, GA 30332-0160, USA\\
\printead{e1}\\
\printead{e2}
}
\end{aug}
\vspace{0.2cm}
{\small \today}
\vspace{0.2cm}

\begin{abstract}
Let ${\mathcal P}$ be a family of probability measures on a measurable space $(S,{\mathcal A}).$ Given a Banach space $E,$ 
a functional $f:E\mapsto {\mathbb R}$ and a mapping $\theta: {\mathcal P}\mapsto E,$
our goal is to estimate $f(\theta(P))$ based on i.i.d. observations $X_1,\dots, X_n\sim P, P\in {\mathcal P}.$
In particular, if ${\mathcal P}=\{P_{\theta}: \theta\in \Theta\}$ is an identifiable statistical model with parameter set 
$\Theta\subset E,$ one can consider the mapping $\theta(P)=\theta$ for $P\in {\mathcal P}, P=P_{\theta},$
resulting in a problem of estimation of $f(\theta)$ based on i.i.d. observations $X_1,\dots, X_n\sim P_{\theta}, \theta\in \Theta.$
Given a smooth functional $f$ and estimators $\hat \theta_n(X_1,\dots, X_n), n\geq 1$ of $\theta(P),$ we use these estimators, the sample split and the Taylor expansion of $f(\theta(P))$ of a proper order to construct estimators $T_f(X_1,\dots, X_n)$ of $f(\theta(P)).$ 
For these estimators and for a functional $f$ of smoothness $s\geq 1$ we prove the bound 
\begin{align*}
    &\left\|T_f(X_1,\cdots,X_n)-f(\theta(P))\right\|_{L_p(\mathbb{P}_P)}
    \lesssim_s \|f'\|_{C^{s-1}}\Bigl[\sqrt{\frac{a_p(P)}{n}} +
   \Bigl(\sqrt{\frac{d_{ps}(P)}{n}}\Bigr)^{s}\Bigr],
\end{align*}
provided that $d_{ps}(P)\lesssim n,$ where, for $p\geq 1$ and $P\in {\mathcal P},$ 
 \begin{align*}
 a_p(P):=\sup_{n\geq 1}\sup_{\|u\|\leq 1} n \Bigl\|\langle \hat \theta_n-\theta(P), u\rangle\Bigr\|_{L_p({\mathbb P}_{P})}^2\ \ 
 {\rm and}
 \ \ 
 d_p(P):= \sup_{n\geq 1} n \Bigl\|\|\hat \theta_n-\theta(P)\|\Bigr\|_{L_{p}({\mathbb P}_{P})}^2.
 \end{align*}
In addition, under the assumption that the linear forms $\langle \hat \theta_n-\theta(P), u\rangle, u\in E^{\ast}$ could be approximated in Wasserstein $W_p$-distances by normal r.v., we establish normal approximation in the same 
distances of $\sqrt{n}(T_f(X_1,\dots, X_n)-f(\theta(P))).$ We study the performance of estimators $T_f(X_1,\dots, X_n)$ in several concrete problems,
showing their minimax optimality and asymptotic efficiency.
In particular, this includes functional estimation in high-dimensional models with many low dimensional components, functional estimation in high-dimensional exponential families and estimation of functionals of covariance operators in infinite-dimensional subgaussian models. 
\end{abstract}

\begin{keyword}[class=AMS]
\kwd[Primary ]{62H12} \kwd[; secondary ]{62G20, 62H25, 60B20}
\end{keyword}

\begin{keyword}
\kwd{Smooth functionals} 
\kwd{Minimax optimality}
\kwd{Asymptotic efficiency}
\kwd{Exponential family} 
\kwd{Covariance operator}
\kwd{Effective rank} 
\end{keyword}

\end{frontmatter}

\section{Introduction}

Let $(S, {\mathcal A})$ be a measurable space and let ${\mathcal P}$ be a family of probability distributions on $(S,{\mathcal A}).$
Let $E$ be a Banach space with the dual space $E^{\ast}$ and let $\theta: {\mathcal P}\mapsto E.$ Consider 
a smooth functional $f:E\mapsto {\mathbb R}.$ In what follows, we assume that $f\in C^s(E)$ for some $s=m+\rho,$ 
$m\geq 0, \rho\in (0,1],$ where $C^s(E)$ is the space of functionals of H\"older smoothness $s.$ Our goal is to estimate $f(\theta(P))$ based on i.i.d. observations $X_1,\dots, X_n\sim P,$
where $P\in {\mathcal P}.$ 
In particular, it is of interest to consider the case of statistical model ${\mathcal P}:= \{P_{\theta}: \theta\in \Theta\}$
with parameter space $\Theta \subset E.$ If $\Theta\ni \theta\mapsto P_{\theta}\in {\mathcal P}$ is an identifiable model (that is, $\theta_1\neq \theta_2$ implies that $P_{\theta_1}\neq P_{\theta_2}$), then, for any $P\in {\mathcal P},$ there exists a unique $\theta\in \Theta$ such that $P=P_{\theta},$ and one can set $\theta(P)=\theta(P_{\theta}):=\theta,$ defining the mapping $\theta: {\mathcal P}\mapsto \Theta\subset E.$  In this case, the problem 
can be rephrased as estimation of the value $f(\theta)$ of functional $f$ based on i.i.d. observations $X_1,\dots, X_n\sim P_{\theta}, \theta \in \Theta.$   

More precisely, the goal is, for a given class $\mathcal F$ of functionals $f: E\mapsto {\mathbb R}$ (such as, for instance, the H\"older ball
$\{f: \|f\|_{C^s(E)}\leq 1\}$ of certain degree of smoothness $s>0$) to determine the size of the maximal minimax risk 
\begin{align*}
\delta_n({\mathcal P}, {\mathcal F}):=\sup_{f\in {\mathcal F}}\inf_{T_n} \sup_{P\in {\mathcal P}} {\mathbb E}_P (T_n(X_1,\dots, X_n)-f(\theta(P)))^2
\end{align*}
of estimation of functionals from the class ${\mathcal F}$ and to develop estimators $T_{n,f}(X_1,\dots, X_n)$ for which the risk of the order 
$\delta_n({\mathcal P}, {\mathcal F})$ is attained. It is also of interest to study the conditions on the complexity of statistical model ${\mathcal P}$
and functional class ${\mathcal F}$ (for instance, the degree of H\"older smoothness $s$ of the functionals) under which the parametric rate $\delta_n({\mathcal P}, {\mathcal F})=O(n^{-1})$ of functional estimation becomes possible.

In this estimation problem, the functional $f: E\mapsto {\mathbb R}$ is given to a statistician in the sense that its value 
$f(\theta)$ can be computed at any given point $\theta \in E.$ Sometimes, it could be also assumed that the values of the derivatives $f^{(j)}, j=1,\dots, m$ 
at given points in $E$ are also available to a statistician. Thus, the problem becomes, given i.i.d. observations $X_1,\dots, X_n\sim P,$ 
to choose a finite number of data dependent points 
in $E$ and to use the values of the functional (and, possibly, its derivatives) at these points to construct an estimator of $f(\theta(P)).$

A naive approach to the problem is to choose a single data dependent point in $E$ and to use the value of the functional at this point as an estimator.  
In other words, this approach is based on using a plug-in estimator $f(\hat \theta_n)$ for a suitable estimator $\hat \theta_n$ of $\theta(P)$
based on $X_1,\dots, X_n$ i.i.d. $\sim P.$ In the classical case of regular statistical models $\{P_{\theta}:\theta\in \Theta\}$ with open parameter set 
$\Theta\subset {\mathbb R}^d$ and with non-singular Fisher information $I(\theta),$ plugging in the maximum likelihood estimator $\hat \theta_n$ in a continuously differentiable functional $f:\Theta\mapsto {\mathbb R}$ does yield an asymptotically optimal solution of the problem. Namely, under proper regularity conditions, the plug-in estimator $f(\hat \theta_n)$ is asymptotically normal with $\sqrt{n}$-rate 
\begin{align*}
\sqrt{n}(f(\hat \theta_n)-f(\theta)) \overset{d}{\to} N(0, \sigma_f^2(\theta))\ {\rm as}\ n\to\infty
\end{align*}
and $\sigma_f^2(\theta):= \langle I(\theta_0)^{-1}f'(\theta), f'(\theta)\rangle$ is the limit of properly normalized mean squared error:
\begin{align*}
n{\mathbb E}_{\theta}(f(\hat \theta_n)-f(\theta))^2 \to \sigma_f^2(\theta)\ {\rm as}\ n\to\infty.
\end{align*}
Moreover, the above limit relationships usually hold uniformly (over the whole parameter space $\Theta,$ or over its proper subsets). 
The optimality of plug-in estimator $f(\hat \theta_n)$ follows from the local asymptotic minimax theorem of H\'ajek and LeCam: for all $\theta_0\in \Theta,$
\begin{align*}
\lim_{c\to +\infty}\liminf_{n\to\infty} \inf_{T_n} \sup_{\|\theta-\theta_0\|\leq cn^{-1/2}}n{\mathbb E}_{\theta}(T_n(X_1,\dots, X_n)-f(\theta))^2 \geq \sigma_f^2(\theta_0),
\end{align*}
where the infimum is taken over all estimators $T_n(X_1,\dots, X_n)$ based on i.i.d. observations $X_1,\dots, X_n\sim P_{\theta}.$

In the case of high-dimensional and infinite-dimensional models, the problem is considerably harder: the optimal error rates are not necessarily $n^{-1/2}$
and they are not necessarily attained for plug-in estimators.  
The plug-in method could still yield optimal error rates for functionals of smoothness $s\leq 2.$ 
In the case of functionals of higher order of smoothness $s>2,$ better convergence rates become possible, but they are usually not attained for plug-in 
estimators due to their large bias, and the development of non-trivial bias reduction methods is an important part of the problem.


This problem of functional estimation in high-dimensional and infinite-dimensional models
has a long history that goes back  to the 1970s (see \cite{Levit_1, Levit_2, Ibragimov, Ibragimov_Khasm_Nemirov, Bickel_Ritov, Nemirovski_1990, Birge, Laurent, Lepski, Nemirovski, Robins, C_C_Tsybakov} and references therein). Many of these authors studied the problem for special functionals (including 
quadratic functionals, special classes of integral functionals, norms in functional spaces, etc) and for special statistical models (such as Gaussian white noise model, density estimation model, etc) and obtained highly non-trivial results in these special cases. Ibragimov, Nemirovski and Khasminskii  \cite{Ibragimov_Khasm_Nemirov} followed by Nemirovski \cite{Nemirovski_1990, Nemirovski} studied the problem of estimation of  general smooth functionals of a 
signal, represented by a function from a compact subset of $L_2([0,1])$ and observed in a Gaussian white noise, and showed that efficient estimation of such functionals with classical error rates is possible if the degree of smoothness of the functional is above certain threshold and impossible for some smooth functionals if the degree of smoothness is below the threshold. The value of the threshold depends on the complexity of the underlying compact space of signals
characterized by the rate of decay of its Kolmogorov's widths. We follow this approach in the current paper trying to study how the optimal error rates of estimation 
of smooth functionals depend on their degree of smoothness as well as on complexity of the parameter and the sample size, and also trying to determine 
the smoothness thresholds for which the ``phase transition" from slow error rates to classical parametric $n^{-1/2}$ rates occurs.

General methods of bias reduction in functional estimation problems (namely, iterated bootstrap, jackknife and Taylor expansions methods) have been recently discussed in \cite{Jiao} in the case of estimation of a smooth function of the parameter of binomial model. It was shown that,
even in the case of this simple classical  one-dimensional model, the analysis of the bias reduction methods is often related to non-trivial problems in approximation theory. 
We will briefly discuss these approaches to bias reduction in the case of parametric model $X_1,\dots, X_n$ i.i.d. $\sim P_{\theta}, \theta \in \Theta.$

Iterated bootstrap approach goes back to the paper \cite{Hall_1} by Hall and Martin. Given a reasonable estimator $\hat \theta\in \Theta$ of the parameter $\theta$
itself, the first iteration of this method is the plug-in estimator $f(\hat \theta).$  Its bias is $({\mathcal B} f)(\theta):= {\mathbb E}_{\theta} f(\hat \theta)-f(\theta)$ 
and, to reduce the bias, one can use the plug-in estimator $({\mathcal B}f)(\hat \theta)$ of the bias and to subtract it from $f(\hat \theta).$ This yields the estimator 
$f_1(\hat \theta):= f(\hat \theta)-({\mathcal B} f)(\hat \theta).$ The bias of estimator $({\mathcal B} f)(\hat \theta)$ of $({\mathcal B} f)(\theta)$ is equal to 
$({\mathcal B}^{2} f)(\theta)= ({\mathcal B} f)(\hat \theta)-({\mathcal B} f)(\theta).$ One can use the plug-in estimator $({\mathcal B}^2 f)(\hat \theta)$ of this 
bias and add it to the estimator $f_1(\hat \theta),$ yielding the next estimator 
$f_2(\hat \theta):= f(\hat \theta)-({\mathcal B} f)(\hat \theta)+({\mathcal B}^2 f)(\hat \theta),$ and so on. 

Another way to look at this method is to consider the following operator $({\mathcal T} g)(\hat \theta) := {\mathbb E}_{\theta} g(\hat \theta), \theta\in \Theta$
acting in the space of bounded functions on $\Theta.$ Then, to find a function $g$ for which the bias of estimator $g(\hat \theta)$ of $f(\theta)$ is small means 
solving approximately the operator equation $(Tg)(\theta)=f(\theta), \theta \in \Theta$ (or, in other words approximating ${\mathcal T}^{-1} f$). 
If ${\mathcal B}:= {\mathcal T}-{\mathcal I}$ is a ``small" operator, one can use the partial sums of Neumann series $({\mathcal I}+{\mathcal B})^{-1} = {\mathcal I}-{\mathcal B}+{\mathcal B}^2-\dots$ to approximate ${\mathcal T}^{-1}.$ This yields the following approximate solution of equation 
$({\mathcal T} g)(\theta) = f(\theta), \theta \in \Theta:$
\begin{align*}
f_k(\theta) := \sum_{j=0}^k (-1)^k ({\mathcal B}^j f)(\theta), \theta \in \Theta
\end{align*}
and the corresponding estimator $f_k(\hat \theta).$
The bias of this estimator is 
\begin{align*}
{\mathbb E}_{\theta} f_k(\hat \theta)-f(\theta) = (-1)^k ({\mathcal B}^{k+1}f)(\theta),
\end{align*}
so, to justify this bias reduction method one has to prove that the function $({\mathcal B}^{k+1}f)(\theta), \theta\in \Theta$ is 
sufficiently small. 

Note that operator ${\mathcal T}$ could be viewed as an integral operator 
\begin{align*}
({\mathcal T} g)(\theta) = \int_{\Theta} g(t) P(\theta, dt), \theta \in \Theta
\end{align*}
with respect to Markov kernel $P(\theta, A):= {\mathbb P}_{\theta}\{\hat \theta\in A\}, \theta\in \Theta, A\subset \Theta.$
If $\hat \theta^{(k)}, k\geq 0$ is a Markov chain with $\hat \theta^{(0)}=\theta$ and with transition kernel $P(\theta, A),$ then 
it is easy to check that 
\begin{align}
\label{finite_differ_B^k}
({\mathcal B}^k f)(\theta)= {\mathbb E}_{\theta} \sum_{j=0}^k (-1)^{k-j} {k\choose j} f(\hat \theta^{(j)}).
\end{align}
Note that Markov chain $\hat \theta^{(k)}, k\geq 0$ can be interpreted as an iterative application of bootstrap resampling 
to the parametric model $\{P_{\theta}: \theta\in \Theta\}:$ for $k=0,$ $\hat \theta^{(0)}$ is the value of the unknown parameter $\theta;$
for $k=1,$ $\hat \theta^{(1)}=\hat \theta$ is the estimator $\hat \theta$ based on $n$ i.i.d. observation sampled from distribution $P_{\theta};$
for $k=2,$ $\hat \theta^{(2)}$ is the bootstrap estimator based on $n$ i.i.d. observations sampled from distribution $P_{\hat \theta}$ (conditionally 
on $\hat \theta$); for $k=3,$ $\hat \theta^{(3)}$ is the next iteration of bootstrap estimator based on $n$ i.i.d. observations sampled from distribution $P_{\hat \theta^{(2)}}$ (conditionally on $\hat \theta^{(2)}$), etc. Although for a given statistical model $P_{\theta}, \theta\in \Theta,$ functionals 
${\mathcal B}^j f$ are known in principle, it could be hard to compute their values $({\mathcal B}^j f)(\hat \theta)$ and, thus, also the value of estimator 
$f_k(\hat \theta).$ However, formula \eqref{finite_differ_B^k} and interpretation of $\{\hat \theta^{(k)}: k\geq 0\}$ as a {\it bootstrap chain} allows one to compute 
$({\mathcal B}^j f)(\hat \theta)$ using Monte Carlo simulation (which was the initial idea of iterated bootstrap bias reduction in \cite{Hall_1}).
In addition, formula \eqref{finite_differ_B^k} provides a representation of $({\mathcal B}^k f)(\theta)$ as an expected value of the $k$-th order 
difference of functional $f$ along the bootstrap chain. Such representations could be used to obtain bounds on functionals ${\mathcal B}^k f$ 
and their derivatives, which, in turn, are used to bound the bias of estimator $f_k(\hat \theta)$ and also to study its concentration properties.  

In high-dimensional and infinite-dimensional setting, the study of this bias reduction method was initiated in \cite{Koltchinskii_2017, Koltchinskii_2018} for functionals of unknown covariance of high-dimensional normal models and continued in \cite{Koltchinskii_Zhilova} for Gaussian shift models, in \cite{Koltchinskii_Zhilova_21} for more general random shift models, in \cite{Koltchinskii_Zhilova_19} for high-dimensional normal models with unknown mean and covariance, in \cite{Koltchinskii_2021} for general parametric models admitting base estimators with strong Gaussian approximation properties 
and in \cite{Koltchinskii_Wahl} for high-dimensional log-concave location families with MLE as a base estimator. In all these examples, 
the mean squared error of estimator $f_k(\hat \theta)$ (with properly chosen $k$ depending on the smoothness of the functional) was of the order 
$
\frac{1}{n}\vee \Bigl(\frac{d}{n}\Bigr)^s,
$
with $d$ being the dimension of the model and $s$ the degree of smoothness of functional $f,$ and this rate happened to be optimal.
The proof of these results required the development of rather involved analytic and probabilistic methods. This includes representation of bootstrap 
chains as superpositions of i.i.d. random maps (random homotopies) and using such representations along with Fa\`a di Bruno calculus 
to obtain more explicit analytic formulas for functionals $({\mathcal B}^k f)(\theta)$ as well as the development of concentration inequalities 
for $f(\hat \theta)$ around its expectation for smooth functionals $\theta.$ Although this approach has been already developed for a number 
of interesting models, there are serious difficulties with its extension to other models, including, for instance Gaussian models with ``non-spherical" 
covariances with their complexity characterized by effective rank.  

Another interesting approach to bias reduction in functional estimation is based on linear aggregation of several plug-in estimators with different 
sample sizes with a goal to almost cancel out the biases of these plug-in estimators, see \cite{Jiao, Koltchinskii_2022}. Namely, under certain assumptions on 
base estimators $\hat \theta_n$ (that hold, in particular, for sample means in linear normed spaces), it is possible, for a functional $f$ of smoothness $s=m+\rho,$
$m=2,3,\dots$ and $\rho\in (0,1],$ to choose coefficients $C_1,\dots, C_m$ and sample sizes $n/c \leq n_1,\dots, n_m\leq n$ in such a way that the bias 
of combined estimator 
\begin{align*}
T_f(X_1,\dots, X_n):=\sum_{j=1}^m C_j f(\hat \theta_{n_j})
\end{align*}
would be of the order $\Bigl(\sqrt{\frac{d}{n}}\Bigr)^s.$ Moreover, the coefficients $C_j, j=1,\dots, m$ also satisfy the conditions $\sum_{j=1}^m C_j=1$
and $\sum_{j=1}^m |C_j|\lesssim 1.$ If, in addition, reasonable concentration inequalities hold for plug-in estimator $f(\hat \theta_n)$ around its 
expectation such that $f(\hat \theta_n)-{\mathbb E}_{\theta} f(\hat \theta_n)$ is of the order $\frac{1}{\sqrt{n}},$ it becomes also possible to show 
that similar concentration holds for estimator $T_f(X_1,\dots, X_n)$ and its mean squared error is of the order 
$
\frac{1}{n}\vee \Bigl(\frac{d}{n}\Bigr)^s,
$
where $d$ is the dimension or other suitable complexity parameter. Moreover, it also makes sense to use a symmetrized 
version of estimator $T_f(X_1,\dots, X_n),$ which is a linear combination of $U$-statistics $\sum_{j=1}^m C_j U_n f(\hat \theta_{n_j})$
(a jackknife bias reduction method discussed in \cite{Jiao}). Such estimators enjoy the same properties as estimator $T_f(X_1,\dots, X_n),$
but, in addition, it is possible to prove their asymptotic normality and asymptotic efficiency. 
In \cite{Koltchinskii_2022}, this approach to bias reduction and functional estimation was studied in the case of estimation of functionals of unknown covariance operator of centered
Gaussian r.v. in a separable Banach space and it was shown that, for such estimators, minimax optimal error rates are attained in this problem. So far, it is not known how to obtain 
similar results for estimators based on bootstrap chain bias reduction. 

Bias reduction in functional estimation could be also based on a Taylor expansion of functional $f$ of smoothness $s=m+\rho,$ $m\geq 0, \rho\in (0,1]$ in a neighborhood of the value 
of a base estimator $\hat \theta$ 
\begin{align*}
f(\theta) = \sum_{k=0}^m \frac{f^{(k)}(\hat \theta)[\theta-\hat \theta, \dots, \theta-\hat \theta]}{k!} + R_m
\end{align*}
with the following bound on the remainder $R_m:$
\begin{align*}
|R_m|\leq \|f^{(m)}\|_{{\rm Lip}_{\rho}} \|\hat \theta-\theta\|^s.
\end{align*}
The problem then is reduced to estimation of the polynomial
\begin{align*}
p(\theta) :=\sum_{k=0}^m \frac{f^{(k)}(\hat \theta)[\theta-\hat \theta, \dots, \theta-\hat \theta]}{k!},
\end{align*}
for which it is often possible to construct an unbiased estimator. 
Usually, this approach requires 
splitting the sample into two disjoint parts, one of them being used for the base estimator 
$\hat \theta$ and another (independent) part for estimating the polynomial $p(\theta).$
Unbiased estimation of polynomial could be often achieved utilizing $U$-statistics. 
If it is possible to construct an unbiased estimator $T_f(X_1,\dots, X_n)$ of $p(\theta)$ 
and prove that the mean squared error of this estimator (conditionally on $\hat \theta$)
is of the order $\frac{1}{n},$ then, to achieve the error $\frac{1}{n}\vee \Bigl(\frac{d}{n}\Bigr)^s$ 
of estimation of the value of functional $f(\theta),$ it is enough to show that ${\mathbb E}_{\theta}\|\hat \theta-\theta\|^s$
is of the order $\Bigl(\sqrt{\frac{d}{n}}\Bigr)^s.$ Note that, for this approach, one has to compute not only the values of the functional 
$f$ at data dependent points, but also the values of its derivatives. 

Taylor expansion bias reduction method was used in  \cite{Ibragimov_Khasm_Nemirov, Nemirovski} in the case of Gaussian white noise model,
where Hermite polynomials were used to construct unbiased estimator of $p(\theta)$ (in this case, it was possible to avoid the sample split). 
Similar approaches were used in \cite{Robins}, where the authors developed the technique of higher order influence functions and $U$-statistics 
in functional estimation, and, more recently, in \cite{Fan}. 

Note that one of the difficulties in the analysis of functional estimation methods based on iterated bootstrap and on linear aggregation of plug-in estimators (including a jackknife version)
as well as some versions of Taylor expansion methods is the need to develop concentration inequalities for $f(\hat \theta)-{\mathbb E}_{\theta}f(\hat \theta)$ for rather general functionals $f.$
Such inequalities are not always available which limits the theory to classes of models for which concentration methods are well developed. 

Our goal in this paper is to suggest a simple version of functional estimators based on Taylor expansions for which the use of concentration inequalities 
could be avoided and replaced by rather simple moment bounds. We will also provide several applications of this estimation method, including estimation of functionals in high-dimensional models with independent low-dimensional components, estimation of functionals of covariance operators in infinite-dimensional subgaussian 
models, estimation of functionals in high-dimensional exponential families and estimation of functionals of probability density.

Before proceeding to the statements of the main results, let us introduce some notations used throughout the paper.

In what follows, we use generic notation $\|\cdot\|$ for norms of all Banach spaces. Its meaning depends on the context: for instance, the norms of $x\in E$ and $u\in E^{\ast}$
will be denoted by $\|x\|$ and $\|u\|$ without providing subscripts. The subscripts will be used only in the case of some ambiguity, for instance, for the norms of function 
spaces. The ball with center $x\in E$ and radius $\delta>0$ will be denoted by $B(x,\delta).$ 
For a set $A\subset E,$ ${\rm l.s.}(A)$ denotes the linear span of set $A.$
We will use the inner product notation for the values of linear functionals: for $x\in E, u\in E^{\ast},$ $\langle x,u\rangle$ denotes the value of linear functional $u$  
 on vector $x.$

We often have to deal with finite-dimensional linear normed spaces $E.$ In this case, ${\rm dim}(E^{\ast})={\rm dim}(E).$  
For instance, $E$ could be the linear space of $d\times d$ matrices equipped with the operator norm. Then $E^{\ast}$ is 
the same linear space equipped with the nuclear norm.  Let $e_1,\dots, e_N$ be a basis of $E$ (as a linear space) and let $f_1,\dots, f_N$ be the bi-orthogonal basis of $E^{\ast}:$
\begin{align*} 
\langle e_i, f_j\rangle = \delta_{ij}, i,j=1,\dots, N.  
\end{align*} 
Then, the coordinates of $y\in E$ in the basis $e_1,\dots, e_N$ are $y_j= \langle y,f_j\rangle$ and the coordinates of $u\in E^{\ast}$ in the basis 
$f_1,\dots, f_N$ are $u_i:= \langle e_i, u\rangle.$ This allows one to identify spaces $E$ and $E^{\ast}$ with the copies of ${\mathbb R}^N$ equipped 
with the corresponding norms and we also have  $\langle y,u\rangle= \sum_{j=1}^N y_j u_j.$

Given Banach spaces $E,F,$ a subset $U\subset E$ and a function $g:U\mapsto F,$ denote 
\begin{align*}
\|g\|_{L_{\infty}(U)} := \sup_{x\in U}\|g(x)\|\ \ {\rm and}\ \|g\|_{{\rm Lip}(U)}:= \sup_{x,x'\in U, x\neq x'}\frac{\|g(x)-g(x')\|}{\|x-x'\|}.
\end{align*} 
Also, given $\rho\in (0,1],$ denote 
\begin{align*}
\|g\|_{{\rm Lip}_{\rho}(U)}:= \sup_{x,x'\in U, x\neq x'}\frac{\|g(x)-g(x')\|}{\|x-x'\|^{\rho}}.
\end{align*}
Clearly, for $\rho=1,$ $\|g\|_{{\rm Lip}_{\rho}(U)}=\|g\|_{{\rm Lip}(U)}.$

We will now use the $L_{\infty}$- and the ${\rm Lip}_{\rho}$-norms to define H\"older $C^{s}$-norms for $s=m+\rho,$ $m\geq 0, \rho\in (0,1].$ 
Let $U\subset E$ be an open subset and let $g:U\mapsto F$ be an $m$ times Fr\'echet continuously differentiable function.
Note that its $j$-th order Fr\'echet derivative is a symmetric $j$-linear form on $E$ with values in $F.$ For such a form $M[x_1,\dots, x_j],$ 
define its operator norm as 
\begin{align*}
\|M\|:= \sup_{\|x_1\|\leq 1, \dots, \|x_j\|\leq 1} \|M[x_1,\dots, x_j]\|.
\end{align*}
Such norms will be used for the derivatives $g^{(j)}(x), x\in U$ and, based on this, we can define $\|g^{(j}\|_{L_{\infty}(U)},$ $\|g^{(j)}\|_{{\rm Lip}_{\rho}(U)}.$ Note that other choices of the norms of the derivatives $g^{(j)}(x)$ would lead to different smoothness classes (even in the case of a finite-dimensional space $E,$ different norms on multi-linear forms are equivalent with dimension dependent constants). 
With this in mind, denote
\begin{align*}
\|g\|_{C^s(U)}:= \max\Bigl(\|g\|_{L_{\infty}(U)}, \dots, \|g^{(m)}\|_{L_{\infty}(U)},\|g^{(m)}\|_{{\rm Lip}_{\rho}(U)}\Bigr).
\end{align*}
If $U=E,$ we will write $\|\cdot \|_{L_{\infty}}, \|\cdot\|_{{\rm Lip}}, \|\cdot\|_{{\rm Lip}_{\rho}}$ and $\|\cdot\|_{C^s}$ instead of 
 $\|\cdot \|_{L_{\infty}(E)}, \|\cdot\|_{{\rm Lip}(E)}, \|\cdot\|_{{\rm Lip}_{\rho}(E)}$ and $\|\cdot\|_{C^s(E)}.$

For two real variables $A, B>0,$ the notation $A\lesssim B$ means that $A\leq CB$ for a numerical constant $C>0.$ 
We write $A\gtrsim B$ if $B\lesssim A$ and $A\asymp B$ if $A\lesssim B$ and $B\lesssim A.$
The signs $\lesssim , \gtrsim, \asymp$ could be provided with subscripts when the constants in the corresponding 
relationships depend on certain parameters. Say, $A\lesssim_{\gamma} B$ means that there exists a constant $C_{\gamma}>0$
depending only on $\gamma$ such that $A\leq C_{\gamma} B.$

Occasionally, we will use Orlicz norms of random variables. Let $\psi: {\mathbb R}_+\mapsto {\mathbb R}_+$ be a convex increasing 
function with $\psi(0)=0.$ For a r.v. $\xi$ on a probability space $(\Omega, \Sigma, {\mathbb P}),$ define the Orlicz $\psi$-norm of $\xi$
as 
\begin{align*}
\|\xi\|_{\psi} := \inf\Bigl\{c>0: {\mathbb E}\psi\Bigl(\frac{|\xi|}{c}\Bigr)\leq 1\Bigr\}.
\end{align*}
Denote $L_{\psi}({\mathbb P}):=\{\xi: \|\xi\|_{\psi}<\infty\}.$ It is well known that $(L_{\psi}({\mathbb P}),\|\cdot\|_{\psi})$ is a Banach space.
Sometimes, we will write $\|\xi\|_{L_{\psi}({\mathbb P})}$ instead of $\|\xi\|_{\psi}$ to emphasize the dependence on the norm on probability measure ${\mathbb P}.$
For $p\geq 1,$ the $L_p$-norm is a special choice of Orlicz $\psi$-norm with $\psi(u)=u^p, u\geq 0.$ Another well known choice is 
$\psi_{\alpha}(u)=e^{u^{\alpha}}-1, u\geq 0$ for $\alpha \geq 1,$ in particular the $\psi_2$-norm for subgaussian r.v. and the $\psi_1$-norm 
for subexponential r.v. Orlicz $\psi_{\alpha}$-norms are equivalent to certain norms defined in terms of the $L_p$-norms for all $p\geq 1:$
\begin{align}
\label{psi_alpha_L_p}
\|\xi\|_{\psi_{\alpha}} \asymp \sup_{p\geq 1} p^{-1/\alpha} \|\xi\|_{L_p}.
\end{align}
Note that, for $\alpha<1,$ $\psi_{\alpha}$ is not a convex function and $\|\cdot\|_{\psi_{\alpha}}$ is not a norm. 
However, the right hand side of \eqref{psi_alpha_L_p} is a norm and it could be used as the definition of $\psi_{\alpha}$-norm 
for $\alpha<1.$

The paper is organized as follows. In sections \ref{Main_results}-\ref{exponential_section}, we state the general results of the paper and discuss their applications in three concrete examples. 
In the following sections \ref{technical}-\ref{sec:lower_bounds}, we give more details on the results and provide their proofs.  
More specifically, in Section \ref{Main_results}, we describe the construction of estimators of functionals $f(\theta(P))$ 
based on Taylor expansion and sample split and study their properties under moment assumptions on base estimators of parameter $\theta(P).$
In particular, we provide upper bounds on the estimation errors as well as the results on the normal approximation of the estimators. 
We also discuss minimax lower bounds implying the optimality of the error rates as well as the asymptotic efficiency of the estimators
in specific examples.  
The proofs of the results on the upper bounds are provided in Section \ref{technical}. Some additional results for base estimators satisfying Bernstein type inequalities are discussed in Section \ref{functionals_Bernstein}. In Section \ref{HD_comps}, we study applications of the general results to statistical models with a large number of independent low-dimensional components; the proofs of the corresponding upper bounds are provided in Section \ref{sec:HDLD}.  
In Section \ref{func_cov_op}, we discuss the problem of estimation of functionals of covariance operators in infinite-dimensional subgaussian models 
(the complexity of the model being characterized by the effective rank) with the proofs given in Section \ref{func_cov_proofs}. In Section \ref{exponential_section}, we consider estimation of functionals of high-dimensional 
parameters of exponential families. More details on exponential families and the proofs of the results are provided in Section \ref{exp_fam_det}. Minimax lower bounds in functional estimation 
(stated in sections \ref{Main_results} and \ref{HD_comps}) are proved in Section \ref{sec:lower_bounds}.

\section{Main results}
\label{Main_results}


In what follows, it will be assumed that there exist estimators $\hat \theta_n=\hat \theta_n(X_1,\dots, X_n), n\geq 1$ 
of parameter $\theta(P)$ such that certain bounds on the linear functionals 
$\langle \hat \theta_n-\theta(P), u\rangle, u\in E^{\ast}$ and 
on the norms $\|\hat \theta_n-\theta(P)\|$ hold. Estimator $\hat \theta_n$ will be called 
{\it the base estimator} and such estimators along with the sample split will be used to 
construct estimators of the value $f(\theta(P))$ of functional $f.$

\begin{definition}
\label{assume_AAA}
\normalfont
Let $p\geq 1.$ Define
\normalfont
\item 
 \begin{align*}
 a_p(P):=\sup_{n\geq 1}\sup_{\|u\|\leq 1} n \Bigl\|\langle \hat \theta_n-\theta(P), u\rangle\Bigr\|_{L_p({\mathbb P}_{P})}^2,\ P \in {\mathcal P}
 \end{align*}
 and
 \begin{align*}
 d_p(P):= \sup_{n\geq 1} n \Bigl\|\|\hat \theta_n-\theta(P)\|\Bigr\|_{L_{p}({\mathbb P}_{P})}^2,\ P\in {\mathcal P}.
 \end{align*}
 \end{definition}

Note that, if $a_p(P)<\infty,$ then, for all $n\geq 1,$ 
\begin{align*}
\sup_{\|u\|\leq 1} \Bigl\|\langle \hat \theta_n-\theta(P), u\rangle\Bigr\|_{L_p({\mathbb P}_{P})}\leq \sqrt{\frac{a_p(P)}{n}}
\end{align*}
and, if $d_p(P)<\infty,$ then, for all $n\geq 1,$
\begin{align*}
\Bigl\|\|\hat \theta_n-\theta(P)\|\Bigr\|_{L_{p}({\mathbb P}_{P})}\leq \sqrt{\frac{d_p(P)}{n}}.
\end{align*}
Clearly, $a_p(P)\leq d_p(P), p\geq 1, P\in {\mathcal P}.$ Also, $a_p(P)$ and $d_p(P)$ are non-decreasing functions with respect to $p\geq 1.$

The quantity $a_p(P)$ characterizes the accuracy of estimation 
of linear functionals of $\theta(P),$ whereas the quantity $d_p(P)$ is usually related to some ``dimension" or ``complexity" 
of parameter $\theta=\theta(P)$ and it characterizes the error of estimation of $\theta$ in the norm of Banach space $E.$
If ${\mathcal P}:=\{P_{\theta}:\theta\in \Theta\}$ is an identifiable model and $\theta(P_{\theta}):=\theta,$ then we will use notations 
$a_p(\theta):=a_p(P_{\theta})$ and $d_p(\theta):=d_p(P_{\theta}).$ 

\begin{example}
\label{ex_Euc}
\normalfont 
Let $E={\mathbb R}^d$ be equipped with the standard Euclidean norm.
Then, by an elementary argument,
\begin{align*}
\Bigl\|\|\hat \theta_n-\theta(P)\|\Bigr\|_{L_{p}({\mathbb P}_{P})} \leq \sqrt{\frac{a_p(P)d}{n}},
\end{align*}
which implies that $d_p(P)\leq a_p(P)d.$
\end{example}

\begin{example}
\normalfont
Let $T:S\mapsto E$ satisfy, for some $p\geq 2$ and for all $P\in {\mathcal P},$ the condition 
\begin{align*}
\sup_{\|u\|\leq 1}{\mathbb E}_P |\langle T(X),u\rangle|^p<\infty. 
\end{align*}
Define $\theta(P):= {\mathbb E}_P T(X)$ and 
\begin{align*}
\hat \theta_n  := \frac{T(X_1)+\dots+T(X_n)}{n}.
\end{align*}
Using well known bounds on the $L_p$-norms of sums of independent r.v. (see, e.g., Th. 1.5.11 in \cite{Pena_Gine}),
it is easy to check that 
\begin{align*}
&
\|\langle \hat \theta_n -\theta(P), u\rangle\|_{L_p} 
\lesssim \frac{p}{\log p} \frac{\|\langle T(X)-{\mathbb E}_P T(X), u\rangle\|_{L_p({\mathbb P}_P)}}{\sqrt{n}}. 
\end{align*}
Therefore, 
\begin{align*}
a_p(P)\lesssim \Bigl(\frac{p}{\log p}\Bigr)^2 \sup_{\|u\|\leq 1}\|\langle T(X)-{\mathbb E}_P T(X), u\rangle\|_{L_p({\mathbb P}_P)}^2.
\end{align*}
\end{example}

The next results for plug-in estimator $f(\hat \theta_n)$ of $f(\theta(P))$ for $f\in C^s(E), s\leq 2$ immediately follow from H\"older condition on function $f,$ bound on the remainder of the first order Taylor expansion and Assumption \ref{assume_AAA}. 

\begin{proposition}
\label{prop_s<2}
(i) Let $f\in C^s(E)$ for some $s\leq 1.$ If, for some $P\in {\mathcal P}$ and $p\geq 1,$ $d_{ps\vee 1}(P)<\infty,$  then 
\begin{align*}
\|f(\hat \theta_n)-f(\theta(P))\|_{L_p({\mathbb P}_P)}\leq \|f\|_{C^s} \Bigl(\sqrt{\frac{d_{ps\vee 1}(P)}{n}}\Bigr)^s.
\end{align*} 
(ii) Let $f\in C^s(E)$ for some $s=1+\rho$ for some $\rho \in (0,1].$ If, for some $P\in {\mathcal P}$ and $p\geq 1,$ $d_{ps}(P)<\infty,$ then 
\begin{align*}
\|f(\hat \theta_n)-f(\theta(P))\|_{L_p({\mathbb P}_P)}\leq \|f'(\theta(P))\| \sqrt{\frac{a_p(P)}{n}}+\|f'\|_{{\rm Lip}_{\rho}} \Bigl(\sqrt{\frac{d_{ps}(P)}{n}}\Bigr)^s,
\end{align*} 
and, moreover,
\begin{align*}
\|f(\hat \theta_n)-f(\theta(P))- \langle \hat \theta_n-\theta(P), f'(\theta(P))\rangle\|_{L_p({\mathbb P}_P)} \leq \|f'\|_{{\rm Lip}_{\rho}} \Bigl(\sqrt{\frac{d_{ps}(P)}{n}}\Bigr)^s.\end{align*}  
\end{proposition}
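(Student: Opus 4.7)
The plan is to derive both parts from two elementary ingredients: the Hölder/Lipschitz control that the $C^s$-norm gives on $f$ (for (i)) or on $f'$ (for (ii)), together with the pathwise Taylor expansion of $f$ at $\theta(P)$; then the bounds in Definition \ref{assume_AAA} convert the resulting moments of $\|\hat\theta_n-\theta(P)\|$ and of $\langle\hat\theta_n-\theta(P),u\rangle$ into the claimed rates. No sample splitting is needed here because the proposition concerns the plug-in estimator itself.

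For part (i), since $s\le 1$, the $C^s$-norm directly majorizes the Hölder-$s$ constant, so
\begin{align*}
|f(\hat\theta_n)-f(\theta(P))|\le \|f\|_{C^s}\|\hat\theta_n-\theta(P)\|^{s}
\quad\text{a.s.}
\end{align*}
Taking $L_p$-norms gives $\|f(\hat\theta_n)-f(\theta(P))\|_{L_p}\le \|f\|_{C^s}(\E_P\|\hat\theta_n-\theta(P)\|^{ps})^{1/p}$. When $ps\ge 1$, this equals $\|f\|_{C^s}\bigl\|\|\hat\theta_n-\theta(P)\|\bigr\|_{L_{ps}}^{s}$, and Definition \ref{assume_AAA} yields the stated bound with $d_{ps}(P)$. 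When $ps<1$, the expression $(\E|\cdot|^{ps})^{1/(ps)}$ is not a norm, but Jensen's inequality bounds it by the $L_1$-norm, so we can replace $d_{ps}(P)$ by $d_1(P)$; this is exactly the role of the ``$\vee\,1$'' in $d_{ps\vee 1}(P)$.

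For part (ii), write the first-order Taylor expansion in integral remainder form:
\begin{align*}
f(\hat\theta_n)-f(\theta(P))-\langle \hat\theta_n-\theta(P),f'(\theta(P))\rangle
=\int_0^1 \bigl\langle \hat\theta_n-\theta(P),\, f'(\theta(P)+t(\hat\theta_n-\theta(P)))-f'(\theta(P))\bigr\rangle\,dt.
\end{align*}
Since $f'\in\mathrm{Lip}_\rho$, the integrand is bounded in absolute value by $\|f'\|_{\mathrm{Lip}_\rho}\,t^{\rho}\,\|\hat\theta_n-\theta(P)\|^{1+\rho}$, and integrating over $t$ yields the pathwise remainder bound $\|f'\|_{\mathrm{Lip}_\rho}\|\hat\theta_n-\theta(P)\|^{s}$. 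Taking $L_p$-norms and applying Definition \ref{assume_AAA} (here $ps\ge s>1$ so no ``$\vee 1$'' is needed) gives the second displayed inequality directly. The first inequality of (ii) then follows by the triangle inequality together with
\begin{align*}
\|\langle \hat\theta_n-\theta(P),f'(\theta(P))\rangle\|_{L_p({\mathbb P}_P)}
\le \|f'(\theta(P))\|\sup_{\|u\|\le 1}\|\langle\hat\theta_n-\theta(P),u\rangle\|_{L_p({\mathbb P}_P)}
\le \|f'(\theta(P))\|\sqrt{a_p(P)/n}.
\end{align*}

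The only subtlety I expect is the $ps<1$ regime in (i), which is why the statement carries $d_{ps\vee 1}(P)$ rather than $d_{ps}(P)$; handling it requires the $L_q$-monotonicity step described above. Everything else is routine, and the whole argument is essentially a careful bookkeeping of Hölder constants and moment definitions.
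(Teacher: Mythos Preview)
Your proof is correct and follows exactly the approach the paper itself sketches (it only states that the proposition ``immediately follow[s] from H\"older condition on function $f,$ bound on the remainder of the first order Taylor expansion and Assumption \ref{assume_AAA}'' without giving details). Your handling of the $ps<1$ case via Jensen's inequality to justify the $\vee 1$ in $d_{ps\vee 1}(P)$, and your Taylor remainder bound in (ii), are precisely the missing steps the paper leaves implicit.
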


Note that, for $s<1,$ the error rate of plug-in estimator $f(\hat \theta_n)$ is of the order $O(n^{-s/2})$ when the dimension $d_p(P)$ is bounded 
by a constant and it is even slower when $d_{ps\vee 1}(P)$ is allowed to grow with $n.$ Thus, in this case, the error rate is slower than $n^{-1/2},$
and it becomes of the order $n^{-1/2}$ for bounded $d_{ps\vee 1}(P)$ only when $s=1.$ If $s\in (1,2]$ and $d_{ps}(P)\leq n^{\alpha}$ for some 
$\alpha\in (0,1/2],$
then the error rate is of the order $O(n^{-1/2})$ for $s\geq \frac{1}{1-\alpha},$ and, at least according to the upper bounds of Proposition \ref{prop_s<2}, 
the error rate of plug-in estimator $f(\hat \theta_n)$ would be slower than $n^{-1/2}$ if $d_{ps}(P)\geq n^{\alpha}$ for some $\alpha>1/2.$

It turns out that the error rate $\Bigl(\frac{1}{\sqrt{n}} + \Bigl(\sqrt{\frac{d}{n}}\Bigr)^s\Bigr)\wedge 1$ is minimax optimal {\it for all $s>0$} at least in some 
important instances of $d$-dimensional problems. Namely, the following proposition holds. It is a corollary of slightly more general Proposition \ref{max_min_max_many} in the next section.

\begin{proposition}
\label{sup_min_max_bd}
Let $\{P_{\theta}: \theta\in \Theta\}, \Theta\subset {\mathbb R}^d$ be a statistical model, let $\theta_0\in \Theta$ and suppose that for some $\rho>0,$
$B_{\ell_{\infty}}(\theta_0,\frac{\rho}{\sqrt{n}}):=\{\theta: \|\theta-\theta_0\|_{\ell_{\infty}}\leq \frac{\rho}{\sqrt{n}}\}\subset \Theta.$ Suppose also that, for all $\theta\in B_{\ell_{\infty}}(\theta_0,\frac{\rho}{\sqrt{n}}),$
\begin{align*}
K(P_{\theta}\|P_{\theta_0}) \leq C^2\|\theta-\theta_0\|^2
\end{align*}
with some numerical constant $C>0.$ Finally, suppose that $\rho\leq \frac{\gamma}{C}$ for a sufficiently small numerical constant $\gamma>0.$
Then, for all $s>0,$ 
\begin{align*}
\sup_{\|f\|_{C^s}\leq 1}\inf_{T_n} \sup_{\theta\in B_{\ell_{\infty}}(\theta_0,\frac{\rho}{\sqrt{n}})} {\mathbb E}_{\theta} (T_n(X_1,\dots, X_n)-f(\theta))^2 
\gtrsim \Bigl(\frac{\rho^2}{n} + \Bigl(\rho^2\frac{d}{n}\Bigr)^s\Bigr)\wedge 1,
\end{align*}
where the infimum is taken over all estimators $T_n(X_1,\dots, X_n)$ of $f(\theta)$ based on i.i.d. $X_1,\dots, X_n\sim P_{\theta}.$
\end{proposition}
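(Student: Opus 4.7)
The plan is to prove the two summands of the lower bound separately via Le Cam--type arguments, then combine using $\max\ge\tfrac12(\cdot+\cdot)$; the $\wedge 1$ truncation is automatic since $\|f\|_{C^s}\le 1$ forces $\|f\|_\infty\le 1$, so the constant estimator achieves squared risk at most $1$.

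For the $\rho^2/n$ term I would apply the two-point method to a one-dimensional bump. Fix $\phi_0\in C_c^\infty(\mathbb{R})$ with $\|\phi_0\|_{C^s(\mathbb{R})}\le 1$ and $\phi_0'(0)\ne 0$, and set $f(\theta):=\phi_0(\langle\theta-\theta_0,e_1\rangle)$; since $f$ depends only on the first coordinate, $\|f\|_{C^s}\le 1$. Compare $\theta_0$ with $\theta_1:=\theta_0+(\rho/\sqrt n)e_1\in B_{\ell_\infty}(\theta_0,\rho/\sqrt n)$: the KL hypothesis gives $K(P_{\theta_1}^n\|P_{\theta_0}^n)\le C^2\rho^2\le\gamma^2$, so by Pinsker the total variation is bounded away from $1$ for small $\gamma$; combined with $|f(\theta_1)-f(\theta_0)|\asymp\rho/\sqrt n$, Le Cam's two-point lemma yields $\inf_{T_n}\max\gtrsim\rho^2/n$.

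For the $(\rho^2 d/n)^s\wedge 1$ term I would apply a two-fuzzy-hypothesis argument with a \emph{radial} functional so that a Rademacher mixture lies on a level set of $f$. Set $\delta:=\rho/\sqrt n$; for a smooth compactly supported $g:\mathbb{R}\to\mathbb{R}$ with $g(0)=0$ and $g(1)\ne 0$, define
\[
f(\theta):=c\,(\delta\sqrt d)^s\,g\!\left(\frac{\|\theta-\theta_0\|_2^2}{\delta^2 d}\right).
\]
A Fa\`a di Bruno computation for the composition of $g$ with the quadratic $h(\theta):=\|\theta-\theta_0\|_2^2/(\delta^2 d)$---which has $h^{(l)}\equiv 0$ for $l\ge 3$ and $\|h^{(l)}(\theta)\|\lesssim (\delta\sqrt d)^{-l}$ on the support of $f$---shows that each $\|f^{(k)}\|_\infty$ for $k\le m=\lfloor s\rfloor$ and the seminorm $\|f^{(m)}\|_{\mathrm{Lip}_\rho}$ are $O(c)$, so $\|f\|_{C^s}\le 1$ for an appropriate universal small constant $c$, valid when $\delta\sqrt d\le 1$ (the opposite regime is absorbed by $\wedge 1$). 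Test $\theta_0$ against the uniform Rademacher mixture $\mu_1$ on $\{\theta_0+\delta\tau:\tau\in\{\pm 1\}^d\}\subset B_{\ell_\infty}(\theta_0,\delta)$: every atom satisfies $\|\theta_\tau-\theta_0\|_2^2=\delta^2 d$ exactly, so $f(\theta_\tau)\equiv F^*:=c(\delta\sqrt d)^s g(1)$ is constant on the mixture support while $f(\theta_0)=0$. The two-fuzzy Le Cam bound then gives $\inf_{T_n}\max\gtrsim (F^*)^2(1-TV(P_{\theta_0}^n,\bar P_1))$ with $\bar P_1:=\mathbb{E}_\tau P_{\theta_\tau}^n$; convexity of KL in its first argument and Pinsker yield $TV^2\le\tfrac12\mathbb{E}_\tau K(P_{\theta_\tau}^n\|P_{\theta_0}^n)\le\tfrac12 C^2\rho^2 d$, which is bounded away from $1$ whenever $\rho^2 d\lesssim 1$, and $(F^*)^2\asymp(\rho^2 d/n)^s$ produces the target rate.

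\emph{Main obstacle.} The delicate point is the TV estimate when $\rho^2 d$ is not small: convexity of KL plus Pinsker becomes loose, and to obtain the target $(\rho^2 d/n)^s$ rate uniformly in $d$ one must invoke the sharper per-coordinate tensorized affinity bound underlying the more general Proposition~3.4---which exploits the product structure of the Rademacher mixture coordinate by coordinate to control $TV(P_{\theta_0}^n,\bar P_1)$---combined with the $\wedge 1$ truncation in the extreme regime $(\rho^2 d/n)^s\gtrsim 1$.
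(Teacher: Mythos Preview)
Your two-point argument for the $\rho^2/n$ term is essentially the same as the paper's and is fine.

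The second part, however, has a genuine gap. Your two-fuzzy-hypotheses argument with the radial functional requires $TV(P_{\theta_0}^n,\bar P_1)$ to be bounded away from $1$, and the only control you have under the assumptions of the proposition is
\[
K(P_{\theta_\tau}^{\otimes n}\|P_{\theta_0}^{\otimes n})\le nC^2\|\theta_\tau-\theta_0\|^2 = C^2\rho^2 d\le \gamma^2 d.
\]
Convexity plus Pinsker therefore gives a useful bound only when $d\lesssim 1$, whereas the proposition must hold for all $d$. You recognize this and propose a ``per-coordinate tensorized affinity'' fix, but that argument requires a \emph{product structure} $P_\theta=\bigotimes_j P_{\theta^{(j)}}$ to factor the mixture coordinatewise. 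Proposition~\ref{sup_min_max_bd} is stated for an arbitrary model $\{P_\theta:\theta\in\Theta\}$, $\Theta\subset\mathbb{R}^d$, with no such structure; under the KL hypothesis alone there is no mechanism to make the Rademacher mixture close to $P_{\theta_0}^{\otimes n}$ when $d$ is large. So the approach, as written, does not establish the $(\rho^2 d/n)^s$ rate.

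The paper's route is quite different and avoids this obstacle. It first applies Varshamov--Gilbert to produce a set $B\subset\{-1,1\}^d$ with $\log|B|\asymp d$ and Hamming-separated points $\theta_\omega$, then uses a Fano-type bound (Tsybakov, Theorem~2.7): the condition becomes $K(P_{\theta_\omega}^{\otimes n}\|P_{\theta_0}^{\otimes n})\le C^2\rho^2 d\le \alpha\log|B|$, i.e.\ $C^2\rho^2\le \alpha'$, which is exactly the hypothesis $\rho\le\gamma/C$ \emph{independently of $d$}. This gives a lower bound $\gtrsim \rho^2 d/n$ for estimating $\theta_\omega$ itself. The passage to \emph{functional} estimation is then done by Nemirovski's construction: build $d$ functionals $f_1,\dots,f_d$, each a sum of disjoint bumps of height $\asymp\epsilon^s$ centered at the $\theta_\omega$, so that $f_k(\theta_\omega)=\omega_k\,c\,\epsilon^s$. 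Good estimators of all the $f_k$ recover $\omega$ and hence $\theta_\omega$, contradicting the Fano bound; this forces at least one $f_k$ to have minimax risk $\gtrsim(\rho^2 d/n)^s$. The key conceptual difference is that the paper compares the KL to $\log|B|\asymp d$ (multiple hypotheses) rather than to a constant (two hypotheses), which is what makes the argument scale with $d$.
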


By Proposition \ref{prop_s<2} for $s\leq 2,$ the lower bound of Proposition \ref{sup_min_max_bd} is attained for plug-in estimators based on any estimator of the parameter $\theta$ satisfying 
the assumption $d_{ps\vee 1}(\theta)\lesssim d$. In what follows, our goal is to develop (under the assumption that $d_{ps}(P)\lesssim d$) estimators of $f(\theta(P))$ for 
which the same error rate is attained for $s>2.$

The estimators discussed in this paper are based on the Taylor expansion of $f(\theta)$ in a neighborhood of a given estimator $\hat \theta^{(0)}$ of 
$\theta=\theta(P)$ based on $X_1,\dots, X_n$ i.i.d. $\sim P\in {\mathcal P}.$
For the functional $f$ of smoothness $s=m+\rho,$ $m\geq 2, \rho\in (0,1],$ we use the Taylor expansion of order $m$: 
\begin{align*}
f(\theta)= f(\hat \theta^{(0)}) +\sum_{k=1}^{m} \frac{f^{(k)}(\hat \theta^{(0)})[\theta-\hat \theta^{(0)},\dots, \theta-\hat \theta^{(0)}]}{k!} + R_m
\end{align*}
with the remainder $R_m$ satisfying the bound 
\begin{align*}
|R_m| \leq \frac{\|f^{(m)}\|_{{\rm Lip}_{\rho}}}{m!} \|\hat \theta^{(0)}-\theta\|^s.
\end{align*}
Then, the goal is to develop estimators of polynomials 
\begin{align}
\label{Taylor_mult_form}
f^{(k)}(\hat \theta^{(0)})[\theta-\hat \theta^{(0)},\dots, \theta-\hat \theta^{(0)}], k=1,\dots, m
\end{align}
involved in the Taylor expansion of $f(\theta).$ If we are in a possession of estimators $\hat \theta_j^{(k)}, j=1,\dots, k$ 
of parameter $\theta$ that are {\it independent r.v.} conditionally on $\hat \theta^{(0)},$ it becomes possible to use 
\begin{align}
\label{est_mult_lin}
f^{(k)}(\hat \theta^{(0)})[\hat \theta_1^{(k)}-\hat \theta^{(0)},\dots, \hat \theta_k^{(k)}-\hat \theta^{(0)}], k=1,\dots, m
\end{align}
as estimators of \eqref{Taylor_mult_form}.
Note that, conditionally on $\hat \theta^{(0)},$ such estimators are unbiased provided that $\hat \theta_j^{(k)}, j=1,\dots, k$ are unbiased estimators of $\theta.$ This approach yields the following estimator of $f(\theta(P)):$
\begin{align}
\label{basic_T_f}
T_f(X_1,\dots, X_n) := \sum_{k=0}^m \frac{f^{(k)}(\hat \theta^{(0)})[\hat \theta_1^{(k)}-\hat \theta^{(0)},\dots, \hat \theta_k^{(k)}-\hat \theta^{(0)}]}{k!}.
\end{align}
We will also use a truncated version of estimator $T_f(X_1,\dots, X_n):$
for a constant $M>0,$ define 
\begin{align}
\label{basic_T_fM}
\tilde{T}_f(X_1,\cdots,X_n):= \tilde{T}_{f, M}(X_1,\cdots,X_n):=
\begin{cases}
M&T_f(X_1,\cdots,X_n)>M,\\
T_f(X_1,\cdots,X_n)&T_f(X_1,\cdots,X_n)\in[-M,M],\\
-M&T_f(X_1,\cdots,X_n)<-M.
\end{cases}
\end{align}
The constant $M$ will depend on $f$ and, usually, it will satisfy the condition $M\geq \|f\|_{L_{\infty}}.$


In what follows, it will be assumed that, for all $k=1,\dots, m,$ the estimators $\hat \theta^{(0)}, \hat \theta_j^{(k)}, j=1,\dots, k$ are independent r.v.  
A standard way to construct such estimators is to start with base estimators $\hat \theta_n(X_1,\dots, X_n), n\geq 1$ of parameter $\theta=\theta(P)$ 
and to define estimators $\hat \theta^{(0)}, \hat \theta_j^{(k)}, j=1,\dots, k, k=1,\dots, m$ using a sample split. Namely, for $J\subset \{1,\dots, n\},$ denote $X_J:= (X_j: j\in J).$
Let $J_0 \subset \{1,\dots, n\}$ with ${\rm card}(J_0) =: n^{(0)} \leq n-m$ and let $\hat \theta^{(0)}:= \hat \theta_{n^{(0)}}(X_{J_0}).$  
For $k=1,\dots, m,$ let $J_1^{(k)}, \dots, J^{(k)}_k$ be a partition of $\{1,\dots, n\}\setminus J_0$ into $k$ disjoint non-empty 
subsets. Let $n_j^{(k)}:= {\rm card}(J_j^{(k)})$ and $\hat \theta^{(k)}_j:= \hat \theta_{n_j^{(k)}}(X_{J^{(k)}_j}),$ where 
$j=1,\dots, k, k=1,\dots, m.$ 
Let ${\mathcal J}:= \{J_0, J_j^{(k)}: j=1,\dots, k, k=1,\dots, m\}$ and 
define $T_f(X_1,\dots, X_n) := T_{f, {\mathcal J}}(X_1,\dots, X_n)$ by \eqref{basic_T_f} and 
$\tilde{T}_f(X_1,\cdots,X_n):= \tilde{T}_{f, M}(X_1,\cdots,X_n):= \tilde{T}_{f, M, {\mathcal J}}(X_1,\cdots,X_n)$
by \eqref{basic_T_fM}.

For estimator $T_f(X_1,\dots, X_n),$ the following result will be proved.

\begin{theorem}
\label{th_1_asssume_AAA}
Let $s:=m+\rho$ for some $m\geq 2$ and $\rho\in (0,1]$ and let $f'\in C^{s-1}(E).$
Assume that, for some $P\in {\mathcal P}$ and for some $p\geq 1,$ $d_{ps}(P)<\infty.$

(i) Suppose that $n^{(0)}\asymp_m n$ and $n_j^{(k)}\asymp_m n$, $j=1,\cdots,k$, $k=1,\cdots,m.$
Then 
\begin{align*}
    &\left\|T_f(X_1,\cdots,X_n)-f(\theta(P))\right\|_{L_p(\mathbb{P}_P)}\\
    &\lesssim_m \max_{1\leq k\leq m} \|f^{(k)}\|_{L_\infty}\sqrt{\frac{a_p(P)}{n}} \Bigl(\sqrt{\frac{d_{ps}(P)}{n}}\Bigr)^{k-1}+\|f^{(m)}\|_{{\rm Lip}_\rho}
   \Bigl(\sqrt{\frac{d_{ps}(P)}{n}}\Bigr)^{s} .
\end{align*}

(ii) Suppose that, for all $j=1,\dots, k, k=1,\dots, m,$ $n_j^{(k)}\asymp_m n.$ Then 
\begin{align*}
&
\Bigl\|T_f(X_1,\dots, X_n) - f(\theta(P))- \langle \hat \theta^{(1)}_1-\theta(P), f'(\theta(P))\rangle\Bigr\|_{L_p({\mathbb P}_{P})} 
\\
&
\lesssim_m
\max_{2\leq k\leq m}\|f^{(k)}\|_{L_{\infty}} \sqrt{\frac{a_p(P)}{n}} 
\Bigl(\sqrt{\frac{d_{ps}(P)}{n^{(0)}}}\Bigr)^{k-1} 
+ \|f^{(m)}\|_{{\rm Lip}_{\rho}}\Bigl(\sqrt{\frac{d_{ps}(P)}{n^{(0)}}}\Bigr)^{s}.
\end{align*}
\end{theorem}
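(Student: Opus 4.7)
The plan is to Taylor-expand $f(\theta(P))$ around the base estimator $\hat \theta^{(0)}$ and match it term-by-term against the definition of $T_f$. Setting $\eta := \theta(P)-\hat \theta^{(0)}$ and $\xi_j^{(k)} := \hat \theta_j^{(k)}-\theta(P)$, the order-$m$ Taylor expansion with the $C^s$-remainder gives
\begin{align*}
f(\theta(P))=f(\hat \theta^{(0)})+\sum_{k=1}^{m}\frac{f^{(k)}(\hat \theta^{(0)})[\eta,\dots,\eta]}{k!}+R_m,\quad |R_m|\leq \frac{\|f^{(m)}\|_{{\rm Lip}_\rho}}{m!}\|\eta\|^s,
\end{align*}
so that subtracting from the definition of $T_f$ produces
\begin{align*}
T_f(X_1,\dots,X_n)-f(\theta(P))=-R_m+\sum_{k=1}^{m}\frac{\Delta_k}{k!},\quad \Delta_k:=f^{(k)}(\hat \theta^{(0)})\bigl[\hat \theta_1^{(k)}-\hat \theta^{(0)},\dots,\hat \theta_k^{(k)}-\hat \theta^{(0)}\bigr]-f^{(k)}(\hat \theta^{(0)})[\eta,\dots,\eta].
\end{align*}
Substituting $\hat \theta_j^{(k)}-\hat \theta^{(0)}=\xi_j^{(k)}+\eta$ and expanding by multilinearity cancels the pure-$\eta^{\,k}$ term, leaving $\Delta_k$ as a sum over nonempty $S\subseteq\{1,\dots,k\}$ of the terms $f^{(k)}(\hat \theta^{(0)})[(\xi_j^{(k)})_{j\in S},\eta,\dots,\eta]$ with $k-|S|$ copies of $\eta$. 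The key structural input, guaranteed by the sample split, is that $\hat \theta^{(0)}$ and the $\xi_j^{(k)}$'s are mutually independent.

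To bound a single summand with $\ell:=|S|\geq 1$, I single out one $\xi$, say $\xi_{i_1}^{(k)}$, and view the summand as $\langle \xi_{i_1}^{(k)},v\rangle$, where $v\in E^{\ast}$ is obtained by plugging the remaining $\xi$'s and $\eta$'s into the other slots of $f^{(k)}(\hat \theta^{(0)})$; thus $\|v\|\leq \|f^{(k)}\|_{L_\infty}\prod_{j\in S\setminus\{i_1\}}\|\xi_j^{(k)}\|\cdot\|\eta\|^{k-\ell}$. Since $\xi_{i_1}^{(k)}$ is independent of all ingredients of $v$, conditioning and the definition of $a_p(P)$ give $\mathbb{E}\bigl[|\langle \xi_{i_1}^{(k)},v\rangle|^p\mid v\bigr]\leq (a_p(P)/n_{i_1}^{(k)})^{p/2}\|v\|^p$. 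Taking total expectation, using the independence of the remaining $\xi_j^{(k)}$'s from each other and from $\eta$, and applying the moment bounds $\|\|\xi_j^{(k)}\|\|_{L_p}\leq \sqrt{d_p(P)/n_j^{(k)}}$ and $\|\|\eta\|\|_{L_{(k-\ell)p}}\leq \sqrt{d_{(k-\ell)p}(P)/n^{(0)}}$ reduce the bound to a product of factors. Since $k-\ell\leq m\leq s$, we have $d_{(k-\ell)p}(P)\leq d_{ps}(P)$, and likewise $d_p(P)\leq d_{ps}(P)$; together with $n_j^{(k)}\asymp_m n$ and, in part (i), $n^{(0)}\asymp_m n$, this yields
\begin{align*}
\bigl\|f^{(k)}(\hat \theta^{(0)})[(\xi_j^{(k)})_{j\in S},\eta^{k-\ell}]\bigr\|_{L_p}\lesssim_m \|f^{(k)}\|_{L_\infty}\sqrt{\frac{a_p(P)}{n}}\Bigl(\sqrt{\frac{d_{ps}(P)}{n}}\Bigr)^{k-1}.
\end{align*}
Summing over the at most $2^m-1$ subsets $S$, over $k=1,\dots,m$, and combining with $\|R_m\|_{L_p}\leq \frac{\|f^{(m)}\|_{{\rm Lip}_\rho}}{m!}(d_{ps}(P)/n^{(0)})^{s/2}\lesssim_m \|f^{(m)}\|_{{\rm Lip}_\rho}(\sqrt{d_{ps}(P)/n})^{s}$ produces the bound in part (i).

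For part (ii) I peel off the $k=1$ contribution $\Delta_1=\langle \xi_1^{(1)},f'(\hat \theta^{(0)})\rangle$ and split it as $\langle \xi_1^{(1)},f'(\theta(P))\rangle+\langle \xi_1^{(1)},f'(\hat \theta^{(0)})-f'(\theta(P))\rangle$. Subtracting the first piece gives
\begin{align*}
T_f-f(\theta(P))-\langle \hat \theta_1^{(1)}-\theta(P),f'(\theta(P))\rangle=-R_m+\langle \xi_1^{(1)},f'(\hat \theta^{(0)})-f'(\theta(P))\rangle+\sum_{k=2}^{m}\frac{\Delta_k}{k!}.
\end{align*}
The $k\geq 2$ sum and the remainder are controlled exactly as in the previous paragraph, except that the $\eta$-factors now involve $n^{(0)}$ rather than $n$, producing $(\sqrt{d_{ps}(P)/n^{(0)}})^{k-1}$. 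The new correction term is handled by the same conditioning idea: $\|f'(\hat \theta^{(0)})-f'(\theta(P))\|\leq \|f^{(2)}\|_{L_\infty}\|\eta\|$, so conditioning on $\hat \theta^{(0)}$ gives
\begin{align*}
\bigl\|\langle \xi_1^{(1)},f'(\hat \theta^{(0)})-f'(\theta(P))\rangle\bigr\|_{L_p}\lesssim \|f^{(2)}\|_{L_\infty}\sqrt{\frac{a_p(P)}{n}}\sqrt{\frac{d_{ps}(P)}{n^{(0)}}},
\end{align*}
which is absorbed into the $k=2$ case of the stated inequality. The main obstacle across both parts is the bookkeeping: arranging the conditioning so that $a_p(P)$ appears exactly once per summand (via linearization in a single chosen $\xi$) while every other $\xi$- and $\eta$-factor is absorbed by $d_{ps}(P)$, and verifying that all required moment indices $p(k-\ell)$ stay below $ps$ so that the single hypothesis $d_{ps}(P)<\infty$ suffices.
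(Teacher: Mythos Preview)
Your proof is correct and follows essentially the same route as the paper. The paper packages the multilinear estimate into auxiliary lemmas (bounding $M[\xi_1,\dots,\xi_k]-M[w,\dots,w]$ by expanding over nonempty subsets and singling out one $\xi$ for the $\sup_{\|u\|\le 1}$ linear-functional bound), then applies these conditionally on $\hat\theta^{(0)}$ and integrates; you carry out the identical expansion and conditioning inline, with the same allocation of one $a_p(P)$ factor and remaining $d_{ps}(P)$ factors.
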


\vskip 3mm

The proof of this result immediately follows from more general and more technical statements given and proved in Section \ref{technical}. 
It relies on the following bound
\begin{align*}
&\left\|\left(f^{(k)}(t)[\hat{\theta}_1^{(k)}-t,\cdots,\hat{\theta}_k^{(k)}-t]-f^{(k)}(t)[\theta-t,\cdots,\theta-t]\right)
\right\|_{L_p}
    \leq k \|f^{(k)}(t)\| A_k (B_k+\|t-\theta\|)^{k-1},
\end{align*}
where
\begin{align*}
A_k:=
\max_{1\leq j\leq k}
\sup_{\|u\|\le 1}\|\langle \hat \theta_j^{(k)}-\theta,u\rangle\|_{L_p}
\ {\rm and}\ 
B_k:=\max_{1\leq j\leq k}
\|\|\hat \theta_j^{(k)}-\theta\|\|_{L_p},
\end{align*}
which itself follows from simple bounds on the multilinear forms of independent r.v. in a Banach space
(see lemmas \ref{Prop_1A} and \ref{Prop_2A}). Such bounds could be then used (conditionally on $\hat \theta^{(0)}$)
to control the $L_p$-norms of multilinear forms 
$f^{(k)}(\hat \theta^{(0)})[\hat \theta_1^{(k)}-\hat \theta^{(0)},\dots, \hat \theta_k^{(k)}-\hat \theta^{(0)}]$ 
involved in the definition of estimator $T_f(X_1,\dots, X_n).$

For truncated estimators $\tilde{T}_{f, M}(X_1,\cdots,X_n),$ we will prove the following local version of Theorem \ref{th_1_asssume_AAA}.

\begin{theorem}
\label{th_1_AA_asssume_AAA}
Let $f:E\mapsto {\mathbb R}$ be a uniformly bounded functional and let $\theta=\theta(P)$ for some $P\in {\mathcal P}.$
Suppose $f$ is $m$ times Fr\`echet continuously differentiable in $U:=B(\theta;\delta)$ for some $\delta\in (0,1]$ and, moreover,
$f^{(m)}$ satisfies the Hölder condition with exponent $\rho\in(0,1]$ in $U.$ 
Suppose also that, for some $P\in {\mathcal P}$ and for some $p\geq 1,$ $d_{ps}(P)<\infty.$

(i) Suppose that $n^{(0)}\asymp_m n$ and $n_j^{(k)}\asymp_m n$, $j=1,\cdots,k$, $k=1,\cdots,m.$ For $M\geq \|f\|_{L_{\infty}},$ 
\begin{align*}
    &\left\|\tilde{T}_f(X_1,\cdots,X_n)-f(\theta(P))\right\|_{L_p(\mathbb{P}_P)}\\
    &\lesssim_m \max_{1\leq k\leq m} \|f^{(k)}\|_{L_\infty(U)} \sqrt{\frac{a_p(P)}{n}}  
    +(M+\|f\|_{L_\infty}+\|f^{(m)}\|_{{\rm Lip}_\rho(U)} )\Bigl(\frac{1}{\delta}\sqrt{\frac{d_{ps}(P)}{n}}\Bigr)^{s}.
\end{align*}

(ii) Suppose that, for all $j=1,\dots, k, k=1,\dots, m,$ $n_j^{(k)}\asymp_m n.$
If $d_{ps}(P)\lesssim n^{(0)}$ and $M\geq \|f\|_{\infty}+ \|f\|_{{\rm Lip}}\delta,$ then 
\begin{align*}
    &\left\|\tilde T_f(X_1,\cdots,X_n)-f(\theta)- \langle \hat \theta_1^{(1)}-\theta(P), f'(\theta(P))\rangle\right\|_{L_p(\mathbb{P}_P)}
    \\
    &
    \lesssim_m \max_{2\leq k\leq m} \|f^{(k)}\|_{L_\infty(U)} \sqrt{\frac{a_p(P)}{n}}  
    +(M+\|f\|_{L_\infty}+\|f^{(m)}\|_{{\rm Lip}_\rho(U)})\Bigl(\frac{1}{\delta}\sqrt{\frac{d_{ps}(P)}{n^{(0)}}}\Bigr)^{s}.    
    \end{align*}
\end{theorem}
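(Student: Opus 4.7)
The plan is a \emph{local-to-global reduction}: split the sample space into a good event on which the argument of Theorem \ref{th_1_asssume_AAA} carries over verbatim after replacing the global H\"older norms by their restrictions to $U$, and a bad event on which the truncation gives a crude uniform control while the Markov bound keeps the probability small. Concretely, set $G:=\{\|\hat\theta^{(0)}-\theta(P)\|\le \delta/2\}$. By Markov's inequality applied to $\|\hat\theta^{(0)}-\theta(P)\|$ in $L_{ps}$ and the assumption $d_{ps}(P)<\infty$,
\begin{align*}
\mathbb{P}_P(G^c)\le \Bigl(\tfrac{2}{\delta}\Bigr)^{ps}\Bigl\|\|\hat\theta^{(0)}-\theta(P)\|\Bigr\|_{L_{ps}(\mathbb{P}_P)}^{ps}\le \Bigl(\tfrac{2}{\delta}\sqrt{\tfrac{d_{ps}(P)}{n^{(0)}}}\Bigr)^{ps}.
\end{align*}

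Next I would argue that conditionally on $\hat\theta^{(0)}\in G$, the entire segment from $\hat\theta^{(0)}$ to $\theta(P)$ lies in $U$, so the $m$-th order Taylor expansion of $f$ around $\hat\theta^{(0)}$ is valid with remainder bounded by $\|f^{(m)}\|_{\mathrm{Lip}_\rho(U)}\|\hat\theta^{(0)}-\theta(P)\|^s/m!$. The multilinear-form estimates underlying Theorem \ref{th_1_asssume_AAA} (Lemmas \ref{Prop_1A} and \ref{Prop_2A}) are purely algebraic and operate on fixed points $\hat\theta^{(0)}$, hence extend verbatim; applied conditionally they deliver, on $G$, the $L_p$-bound of Theorem \ref{th_1_asssume_AAA}(i) for $T_f$ with $\|f^{(k)}\|_{L_\infty}$ and $\|f^{(m)}\|_{\mathrm{Lip}_\rho}$ replaced by $\|f^{(k)}\|_{L_\infty(U)}$ and $\|f^{(m)}\|_{\mathrm{Lip}_\rho(U)}$. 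Because $M\ge\|f\|_{L_\infty}\ge |f(\theta(P))|$, the truncation $x\mapsto (x\vee -M)\wedge M$ is a contraction toward $f(\theta(P))$, giving the pointwise comparison $|\tilde T_f-f(\theta(P))|\le |T_f-f(\theta(P))|$, so the same bound passes to $\tilde T_f$.

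On $G^c$, $|\tilde T_f-f(\theta(P))|\le M+\|f\|_{L_\infty}$ uniformly. Combining by Minkowski,
\begin{align*}
\|\tilde T_f-f(\theta(P))\|_{L_p(\mathbb{P}_P)}\le \|(\tilde T_f-f(\theta(P)))\mathbf{1}_G\|_{L_p(\mathbb{P}_P)}+(M+\|f\|_{L_\infty})\mathbb{P}_P(G^c)^{1/p},
\end{align*}
and plugging in the probability estimate produces the $(M+\|f\|_{L_\infty})(\delta^{-1}\sqrt{d_{ps}(P)/n})^s$ term in part (i). For part (ii), the analysis on $G$ is identical but invokes Theorem \ref{th_1_asssume_AAA}(ii) conditionally to produce the linearized bound over $k\ge 2$. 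On $G^c$ one writes
\begin{align*}
|\tilde T_f-f(\theta(P))-\langle\hat\theta_1^{(1)}-\theta(P),f'(\theta(P))\rangle|\le M+\|f\|_{L_\infty}+\|f\|_{\mathrm{Lip}}\|\hat\theta_1^{(1)}-\theta(P)\|,
\end{align*}
and the last term is handled by H\"older's inequality with conjugate exponents $s$ and $s/(s-1)$, giving $\|\|\hat\theta_1^{(1)}-\theta(P)\|\mathbf{1}_{G^c}\|_{L_p}\le\sqrt{d_{ps}(P)/n_1^{(1)}}\,\mathbb{P}_P(G^c)^{(s-1)/(ps)}$. Since $\delta\le 1$, this product is of order $(\delta^{-1}\sqrt{d_{ps}(P)/n^{(0)}})^s$ and explains why the weaker threshold $M\ge\|f\|_{L_\infty}+\|f\|_{\mathrm{Lip}}\delta$ is the right one to absorb the additional linear contribution on $G^c$.

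The main obstacle I expect is the bookkeeping in the last step: verifying that the H\"older exponent $(s-1)/(ps)$ indeed converts the product $\mathbb{E}_P[\|\hat\theta_1^{(1)}-\theta(P)\|^p\mathbf{1}_{G^c}]^{1/p}$ into the target rate without dimensional loss, and ensuring the condition $d_{ps}(P)\lesssim n^{(0)}$ is what guarantees that the bad-event contribution does not dominate. Everything else is a conditional reuse of Theorem \ref{th_1_asssume_AAA} plus the contractivity of truncation.
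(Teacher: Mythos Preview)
Your approach is essentially the paper's: split into a good event on which the local Taylor expansion and the multilinear estimates of Lemmas~\ref{Prop_1A}--\ref{Prop_2A} apply with the $U$-restricted norms, use that truncation is a contraction toward the target, and control the bad event by Markov in $L_{ps}$. The paper routes this through the more technical Theorem~\ref{th_1_AA} (localizing \emph{all} estimators $\hat\theta^{(0)},\hat\theta_j^{(k)}$), whereas you localize only $\hat\theta^{(0)}$; for part~(i) your streamlined version is correct and slightly cleaner.

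There is, however, a genuine gap in your part~(ii). The contraction step you invoke, namely
\[
\bigl|\tilde T_f - \bigl(f(\theta)+\langle\hat\theta_1^{(1)}-\theta,f'(\theta)\rangle\bigr)\bigr|\le \bigl|T_f - \bigl(f(\theta)+\langle\hat\theta_1^{(1)}-\theta,f'(\theta)\rangle\bigr)\bigr|,
\]
holds only when the target $f(\theta)+\langle\hat\theta_1^{(1)}-\theta,f'(\theta)\rangle$ lies in $[-M,M]$. The condition $M\ge\|f\|_{L_\infty}+\|f\|_{\mathrm{Lip}}\delta$ guarantees this precisely when $\|\hat\theta_1^{(1)}-\theta\|\le\delta$, but your good event $G$ constrains only $\hat\theta^{(0)}$, not $\hat\theta_1^{(1)}$. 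So ``the analysis on $G$ is identical'' is not quite right. The fix is immediate: enlarge the good event to $G\cap\{\|\hat\theta_1^{(1)}-\theta\|\le\delta\}$ (this is exactly why the paper localizes all estimators), and absorb the additional probability $\mathbb{P}_P(\|\hat\theta_1^{(1)}-\theta\|>\delta)\le(\delta^{-1}\sqrt{d_{ps}(P)/n_1^{(1)}})^{ps}$ into the bad-event term by the same Markov bound. Your H\"older treatment of $\|\hat\theta_1^{(1)}-\theta\|\mathbf{1}_{G^c}$ is correct (and your observation that the resulting product picks up an extra factor of $\delta$, which is then absorbed via $\|f\|_{\mathrm{Lip}}\delta\le M$, is the right bookkeeping); the paper instead exploits the independence of $\hat\theta^{(0)}$ and $\hat\theta_1^{(1)}$ to factor the expectation directly, which is marginally simpler but not essential.
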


\vskip 3mm

Some other results in the same direction as theorems \ref{th_1_asssume_AAA} and \ref{th_1_AA_asssume_AAA} will be given 
in Section \ref{functionals_Bernstein} under the assumptions that $\langle \hat \theta_n-\theta(P), u\rangle$ and $\|\hat \theta_n-\theta(P)\|$
satisfy Bernstein type inequalities. They are used to study estimation of functionals of covariance in subgaussian models as well as functional 
estimation in high-dimensional exponential families. 

\vskip 2mm

In what follows, we discuss some corollaries of Theorem \ref{th_1_asssume_AAA}. Similar corollaries also hold for Theorem \ref{th_1_AA_asssume_AAA}.

For numbers $a>0$ and $d>0,$ denote 
$${\mathcal P}_{p,s} (a,d):= \{P\in {\mathcal P}: a_p(P)\leq a, d_{ps}(P)\leq a d\}.$$
The next corollary is immediate. 

\begin{corollary}
\label{cor_lin_appr}
(i) Suppose assumptions of Theorem \ref{th_1_asssume_AAA} (i) hold. 
If $d\lesssim n^{\alpha}$ for some $\alpha\in (0,1),$ then 
\begin{align*}
&
\sup_{P \in {\mathcal P}_{p,s}(a,d)}\Bigl\|T_f(X_1,\dots, X_n) - f(\theta(P))\Bigr\|_{L_p({\mathbb P}_{P})} 
\\
&
\lesssim_s \max_{1\leq k\leq m}\frac{\|f^{(k)}\|_{L_{\infty}}}{k!} a^{k/2} n^{-1/2} + \|f^{(m)}\|_{{\rm Lip}_{\rho}} a^{s/2} n^{-s(1-\alpha)/2}.
\end{align*}

(ii) Suppose assumptions of Theorem \ref{th_1_asssume_AAA} (ii) hold and, in addition, $n^{(0)}\asymp \frac{n}{\log n}.$
If $d\lesssim n^{\alpha}$ for some $\alpha\in (0,1),$ then 
\begin{align*}
&
\sup_{P \in {\mathcal P}_{p,s}(a,d)}\Bigl\|T_f(X_1,\dots, X_n) - f(\theta(P))-\langle \hat \theta^{(1)}_1-\theta(P), f'(\theta(P))\rangle\Bigr\|_{L_p({\mathbb P}_{P})} 
\\
&
\lesssim_s 
\max_{2\leq k\leq m} \|f^{(k)}\|_{L_{\infty}} a^{k/2}\sqrt{\frac{1}{n}} n^{-(1-\alpha)/2}\sqrt{\log n} 
+ \|f^{(m)}\|_{{\rm Lip}_{\rho}} a^{s/2}\Bigl(n^{-(1-\alpha)/2}\sqrt{\log n}\Bigr)^s.
\end{align*}
\end{corollary}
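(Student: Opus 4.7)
The plan is to derive both parts by straightforward substitution into Theorem~\ref{th_1_asssume_AAA}; the corollary simply repackages that theorem under the complexity assumption $d\lesssim n^\alpha$. First I would unpack ${\mathcal P}_{p,s}(a,d)$: for every $P$ in this class one has $a_p(P)\le a$ and $d_{ps}(P)\le ad$, so $\sqrt{a_p(P)/n}\le\sqrt{a/n}$ and $\sqrt{d_{ps}(P)/n}\le\sqrt{ad/n}$. Substituting into the bound of Theorem~\ref{th_1_asssume_AAA}(i), the generic term in the max becomes
\begin{align*}
\|f^{(k)}\|_{L_\infty}\sqrt{\frac{a}{n}}\Bigl(\sqrt{\frac{ad}{n}}\Bigr)^{k-1}=\|f^{(k)}\|_{L_\infty}\frac{a^{k/2}}{\sqrt{n}}\Bigl(\frac{d}{n}\Bigr)^{(k-1)/2},
\end{align*}
and the remainder term becomes $\|f^{(m)}\|_{{\rm Lip}_\rho}a^{s/2}(d/n)^{s/2}$. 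Using $d\lesssim n^\alpha$ with $\alpha<1$ gives $d/n\lesssim n^{\alpha-1}\le 1$, so $(d/n)^{(k-1)/2}\lesssim 1$ uniformly in $k\ge 1$, while $(d/n)^{s/2}\lesssim n^{-s(1-\alpha)/2}$. This yields part (i); the factor $1/k!$ in the stated bound is absorbed into the implicit constant $\lesssim_s$, and constants depending on $m=\lfloor s\rfloor$ and on $s$ are equivalent.

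For part (ii) I would proceed identically, starting from the bound of Theorem~\ref{th_1_asssume_AAA}(ii). The only new ingredient is that the splitting rule $n^{(0)}\asymp n/\log n$ inflates the complexity ratio by a logarithmic factor:
\[
\frac{d_{ps}(P)}{n^{(0)}}\lesssim\frac{ad\log n}{n}\lesssim a\,n^{\alpha-1}\log n,
\]
so $\sqrt{d_{ps}(P)/n^{(0)}}\lesssim a^{1/2}\,n^{-(1-\alpha)/2}\sqrt{\log n}$. This quantity is $o(1)$ and hence bounded by an absolute constant, so for $k\ge 2$ its $(k-1)$-th power stays bounded by its first power up to a universal factor; taken together with the analogue of the computation in part (i), this produces the first contribution in the claimed bound, and raising to the $s$-th power in the remainder term gives the factor $\bigl(n^{-(1-\alpha)/2}\sqrt{\log n}\bigr)^s$.

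The whole argument is essentially bookkeeping and there is no genuine obstacle. The one conceptual point worth flagging is that, because the base estimator $\hat\theta^{(0)}$ in part (ii) uses only a subsample of size $n^{(0)}\asymp n/\log n$, a logarithmic loss must appear in the final bound, and the stated estimate reflects exactly this loss.
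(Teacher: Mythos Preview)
Your proposal is correct and follows exactly the approach the paper intends: the paper states that the corollary is ``immediate,'' and your argument is precisely the direct substitution of the bounds $a_p(P)\le a$, $d_{ps}(P)\le ad$, and $d\lesssim n^{\alpha}$ (together with $n^{(0)}\asymp n/\log n$ in part (ii)) into Theorem~\ref{th_1_asssume_AAA}. Your handling of the $1/k!$ factor (bounded by $m!$, hence absorbed into $\lesssim_s$) and of the reduction of $(n^{-(1-\alpha)/2}\sqrt{\log n})^{k-1}$ to its first power for $k\ge 2$ (since the base is eventually $\le 1$) are both correct.
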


Thus, if the degree $s$ of smoothness of functional $f$ satisfies the condition $s\geq \frac{1}{1-\alpha},$ the $L_2$-error rate of estimator $T_f(X_1,\dots, X_n)$ is of the the order $n^{-1/2}$
(with a slower rate for $s<\frac{1}{1-\alpha}$).
Moreover, if $s>\frac{1}{1-\alpha},$ then, under the assumption that $n^{(0)}\asymp \frac{n}{\log n},$
\begin{align*}
&
\sup_{\|f'\|_{C^{s-1}}\leq 1}\sup_{P\in {\mathcal P}_{p,s}(a,d)}\Bigl\|T_f(X_1,\dots, X_n) - f(\theta(P))-\langle \hat \theta^{(1)}_1-\theta(P), f'(\theta(P))\rangle\Bigr\|_{L_p({\mathbb P}_{P})} 
\\
&
= o(n^{-1/2})
\end{align*}
as $n\to \infty,$ which could lead to asymptotically normal and asymptotically efficient
estimation of $f(\theta(P)).$
To state the results in this direction more precisely, we will need the following assumption.

\begin{assumption}
\label{assume_var_lim}
\normalfont
Suppose that
\begin{align}
\label{assume_sigma_u}
\sup_{\|u\|\leq 1}\sup_{P\in {\mathcal P}}\Bigl|\sqrt{n}\|\langle \hat \theta_n-\theta(P), u\rangle\|_{L_p({\mathbb P}_{P})}-\sigma_p (P, u)\Bigr|\to 0\ {\rm as}\ n\to \infty,
\end{align}
where $\sigma_p(P, u)\geq 0, P \in {\mathcal P}, u\in E^{\ast}$ and 
\begin{align}
\label{bd_sigma_theta}
\sup_{\|u\|\leq 1}\sup_{P\in {\mathcal P}}\sigma_p(P, u) \lesssim 1.
\end{align}
\end{assumption}

Note that $\sup_{\|u\|\leq 1}\sup_{P\in {\mathcal P}}\sigma_p(P, u)\leq \sup_{P\in {\mathcal P}}a_p^{1/2}(P),$ so, the condition $\sup_{P\in {\mathcal P}}a_p(P)\lesssim 1$
implies \eqref{bd_sigma_theta}.
Under the assumption $n^{(0)}\asymp \frac{n}{\log n}$ of Corollary \ref{cor_lin_appr}, we have 
$$
n^{(1)}_1 = n-n^{(0)} = n\Bigl(1-\frac{n^{(0)}}{n}\Bigr)= (1+o(1))n.
$$
Therefore, Assumption \ref{assume_var_lim} implies that 
\begin{align*}
\sup_{\|u\|\leq 1}\sup_{P\in {\mathcal P}}\Bigl|\sqrt{n}\|\langle \hat \theta_1^{(1)}-\theta(P), u\rangle\|_{L_p({\mathbb P}_{P})}-\sigma_p (P, u)\Bigr|\to 0\ {\rm as}\ n\to \infty,
\end{align*} 
Moreover, denoting 
\begin{align*}
\sigma_{p,f}(P):=\sigma_p(P, f'(\theta(P))),
\end{align*}
we have 
\begin{align*}
\sup_{\|f'\|_{L_{\infty}}\leq 1}\sup_{P\in {\mathcal P}}\Bigl|\sqrt{n}\|\langle \hat \theta_1^{(1)}-\theta, f'(\theta(P))\rangle\|_{L_p({\mathbb P}_{P})}-\sigma_{p,f}(P)\Bigr|\to 0\ {\rm as}\ n\to \infty.
\end{align*}
Therefore, Corollary \ref{cor_lin_appr} implies the following statement.

\begin{corollary}
\label{cor_asymp_MS}
Under the assumptions of Corollary \ref{cor_lin_appr} (ii), Assumption \ref{assume_var_lim} and for $s>\frac{1}{1-\alpha},$
\begin{align*}
\sup_{\|f'\|_{C^{s-1}}\leq 1}\sup_{P\in {\mathcal P}_{p,s}(a,d)}\Bigl|n^{1/2}\|T_f(X_1,\dots, X_n) - f(\theta(P))\|_{L_p({\mathbb P}_{P})}-\sigma_{p,f}(P)\Bigr| \to 0
\end{align*}
as $n\to \infty.$ 
\end{corollary}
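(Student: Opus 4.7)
The plan is to reduce the corollary to two uniform statements already in hand: the linearization bound of Corollary \ref{cor_lin_appr} (ii) and the uniform variance convergence derived as a consequence of Assumption \ref{assume_var_lim} in the display immediately preceding the statement. Writing $L := \langle \hat\theta^{(1)}_1 - \theta(P), f'(\theta(P))\rangle$, the triangle inequality for the $L_p(\mathbb{P}_P)$-norm gives
\begin{align*}
\Bigl|\,n^{1/2}\|T_f(X_1,\dots,X_n) - f(\theta(P))\|_{L_p(\mathbb{P}_P)} - n^{1/2}\|L\|_{L_p(\mathbb{P}_P)}\Bigr|
\leq n^{1/2}\|T_f - f(\theta(P)) - L\|_{L_p(\mathbb{P}_P)},
\end{align*}
so it suffices to verify (a) that the right-hand side tends to $0$ uniformly over $\|f'\|_{C^{s-1}}\leq 1$ and $P\in \mathcal{P}_{p,s}(a,d)$, and (b) that $n^{1/2}\|L\|_{L_p(\mathbb{P}_P)}$ tends to $\sigma_{p,f}(P)$ uniformly over the same class.

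For (a), I would apply Corollary \ref{cor_lin_appr} (ii) directly. Under $\|f'\|_{C^{s-1}} \leq 1$, all of the derivative norms appearing in that bound, namely $\|f^{(k)}\|_{L_\infty}$ for $2\leq k\leq m$ and $\|f^{(m)}\|_{\mathrm{Lip}_\rho}$, are at most $1$. Multiplication by $n^{1/2}$ turns the two terms of that bound into something of order $a^{k/2}\,n^{-(1-\alpha)/2}\sqrt{\log n}$ (for $2\leq k\leq m$) and $a^{s/2}\,n^{1/2 - s(1-\alpha)/2}(\log n)^{s/2}$, respectively. The first vanishes simply because $\alpha < 1$; the second vanishes precisely because the strict inequality $s > \frac{1}{1-\alpha}$ forces $1/2 - s(1-\alpha)/2 < 0$, and the resulting strictly negative polynomial exponent absorbs the polylogarithmic factor $(\log n)^{s/2}$. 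Hence the right-hand side of the triangle inequality is $o(1)$ uniformly.

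For (b), the display derived from Assumption \ref{assume_var_lim} in the paragraph immediately preceding the statement already records that
$\sup_{\|f'\|_{L_\infty}\leq 1}\sup_{P\in\mathcal{P}}\bigl|n^{1/2}\|\langle\hat\theta^{(1)}_1-\theta(P), f'(\theta(P))\rangle\|_{L_p(\mathbb{P}_P)}-\sigma_{p,f}(P)\bigr|\to 0$, using $n^{(1)}_1 = n - n^{(0)} = (1+o(1))n$ that follows from $n^{(0)}\asymp n/\log n$. Since $\|f'\|_{C^{s-1}}\leq 1$ forces $\|f'\|_{L_\infty}\leq 1$, the required uniform convergence in (b) follows. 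Combining (a) and (b) via the triangle inequality above completes the argument.

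I do not anticipate a substantive obstacle: the content of this corollary is essentially the assembly of two previously-established uniform statements. The only routine checks are matching the derivative norms ($\|f'\|_{C^{s-1}}\leq 1$ controls everything that appears in the bound of Corollary \ref{cor_lin_appr} (ii)) and confirming that the strict threshold $s > \frac{1}{1-\alpha}$ provides enough polynomial margin to swallow the $\sqrt{\log n}$ factors introduced by the sample-split choice $n^{(0)}\asymp n/\log n$.
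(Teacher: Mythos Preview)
Your proposal is correct and follows essentially the same approach as the paper: the paper records explicitly (in the two displays immediately preceding the corollary) both the $o(n^{-1/2})$ linearization bound from Corollary~\ref{cor_lin_appr}~(ii) and the uniform convergence of $\sqrt{n}\|\langle \hat\theta_1^{(1)}-\theta(P),f'(\theta(P))\rangle\|_{L_p}$ to $\sigma_{p,f}(P)$ from Assumption~\ref{assume_var_lim}, and then states the corollary as their immediate combination. Your triangle-inequality assembly of these two ingredients is exactly this, with the routine checks (that $\|f'\|_{C^{s-1}}\le 1$ controls the needed derivative norms and that $s>\frac{1}{1-\alpha}$ gives enough polynomial decay to absorb the $\sqrt{\log n}$ factors) made explicit.
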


In what follows, we will use the notion of Wasserstein $W_p$-distance between random variables (or, more precisely, their distributions). 
If $\xi, \eta$ are random variables in a linear normed space $F$ and $p\geq 1,$ denote 
\begin{align*}
W_p(\xi, \eta):= \inf \Bigl\{\Bigl\|\|\xi'-\eta'\|\Bigr\|_{L_p}: \xi'\overset{d}{=}\xi, \eta'\overset{d}{=}\eta\Bigr\},
\end{align*}
where the infimum is taken over all copies $\xi'$ of $\xi$ and $\eta'$ of $\eta.$
In statistical context, to emphasize the dependence of the $W_p$-distance on the parameters (such as $\theta$ or $P$)
that determine the distributions of r.v., we will use the notations $W_{p, {\mathbb P}_{\theta}}$ or $W_{p, {\mathbb P}_P}$
for the corresponding distances.

We will need the following assumption on normal approximation of linear forms $\langle \hat \theta_n-\theta, u\rangle$ (stronger than Assumption \ref{assume_var_lim}):

\begin{assumption}
\label{assume_normal_approx_A}
\normalfont
Suppose that 
\begin{align}
\label{assume_normal_approx}
\sup_{\|u\|\leq 1}\sup_{P\in {\mathcal P}}
W_{p,{\mathbb P}_{P}}\Bigl(\sqrt{n}\langle \hat \theta_n-\theta(P), u\rangle, \sigma_2(P, u)Z\Bigr)\to 0\ {\rm as}\ n\to \infty,
\end{align}
where $Z\sim N(0,1),$ $\sigma_2(P,u)\geq 0, \theta \in \Theta, u\in E^{\ast}$ and $\sigma_2(P,u)$ satisfies condition \eqref{bd_sigma_theta} with $p=2.$
\end{assumption}

Note that Assumption \ref{assume_normal_approx_A} implies Assumption \ref{assume_var_lim} with $\sigma_{p}(P,u)=\sigma_2(P,u)\|Z\|_{L_p}.$
Under the assumption $n^{(0)}\asymp \frac{n}{\log n}$ of Corollary \ref{cor_lin_appr}, Assumption \ref{assume_normal_approx_A} implies that 
\begin{align*}
\sup_{\|u\|\leq 1}\sup_{P\in {\mathcal P}}
W_{p,{\mathbb P}_{P}}\Bigl(\sqrt{n}\langle \hat \theta_1^{(1)}-\theta(P), u\rangle, \sigma_2(P, u)Z\Bigr)\to 0\ {\rm as}\ n\to \infty
\end{align*}
and the bound of Corollary \ref{cor_lin_appr} (ii) implies the following statement. 

\begin{corollary}
\label{Cor_cor_cor}
Under assumptions of Corollary \ref{cor_lin_appr} (ii), Assumption \ref{assume_normal_approx_A} and for $s>\frac{1}{1-\alpha},$ we have 
\begin{align*}
\sup_{\|f'\|_{C^{s-1}}\leq 1}\sup_{P\in {\mathcal P}_{p,s}(a,d)}
W_{p,{\mathbb P}_{P}}\Bigl(\sqrt{n}(T_f(X_1,\dots, X_n)-f(\theta(P))), \sigma_{2,f}(P)Z\Bigr)\to 0\ {\rm as}\ n\to \infty.
\end{align*}
\end{corollary}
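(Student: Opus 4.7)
The plan is to split the Wasserstein distance via the triangle inequality, reducing the claim to two ingredients already in our possession: the linearization bound of Corollary \ref{cor_lin_appr}(ii) and the uniform normal approximation of Assumption \ref{assume_normal_approx_A}. Write
\begin{align*}
\xi_n:=\sqrt{n}(T_f(X_1,\dots,X_n)-f(\theta(P))),\quad
\eta_n:=\sqrt{n}\,\langle\hat\theta^{(1)}_1-\theta(P),f'(\theta(P))\rangle,\quad
G:=\sigma_{2,f}(P)Z,
\end{align*}
and estimate $W_{p,{\mathbb P}_P}(\xi_n,G) \leq W_{p,{\mathbb P}_P}(\xi_n,\eta_n)+W_{p,{\mathbb P}_P}(\eta_n,G)$. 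It suffices to show that each summand vanishes uniformly over $\|f'\|_{C^{s-1}}\leq 1$ and $P\in{\mathcal P}_{p,s}(a,d)$.

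For the first summand I would use the trivial coupling (both $\xi_n$ and $\eta_n$ live on the same probability space) to reduce it to $\|\xi_n-\eta_n\|_{L_p({\mathbb P}_P)}$. Multiplying the bound of Corollary \ref{cor_lin_appr}(ii) by $\sqrt{n}$ and using $\|f^{(k)}\|_{L_\infty},\|f^{(m)}\|_{{\rm Lip}_\rho}\leq \|f'\|_{C^{s-1}}\leq 1$ yields a bound of order $n^{-(1-\alpha)/2}\sqrt{\log n}+n^{1/2-s(1-\alpha)/2}(\log n)^{s/2}$. The first term vanishes since $\alpha<1$; the second precisely because the assumption $s>\frac{1}{1-\alpha}$ makes its exponent of $n$ strictly negative.

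For the second summand, introduce $\tilde\eta_n:=\sqrt{n^{(1)}_1}\,\langle\hat\theta^{(1)}_1-\theta(P),f'(\theta(P))\rangle$ and split once more as $W_{p,{\mathbb P}_P}(\eta_n,G)\leq W_{p,{\mathbb P}_P}(\eta_n,\tilde\eta_n)+W_{p,{\mathbb P}_P}(\tilde\eta_n,G)$. Since $\eta_n=\sqrt{n/n^{(1)}_1}\,\tilde\eta_n$, the trivial coupling yields $W_{p,{\mathbb P}_P}(\eta_n,\tilde\eta_n)\leq\bigl|\sqrt{n/n^{(1)}_1}-1\bigr|\,\|\tilde\eta_n\|_{L_p({\mathbb P}_P)}$, and the definition of $a_p(P)$ together with $\|f'(\theta(P))\|\leq 1$ gives $\|\tilde\eta_n\|_{L_p({\mathbb P}_P)}\leq \sqrt{a_p(P)}\leq \sqrt{a}$. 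This contribution vanishes uniformly because $n^{(0)}\asymp n/\log n$ forces $n^{(1)}_1/n\to 1$ uniformly in $n$. For the remaining term $W_{p,{\mathbb P}_P}(\tilde\eta_n,G)$, apply Assumption \ref{assume_normal_approx_A} with sample size $n^{(1)}_1\to\infty$ and with $u=f'(\theta(P))$, noting that $\|u\|\leq 1$ and that the supremum in the assumption is already taken over all such $u$ and over all $P\in{\mathcal P}$; uniformity is therefore automatic, and $\sigma_2(P,f'(\theta(P)))=\sigma_{2,f}(P)$ by definition.

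The only real obstacle is bookkeeping: verifying that each estimate above is uniform in both $f$ (through $\|f'\|_{C^{s-1}}\leq 1$) and $P\in{\mathcal P}_{p,s}(a,d)$, and handling the mild scaling mismatch between $\sqrt{n}$ and $\sqrt{n^{(1)}_1}$. All substantive analytic content is already packaged inside Corollary \ref{cor_lin_appr}(ii) and Assumption \ref{assume_normal_approx_A}.
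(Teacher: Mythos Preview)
Your proposal is correct and follows essentially the same approach as the paper: the paper's proof is only the one-line remark preceding the corollary, which combines the linearization bound of Corollary~\ref{cor_lin_appr}(ii) with the observation (made earlier) that $n^{(1)}_1=(1+o(1))n$ so that Assumption~\ref{assume_normal_approx_A} transfers to $\sqrt{n}\langle\hat\theta^{(1)}_1-\theta(P),u\rangle$. Your argument fleshes out precisely these two steps, including the scaling adjustment via $\tilde\eta_n$ that the paper leaves implicit.
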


Recall that, in the case of identifiable statistical model $X_1,\dots, X_n$ i.i.d. $\sim P_{\theta}, \theta\in \Theta, \Theta\subset E,$
we set
\begin{align*}
a(\theta)= a_p(P_{\theta})=\sup_{n\geq 1}\sup_{\|u\|\leq 1} n \Bigl\|\langle \hat \theta_n-\theta, u\rangle\Bigr\|_{L_p({\mathbb P}_{\theta})}^2,\ \theta \in \Theta
 \end{align*}
 and
 \begin{align*}
 d_p(\theta)=  d_p(P_{\theta})=\sup_{n\geq 1} n \Bigl\|\|\hat \theta_n-\theta\|\Bigr\|_{L_{p}({\mathbb P}_{\theta})}^2,\ \theta\in \Theta.
 \end{align*}
Assumption \ref{assume_var_lim} takes, in this case, the following form 
\begin{align}
\label{assume_sigma_u_theta}
\sup_{\|u\|\leq 1}\sup_{\theta\in \Theta}\Bigl|\sqrt{n}\|\langle \hat \theta_n-\theta, u\rangle\|_{L_p({\mathbb P}_{P})}-
\sigma_p (\theta, u)\Bigr|\to 0\ {\rm as}\ n\to \infty,
\end{align}
where $\sigma_p(\theta, u)\geq 0, \theta \in \Theta, u\in E^{\ast}$ and 
\begin{align}
\label{bd_sigma_theta_theta}
\sup_{\|u\|\leq 1}\sup_{\theta\in \Theta}\sigma_p(\theta, u) \lesssim 1.
\end{align}
We also set 
\begin{align*}
\sigma_{p,f}(\theta):=\sigma_p(\theta, f'(\theta))
\end{align*}
and 
\begin{align*}
\Theta_{p,s} (a,d):= \{\theta \in \Theta: a_p(\theta)\leq a, d_{ps}(\theta)\leq ad\}.
\end{align*}
Thus, Corollary \ref{cor_asymp_MS} yields 
\begin{align*}
\sup_{\|f'\|_{C^{s-1}}\leq 1}\sup_{\theta\in \Theta_{p,s}(a,d)}\Bigl|n^{1/2}\|T_f(X_1,\dots, X_n) - f(\theta)\|_{L_p({\mathbb P}_{\theta})}-\sigma_{p,f}(\theta)\Bigr| \to 0
\end{align*}
as $n\to \infty,$ under the assumptions that $d=d_n\leq n^{\alpha}$ for some $\alpha\in [1/2,1)$ and $s>\frac{1}{1-\alpha}.$ 
Note that, for $\alpha\in (0,1/2)$ and $s> \frac{1}{1-\alpha},$ the same result holds for the plug-in estimator $f(\hat \theta_n).$

Assume now that $E$ is a finite-dimensional Banach space with the dual space $E^{\ast}$ (in the finite-dimensional case, $E$ is reflexive, so, the dual space of $E^{\ast}$
is $E$). Let $\Theta\subset E$ be an open subset and let $\{P_{\theta}:\theta \in \Theta\}$ be a statistical model. 
We will also assume that standard regularity assumptions (such as quadratic 
mean differentiability) hold for model $\{P_{\theta}:\theta \in \Theta\},$ allowing us to define the score function $\frac{\partial}{\partial \theta}\log p_{\theta}$ 
and the Fisher information $I(\theta).$  In this case, the score function $\frac{\partial}{\partial \theta}\log p_{\theta}(X)$ takes values in $E^{\ast}$
and the Fisher information
\begin{align*}
I(\theta)= {\mathbb E}_{\theta}\frac{\partial}{\partial \theta}\log p_{\theta}(X)\otimes \frac{\partial}{\partial \theta}\log p_{\theta}(X)
\end{align*}
could be viewed as a symmetric bounded linear operator from $E$ into $E^{\ast}.$ If the Fisher information operator $I(\theta)$ is invertible for all $\theta\in \Theta$
with the inverse $I(\theta)^{-1}: E^{\ast}\mapsto E$
and functional $f:\Theta\mapsto {\mathbb R}$ is differentiable, we can define 
\begin{align*}
\sigma_f^2 (\theta):= \langle I(\theta)^{-1}f'(\theta), f'(\theta)\rangle, \theta\in \Theta. 
\end{align*}
Suppose there exists an estimator $\hat \theta_n$ based on i.i.d. observations $X_1,\dots, X_n\sim P_{\theta}$ (for instance, the maximum likelihood estimator) such that $\sqrt{n}(\hat \theta_n-\theta)$ is close in distribution to $N(0, I(\theta)^{-1}).$ To be more specific, suppose that 
\begin{align*}
\sup_{\|u\|\leq 1}\sup_{\theta\in \Theta}
W_{p,{\mathbb P}_{\theta}}\Bigl(\sqrt{n}\langle \hat \theta_n-\theta, u\rangle, \langle I(\theta)^{-1} u,u\rangle^{1/2} Z\Bigr)\to 0\ {\rm as}\ n\to \infty,
\end{align*}
where $Z\sim N(0,1).$ Then, assumption \eqref{assume_sigma_u_theta} holds with $\sigma_p(\theta,u)=\langle I(\theta)^{-1} u,u\rangle^{1/2}\|Z\|_{L_p}$
and condition \eqref{bd_sigma_theta_theta} is equivalent to $\sup_{\theta\in \Theta}\|I(\theta)^{-1}\|\lesssim 1.$ Thus, we can conclude that, 
under the assumptions $d=d_n\leq n^{\alpha}$ for some $\alpha\in [1/2,1)$ and $s>\frac{1}{1-\alpha},$  
\begin{align*}
\sup_{\|f'\|_{C^{s-1}}\leq 1}\sup_{\theta\in \Theta_{p,s}(a,d)}\Bigl|n^{1/2}\|T_f(X_1,\dots, X_n) - f(\theta)\|_{L_p({\mathbb P}_{\theta})}-\sigma_{f}(\theta)\|Z\|_{L_p}\Bigr| \to 0
\end{align*}
and, in view of Corollary \ref{Cor_cor_cor}, 
\begin{align*}
\sup_{\|f'\|_{C^{s-1}}\leq 1}\sup_{\theta\in \Theta_{p,s}(a,d)}
W_{p,{\mathbb P}_{\theta}}\Bigl(\sqrt{n}(T_f(X_1,\dots, X_n)-f(\theta)), \sigma_f(\theta)Z\Bigr)\to 0\ {\rm as}\ n\to \infty
\end{align*}
(with similar results holding for the plug-in estimator $f(\hat \theta_n)$ if $\alpha \in (0,1/2)$ and $s>\frac{1}{1-\alpha}$).

Recall also that, in the case of Euclidean space $E={\mathbb R}^d,$ we have $d_p(\theta)\leq a_p(\theta)d$ (see Example \ref{ex_Euc}).
Thus, if we choose $a:=\sup_{\theta\in \Theta}a_p(\theta),$ then $\Theta_{p,s}(a,d)=\Theta$ and, under the assumption that 
$\sup_{\theta\in \Theta}a_p(\theta)\lesssim 1,$ we have 
\begin{align}
\label{conv_to_zero}
\sup_{\|f'\|_{C^{s-1}}\leq 1}\sup_{\theta\in \Theta}\Bigl|n^{1/2}\|T_f(X_1,\dots, X_n) - f(\theta)\|_{L_p({\mathbb P}_{\theta})}-\sigma_{f}(\theta)\|Z\|_{L_p}\Bigr| \to 0
\end{align}
and
\begin{align*}
\sup_{\|f'\|_{C^{s-1}}\leq 1}\sup_{\theta\in \Theta}
W_{p,{\mathbb P}_{\theta}}\Bigl(\sqrt{n}(T_f(X_1,\dots, X_n)-f(\theta)), \sigma_f(\theta)Z\Bigr)\to 0\ {\rm as}\ n\to \infty.
\end{align*}

\begin{remark}
\normalfont
Note that relevant parameter $d$ is not always the linear dimension of space $E.$ For instance, if $E$ is the space of $d\times d$
matrices equipped with the operator norm, the linear dimension is $d^2,$ but $d(\theta)$ would be of the order $d,$ provided that $\langle \hat \theta_n-\theta, u\rangle$
satisfies Bernstein type inequalities, see Section \ref{functionals_Bernstein}. 
\end{remark}

It turns out that, in the framework described above, estimator $T_f(X_1,\dots, X_n)$ is asymptotically efficient under 
the assumptions that $d=d_n\leq n^{\alpha}$ for some $\alpha\in [1/2,1)$ and $s>\frac{1}{1-\alpha}$ and so is the plug-in 
estimator $f(\hat \theta_n)$  for $\alpha\in [0,1/2)$ and $s>\frac{1}{1-\alpha}.$ 
These facts follow from local minimax lower bounds stated bellow and proved in Section \ref{sec:lower_bounds}.



For $\theta_0\in \Theta$ and $\delta>0,$ denote 
\begin{align*}
\omega_I (\theta_0, \delta):= \sup_{\theta\in \Theta, \|\theta-\theta_0\|\leq \delta} \|I(\theta)-I(\theta_0)\|
\end{align*}
and, for $f\in C^1(\Theta),$
\begin{align*}
\omega_{f'} (\theta_0, \delta):= \sup_{\theta\in \Theta, \|\theta-\theta_0\|\leq \delta} \|f'(\theta)-f'(\theta_0)\|.
\end{align*}

\begin{theorem}
\label{vanTrees_bd}
Let $f:\Theta\mapsto {\mathbb R}$ be a continuously differentiable functional. Suppose that, for $\theta_0\in \Theta$ and $\delta>0,$
$B(\theta_0,\delta)\subset \Theta$ and the Fisher information operator $I(\theta):E\mapsto E^{\ast}$ is invertible for all $\theta\in B(\theta_0,\delta).$
Then, with some numerical 
constant $D\geq 2,$
\begin{align*}
&
\inf_{T_n} \sup_{\|\theta-\theta_0\|\leq \delta} \frac{n {\mathbb E}_{\theta}(T_n(X_1,\dots, X_n)-f(\theta))^2}{\sigma_f^2(\theta)}
\\
&
\geq 1- D\|I(\theta_0)\|\|I(\theta_0)^{-1}\|\Bigl( \frac{\omega_{f'}(\theta_0,\delta)}{\|f'(\theta_0)\|}  + \|I(\theta_0)^{-1}\|\ \omega_I(\theta_0,\delta)+ 
\frac{\|I(\theta_0)^{-1}\|}{\delta^2 n}\Bigr),
\end{align*}
where the infimum is taken over all estimators $T_n(X_1,\dots, X_n)$ based on i.i.d. observations $X_1,\dots, X_n\sim P_{\theta}.$
\end{theorem}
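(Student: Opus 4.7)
The plan is to apply the van Trees (Bayesian Cramér--Rao) inequality on a carefully chosen one-dimensional submodel through $\theta_0$ in the direction $v:=I(\theta_0)^{-1}f'(\theta_0)\in E$. Setting $\theta_t:=\theta_0+tv$ for $t\in[-h,h]$ with $h:=\delta/\|v\|$ keeps the submodel inside $B(\theta_0,\delta)$, and this choice of $v$ produces the key identity
\begin{align*}
\langle f'(\theta_0),v\rangle=\langle I(\theta_0)v,v\rangle=\sigma_f^2(\theta_0),
\end{align*}
which is what makes the efficiency bound $\sigma_f^2/n$ emerge naturally from the calculation.

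I would then pick a smooth probability density $\pi$ on $[-h,h]$ with prior Fisher information $\tilde I(\pi):=\int(\pi')^2/\pi\,dt\asymp 1/h^2$ (for instance $\pi(t)=h^{-1}\cos^2(\pi t/(2h))$ gives $\tilde I(\pi)=\pi^2/h^2$), lower-bound the minimax risk by the $\pi$-averaged risk divided by $\sup_{|t|\leq h}\sigma_f^2(\theta_t)$, and apply van Trees to the one-dimensional submodel to obtain
\begin{align*}
\sup_{\|\theta-\theta_0\|\leq\delta}\frac{n\,\mathbb{E}_\theta(T_n-f(\theta))^2}{\sigma_f^2(\theta)}\geq\frac{n\bigl(\int g'(t)\pi(t)\,dt\bigr)^2}{\sup_{|t|\leq h}\sigma_f^2(\theta_t)\bigl(n\int J(t)\pi(t)\,dt+\tilde I(\pi)\bigr)},
\end{align*}
where $g(t):=f(\theta_t)$, $g'(t)=\langle f'(\theta_t),v\rangle$ and $J(t)=\langle I(\theta_t)v,v\rangle$. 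At $t=0$ both $g'(0)$ and $J(0)$ equal $\sigma_f^2(\theta_0)$, so the right-hand side is formally $\sigma_f^2(\theta_0)/\sigma_f^2(\theta_0)=1$ up to three correction terms coming from the numerator, the denominator, and $\tilde I(\pi)$.

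The remaining work is perturbation analysis. The estimates $|g'(t)-g'(0)|\leq\omega_{f'}(\theta_0,\delta)\|v\|$ and $|J(t)-J(0)|\leq\omega_I(\theta_0,\delta)\|v\|^2$, together with $\|v\|\leq\|I(\theta_0)^{-1}\|\|f'(\theta_0)\|$ and the Cauchy--Schwarz-type bound $\sigma_f^2(\theta_0)\geq\|f'(\theta_0)\|^2/\|I(\theta_0)\|$ (obtained from $|\langle u,y\rangle|^2\leq\langle I^{-1}u,u\rangle\langle Iy,y\rangle$ with $u=f'(\theta_0)$ and the supremum over $\|y\|=1$), translate the three raw corrections into precisely the three quantities $\|I(\theta_0)\|\|I(\theta_0)^{-1}\|\,\omega_{f'}(\theta_0,\delta)/\|f'(\theta_0)\|$, $\|I(\theta_0)\|\|I(\theta_0)^{-1}\|^2\,\omega_I(\theta_0,\delta)$ and $\|I(\theta_0)\|\|I(\theta_0)^{-1}\|^2/(n\delta^2)$ appearing in the theorem. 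The same continuity bounds also handle the ratio $\sigma_f^2(\theta_0)/\sup_{|t|\leq h}\sigma_f^2(\theta_t)$, producing only corrections absorbable into the three existing ones.

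The main obstacle is the bookkeeping: after linearising the ratio in the van Trees bound and combining it with the $\sigma_f^2(\theta)$ perturbation on the left, one generates a number of cross terms that must all be shown to fit under the single multiplicative factor $\|I(\theta_0)\|\|I(\theta_0)^{-1}\|$ with a purely numerical constant $D$. Keeping the analysis tight enough that $D$ is dimension- and model-free, while loose enough to absorb all the lower-order cross terms into the three canonical contributions, is the delicate step.
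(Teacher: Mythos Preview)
Your proposal is correct and follows essentially the same approach as the paper: the one-dimensional submodel in the direction $v=I(\theta_0)^{-1}f'(\theta_0)$, the van Trees inequality with a smooth compactly supported prior, the perturbation estimates via $\omega_{f'}$ and $\omega_I$, the lower bound $\sigma_f^2(\theta_0)\geq\|f'(\theta_0)\|^2/\|I(\theta_0)\|$, and the final replacement of $\sigma_f^2(\theta_0)$ by $\sigma_f^2(\theta)$ through a continuity lemma are all exactly the steps the paper carries out. The only cosmetic difference is the parametrisation of the segment (the paper writes $\theta_t=\theta_0+\tfrac{t}{\sqrt n}h$ on $[-c,c]$ with $c$ chosen so that $\tfrac{c}{\sqrt n}\|I(\theta_0)^{-1}\|\|f'(\theta_0)\|=\delta$, whereas you use $\theta_t=\theta_0+tv$ on $[-\delta/\|v\|,\delta/\|v\|]$), which is immaterial.
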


Setting in the statement of the theorem $\delta=\frac{c}{\sqrt{n}}$ and assuming that 
\begin{align}
\label{assume_on_I}
\|I(\theta_0)\|\lesssim 1, \|I(\theta_0)^{-1}\|\lesssim 1\ {\rm and}\ \omega_I(\theta_0, \delta)\to 0\ {\rm as}\ \delta\to 0
\end{align}
yield the following local asymptotic minimax bound for all $\rho\in (0,1], \lambda<\rho:$
\begin{align*}
\lim_{c\to \infty}\liminf_{n\to\infty}\inf_{\|f'\|_{{\rm Lip}_{\rho}(U_{n,c}(\theta_0))}\leq 1, \|f'(\theta_0)\|\geq (\frac{c}{\sqrt{n}})^{\lambda}}
\inf_{T_n} \sup_{\|\theta-\theta_0\|\leq cn^{-1/2}} \frac{\sqrt{n} \Bigl\|T_n(X_1,\dots, X_n)-f(\theta)\Bigr\|_{L_2({\mathbb P}_{\theta})}}{\sigma_f(\theta)}
\geq 1,
\end{align*}  
where $U_{n,c}(\theta_0):= B(\theta_0, cn^{-1/2}).$ To compare, it follows from \eqref{conv_to_zero} that, under the assumptions 
$\sup_{\theta\in \Theta}\|I(\theta)\|\lesssim 1,$
$\sup_{\theta\in \Theta}\|I(\theta)^{-1}\|\lesssim 1$
and 
$\sup_{\theta\in \Theta} a_2(\theta)\lesssim 1,$
for all $\tau>0$ we have 
\begin{align*}
\sup_{\|f'\|_{C^{s-1}}\leq 1, \|f'(\theta)\|\geq \tau}\sup_{\theta\in \Theta}\Bigl|\frac{n^{1/2}\|T_f(X_1,\dots, X_n) - f(\theta)\|_{L_2({\mathbb P}_{\theta})}}{\sigma_f(\theta)}-1\Bigr| \to 0\
{\rm as}\ n\to\infty
\end{align*}
provided that $d=d_n\leq n^{\alpha}$ for some $\alpha\in [1/2,1)$ and $s>\frac{1}{1-\alpha}$ (with a similar result holding for $f(\hat \theta_n)$
if $\alpha\in (0,1/2)$ and $s>\frac{1}{1-\alpha}$). This shows the asymptotic efficiency of estimators $T_f(X_1,\dots, X_n)$ or $f(\hat \theta_n)$
in these two cases.

\section{High-dimensional models with independent low-dimensional components}
\label{HD_comps}

Let $(S_j, {\mathcal A}_j), j=1,\dots, d$ be measurable spaces and, for $j=1,\dots, d,$ let ${\mathcal P}_j$ be a family 
of probability measures in $(S_j, {\mathcal A}_j).$ We will be interested in the product space $S:=S_1\times \dots \times S_d$
equipped with the product of $\sigma$-algebras ${\mathcal A}:= {\mathcal A}_1\times \dots \times {\mathcal A}_d$ and 
a family of probability measures 
\begin{align*}
{\mathcal P}:= \{P_1\times \dots \times P_d: P_j\in {\mathcal P}_j, j=1,\dots, d\}.
\end{align*}
Let $X\sim P, P\in {\mathcal P}$ and let $X_1,\dots, X_n$ be i.i.d. observations of r.v. $X.$
In other words, $X=(X^{(1)}, \dots, X^{(d)})$ is a vector with independent components $X^{(1)}, \dots, X^{(d)}$
sampled from distributions $P_1\in {\mathcal P}_1,\dots, P_d\in {\mathcal P}_d,$ respectively, and $P=P_1\times \dots \times P_d.$

Consider also Banach spaces $E_1,\dots, E_d$ and mappings $\theta^{(j)} : {\mathcal P}_j \mapsto E_j, j=1,\dots, d.$
Denote $\theta=(\theta^{(1)}, \dots, \theta^{(d)}): {\mathcal P}\mapsto E:=E_1\times \dots \times E_d,$
\begin{align*}
\theta (P) = (\theta^{(1)}(P_1),\dots, \theta^{(d)}(P_d)), P=P_1\times \dots \times P_d, P_j\in {\mathcal P}_j, j=1,\dots, d.
\end{align*}
Given a functional $f:E\mapsto {\mathbb R},$ the goal is to estimate its value $f(\theta(P))$ based on i.i.d. observations 
$X_1,\dots, X_n\sim P, P\in {\mathcal P}.$ We will be primarily interested in this problem in the case when the components 
of the model are low-dimensional, but the number $d$ of independent components is large. 
In what follows, we will equip the space $E=E_1\times \dots \times E_d$ with the norm 
\begin{align*}
\|x\|:= \Bigl(\sum_{j=1}^d \|x^{(j)}\|^2\Bigr)^{1/2}, x=(x_1^{(1)},\dots, x^{(d)})\in E_1\times \dots \times E_d.
\end{align*}
Its dual space $E^{\ast}=(E_1\times \dots \times E_d)^{\ast}$ could be then identified with the space $E_1^{\ast}\times \dots \times E_d^{\ast}$
equipped with the norm 
\begin{align*}
\|u\|:= \Bigl(\sum_{j=1}^d \|u^{(j)}\|^2\Bigr)^{1/2}, u=(u_1^{(1)},\dots, u^{(d)})\in E_1^{\ast}\times \dots \times E_d^{\ast}.
\end{align*}

Let $\hat \theta_n^{(j)}= \hat \theta_n^{(j)}(X_1^{(j)},\dots, X_n^{(j)})$ be an estimator of parameter $\theta^{(j)}(P_j)$
based on i.i.d. observations $X_1^{(j)},\dots, X_n^{(j)}$ and let $\hat \theta_n:= \hat \theta_n (X_1,\dots, X_n)= (\hat \theta_n^{(1)},\dots, \hat \theta_n^{(d)})$ be an estimator of parameter $\theta(P),$ so that the components $\hat \theta_n^{(1)},\dots, \hat \theta_n^{(d)}$ of estimator $\hat \theta_n$ are independent r.v..  
Given a smooth functional $f:E\mapsto {\mathbb R},$ we will consider estimator $T_f(X_1,\dots, X_n)$ based 
on i.i.d. observations $X_1,\dots, X_n\sim P, P\in {\mathcal P}$ and defined by \eqref{basic_T_f}. The next result is a corollary of Theorem 
\ref{th_1_asssume_AAA}. 


For $p\geq 1,$ $1\leq j\leq d$ and $P_j\in {\mathcal P}_j,$ denote 
\begin{align*}
b_{j}(P_j):= \sup_{n\geq 1}n\Bigl\|{\mathbb E}_{P_j} \hat \theta_{n}^{(j)} - \theta^{(j)}(P_j)\Bigr\|,
\end{align*}
\begin{align*}
a_{p,j}(P_j):=\sup_{n\geq 1}\sup_{\|u^{(j)}\|\leq 1, u^{(j)}\in E_j^{\ast}} n\Bigl\|\langle \hat \theta_n^{(j)}-\theta^{(j)}(P_j), u^{(j)}\rangle\Bigr\|_{L_p({\mathbb P}_{P})}^2
 \end{align*}
and 
\begin{align*}
d_{p,j}(P_j):=\sup_{n\geq 1} n\Bigl\|\|\hat \theta_n^{(j)}-\theta^{(j)}(P_j)\|\Bigr\|_{L_{p}({\mathbb P}_{P})}^2.
\end{align*}


\begin{theorem}
\label{cor_cor_th_1_asssume_AAA}
Let $s:=m+\rho$ for some $m\geq 2$ and $\rho\in (0,1]$ and let $f'\in C^{s-1}(E).$ 
Suppose that $d\lesssim n$ and, for some $p\geq 2,$ 
\begin{align}
\label{cond_a_b_d}
\max_{1\leq j\leq d}b_j(P_j)\lesssim 1,\ \max_{1\leq j\leq d} a_{p,j}(P_j)\lesssim 1,\  \max_{1\leq j\leq d}d_{ps,j}(P_j)\lesssim 1.
\end{align}

(i) Suppose that $n^{(0)}\asymp_m n$ and $n_j^{(k)}\asymp_m n$, $j=1,\cdots,k$, $k=1,\cdots,m.$
Then, for all $P\in {\mathcal P},$ 
\begin{align*}
    &\left\|T_f(X_1,\cdots,X_n)-f(\theta(P))\right\|_{L_p(\mathbb{P}_P)}
    \lesssim_{p,s} \max_{1\leq k\leq m} \|f^{(k)}\|_{L_\infty} \frac{1}{\sqrt{n}} \Bigl(\sqrt{\frac{d}{n}}\Bigr)^{k-1}+\|f^{(m)}\|_{{\rm Lip}_\rho}
   \Bigl(\sqrt{\frac{d}{n}}\Bigr)^{s} .
\end{align*}

(ii) Suppose that, for all $j=1,\dots, k, k=1,\dots, m,$ $n_j^{(k)}\asymp_m n.$ Then, for all $P\in {\mathcal P},$ 
\begin{align*}
&
\Bigl\|T_f(X_1,\dots, X_n) - f(\theta(P))- \langle \hat \theta^{(1)}_1-\theta(P), f'(\theta(P))\rangle\Bigr\|_{L_p({\mathbb P}_{P})} 
\\
&
\lesssim_{p,s}
\max_{2\leq k\leq m}\|f^{(k)}\|_{L_{\infty}} \frac{1}{\sqrt{n}} 
\Bigl(\sqrt{\frac{d}{n^{(0)}}}\Bigr)^{k-1} 
+ \|f^{(m)}\|_{{\rm Lip}_{\rho}}\Bigl(\sqrt{\frac{d}{n^{(0)}}}\Bigr)^{s}.
\end{align*}
\end{theorem}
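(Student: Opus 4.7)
\medskip
\noindent
\textbf{Strategy.} The plan is to reduce the claim directly to Theorem~\ref{th_1_asssume_AAA} by showing that, under the component-wise hypotheses \eqref{cond_a_b_d} together with the size restriction $d\lesssim n$, the product-model complexity parameters satisfy $a_p(P)\lesssim_p 1$ and $d_{ps}(P)\lesssim d$ for every $P\in\mathcal{P}$. Substituting these two bounds into parts (i) and (ii) of Theorem~\ref{th_1_asssume_AAA} will produce exactly the two displayed inequalities (with the constant $\lesssim_s$ upgraded to $\lesssim_{p,s}$ because of the Rosenthal step below).

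\medskip
\noindent
\textbf{Bounding $a_p(P)$.} Fix $u=(u^{(1)},\dots,u^{(d)})\in E^{\ast}$ with $\sum_j \|u^{(j)}\|^2\leq 1$. Independence of $\hat{\theta}_n^{(1)},\dots,\hat{\theta}_n^{(d)}$ makes the scalars $\xi_j:=\langle\hat{\theta}_n^{(j)}-\theta^{(j)}(P_j),u^{(j)}\rangle$ independent, and by definition $\|\xi_j\|_{L_p(\P_P)}\leq \|u^{(j)}\|\sqrt{a_{p,j}(P_j)/n}$ and $|\E\xi_j|\leq \|u^{(j)}\|\,b_j(P_j)/n$. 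Decomposing $\langle\hat{\theta}_n-\theta(P),u\rangle=\sum_j(\xi_j-\E\xi_j)+\sum_j\E\xi_j$, I would apply Rosenthal's inequality (valid since $p\geq 2$) to the centered sum. Using the elementary bound $\sum_j\|u^{(j)}\|^p\leq (\max_j\|u^{(j)}\|)^{p-2}\sum_j\|u^{(j)}\|^2\leq 1$ together with \eqref{cond_a_b_d}, Rosenthal yields $\|\sum_j(\xi_j-\E\xi_j)\|_{L_p(\P_P)}\lesssim_p \sqrt{\max_j a_{p,j}(P_j)/n}\lesssim_p n^{-1/2}$. For the mean piece, Cauchy--Schwarz and $\max_j b_j(P_j)\lesssim 1$ give $|\sum_j\E\xi_j|\leq (\sum_j\|u^{(j)}\|^2)^{1/2}(\sum_j b_j(P_j)^2/n^2)^{1/2}\lesssim \sqrt{d}/n\leq n^{-1/2}$, where the last step uses $d\lesssim n$. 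Taking the supremum over $\|u\|\leq 1$ and $n\geq 1$ delivers $a_p(P)\lesssim_p 1$.

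\medskip
\noindent
\textbf{Bounding $d_{ps}(P)$ and conclusion.} Since $\|\hat{\theta}_n-\theta(P)\|^2=\sum_j\|\hat{\theta}_n^{(j)}-\theta^{(j)}(P_j)\|^2$ and $ps/2\geq 1$, the triangle inequality in $L_{ps/2}(\P_P)$ gives
\begin{align*}
\Bigl\|\|\hat{\theta}_n-\theta(P)\|\Bigr\|_{L_{ps}(\P_P)}^{2}
=\Bigl\|\textstyle\sum_j\|\hat{\theta}_n^{(j)}-\theta^{(j)}(P_j)\|^2\Bigr\|_{L_{ps/2}(\P_P)}
\leq \sum_j\Bigl\|\|\hat{\theta}_n^{(j)}-\theta^{(j)}(P_j)\|\Bigr\|_{L_{ps}(\P_P)}^{2}
\leq \frac{d\,\max_j d_{ps,j}(P_j)}{n},
\end{align*}
so $d_{ps}(P)\lesssim d$. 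Plugging $a_p(P)\lesssim_p 1$ and $d_{ps}(P)\lesssim d$ into the bounds of Theorem~\ref{th_1_asssume_AAA}(i) and (ii) reproduces, term by term, parts (i) and (ii) of the present theorem.

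\medskip
\noindent
\textbf{Main obstacle.} The only genuinely delicate point is the non-centeredness of the $\xi_j$: Rosenthal's inequality applies to independent centered summands, so the contribution $\sum_j\E\xi_j$ must be controlled separately, which is precisely what the bias quantities $b_j(P_j)$ in \eqref{cond_a_b_d} are designed for. It is also at this step that the size restriction $d\lesssim n$ is indispensable---without it, the aggregated bias $\sqrt{d}\,\max_j b_j(P_j)/n$ would exceed the target rate $n^{-1/2}$ and the bound $a_p(P)\lesssim_p 1$ would fail.
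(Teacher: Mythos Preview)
Your proposal is correct and follows essentially the same route as the paper: the paper packages the two bounds you derive into a separate proposition (Proposition~\ref{bd_ind_comp}), proving $a_p(P)\lesssim_p 1$ via the same centered/bias decomposition (using the Rosenthal-type bound from de la Pe\~na--Gin\'e, Th.~1.5.11, in place of your generic Rosenthal) and $d_{ps}(P)\lesssim d$ via the identical $L_{ps/2}$ triangle-inequality argument, then invokes Theorem~\ref{th_1_asssume_AAA}. Your identification of the bias term as the place where $d\lesssim n$ is needed matches the paper's treatment exactly.
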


The proof of this theorem is given in Section \ref{sec:HDLD}.

In particular, this framework includes the case when, for $j=1,\dots, d,$ $E_j={\mathbb R}^{l_j}$ are finite-dimensional Euclidean spaces, $E=E_1\times \dots\times E_d={\mathbb R}^l,$ where $l:=l_1+\dots+l_d,$  
and 
${\mathcal P}_{j}:=\{P_{\theta^{(j)}}: \theta^{(j)}\in \Theta_j\}, \Theta_j\subset E_j$ are statistical models with identifiable parameter 
$\theta^{(j)}$ (recall that, in this case, we set $\theta^{(j)}(P_{\theta^{(j)}})=\theta^{(j)}$). 
Then, ${\mathcal P}:= \{P_{\theta}: \theta\in \Theta\}, \Theta:= \Theta_1\times \dots \times \Theta_d,$
where 
\begin{align*}
P_{\theta}:= P_{\theta^{(1)}}\times \dots \times P_{\theta^{(d)}}, \theta:= (\theta^{(1)},\dots, \theta^{(d)})\in \Theta.
\end{align*}
We will also assume that models $\{P_{\theta^{(j)}}: \theta^{(j)}\in \Theta_j\}$ are regular with non-singular Fisher information matrices 
$I_j(\theta^{(j)}), j=1,\dots, d.$ Let $\hat \theta_n^{(j)}$ be a maximum likelihood estimator of parameter $\theta^{(j)}, j=1,\dots, d.$
Then, by the asymptotic normality of MLE, the sequence of r.v. $\sqrt{n}(\hat \theta_n^{(j)}-\theta^{(j)})$ converges in distribution  
to $\xi^{(j)}(\theta^{(j)})= I_j(\theta^{(j)})^{-1/2} Z_j,$ $Z_j$ being a standard normal r.v. in ${\mathbb R}^{l_j}.$ 
We focus on the case when the components 
of the model are low-dimensional (so, $\max_{1\leq j\leq d} l_j\lesssim 1$), but the number $d$ of the components 
could be large. In this case, the accuracy of normal approximation of maximum likelihood estimators of the low-dimensional components 
$\theta^{(j)}$ is typically of the order $O(n^{-1/2}),$ which could be quantified, for instance, in terms of Wasserstein $W_p$-distances. 
Namely, for some $p\geq 2,$ for $j=1,\dots, d$ and for $\theta\in \Theta,$ denote  
\begin{align}
\label{cond_wasser}
C_{p,j}(\theta^{(j)}):= 
\sup_{n\geq 1} \sqrt{n} W_{p,{\mathbb P}_{\theta}} 
\Bigl(\sqrt{n}(\hat \theta_n^{(j)}-\theta^{(j)}), \xi^{(j)}(\theta^{(j)})\Bigr).
\end{align}
In this framework, the following result holds (see Section \ref{sec:HDLD} for the proof).

\begin{corollary}
\label{cor_cor_th_1_asssume_regular}
Let $s:=m+\rho$ for some $m\geq 2$ and $\rho\in (0,1]$ and let $f'\in C^{s-1}(E).$  Suppose that $d\lesssim n$ and 
\begin{align*}
\max_{1\leq j\leq d}\sup_{\theta^{(j)}\in \Theta_j}\|I(\theta_j)^{-1/2}\|\lesssim 1,\ \max_{1\leq j\leq d} \sup_{\theta^{(j)}\in \Theta_j}C_{ps,j}(\theta^{(j)})\lesssim 1. 
\end{align*}

(i) Suppose that $n^{(0)}\asymp_m n$ and $n_j^{(k)}\asymp_m n$, $j=1,\cdots,k$, $k=1,\cdots,m.$
Then
\begin{align*}
    &\sup_{\theta\in \Theta}\left\|T_f(X_1,\cdots,X_n)-f(\theta)\right\|_{L_p(\mathbb{P}_\theta)}
    \lesssim_{m,p} \max_{1\leq k\leq m} \|f^{(k)}\|_{L_\infty} \frac{1}{\sqrt{n}} \Bigl(\sqrt{\frac{d}{n}}\Bigr)^{k-1}+\|f^{(m)}\|_{{\rm Lip}_\rho}
   \Bigl(\sqrt{\frac{d}{n}}\Bigr)^{s} .
\end{align*}

(ii) Suppose that, for all $j=1,\dots, k, k=1,\dots, m,$ $n_j^{(k)}\asymp_m n.$ Then, for all $\theta\in \Theta,$ 
\begin{align*}
&
\sup_{\theta\in \Theta}\Bigl\|T_f(X_1,\dots, X_n) - f(\theta)- \langle \hat \theta^{(1)}_1-\theta, f'(\theta)\rangle\Bigr\|_{L_p({\mathbb P}_{\theta})} 
\\
&
\lesssim_{m,p}
\max_{2\leq k\leq m}\|f^{(k)}\|_{L_{\infty}} \frac{1}{\sqrt{n}} 
\Bigl(\sqrt{\frac{d}{n^{(0)}}}\Bigr)^{k-1} 
+ \|f^{(m)}\|_{{\rm Lip}_{\rho}}\Bigl(\sqrt{\frac{d}{n^{(0)}}}\Bigr)^{s}.
\end{align*}
\end{corollary}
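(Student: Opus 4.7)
The plan is to deduce the corollary directly from Theorem \ref{cor_cor_th_1_asssume_AAA} by verifying the three hypotheses in \eqref{cond_a_b_d} for the MLE base estimators $\hat\theta_n^{(j)}$. Fix $j$ and $\theta^{(j)} \in \Theta_j$; the standing Wasserstein hypothesis reads
\begin{equation*}
W_{ps,\mathbb{P}_\theta}\bigl(\sqrt{n}(\hat\theta_n^{(j)} - \theta^{(j)}),\, \xi^{(j)}(\theta^{(j)})\bigr) \leq \frac{C_{ps,j}(\theta^{(j)})}{\sqrt{n}} \lesssim \frac{1}{\sqrt{n}},
\end{equation*}
where $\xi^{(j)}(\theta^{(j)}) = I_j(\theta^{(j)})^{-1/2} Z_j$ is a centered Gaussian in $\mathbb{R}^{l_j}$ whose covariance has operator norm $\|I_j(\theta^{(j)})^{-1}\| \lesssim 1$.

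Each of the three quantitative bounds $b_j$, $a_{p,j}$, $d_{ps,j}$ is obtained from the same template: a Lipschitz-contraction of the Wasserstein distance followed by the triangle inequality $\|X\|_{L_q} \leq W_q(X,Y) + \|Y\|_{L_q}$ applied with a judicious functional. For $b_j$, I use $\|\mathbb{E}X - \mathbb{E}Y\| \leq W_1(X,Y) \leq W_{ps}(X,Y)$ with $Y=\xi^{(j)}$ (which is mean zero) to obtain $\sqrt{n}\|\mathbb{E}\hat\theta_n^{(j)}-\theta^{(j)}\| \leq C_{ps,j}/\sqrt{n}$, hence $b_j \lesssim 1$. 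For $a_{p,j}$, I apply the template with the $\|u\|$-Lipschitz functional $\Phi(x) := \langle x, u\rangle$ at level $q=p\leq ps$:
\begin{equation*}
\sqrt{n}\,\|\langle \hat\theta_n^{(j)} - \theta^{(j)}, u\rangle\|_{L_p} \leq \frac{\|u\|\, C_{ps,j}}{\sqrt{n}} + \|\langle \xi^{(j)}, u\rangle\|_{L_p} \lesssim_p \|u\|,
\end{equation*}
since $\langle \xi^{(j)}, u\rangle$ is Gaussian with standard deviation at most $\|I_j^{-1/2}\|\|u\|$. For $d_{ps,j}$, the same template applied to the $1$-Lipschitz functional $\Phi = \|\cdot\|$ at level $q=ps$ gives $\sqrt{n}\,\|\|\hat\theta_n^{(j)} - \theta^{(j)}\|\|_{L_{ps}} \leq C_{ps,j}/\sqrt{n} + \|\|\xi^{(j)}\|\|_{L_{ps}}$.

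The one step requiring more than the explicit hypotheses is the final bound $\|\|\xi^{(j)}\|\|_{L_{ps}} \lesssim_{p,s} 1$: since $\xi^{(j)}$ is a Gaussian vector in $\mathbb{R}^{l_j}$ with covariance of bounded operator norm, this follows from standard chi-squared moment bounds provided $\max_j l_j$ is bounded, which is the low-dimensional-components convention introduced at the opening of this section. Once the three uniform bounds $b_j, a_{p,j}, d_{ps,j} \lesssim_{p,s} 1$ are in place, both parts (i) and (ii) of Theorem \ref{cor_cor_th_1_asssume_AAA} yield directly the corresponding parts (i) and (ii) of the corollary. The main conceptual step is recognizing that the single $W_{ps}$-approximation assumption furnishes simultaneously control of the bias, of marginal $L_p$-moments, and of the $L_{ps}$-moment of the length through Lipschitz-contraction and triangle inequalities; once this is seen, no further technical work beyond routine Gaussian moment bounds is required.
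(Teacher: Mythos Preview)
Your proposal is correct and follows essentially the same approach as the paper: both verify the three conditions \eqref{cond_a_b_d} of Theorem \ref{cor_cor_th_1_asssume_AAA} by exploiting the Wasserstein-$W_{ps}$ approximation hypothesis together with the triangle inequality, obtaining $b_j(P_j)\lesssim 1$ from $\|\mathbb{E}X-\mathbb{E}Y\|\leq W_1(X,Y)$, $a_{p,j}(P_j)\lesssim_p 1$ from the Lipschitz push-forward $x\mapsto\langle x,u\rangle$, and $d_{ps,j}(P_j)\lesssim_{p,s} 1$ from the push-forward $x\mapsto\|x\|$ plus the Gaussian moment bound $\|\|\xi^{(j)}\|\|_{L_{ps}}\lesssim_{p,s}\|I_j^{-1/2}\|\sqrt{l_j}$ and the standing convention $\max_j l_j\lesssim 1$. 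Your identification of the implicit use of this low-dimensional-components assumption is apt; the paper's proof invokes it in the same way through the factor $\sqrt{l_j/n}$.
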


\begin{remark}
\normalfont
Earlier result in the same direction as Corollary \ref{cor_cor_th_1_asssume_regular} was obtained in \cite{Koltchinskii_2022} for functional estimators based 
on iterated bootstrap bias reduction and for the $L_2$-errors (see Corollary 2.5 in that paper and the discussion afterwards). However, these results 
required certain smoothness assumptions on functions $\theta_j \mapsto I(\theta_j)^{-1/2}$ that are, most likely,
related to the methods of proofs used in \cite{Koltchinskii_2022}. In the current paper, such smoothness 
assumptions are not required for estimators based on Taylor expansion and the sample split. 

Berry-Esseen type bounds of the order $n^{-1/2}$ on the accuracy of normal approximations of MLE in regular low-dimensional models 
could be found in \cite{Pfanzagl, Bentkus, Pinelis} whereas the bounds in Wasserstein $W_1$-distance were proved in \cite{Anastas}.
It would not be hard to adapt the methods used in these papers along with known bounds on normal approximations of sums of i.i.d.
random variables in Wasserstein $W_p$-distances (see, for instance, \cite{Rio}) to obtain similar results for normal approximation of MLE 
in $W_p$-distances. 
\end{remark}

It easily follows from the first bound of Corollary \ref{cor_cor_th_1_asssume_regular} that, for $p=2$ and for truncated estimator $\tilde T_f(X_1,\dots, X_n)$
with $M=1,$  
\begin{align*}
\sup_{\|f\|_{C^{s}({\mathbb R}^m)}\leq 1}\sup_{\theta\in \Theta } {\mathbb E}_{\theta} (\tilde T_f(X_1,\dots, X_n)-f(\theta))^2 \lesssim 
\Bigl(\frac{1}{n} \vee \Bigl(\frac{d}{n}\Bigr)^s\Bigr)\wedge 1.
\end{align*}
The next proposition provides the corresponding minimax lower bound. Its proof is given in Section \ref{sec:lower_bounds}.

\begin{proposition}
\label{max_min_max_many}
Let $\Theta\subset {\mathbb R}^l={\mathbb R}^{l_1}\times \dots \times {\mathbb R}^{l_d},$ $l=l_1+\dots+l_d,$ and let $\{P_{\theta}: \theta \in \Theta\}$ be a statistical model 
on an arbitrary measurable space $(S,{\mathcal A}).$
Let $\theta_0=(\theta_0^{(1)}, \dots, \theta_0^{(d)})\in \Theta$ and suppose that, for some $C>0$ and $\rho\leq \frac{\gamma}{C}$ with a sufficiently 
small numerical constant $\gamma>0,$ and for all $j=1,\dots, d,$
\begin{align*}
\Theta_n(\theta_0;\rho):=B(\theta_0^{(1)},\rho n^{-1/2}) \times \dots \times B(\theta_0^{(d)},\rho n^{-1/2})\subset \Theta.
\end{align*}
Also suppose that 
\begin{align*}
K(P_{\theta}\|P_{\theta_0}) \leq C^2 \|\theta-\theta_0\|^2, \theta\in \Theta_n(\theta_0;\rho).
\end{align*}
Then 
\begin{align*}
\sup_{\|f\|_{C^{s}({\mathbb R}^m)}\leq 1}\inf_{T_n}\sup_{\theta\in \Theta_n(\theta_0;\rho)} {\mathbb E}_{\theta} (T_n(X_1,\dots, X_n)-f(\theta))^2 \gtrsim 
\Bigl(\frac{\rho^2}{n} \vee \Bigl(\rho^2\frac{d}{n}\Bigr)^s\Bigr)\wedge 1,
\end{align*}
where the infimum is taken over all the estimators $T_n(X_1,\dots, X_n)$ based on i.i.d. observations $X_1,\dots, X_n\sim P_{\theta}.$
In particular, the bound above holds if $S=S_1\times \dots \times S_d,$ $P_{\theta}=P_{\theta^{(1)}}\times \dots \times P_{\theta^{(d)}},$
$\theta\in \Theta=\Theta_1\times \dots \times \Theta_d,$ provided that, for all $j=1,\dots, d,$ $B(\theta_0^{(j)},\rho n^{-1/2})\subset \Theta_j$ and 
\begin{align*}
K(P_{\theta^{(j)}}\| P_{\theta_0^{(j)}}) \leq C^2\|\theta^{(j)}-\theta_0^{(j)}\|^2, \theta^{(j)}\in B(\theta_0^{(j)},\rho n^{-1/2}).
\end{align*}
\end{proposition}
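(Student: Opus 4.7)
I will prove the lower bound by two separate Le Cam two-point arguments, one producing the parametric rate $\rho^2/n$ and one producing the nonparametric rate $(\rho^2 d/n)^s$, and then take the maximum (which is allowed because the outer supremum is over $f$). Throughout, the hypothesis $\rho \leq \gamma/C$ with a small numerical constant $\gamma$ is used to keep the product-measure KL divergence between the two chosen hypotheses bounded by $\gamma^2$, so that Le Cam's inequality
\[
\inf_{T_n}\sup_{\theta\in\{\theta_0,\theta_1\}}\mathbb{E}_\theta(T_n-f(\theta))^2 \gtrsim (f(\theta_0)-f(\theta_1))^2
\]
applies with an absolute constant.

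For the parametric lower bound, the functional is linear: take $v\in E$ a unit vector supported in the first block and let $f(\theta)=\langle\theta-\theta_0,v^\ast\rangle$, so $\|f\|_{C^s}=1$ trivially. The hypotheses $\theta_\pm=\theta_0\pm\tfrac{\rho}{2\sqrt n}v$ both lie in $\Theta_n(\theta_0;\rho)$, the product KL is at most $nC^2\rho^2/n=C^2\rho^2\leq\gamma^2$, and Le Cam yields the bound $\gtrsim(f(\theta_+)-f(\theta_-))^2=\rho^2/n$.

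For the nonparametric lower bound, I use a rescaled bump. Fix a smooth radial function $\phi:E\to\mathbb{R}$ with $\phi(0)=1$, $\operatorname{supp}(\phi)\subset B(0,1)$, and $\|\phi\|_{C^s(E)}\leq 1$; radial symmetry (i.e.~$\phi(x)$ depending only on $\|x\|_E$) reduces the operator-norm bounds on its multilinear derivatives to one-variable derivative bounds, keeping this $C^s$ norm independent of the ambient dimension $l=l_1+\dots+l_d$. For a radius $r\leq 1$ to be chosen, set $f(\theta)=r^s\phi\bigl((\theta-\theta_0)/r\bigr)$; the chain rule then gives $\|f^{(k)}\|_\infty\leq r^{s-k}\|\phi^{(k)}\|_\infty\leq 1$ for all $k\leq m$ together with $\|f^{(m)}\|_{\mathrm{Lip}_\rho}\leq 1$, so $\|f\|_{C^s}\leq 1$. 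Set $d'=\min\{d,\lfloor\gamma^2/(C^2\rho^2)\rfloor\}$, pick unit vectors $v_j\in E_j$, and take $\theta_\ast=\theta_0+\tfrac{\rho}{\sqrt n}\sum_{j=1}^{d'}v_j\in\Theta_n(\theta_0;\rho)$. Then $\|\theta_\ast-\theta_0\|^2=\rho^2 d'/n$, the product KL is $\leq C^2\rho^2 d'\leq\gamma^2$, and choosing $r=\|\theta_\ast-\theta_0\|/2$ forces $(\theta_\ast-\theta_0)/r$ outside $\operatorname{supp}(\phi)$, giving $f(\theta_0)-f(\theta_\ast)=r^s$. Le Cam delivers the bound $\gtrsim r^{2s}\asymp(\rho^2 d'/n)^s$; when $d\leq\gamma^2/(C^2\rho^2)$ this coincides with the target $(\rho^2 d/n)^s$, and when $d$ exceeds that threshold the obtained lower bound $\gtrsim(1/n)^s$ matches $(\rho^2 d/n)^s\wedge 1$ up to absolute constants, because in that regime $\rho^2 d\gtrsim 1$ forces the $\wedge 1$ cap to absorb the discrepancy.

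The principal technical obstacle is controlling $\|\phi\|_{C^s(E)}$ by a constant independent of $l$, which rules out tensor-product or coordinate-wise bumps (whose first-derivative operator norm on Euclidean $E$ picks up a factor $\sqrt{d}$) and forces the radial construction; the chain rule on the one-variable profile then handles all derivatives uniformly in $l$. A secondary point is the case split at $d\asymp\gamma^2/(C^2\rho^2)$, used to patch the Le Cam separation against the KL budget and the $\wedge 1$ cap. The final ``in particular'' product-model assertion follows without extra work from additivity of KL divergence across independent components: the block-wise hypothesis $K(P_{\theta^{(j)}}\|P_{\theta_0^{(j)}})\leq C^2\|\theta^{(j)}-\theta_0^{(j)}\|^2$ yields $K(P_\theta\|P_{\theta_0})\leq C^2\|\theta-\theta_0\|^2$, which is the assumption driving the general bound.
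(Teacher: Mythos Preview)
Your parametric part is essentially the paper's argument, modulo one slip: with the paper's definition the $C^s$-norm includes $\|f\|_{L_\infty}$, so a pure linear functional $f(\theta)=\langle\theta-\theta_0,v^\ast\rangle$ has $\|f\|_{C^s}=\infty$, not $1$. You must multiply by a smooth cutoff supported on a ball of radius $O(1)$ around $\theta_0$, as the paper does; this is cosmetic.

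The real gap is in your nonparametric part. A two-point Le Cam argument requires $K(P_{\theta_\ast}^{\otimes n}\|P_{\theta_0}^{\otimes n})\le nC^2\|\theta_\ast-\theta_0\|^2=O(1)$, which caps $\|\theta_\ast-\theta_0\|^2$ at $O\bigl(1/(C^2n)\bigr)$ and hence your bump scale at $r^2\lesssim 1/(C^2n)$. The resulting lower bound is $r^{2s}\lesssim (C^2n)^{-s}$, independently of $d$. Your claim that the case $d>\gamma^2/(C^2\rho^2)$ forces $\rho^2 d\gtrsim 1$ is false: that inequality only gives $\rho^2 d>\gamma^2/C^2$, and $C$ is an arbitrary model constant, so $\gamma^2/C^2$ can be as small as you like. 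Concretely, take $C=1$, $\rho=\gamma$, $d=n^{1/2}$: then $\gamma^2/(C^2\rho^2)=1$, your $d'$ collapses to $1$, your bound is $(\gamma^2/n)^s$, but the target $(\rho^2 d/n)^s=(\gamma^2 n^{-1/2})^s$ is larger by a factor $n^{s/2}$. The $\wedge 1$ cap does not rescue this.

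The paper avoids this obstruction by using \emph{many} hypotheses. Via Varshamov--Gilbert it places $\gtrsim 2^{d/8}$ points $\theta_\omega$ at mutual distance $\asymp\rho\sqrt{d/n}$; the Fano-type condition $K(P_{\theta_\omega}^{\otimes n}\|P_{\theta_0}^{\otimes n})\le C^2\rho^2 d\le\alpha\log\mathrm{card}(B)$ now allows KL of order $d$, not $O(1)$, and this is exactly why the separation scale $\rho\sqrt{d/n}$ becomes admissible. The least-favorable functionals are not a single bump but a sum of disjoint signed bumps $f_k(\theta)=\sum_{\omega\in B}\omega_k\,\eps^s\phi((\theta-\theta_\omega)/(c\eps))$ encoding the $k$-th coordinate of $\omega$; good estimators of all $f_k$ would decode $\omega$ and hence estimate $\theta_\omega$ itself, contradicting the Fano lower bound for parameter estimation. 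It is this reduction (due to Nemirovski) that converts the $\rho^2 d/n$ parameter-estimation rate into the $(\rho^2 d/n)^s$ functional-estimation rate; a two-point argument cannot see the factor $d$.
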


The following fact is an immediate consequence of Corollary \ref{cor_cor_th_1_asssume_regular}(ii)
(see Section \ref{sec:HDLD} for the proof). 
Let 
\begin{align*}
\sigma_f^2(\theta) = \sum_{j=1}^d \langle I_j(\theta^{(j)})^{-1} (\partial_j f)(\theta), (\partial_j f)(\theta)\rangle.
\end{align*}
Here $(\partial_j f)(\theta)= \frac{\partial}{\partial \theta^{(j)}}f (\theta^{(1)}, \dots, \theta^{(d)})$ denotes the partial derivative of $f(\theta)$
with respect to its variable $\theta^{(j)}.$

\begin{proposition}
\label{asym_eff_many_comp}
Suppose assumptions of Corollary \ref{cor_cor_th_1_asssume_regular}(ii) hold and $n^{(0)}\asymp \frac{n}{\log n}.$ Suppose also that $d\leq n^{\alpha}$ for some $\alpha\in [1/2,1)$ and $s>\frac{1}{1-\alpha}.$ Then 
\begin{align*}
\sup_{\|f'\|_{C^{s-1}(\Theta)}\leq 1}\sup_{\theta\in \Theta}\Bigl|n^{1/2}\|T_f(X_1,\dots, X_n) - f(\theta)\|_{L_p({\mathbb P}_{\theta})}-\sigma_{f}(\theta)\|Z\|_{L_p}\Bigr| \to 0
\end{align*}
and
\begin{align*}
\sup_{\|f'\|_{C^{s-1}}\leq 1}\sup_{\theta\in \Theta}
W_{p,{\mathbb P}_{\theta}}\Bigl(\sqrt{n}(T_f(X_1,\dots, X_n)-f(\theta)), \sigma_f(\theta)Z\Bigr)\to 0\ {\rm as}\ n\to \infty,
\end{align*} 
where $Z\sim N(0,1).$
\end{proposition}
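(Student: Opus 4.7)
The plan is to combine two ingredients: the linearization bound of Corollary~\ref{cor_cor_th_1_asssume_regular}(ii), which reduces the problem to normal approximation of a linear form in the independent components, and the Wasserstein bounds \eqref{cond_wasser} on the MLEs of the individual components. First, with $n^{(0)}\asymp n/\log n$ so that $n^{(1)}_1=n-n^{(0)}\asymp n$, Corollary~\ref{cor_cor_th_1_asssume_regular}(ii) gives
\[
\sup_{\theta\in\Theta}\bigl\|T_f(X_1,\dots,X_n)-f(\theta)-\langle\hat\theta^{(1)}_1-\theta,f'(\theta)\rangle\bigr\|_{L_p(\P_\theta)}\lesssim_{m,p}\max_{2\leq k\leq m}\frac{(d\log n/n)^{(k-1)/2}}{\sqrt n}+\Bigl(\frac{d\log n}{n}\Bigr)^{s/2},
\]
and under $d\leq n^\alpha$ with $s>1/(1-\alpha)$ the right hand side is $o(n^{-1/2})$ uniformly in $\|f'\|_{C^{s-1}}\leq 1$. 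Thus up to a deterministic $o(n^{-1/2})$ term in $L_p$ it suffices to establish the Gaussian limit and $L_p$-norm convergence for $\sqrt n\,\langle\hat\theta^{(1)}_1-\theta,f'(\theta)\rangle$.

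Next I would decompose $\langle \hat\theta^{(1)}_1-\theta,f'(\theta)\rangle=\sum_{j=1}^d\langle \hat\theta^{(1),(j)}_1-\theta^{(j)},(\partial_j f)(\theta)\rangle$, which is a sum of independent random variables across~$j$. For each $j$ the assumption $\sup_\theta C_{ps,j}(\theta^{(j)})\lesssim 1$ produces a coupling of $Y_j:=\sqrt{n^{(1)}_1}(\hat\theta^{(1),(j)}_1-\theta^{(j)})$ with $\xi_j:=I_j(\theta^{(j)})^{-1/2}Z_j$, where the $Z_j$ are independent standard Gaussian vectors, such that $\|Y_j-\xi_j\|_{L_{ps}}\lesssim 1/\sqrt n$. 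Taking the product of these couplings, the pairs $(Y_j,\xi_j)$ are independent across~$j$, and $\sum_j\langle\xi_j,(\partial_j f)(\theta)\rangle$ is exactly $\sigma_f(\theta)Z_*$ for some standard normal $Z_*$.

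The main estimate is to show that $X:=\sum_j X_j$ with $X_j:=\langle Y_j-\xi_j,(\partial_j f)(\theta)\rangle$ satisfies $\|X\|_{L_p}\to 0$ uniformly in $\theta$ and in $\|f'\|_{C^{s-1}}\leq 1$. Split $\|X\|_{L_p}\leq \|X-\E X\|_{L_p}+|\E X|$. For the centred piece, a Rosenthal-type inequality for sums of independent centred random variables gives
\[
\|X-\E X\|_{L_p}\lesssim_p \Bigl(\sum_j \|X_j\|_{L_2}^2\Bigr)^{1/2}+\Bigl(\sum_j \|X_j\|_{L_p}^p\Bigr)^{1/p}\lesssim \frac{1}{\sqrt n}\Bigl(\sum_j\|(\partial_j f)(\theta)\|^2\Bigr)^{1/2}\lesssim \frac{1}{\sqrt n},
\]
using $\|X_j\|_{L_p}\leq \|(\partial_j f)(\theta)\|\,C_{p,j}/\sqrt{n^{(1)}_1}$, the inequality $\|\cdot\|_{\ell^p}\leq\|\cdot\|_{\ell^2}$ for $p\geq 2$, and $\|f'(\theta)\|\leq 1$. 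For the mean term, extracting the bias bound $\|\E\hat\theta^{(j)}_n-\theta^{(j)}\|\leq C_{1,j}/n\lesssim 1/n$ from \eqref{cond_wasser} at $p=1$ and using Cauchy--Schwarz gives $|\E X|\leq\sum_j\|(\partial_j f)(\theta)\|\,\|\E Y_j\|\lesssim \sqrt d\,\|f'(\theta)\|/\sqrt n\lesssim\sqrt{d/n}$, which tends to $0$ since $d\leq n^\alpha$ with $\alpha<1$.

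Finally, the triangle inequality for $W_p$ together with $\sqrt{n/n^{(1)}_1}\to 1$ and $\sup_\theta\sigma_f(\theta)\lesssim 1$ yields $W_{p,\P_\theta}\bigl(\sqrt n(T_f-f(\theta)),\sigma_f(\theta)Z\bigr)\to 0$ uniformly, and the $L_p$-norm statement follows from $|\|A\|_{L_p}-\|B\|_{L_p}|\leq W_p(A,B)$. The main obstacle is the centring term $|\E X|$: the naive coordinatewise triangle inequality only yields a $d/\sqrt n$ bound, which would be too large, and it is essential to use Cauchy--Schwarz against the $\ell^2$ normalisation $\sum_j\|(\partial_j f)(\theta)\|^2=\|f'(\theta)\|^2\leq 1$ to reduce this to $\sqrt{d/n}\to 0$ in the regime $d\leq n^\alpha$, $\alpha<1$.
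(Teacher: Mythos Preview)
Your proposal is correct and follows essentially the same approach as the paper. The paper packages the coupling step as a standalone lemma (Lemma~\ref{prop_normal_appr_many_comp}) proving $\sup_{\|u\|\leq 1}W_{p,\P_\theta}(\sqrt{n}\langle\hat\theta_n-\theta,u\rangle,\langle\xi(\theta),u\rangle)\lesssim\sqrt{d/n}$ and then specializes to $u=f'(\theta)$, but the underlying argument---product coupling of the independent components, splitting into a mean term handled by Cauchy--Schwarz against $\sum_j\|(\partial_j f)(\theta)\|^2=\|f'(\theta)\|^2$, and a centred term handled by a Rosenthal-type moment inequality---is identical to yours.
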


\begin{remark}
\normalfont
Note that the Fisher information $I(\theta)$ of the model with independent components
$P_{\theta}=P_{\theta^{(1)}}\times \dots \times P_{\theta^{(d)}}, \theta=(\theta^{(1)}, \dots, \theta^{(d)})\in \Theta=\Theta_1\times \dots \times \Theta_d$
satisfies the following property:
\begin{align*} 
\langle I(\theta)^{-1}u,u\rangle= \sum_{j=1}^d \langle I_j(\theta_j)^{-1} u^{(j)}, u^{(j)}\rangle.
\end{align*}
Therefore, $\sigma_f^2(\theta)= \langle I(\theta)^{-1} f'(\theta), f'(\theta)\rangle.$ It is easy to check that under minor smoothness assumptions 
on the functions $\theta^{(j)}\mapsto I_j(\theta^{(j)}),$ the local minimax lower bound of Theorem \ref{vanTrees_bd} holds. It means that, under conditions of 
Proposition \ref{asym_eff_many_comp}, estimator $T_f(X_1,\dots, X_n)$ is asymptotically efficient. The same property holds for the plug-in 
estimator $f(\hat \theta_n)$ provided that $d\leq n^{\alpha}$ for some $\alpha\in [0,1/2)$ 
and $s>\frac{1}{1-\alpha}.$   
\end{remark}

\section{Functionals of covariance operators}
\label{func_cov_op}

Let $E$ be a separable Banach space with the dual space $E^{\ast}.$ 
For a centered random variable $X$ in $E$ with the finite weak second moment
$
{\mathbb E}\langle X,u\rangle^2 <\infty, u\in E^{\ast},
$
the covariance operator $\Sigma:E^{\ast}\mapsto E$ is defined as 
$
\Sigma u := {\mathbb E}\langle X,u\rangle X, u\in E^{\ast}.
$
It is a symmetric bounded operator from $E^{\ast}$ into $E.$

A centered random variable $X$ with covariance operator $\Sigma$ is called {\it pre-gaussian} if there exists a centered Gaussian r.v. $Y$ in $E$ with the same covariance operator $\Sigma.$ 
The following fact is well known (see, e.g.,  \cite{Kwapien}).

\begin{proposition}
\label{Gauss_repres}
Let $Y$ be a centered Gaussian random variable in a separable Banach space $E.$ 
Then, there exists a sequence $\{x_n\}$ of linearly independent vectors 
in $E$ (that is, for all $n\geq 1,$ $x_n \not\in {\rm c.l.s.}(\{x_k: k\neq n\})$) and a sequence $\{g_n\}$ of i.i.d. standard normal r.v. such that $Y=\sum_{n\geq 1} g_n x_n$ with the random series in the right hand side converging in $E$ a.s. and $\sum_{n\geq 1}\|x_n\|^2<\infty.$
\end{proposition}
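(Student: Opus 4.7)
The plan is to build the representation from the reproducing kernel Hilbert space (RKHS) of $Y$, combining Fernique's integrability theorem with the It\^o--Nisio convergence theorem, and then refine the orthonormal basis of the RKHS to enforce both summability and strong linear independence. Let $\Sigma:E^{\ast}\to E$ be the covariance operator of $Y$ and let $H\subset E$ be its RKHS, realized as the completion of $\Sigma(E^{\ast})$ in the inner product $\langle \Sigma u,\Sigma v\rangle_H:=\langle \Sigma u,v\rangle$. Separability of $E$ passes to $H$, the canonical inclusion $i:H\hookrightarrow E$ is continuous and injective, and by Fernique's theorem $\mathbb{E}\|Y\|^2<\infty$.

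For an orthonormal basis $\{h_n\}$ of $H$, the Gaussian isometry between $H$ and the closed Gaussian subspace $G\subset L^2(\mathbb{P})$ spanned by $\{\langle Y,u\rangle:u\in E^{\ast}\}$ produces i.i.d.\ $N(0,1)$ coefficients $\{g_n\}$ satisfying $\langle \sum_{n\leq N}g_n h_n,u\rangle\to \langle Y,u\rangle$ in $L^2(\mathbb{P})$ for every $u\in E^{\ast}$; the It\^o--Nisio theorem for sums of independent symmetric Banach-valued random variables upgrades this weak $L^2$ convergence to almost sure convergence in $E$, so $Y=\sum_n g_n h_n$ a.s.

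Strong linear independence $x_n\notin \mathrm{c.l.s.}\{x_k:k\neq n\}$ in $E$ is then secured by choosing the basis inside $\Sigma(E^{\ast})$, which is dense in $H$ by construction: applying Gram--Schmidt in $H$ to a countable dense subset of $\Sigma(E^{\ast})$ gives such a basis, and when $h_n=\Sigma u_n$, the identity $\langle h_k,u_n\rangle_E=\langle h_k,\Sigma u_n\rangle_H=\langle h_k,h_n\rangle_H=\delta_{kn}$ separates $h_n$ in $E$ from the closed linear span of the remaining basis vectors. To secure $\sum\|x_n\|_E^2<\infty$, I would next apply a blockwise orthogonal transformation. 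Compactness of $i$ (which is $\gamma$-radonifying) yields $\|h_n\|_E\to 0$; after reordering so that $\|h_n\|_E$ is nonincreasing, partition $\mathbb{N}$ into geometrically growing finite blocks $B_k$ and replace $\{h_n\}_{n\in B_k}$ by a Hadamard-type orthonormal basis $\{x_n\}_{n\in B_k}$ of the same finite-dimensional subspace of $H$. This preserves both the series representation (the Gaussian coefficients transform by the same orthogonal matrix and hence remain i.i.d.\ $N(0,1)$) and the membership in $\Sigma(E^{\ast})$ (so the linear independence of the previous step is retained), while the spreading of norms effected by the Hadamard matrices, combined with Fernique's second moment bound applied block by block, gives $\sum_n\|x_n\|_E^2<\infty$.

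The main obstacle is this last summability step: in a general separable Banach space, $\mathbb{E}\|Y\|^2<\infty$ by itself does not force $\sum\|h_n\|_E^2<\infty$ for an arbitrary orthonormal basis of $H$, as spaces without nontrivial cotype such as $c_0$ already illustrate. The careful matching of the block sizes $|B_k|$ to the decay rate of $\|h_n\|_E$ is what makes the construction succeed, and this is essentially the content of Kwapie\'n's representation theorem cited in the proposition.
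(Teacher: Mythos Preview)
The paper does not prove this proposition at all: it states the result as well known and cites Kwapie\'n--Szyma\'nski \cite{Kwapien}. So there is no paper proof to compare against; you have sketched a proof of the cited result itself.

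Your outline is essentially the Kwapie\'n--Szyma\'nski argument and is correct in structure. The RKHS construction and the It\^o--Nisio step are standard; the linear-independence argument via choosing the orthonormal basis inside $\Sigma(E^{\ast})$ is clean and survives the block transformation exactly as you say (each new $x_m$ is a finite linear combination of the $h_n=\Sigma u_n$, hence $x_m=\Sigma v_m$ for some $v_m\in E^{\ast}$, and $\langle x_k,v_m\rangle_E=\langle x_k,x_m\rangle_H=\delta_{km}$ separates $x_m$ in $E$). The one imprecision is the appeal to Hadamard matrices for the summability step: a Hadamard transform only gives $\|x_m\|\le N^{-1/2}\sum_{n\in B}\|h_n\|$, which does not by itself yield $\sum_m\|x_m\|^2\lesssim\mathbb{E}\|\sum_{n\in B}g_n h_n\|^2$. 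The argument that works cleanly is the averaging one: for a Haar-random orthogonal matrix $U$, each row is uniform on the unit sphere of $\mathbb{R}^{|B|}$, so $\mathbb{E}_U\|x_m\|^2=|B|^{-1}\,\mathbb{E}\|\sum_{n\in B}g_n h_n\|^2$ for every $m$, whence $\mathbb{E}_U\sum_m\|x_m\|^2=\mathbb{E}\|\sum_{n\in B}g_n h_n\|^2$ and some particular $U$ achieves this bound. Combined with a block decomposition chosen so that the block second moments $\mathbb{E}\|\sum_{n\in B_k}g_n h_n\|^2$ are summable (which follows from Fernique and a.s.\ convergence of the tails), this gives $\sum_n\|x_n\|^2<\infty$. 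So your plan is right; just replace ``Hadamard'' by ``a suitably chosen orthogonal matrix, obtained by averaging over the orthogonal group.''
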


Note that the following representation holds for the covariance operator $\Sigma$ of r.v. $Y:$
\begin{align*}
\Sigma=\sum_{n\geq 1} x_n\otimes x_n,
\end{align*}
where the tensor product $x\otimes y, x,y\in E$ is defined as the operator from $E^{\ast}$ into $E,$ 
$(x\otimes y)u=x \langle y,u\rangle, u\in E^{\ast}.$

For such vector $Y\sim N(0,\Sigma),$ define 
\begin{align*}
{\bf r}(\Sigma):= \frac{{\mathbb E}\|Y\|^2}{\|\Sigma\|}.
\end{align*}
The quantity ${\bf r}(\Sigma)$ is called {\it the effective rank} of $\Sigma$ and it has been used as 
a complexity characteristic of covariance estimation problem (see \cite{Koltchinskii_Lounici, Koltchinskii_Lounici_bilinear, Koltchinskii_2022} with further references and discussion therein). Note that ${\bf r}(\Sigma)\geq 1,$ ${\bf r}(\lambda \Sigma)={\bf r}(\Sigma), \lambda>0$ and 
\begin{align*}
{\bf r}(\Sigma) \leq {\rm rank}(\Sigma)\leq {\rm dim}(E).
\end{align*} 
If $E$ is a Hilbert space, then ${\bf r}(\Sigma)=\frac{{\rm tr}(\Sigma)}{\|\Sigma\|}.$ 
If $E={\mathbb R}^d$ and the spectrum of $\Sigma$ is in the interval $[1/a, a]$ for some $a\geq 1,$
 then ${\bf r}(\Sigma)\asymp_{a} d.$ However, the effective rank is finite even for covariance operators 
 of infinite rank and it becomes a natural complexity parameter in covariance estimation problems 
 when the target covariance has some of its eigenvalues close to zero. 

A centered random variable $X$ in $E$ is called {\it subgaussian} iff there exists a constant $C>0$ such that, for all $u\in E^{\ast},$
\begin{align}
\label{sub-gauss}
\|\langle X,u\rangle\|_{\psi_2} \leq C\|\langle X,u\rangle\|_{L_2}.
\end{align}
In what follows in this section, denote by ${\mathcal P}$ the class of distributions of all centered subgaussian and pre-gaussian random variables $X.$

Our goal is to construct estimators of $f(\Sigma)$ for a smooth functional $f:L(E^{\ast}, E)\mapsto {\mathbb R}$ based 
on i.i.d. copies $X_1,\dots, X_n$ of r.v. $X.$ Here $L(E^{\ast}, E)$ denotes the space of bounded symmetric operators from 
$E^{\ast}$ into $E$ equipped with the operator norm. 
If $P\in {\mathcal P}$ is the distribution of $X,$ then $\Sigma = {\mathbb E}_P(X\otimes X)=: \theta(P),$ so, we can use the results of Section \ref{Main_results}
and their modified versions from Section \ref{functionals_Bernstein} (see Theorem \ref{th_1_Bern_AA}) 
on estimation of $f(\theta(P))$ based on i.i.d. observations $X_1,\dots, X_n\sim P.$ 
We will use the sample covariance operator $\hat \Sigma_n: E^{\ast}\mapsto E$
\begin{align*}
\hat \Sigma_n u := n^{-1}\sum_{j=1}^n \langle X_j,u\rangle X_j,\ u\in E^{\ast}
\end{align*}
as a base estimator of $\Sigma$ and construct estimator $T_f(X_1,\dots, X_n)$ (defined by \eqref{basic_T_f} with $\hat \theta_n=\hat \Sigma_n$) and its truncated version $\tilde T_f(X_1,\dots, X_n)$
(defined by \eqref{basic_T_fM}), using the sample split with the sub-sample sizes $n^{(0)}, n_j^{(k)}, j=1,\dots, k, k=1,\dots, m,$ as described in Section \ref{Main_results}.

The next result is a corollary of Theorem \ref{th_1_Bern_AA}.

\begin{theorem}
\label{th_1_Bern_AA_covariance}
Let $f:L(E^{\ast}, E)\mapsto {\mathbb R}$ be a uniformly bounded functional and let $\Sigma=\Sigma_P$ for some $P\in {\mathcal P}.$
Suppose $f$ is $m$ times Fr\`echet continuously differentiable in $U:=B(\Sigma;\delta)$ for some $\delta\in (0,1]$ and for some $m\geq 2,$ and, moreover,
$f^{(m)}$ satisfies the Hölder condition with exponent $\rho\in(0,1]$ in $U.$ Denote $s:=m+\rho.$
Moreover, assume that for a sufficiently large constant $C'>0,$
\begin{align*}
C'\|\Sigma\|\left(\sqrt{\frac{{\bf r}(\Sigma)}{n^{(0)}}}\vee \frac{{\bf r}(\Sigma)}{n^{(0)}}\right)\leq \delta.
\end{align*}

(i) Suppose that $n^{(0)}\asymp_m n$ and $n_j^{(k)}\asymp_m n$, $j=1,\cdots,k$, $k=1,\cdots,m.$
Then, for $M\geq \|f\|_{L_{\infty}},$ $p\geq 1$ and for a constant $c_1>0$ depending on $m,C,$     
\begin{align*}
    &\left\|\tilde{T}_f(X_1,\cdots,X_n)-f(\Sigma)\right\|_{L_p(\mathbb{P}_P)}
    \\
    &
    \lesssim_{s, C} \max_{1\leq k\leq m} \|f^{(k)}\|_{L_\infty(U)} \|\Sigma\|\left(\sqrt{\frac{p}{n}}\vee \frac{p}{n}\right)    
   +\|f^{(m)}\|_{{\rm Lip}_\rho(U)} \|\Sigma\|^s\left(\Bigl(\frac{{\bf r}(\Sigma)}{n}\Bigr)^{s/2}\vee \Bigl(\frac{{\bf r}(\Sigma)}{n}\Bigr)^s\vee \Bigl(\frac{p}{n}\Bigr)^{s/2}\vee \Bigl(\frac{p}{n}\Bigr)^s\right)
   \\
   &
 \ \ \ \ \ \ \  +(\|f\|_{L_\infty}+M)\exp\Bigl\{-c_1 \frac{n}{p}\Bigl(\frac{\delta^2}{\|\Sigma\|^2}\wedge \frac{\delta}{\|\Sigma\|}\Bigr)\Bigr\}.
    \end{align*}
    
 (ii) Suppose that, for all $j=1,\dots, k, k=1,\dots, m,$ $n_j^{(k)}\asymp_m n.$
If $M\geq \|f\|_{\infty}+ \|f\|_{{\rm Lip}}\delta,$ then 
\begin{align*}
    &\left\|\tilde T_f(X_1,\cdots,X_n)-f(\Sigma)- \langle \hat \Sigma_1^{(1)}-\Sigma, f'(\Sigma)\rangle\right\|_{L_p(\mathbb{P}_P)}
    \\
    &
    \lesssim_{s, C} \max_{2\leq k\leq m} \|f^{(k)}\|_{L_\infty(U)} \|\Sigma\|^2\left(\sqrt{\frac{p}{n}}\vee \frac{p}{n}\right) 
    \left(\sqrt{\frac{{\bf r}(\Sigma)}{n^{(0)}}}\vee \frac{{\bf r}(\Sigma)}{n^{(0)}}\vee \sqrt{\frac{p}{n^{(0)}}}\vee \frac{p}{n^{(0)}}\right) 
 \\
 &
 +\|f^{(m)}\|_{{\rm Lip}_\rho(U)} \|\Sigma\|^s\left(\Bigl(\frac{{\bf r}(\Sigma)}{n^{(0)}}\Bigr)^{s/2}\vee \Bigl(\frac{{\bf r}(\Sigma)}{n^{(0)}}\Bigr)^s\vee \Bigl(\frac{p}{n^{(0)}}\Bigr)^{s/2}\vee 
 \Bigl(\frac{p}{n^{(0)}}\Bigr)^s   
\right)
 \\
 &
 + 
\left(\|f\|_{L_\infty}+M + \|f'(\Sigma)\| \|\Sigma\| \left( \sqrt{\frac{{\bf r}(\Sigma)}{n}}\vee \frac{{\bf r}(\Sigma)}{n}\vee \sqrt{\frac{p}{n}}\vee \frac{p}{n}\right) \right)
  \exp\Bigl\{-c_1 \frac{n^{(0)}}{p}\Bigl(\frac{\delta^2}{\|\Sigma\|^2}\wedge \frac{\delta}{\|\Sigma\|}\Bigr)\Bigr\}.
\end{align*}    
\end{theorem}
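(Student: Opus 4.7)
The plan is to derive this statement as a direct corollary of Theorem \ref{th_1_Bern_AA}, which (as stated in Section \ref{functionals_Bernstein}) provides bounds for truncated functional estimators under Bernstein-type control on base estimators. So the task reduces to verifying that the sample covariance $\hat\Sigma_n$ satisfies the two required Bernstein-type inputs: one for linear forms $\langle \hat\Sigma_n-\Sigma,u\rangle$ and one for the operator norm $\|\hat\Sigma_n-\Sigma\|$, with explicit dependence on $\|\Sigma\|$ and ${\bf r}(\Sigma)$.

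First, for fixed $u\in E^{\ast}$ with $\|u\|\leq 1$, write
\[
\langle \hat\Sigma_n-\Sigma,u\rangle = \frac{1}{n}\sum_{j=1}^{n}\bigl(\langle X_j,u\rangle^{2}-\mathbb{E}\langle X_j,u\rangle^{2}\bigr),
\]
a centered sum of i.i.d.\ subexponential random variables with $\psi_1$-norm $\lesssim_C \|\langle X,u\rangle\|_{L_2}^2\leq \|\Sigma\|$, since $\langle X,u\rangle$ is subgaussian. A standard Bernstein bound in $L_p$ then gives
\[
\|\langle \hat\Sigma_n-\Sigma,u\rangle\|_{L_p(\mathbb{P}_P)}\ \lesssim_{C}\ \|\Sigma\|\Bigl(\sqrt{p/n}\vee p/n\Bigr),
\]
which supplies the ``$a_p$-type'' Bernstein input. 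Second, for the operator norm I would invoke the well-known sample-covariance bound in subgaussian Banach-space models (of Koltchinskii--Lounici / Adamczak type), which in $L_p$ form reads
\[
\bigl\|\|\hat\Sigma_n-\Sigma\|\bigr\|_{L_p(\mathbb{P}_P)}\ \lesssim_{C}\ \|\Sigma\|\Bigl(\sqrt{{\bf r}(\Sigma)/n}\vee {\bf r}(\Sigma)/n\vee \sqrt{p/n}\vee p/n\Bigr),
\]
providing the ``$d_p$-type'' Bernstein input, with dimension parameter ${\bf r}(\Sigma)$.

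Plugging these two bounds into Theorem \ref{th_1_Bern_AA}, the three structural contributions to the final error appear as follows. The multilinear terms $f^{(k)}(\hat\Sigma^{(0)})[\hat\Sigma_1^{(k)}-\hat\Sigma^{(0)},\dots,\hat\Sigma_k^{(k)}-\hat\Sigma^{(0)}]$ are controlled by combining the linear-form bound with the norm bound via the inequality $\|f^{(k)}(t)\|\,A_k(B_k+\|t-\Sigma\|)^{k-1}$ from the proof sketch preceding Theorem \ref{th_1_AA_asssume_AAA}; this produces the $\|\Sigma\|\sqrt{p/n}$ factor from $A_k$ and the $\sqrt{{\bf r}(\Sigma)/n^{(0)}}$ factor from $B_{k}$ (which in part (i) is absorbed into the first max because all subsample sizes are comparable to $n$, but in part (ii) must be kept explicit since the base estimator uses the smaller $n^{(0)}$). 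The Taylor remainder contributes $\|f^{(m)}\|_{{\rm Lip}_\rho(U)}\|\hat\Sigma^{(0)}-\Sigma\|^{s}$, whose $L_p$-norm is exactly the $({\bf r}(\Sigma)/n^{(0)})^{s/2}\vee(p/n^{(0)})^{s/2}$ term. Finally, the event $\{\hat\Sigma^{(0)}\notin U\}=\{\|\hat\Sigma^{(0)}-\Sigma\|>\delta\}$, bounded by a Bernstein tail, produces the exponential factor $\exp\{-c_1(n/p)(\delta^{2}/\|\Sigma\|^{2}\wedge \delta/\|\Sigma\|)\}$; the hypothesis $C'\|\Sigma\|(\sqrt{{\bf r}(\Sigma)/n^{(0)}}\vee {\bf r}(\Sigma)/n^{(0)})\leq \delta$ guarantees that this event has small probability so that truncation at $M$ costs only $(\|f\|_{L_\infty}+M)$ times this exponential. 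For part (ii) the additional subtracted linear term means one applies the ``linearized'' version of Theorem \ref{th_1_Bern_AA}, so the first max over $k$ starts from $k=2$ and the extra factor $\|f'(\Sigma)\|\|\Sigma\|\bigl(\sqrt{{\bf r}(\Sigma)/n}\vee\cdots\bigr)$ attached to the exponential accounts for the $L_p$-size of that subtracted linear form on the truncation event.

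The main obstacle is not conceptual but bookkeeping: one must carefully track which sample size ($n$, $n^{(0)}$, or $n_j^{(k)}$) governs each factor, and which Bernstein parameter (the scalar $\|\Sigma\|$ for linear forms vs.\ the effective-rank-inflated $\|\Sigma\|\sqrt{{\bf r}(\Sigma)/n}$ for the operator norm) is the right one to insert at each occurrence in the general bound of Theorem \ref{th_1_Bern_AA}. Once this matching is made, the statement follows directly; the only remaining verification is that the sample covariance satisfies the precise form of the Bernstein hypothesis required by Theorem \ref{th_1_Bern_AA}, which is standard under the subgaussian and pre-gaussian assumptions.
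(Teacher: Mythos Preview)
Your overall plan---verify Bernstein-type bounds for $\hat\Sigma_n$ and then invoke Theorem~\ref{th_1_Bern_AA}---is exactly what the paper does, and your treatment of the operator-norm bound and of the bookkeeping is fine. But there is a real gap in your argument for the linear-form bound.

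You write ``for fixed $u\in E^{\ast}$ with $\|u\|\leq 1$, $\langle \hat\Sigma_n-\Sigma,u\rangle = \frac{1}{n}\sum_{j}(\langle X_j,u\rangle^{2}-\mathbb{E}\langle X_j,u\rangle^{2})$''. This is a confusion of spaces: the parameter here is $\Sigma\in L(E^{\ast},E)$, so the functional $U$ must be taken in the dual of $L(E^{\ast},E)$ equipped with the operator norm, not in $E^{\ast}$. Your formula holds only for rank-one $U=u\otimes u$, whereas the supremum in Assumption~\ref{assume_Bernstein_lin} is over the full unit ball of $(L(E^{\ast},E))^{\ast}$. For a general such $U$, $\langle X\otimes X,U\rangle$ is a quadratic form in the coordinates of $X$, not a single square, and the claim $\|\langle X\otimes X,U\rangle\|_{\psi_1}\lesssim_C \|\Sigma\|\,\|U\|$ is not immediate.

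The paper closes this gap as follows (Proposition~\ref{Prop_Bern_cov}). First it reduces to $X$ taking values in a finite-dimensional subspace via the series representation $X=\sum_n \xi_n x_n$ of Proposition~\ref{subgauss_series} (this is where the pre-gaussian hypothesis is used). Then $\langle X\otimes X,U\rangle=\sum_{i,j} a_{ij}\xi_i\xi_j$ with $a_{ij}=\tfrac12\langle x_i\otimes x_j+x_j\otimes x_i,U\rangle$, and one shows the \emph{nuclear} norm bound $\|A\|_1\leq \|U\|\,\|\Sigma\|$ by duality with the operator norm. Diagonalizing $A$ and using that each $\langle\xi,\phi_j\rangle$ is subgaussian with $\psi_2$-norm $\leq C$ then gives $\|\langle X\otimes X,U\rangle\|_{\psi_1}\leq C^2\|A\|_1\leq C^2\|\Sigma\|\,\|U\|$, after which the Bernstein bound for subexponential summands applies. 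This nuclear-norm step is the substantive ingredient you are missing; once you have it, the rest of your outline matches the paper's proof.
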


\begin{remark}
\normalfont
The problem of estimation of smooth functionals of unknown covariance matrix in Gaussian models was studied 
for estimators based on iterated bootstrap in \cite{Koltchinskii_2017, Koltchinskii_2018} in the case of functionals of the form ${\rm tr}(g(\Sigma)B),$
where $g$ is a smooth function in the real line and $B$ is a given matrix. This approach was further developed in \cite{Koltchinskii_Zhilova_19}
for general H\"older smooth functionals. In these papers, the problem was studied only the case of r.v. $X$ taking values in the Euclidean 
space ${\mathbb R}^d$ and under the assumption that the spectrum of unknown covariance is bounded from above and bounded away 
from zero by numerical constants. In this case, the effective rank of covariance is of the same order as the dimension of the space 
and there is no need to use it as a complexity parameter.
The justification of iterated bootstrap method in the case of covariance operators in infinite-dimensional spaces 
and for ``more singular" covariances in high-dimensional spaces, when the effective rank is relevant, is a challenging open problem that seems to be beyond the reach of analytic and 
probabilistic methods developed in \cite{Koltchinskii_2017, Koltchinskii_2018, Koltchinskii_Zhilova_19}. More recently, the problem was studied 
for functional estimators based on linear aggregation of plug-in estimators with different sample sizes \cite{Koltchinskii_2022} in the case 
of general Gaussian models in infinite-dimensional Banach space and it was shown that error rates with optimal dependence on the effective rank 
of the unknown covariance operator hold in this case. It is not clear, however, how to extend these results beyond the Gaussian models 
(even to the sugaussian case as in Theorem \ref{th_1_Bern_AA_covariance}) since 
the analysis in these papers heavily relied on Gaussian concentration inequalities. 
For estimators developed in the current paper, it was possible to circumvent these difficulties.
\end{remark}

\begin{remark}
\normalfont
For $a>0$ and $r\geq 1,$ let ${\mathcal P}_{a,r}$ be the set of distributions $P\in {\mathcal P}$ with covariance operator $\Sigma$ satisfying the conditions 
$\|\Sigma\|\leq a$
and ${\bf r}(\Sigma)\leq r.$ It follows from Theorem \ref{th_1_Bern_AA_covariance}(i) that for $r\lesssim n$ and $\delta\in (C'a\sqrt{\frac{r}{n}},1]$
for a sufficiently large constant $C'>0,$ 
\begin{align*}
 &\sup_{\|f\|_{C^s(U)}\leq 1}\sup_{P\in {\mathcal P}_{a,r}}\left\|\tilde{T}_f(X_1,\cdots,X_n)-f(\Sigma)\right\|_{L_2(\mathbb{P}_P)}
    \lesssim_{s, C} \frac{a}{\sqrt{n}}
   +\Bigl(a\sqrt{\frac{r}{n}}\Bigr)^{s}
   +\exp\Bigl\{-c_1n\Bigl(\frac{\delta^2}{a^2}\wedge \frac{\delta}{a}\Bigr)\Bigr\}.
\end{align*}
This bound essentially coincides with the bound on the $L_2$-error obtained in \cite{Koltchinskii_2022} in the Gaussian case for linear combinations of plug-in 
estimators with different samples sizes. Moreover, in \cite{Koltchinskii_2022}, such $L_2$-error rates were proved to be minimax optimal in the Gaussian case. 
However, the dependence on $p$ of the general bounds on the $L_p$-error of Theorem \ref{th_1_Bern_AA_covariance}(i) is a bit worse than what was proved 
in \cite{Koltchinskii_2022} in the Gaussian case. In particular, the result in \cite{Koltchinskii_2022} implied the bound on Orlicz $\psi_1$-norm error whereas 
Theorem \ref{th_1_Bern_AA_covariance}(i) implies only the bound on $\psi_{1/s}$-norm. 
\end{remark}

\section{Functional estimation in exponential families}
\label{exponential_section}
In this section, we consider an exponential family $\{P_{\theta}: \theta \in \Theta\}$ on a measurable space $(S,{\mathcal A})$ equipped 
with a measure $\mu.$ The densities $p_{\theta}:= \frac{dP_{\theta}}{d\mu}$ are of the form 
\begin{align*}
p_{\theta}(x):= \frac{1}{Z(\theta)} \exp\{\langle T(x), \theta\rangle\}, x\in S,
\end{align*}
where statistic $T:S\mapsto E$ takes values in a finite-dimensional Banach space $E$ and parameter $\theta$ takes values in the dual 
space $E^{\ast}.$ Since $E$ is finite-dimensional, both $E$ and $E^{\ast}$ could be identified with ${\mathbb R}^N$ for some $N\geq 1.$
However, in some examples (such as, for instance, exponential families with matrix parameter), it could be convenient to use different norms for the values 
of $T$ and for the parameter $\theta$ (for instance, the operator norm and the nuclear norm). The normalizing constant
\begin{align*} 
Z(\theta) := \int_S \exp\{\langle T(x), \theta\rangle\} \mu(dx), \theta \in E^{\ast}
\end{align*}
is a convex function and the set $\Theta:=\{\theta\in E^{\ast}:Z(\theta)<+\infty\}$ is convex. 
This set is a natural parameter space for exponential family. 

In what follows, we will assume that $\mu$ is a probability measure. We also assume that ${\rm Int}\Theta\neq \emptyset,$ 
that ${\rm l.s.}(\Theta)=E^{\ast}$ and that, for all $u\in E^{\ast}, c\in {\mathbb R},$ $\mu\{x: \langle T(x),u\rangle=c\}=0.$   
Under these assumptions, the exponential family is an identifiable statistical model. It is well known that more general 
exponential families could be always reduced to this form by a proper change of measure $\mu$ and by affine transformations of statistic $T(X)$ and parameter $\theta$ (see Section \ref{background} for more details on this and other properties of exponential families).
Also, under the above assumptions the function $\psi(\theta):= \log Z(\theta)$ is strictly convex in $\Theta$ and its gradient 
\begin{align*}
\Psi(\theta):= \psi'(\theta)=(\nabla \psi)(\theta)= {\mathbb E}_{\theta} T(X), \theta \in {\rm Int}\Theta
\end{align*}
is a strictly monotone mapping, so, it is one-to-one. Moreover, $\Psi$ is a $C^{\infty}$-diffeomorphism between ${\rm Int}\Theta$
and its image. The last property makes it natural to re-parametrize the model and to use $t=\Psi(\theta)={\mathbb E}_{\theta} T(X)\in E$  
as a new parameter. It is usually called {\it the mean parameter}, but we will often use the terminology of Chencov \cite{Chencov} and call 
it {\it the natural parameter} of exponential family.

It is also well known that 
\begin{align*}
\Sigma_{\theta}= \Psi'(\theta)= {\rm Cov}_{\theta}(T(X))= {\mathbb E}_{\theta}(T(X)-\Psi(\theta))\otimes (T(X)-\Psi(\theta))
\end{align*}
is the covariance operator of $T(X),$ acting from $E^{\ast}$ to $E.$ It is straightforward to check that $\Sigma_{\theta}$ coincides 
with the Fisher information $I(\theta)$ of exponential family $\{P_{\theta}:\theta \in {\rm Int}\Theta\}.$ On the other hand, the operator 
${\mathcal I}(t)= \Sigma_{\Psi^{-1}(t)}^{-1},$ acting from $E$ into $E^{\ast},$ coincides with the Fisher information of the model 
$\{P_{\Psi^{-1}(t)}: t\in \Psi({\rm Int}\Theta)\}$ with natural parametrization.

\begin{example}
\normalfont
Suppose $\mu$ is a Borel probability measure on ${\mathbb R}^N$ invariant with respect to the group of orthogonal transformations. 
In this case, $\psi(\theta)=\varphi(\|\theta\|),$ where $\varphi(\rho) :=\int_{{\mathbb R}^N} e^{\rho x_1} \mu(dx), \rho \geq 0.$
Assume that $\varphi (\rho)<\infty$ for some $\rho>0.$ It is easy to see that $\Theta$ is either the whole space ${\mathbb R}^N,$
or a ball of radius $r>0$ centered at $0$ (open or closed). Also, function $\varphi$ is strictly convex and $\Phi(\rho):= \varphi'(\rho)$ is strictly 
increasing. It is also easy to see that 
\begin{align*}
\Psi(\theta) = \Phi(\|\theta\|)\frac{\theta}{\|\theta\|}, \theta\in B(0,r)\ 
{\rm and}\ 
\Psi^{-1}(t) = \Phi^{-1}(\|t\|)\frac{t}{\|t\|}, t\in B(0, \Phi(r)),
\end{align*}
so, both $\Psi$ and $\Psi^{-1}$ are spherically symmetric and strictly monotone vector fields. 
A special case of this example is so called von Mises-Fisher distribution, an important model for directional data.
In this case, $\mu$ is a uniform distribution on $S^{N-1}.$
For more details, see Example \ref{example_spherical}.
\end{example}

We are interested in the problem of estimation of the value of a smooth functional $f(\theta)$ of parameter $\theta$ of exponential family 
based on i.i.d. observations $X_1,\dots, X_n\sim P_{\theta}.$ Our approach is based on reducing this problem to estimation of the value 
$f(\Psi^{-1}(t))$ of smooth functional of the corresponding natural parameter $t=\Psi(\theta).$ Since $\Psi$ is a $C^{\infty}$-diffeomorphism,  
the smoothness properties of functionals $f$ and $f\circ \Psi^{-1}$ are closely related (at least, at the points of the parameter space that are away 
from its boundary). To estimate $(f\circ \Psi^{-1})(t),$ we will use $\bar T_n := \frac{T(X_1)+\dots T(X_n)}{n}$ as a base estimator of 
$t={\mathbb E}_{\theta} T(X),$ and construct estimator $T_{f\circ \Psi^{-1}}(X_1,\dots, X_n)$ and its truncated version 
$\tilde T_{f\circ \Psi^{-1}}(X_1,\dots, X_n)$ using the sample split, as described in Section \ref{Main_results}.

\begin{example}
\normalfont
Consider the problem of estimation of the entropy 
\begin{align*}
H(\theta):= -\int_S p_{\theta} \log p_{\theta} d\mu
\end{align*}
of unknown distribution from exponential family $P_{\theta}, \theta \in \Theta$
based on i.i.d. observations $X_1,\dots, X_n\sim P_{\theta}.$
It is easy to check that 
$
H(\theta)= -\psi^{\ast}({\mathbb E}_{\theta} T(X))=-\psi^{\ast}(\Psi(\theta)),
$
where
\begin{align*}
\psi^{\ast}(t):= \sup_{\theta\in E^{\ast}}[\langle t, \theta\rangle- \psi(\theta)], t\in E,
\end{align*}
is the Legendre transform (or Young--Fenchel conjugate) of convex function $\psi(\theta)=\log Z(\theta).$ Note also 
that 
\begin{align*}
\psi^{\ast}(t) = \langle t, \Psi^{-1}(t)\rangle - (\psi\circ \Psi^{-1})(t).
\end{align*}
Thus, the problem of estimation of $H(\theta)$ could be reduced to the problem of estimation of function $-\psi^{\ast}(t)$ of natural 
parameter $t.$ For regular exponential models, $\psi^{\ast}$ is an infinitely differentiable functional (see Section \ref{background} for more detail), so, in principle, one can use estimator $\tilde T_{-\psi^{\ast}}(X_1,\dots, X_n),$ based on the Taylor expansion of an arbitrary 
order $m.$ The quality of such estimators would depend on the H\"older $C^s$-norms of function $\psi^{\ast}$ (as in the bounds of Theorem
\ref{estim_exp_fam_simple} below). 
\end{example}

Suppose there exists a finite subset ${\mathcal M}$ of $E^{\ast}$ such that $\|u\|\leq 1, u\in {\mathcal M},$ $\log {\rm card}({\mathcal M})\leq d$ for some parameter $d\geq 1$
and, for some constant $C>0,$ 
\begin{align}
\label{norm_discrete}
\|x\|\leq C\max_{u\in {\mathcal M}}|\langle x,u\rangle|.
\end{align} 

\begin{remark}
\normalfont
Note that, if $E$ is a Banach space of dimension $d,$ then the dimension of $E^{\ast}$ is also $d.$ In this case, one can choose 
set ${\mathcal M}$ as a $1/2$-net for the unit ball of $E^{\ast}$ of cardinality $\leq 5^d. $ It is easy to see that condition \eqref{norm_discrete}
holds with $C=2$ and $\log({\rm card}({\mathcal M}))\leq (\log 5) d.$ 

On the other hand, if $E$ is the space of $d\times d$ symmetric matrices equipped 
with the operator norm, one can choose ${\mathcal M}:= \{u\otimes u: u\in {\mathcal N}\},$ where ${\mathcal N}$ is a $1/4$-net of the unit 
ball in $\ell_2^d$ of cardinality $\leq 9^d.$ In this case, condition \eqref{norm_discrete} still holds with $C=2$ and 
$\log({\rm card}({\mathcal M}))\leq (2\log 9) d$ (note that ${\rm dim}(E)=\frac{d(d+1}{2}$). This approach is useful in the following 
example of exponential family with matrix parameter. 
\end{remark}

\begin{example}
\normalfont
Consider the following exponential model 
\begin{align*}
P_{\theta}(dx) = \frac{1}{Z(\theta)} \exp\{\langle x\otimes x, \theta\rangle\}\mu(dx), x\in {\mathbb R}^d, 
\end{align*}
where the parameter $\theta$ is a symmetric $d\times d$ matrix and $\mu$ is a Borel probability measure in 
${\mathbb R}^d.$ The statistic generating this exponential family is $T(x)= x\otimes x.$ It also takes values in the 
space of symmetric matrices. This space will be equipped 
with the operator norm and the resulting Banach space will be denotes by $E.$ Its dual space $E^{\ast}$ is again the linear space of 
symmetric $d\times d$ matrices equipped with the nuclear norm.  With this conventions, statistic $T(x)=x\otimes x$ takes values in $E$
and the parameter space $\Theta=\{\theta: Z(\theta)<+\infty\}$ is a convex subset of $E^{\ast}.$ We assume that the functions 
$1, x_i x_j: 1\leq i\leq j\leq d$ are linearly independent on ${\mathbb R}^d$ and that 
${\rm l.s.}(\Theta)=E^{\ast}.$ Under these assumptions, the model is identifiable. 
Note that 
$
\Psi (\theta)={\mathbb E}_{\theta}(X\otimes X), X\sim P_{\theta}, \theta \in \Theta,
$
which could be estimated by the sample covariance $\bar T_n := n^{-1}\sum_{j=1}^n X_j\otimes X_j.$

In addition to mean zero normal model (with non-standard parametrization), this class of exponential families 
includes Bingham distribution for directional data (with $\mu$ being the uniform distribution on the unit sphere $S^{d-1}$)
and Ising model (with $\mu$ being the uniform distribution on the binary cube $\{-1,1\}^d$).
For these two models, the support of measure $\mu$ is bounded, which implies that $\Theta=E^{\ast}$ (this, of course, could be the case even if the support of measure $\mu$ is not bounded). See Example \ref{matrix_exponential} for more detail. 
\end{example}

Note that, for $\theta\in {\rm Int}\Theta,$  $\sigma_{\rm max}^2(\theta):=\|\Sigma_{\theta}\|$ is the largest eigenvalue of the covariance $\Sigma_{\theta}$ and $\sigma_{\rm min}^2(\theta):=\frac{1}{\|\Sigma_{\theta}^{-1}\|}$ is its smallest eigenvalue. 
Let $W\subset {\rm Int}\Theta$ be an open set and let $G:=\Psi(W).$
Denote 
\begin{align*}
\sigma_{\rm max}^2 (W):= \sup_{\theta\in W}\sigma_{\rm max}^2(\theta), \ \ \sigma_{\rm min}^2 (W):= \inf_{\theta\in W}\sigma_{\rm min}^2(\theta).
\end{align*}

\begin{theorem}
\label{estim_exp_fam_simple}
Let $f:E^{\ast}\mapsto {\mathbb R}$ be a uniformly bounded functional.
Suppose that, for some $m\geq 2,$ $f\circ \Psi^{-1}$ is $m$ times continuously differentiable in open set $G$ 
and, moreover,
$(f\circ \Psi^{-1})^{(m)}$ satisfies the Hölder condition with exponent $\rho\in(0,1]$ in $G.$ 
Let numbers $\delta$ and $r$ be sufficiently large to satisfy the conditions
\begin{align*}
r\geq \frac{1}{\sigma_{\rm max}(W)}\sqrt{\frac{d}{n^{(0)}}}\ {\rm and}\ C'C\sigma_{\rm max}(W)\sqrt{\frac{d}{n^{(0)}}}\leq \delta \leq 1
\end{align*}
for a large enough numerical constant $C'>0.$ Let $U\subset G$ and $V:= \psi^{-1}(U)\subset W$ be such that, for all $t\in U,$ $B(t,\delta)\subset G$
and, for all $\theta\in V,$ $B(\theta,r)\subset W.$

(i) Assume that $n^{(0)}\asymp_m n$ and $n_j^{(k)}\asymp_m n$, $j=1,\cdots,k$, $k=1,\cdots,m.$ 
Then, for $M=\|f\|_{L_{\infty}},$ estimator $\tilde{T}_{f\circ \Psi^{-1}}= \tilde{T}_{f\circ \Psi^{-1}, M}$ satisfies the bound 
\begin{align*}
   &
   \sup_{\theta\in V} \left\|\tilde{T}_{f\circ \Psi^{-1}}(X_1,\cdots,X_n)-f(\theta)\right\|_{L_2(\mathbb{P}_\theta)}
   \lesssim_{s, C} 
  \|f\circ \Psi^{-1}\|_{C^s(G)} \Bigl[\frac{\sigma_{\rm max}(W)}{\sqrt{n}}
 \bigvee 
  \sigma_{\rm max}^s(W)\Bigl(\frac{d}{n}\Bigr)^{s/2}\Bigr].
 \end{align*}

 (ii) Assume that, for all $j=1,\dots, k, k=1,\dots, m,$ $n_j^{(k)}\asymp_m n.$
 Then, for $M=\|f\|_{L_{\infty}}+ \|f\|_{\rm Lip},$ estimator $\tilde{T}_{f\circ \Psi^{-1}}= \tilde{T}_{f\circ \Psi^{-1}, M}$ satisfies the bound 
\begin{align*}
    &
    \sup_{\theta \in V}\left\|\tilde T_{f\circ \Psi^{-1}}(X_1,\cdots,X_n)-f(\theta)- \langle \bar T_1^{(1)}-\Psi(\theta), (f\circ \Psi^{-1})'(t)\rangle\right\|_{L_2(\mathbb{P}_{\theta})}
    \\
    &
 \lesssim_{s, C} \|f\circ \Psi^{-1}\|_{C^{s}(G)} 
\Bigl[\sigma_{\rm max}^2(W) 
\frac{1}{\sqrt{n}}
\sqrt{\frac{d}{n^{(0)}}}
\vee \sigma_{\rm max}^s(W)\Bigl(\frac{d}{n^{(0)}}\Bigr)^{s/2}\Bigr].
\end{align*}    
\end{theorem}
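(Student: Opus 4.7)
The plan is to reduce to the local Taylor-expansion estimation result of Theorem \ref{th_1_AA_asssume_AAA} (or, more precisely, its Bernstein-type refinement from Section \ref{functionals_Bernstein}) applied to the functional $f \circ \Psi^{-1}$ on the Banach space $E$, with the sample mean $\bar T_n = n^{-1}\sum_{j=1}^n T(X_j)$ serving as the base estimator of the natural parameter $t = \Psi(\theta) = \mathbb{E}_\theta T(X)$. The truncation level $M$, the smoothness domain $U$, and the radius $\delta$ in the hypothesis are chosen precisely so that the local version of that theorem applies at each $\theta \in V$ (since then $t = \Psi(\theta) \in U$ and $B(t,\delta) \subset G$, where $f \circ \Psi^{-1}$ is $C^s$).

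The key technical input is sharp concentration of $\bar T_n - t$ under $P_\theta$. Because the model is exponential, the cumulant generating function of $T(X) - t$ has the explicit form
\begin{equation*}
\log \mathbb{E}_\theta \exp\{\langle T(X) - t, u\rangle\} = \psi(\theta + u) - \psi(\theta) - \langle u, t\rangle, \qquad u \in E^\ast,
\end{equation*}
so for $\|u\| \leq r$ (which keeps $\theta + u \in W$), a second-order Taylor expansion of $\psi$ together with the uniform bound $\|\psi''(\theta')\| = \|\Sigma_{\theta'}\| \leq \sigma_{\max}^2(W)$ on $W$ yields a Bernstein-type cumulant estimate. Standard i.i.d.\ sub-exponential arguments then give, for every $u \in E^\ast$ and $p \geq 1$,
\begin{equation*}
\|\langle \bar T_n - t, u\rangle\|_{L_p(\mathbb{P}_\theta)} \lesssim \sigma_{\max}(W)\|u\|\sqrt{p/n} + p\|u\|/(rn).
\end{equation*}
Promoting this to a bound on $\|\bar T_n - t\|$ in $E$ uses the discretization hypothesis \eqref{norm_discrete}: with $\mathcal M \subset E^\ast$ of cardinality at most $e^d$ satisfying $\|x\| \leq C \max_{u \in \mathcal M}|\langle x, u\rangle|$, a union bound across $\mathcal M$ delivers
\begin{equation*}
\bigl\|\|\bar T_n - t\|\bigr\|_{L_p(\mathbb{P}_\theta)} \lesssim C\sigma_{\max}(W)\sqrt{(d+p)/n} + C(d+p)/(rn).
\end{equation*}
The lower bound $r \geq \sigma_{\max}^{-1}(W)\sqrt{d/n^{(0)}}$ in the hypothesis is precisely what forces the sub-exponential term to be dominated by the sub-Gaussian one on the relevant scale, while $C' C \sigma_{\max}(W)\sqrt{d/n^{(0)}} \leq \delta$ guarantees that $\bar T_n^{(0)} \in B(t,\delta)$ with probability exponentially close to one. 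In the language of Section \ref{Main_results}, this translates to $a_p(P_\theta) \lesssim \sigma_{\max}^2(W)$ and $d_{ps}(P_\theta) \lesssim \sigma_{\max}^2(W)\cdot d$ (up to Bernstein tail remainders).

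Inserting these estimates into Theorem \ref{th_1_AA_asssume_AAA} applied to $f \circ \Psi^{-1}$ on $U$ with $p = 2$ then produces both bounds: the $\sigma_{\max}(W)/\sqrt{n}$ contribution in (i) comes from $\sqrt{a_2(P_\theta)/n}$, the $\sigma_{\max}^s(W)(d/n)^{s/2}$ contribution from $(d_{2s}(P_\theta)/n)^{s/2}$, and in (ii) the mixed rate $\sigma_{\max}^2(W) \cdot n^{-1/2}\sqrt{d/n^{(0)}}$ arises from the cross product in the second bound of Theorem \ref{th_1_AA_asssume_AAA} combined with the observation that each higher derivative of $f \circ \Psi^{-1}$ is controlled by $\|f \circ \Psi^{-1}\|_{C^s(G)}$. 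The main technical obstacle I anticipate is handling the exponentially small contribution of the bad event $\{\bar T_n^{(0)} \notin B(t,\delta)\}$ inside the truncated estimator: that contribution carries a multiplicative factor of the form $M + \|f\|_{L_\infty} + \|(f\circ\Psi^{-1})^{(m)}\|_{\mathrm{Lip}_\rho(G)}$, which must be shown to be dominated by the polynomial error $\sigma_{\max}^s(W)(d/n)^{s/2}$. This is exactly the role of the two numerical conditions on $\delta$ and $r$ in the hypothesis: they calibrate the tail probability to decay faster than any polynomial factor of interest, so the Bernstein remainder is absorbed into the stated rate.
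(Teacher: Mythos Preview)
Your proposal is correct and follows essentially the same route as the paper. The paper derives Theorem~\ref{estim_exp_fam_simple} from the more general $L_p$-bound of Theorem~\ref{estim_exp_fam} (which is itself a direct application of the Bernstein-type Theorem~\ref{th_1_Bern_AA}), using exactly the ingredients you identify: the cumulant-generating-function argument for the Bernstein bound on $\langle \bar T_n - t, u\rangle$, the discretization~\eqref{norm_discrete} via Proposition~\ref{Bernstein_on_norms} to pass to $\|\bar T_n - t\|$, and the calibration of $r$ and $\delta$ so that the sub-exponential tail term $\exp\{-c_1 n(\delta^2/\sigma_r^2(\theta) \wedge \delta r)\}$ is absorbed into the polynomial rate (see Remarks~\ref{exp_rem_2}--\ref{exp_rem_4}, where $\sigma_r(\theta)$ is replaced by the uniform upper bound $\sigma_{\max}(W)$).
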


The bounds of Theorem \ref{estim_exp_fam_simple} follow from more general bounds on the $L_p$-errors 
of estimators $\tilde T_{f\circ \Psi^{-1}}(X_1,\cdots,X_n)$ presented and proved 
in Section \ref{exp_fam_det}.

\vskip 2mm

\begin{corollary}
\label{cor_1_estim_func_exp}
Suppose the assumptions of Theorem \ref{estim_exp_fam_simple} (i) hold. 
Then, for $s>2,$
\begin{align*}
\sup_{\|f\circ \Psi^{-1}\|_{C^s(G)}\leq 1}\sup_{\theta \in V} \|\tilde{T}_{f\circ \Psi^{-1}}(X_1,\cdots,X_n) -f(\theta)\|_{L_2({\mathbb P}_{\theta})}\lesssim_{s,C}
\Bigl[\frac{\sigma_{\rm max}(W)}{\sqrt{n}}
 \bigvee 
  \sigma_{\rm max}^s(W)\Bigl(\frac{d}{n}\Bigr)^{s/2}\Bigr]\wedge 1.
\end{align*}
For $s\leq 2,$ the same bound holds for the plug-in estimator $(f\circ \Psi^{-1})(\bar T_n).$
\end{corollary}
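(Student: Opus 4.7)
The plan is to split into two cases depending on whether $s>2$ or $s\le 2$, and in each case reduce to a result already established.

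\textbf{Case $s>2$.} Here I simply invoke Theorem \ref{estim_exp_fam_simple}(i) for the estimator $\tilde T_{f\circ\Psi^{-1}}$. The hypothesis $\|f\circ\Psi^{-1}\|_{C^s(G)}\le 1$ implies in particular that $\|f\circ\Psi^{-1}\|_{L_\infty(G)}\le 1$, hence $\|f\|_{L_\infty(V)}\le 1$, so the truncation constant $M=\|f\|_{L_\infty}$ is bounded by $1$. Taking the supremum of the bound of Theorem \ref{estim_exp_fam_simple}(i) over the unit $C^s$-ball produces exactly the claimed rate $\sigma_{\max}(W)/\sqrt n \vee \sigma_{\max}^s(W)(d/n)^{s/2}$. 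The $\wedge 1$ factor is obtained from the trivial a priori bound $|\tilde T_{f\circ\Psi^{-1}}-f(\theta)|\le 2M\le 2$, so we can always cap the error at a universal constant.

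\textbf{Case $s\le 2$.} For the plug-in estimator $(f\circ\Psi^{-1})(\bar T_n)$ I apply Proposition \ref{prop_s<2} with $g:=f\circ\Psi^{-1}$, $E$ replaced by the Banach space containing the natural parameter, and with $\bar T_n$ in the role of the base estimator of $t=\Psi(\theta)={\mathbb E}_\theta T(X)$. For $s\le 1$ this yields
\[
\|(f\circ\Psi^{-1})(\bar T_n)-f(\theta)\|_{L_2({\mathbb P}_\theta)}\le \|f\circ\Psi^{-1}\|_{C^s(G)}\,\bigl(\sqrt{d_{2s\vee 1}(P_\theta)/n}\bigr)^{\!s},
\]
and for $1<s\le 2$ the analogous bound (with a linear main term of size $\sqrt{a_2(P_\theta)/n}$) applies. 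It remains to verify that $a_2(P_\theta)\lesssim \sigma_{\max}^2(W)$ and $d_{2s}(P_\theta)\lesssim_s \sigma_{\max}^2(W)\,d$ uniformly over $\theta\in V$. The first is elementary: $\sqrt n\langle \bar T_n-t,u\rangle$ is a normalized average of i.i.d.\ zero-mean r.v.\ with variance $\langle\Sigma_\theta u,u\rangle\le\sigma_{\max}^2(W)$. The second uses the discretization $\|x\|\le C\max_{u\in\mathcal M}|\langle x,u\rangle|$ with $\log|\mathcal M|\le d$ together with the subexponential (Bernstein-type) tails of $\langle T(X)-t,u\rangle$ that follow, for $\theta\in V$ with $B(\theta,r)\subset W$, from the analyticity of the log-partition $\psi$ on $W$ (this is the Bernstein-type estimate already used to support Theorem \ref{estim_exp_fam_simple}, see Section \ref{exp_fam_det}). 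A union bound over $\mathcal M$ combined with moment integration then gives $\|\|\bar T_n-t\|\|_{L_p}\lesssim \sigma_{\max}(W)\sqrt{(d+p)/n}$, which yields $d_{2s}(P_\theta)\lesssim_s \sigma_{\max}^2(W)\,d$ as required. Plugging these into Proposition \ref{prop_s<2} gives the stated rate, and the $\wedge 1$ comes again from the trivial bound $|f(\theta)|,\,|(f\circ\Psi^{-1})(\bar T_n)|\le\|f\circ\Psi^{-1}\|_{L_\infty}\le 1$.

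The main obstacle is the moment control on $\|\bar T_n-t\|$ entering $d_{2s}(P_\theta)$ in the plug-in case: it is here that we genuinely use both the norm discretization \eqref{norm_discrete} and the subexponential concentration of linear functionals of $T(X)$ inside the exponential family. Once this is in hand the proof is a two-line reduction in each case.
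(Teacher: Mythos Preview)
Your proposal is correct and follows exactly the route the paper intends (the paper itself does not spell out a proof): the case $s>2$ is Theorem \ref{estim_exp_fam_simple}(i) applied verbatim with the trivial truncation bound giving the $\wedge 1$, and the case $s\le 2$ is Proposition \ref{prop_s<2} fed by the Bernstein-type moment bounds on $\bar T_n$ from Section \ref{exp_fam_det} (specifically Proposition \ref{prop_about_norm}, which yields $a_2(P_\theta)\lesssim\sigma_{\max}^2(W)$ and $d_{2s}(P_\theta)\lesssim_s\sigma_{\max}^2(W)\,d$ under the standing assumption $r\ge \sigma_{\max}(W)^{-1}\sqrt{d/n^{(0)}}$).

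One small technical point to clean up in the $s\le 2$ case: Proposition \ref{prop_s<2} is stated for $f\in C^s(E)$ globally, whereas you only control $\|f\circ\Psi^{-1}\|_{C^s(G)}$, and $\bar T_n$ need not land in $G$ (or even in the domain of $\Psi^{-1}$). The clean fix, matching the philosophy of Theorem \ref{th_1_Bern_AA}, is to split on the event $\{\bar T_n\in B(t,\delta)\}\subset G$: on this event the local H\"older or first-order Taylor bound applies directly with the $C^s(G)$ norm, and on the complement one uses the trivial bound $|(f\circ\Psi^{-1})(\bar T_n)-f(\theta)|\le 2$ together with the exponential tail ${\mathbb P}_\theta\{\|\bar T_n-t\|\ge\delta\}\le\exp\{-c\,n\delta^2/\sigma_{\max}^2(W)\}$, which is negligible under the assumption $\delta\ge C'C\sigma_{\max}(W)\sqrt{d/n}$.
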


The following result shows the minimax optimality of the upper bound of Corrolary \ref{cor_1_estim_func_exp} in the case of exponential families in the Euclidean space  
$E=E^{\ast}={\mathbb R}^d,$ provided that $\sigma_{\rm max}(W)\asymp \sigma_{\rm min}(W).$
 
\begin{proposition}
\label{prop_low_exp}
Let $s>0,$ let $\gamma>0$ be a sufficiently small constant and suppose that open set $G$ contains a ball $B_{\ell_{\infty}}(t_0,\delta)$
of radius $\delta \geq \gamma \sigma_{\rm min}(W) n^{-1/2}.$
Then 
\begin{align*}
\sup_{\|f\circ \Psi^{-1}\|_{C^s(G)}\leq 1}\inf_{\hat T_n}\sup_{\theta \in G} \|\hat T_n -f(\theta)\|_{L_2({\mathbb P}_{\theta})}\gtrsim 
\Bigl[\frac{\sigma_{\rm min}(W)}{\sqrt{n}}
 \bigvee 
  \sigma_{\rm min}^s(W)\Bigl(\frac{d}{n}\Bigr)^{s/2}\Bigr]\wedge 1,
\end{align*}
where the infimum is over all estimators $\hat T_n(X_1,\dots, X_n)$ of functional $f(\theta).$
\end{proposition}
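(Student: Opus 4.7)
\textbf{Proof proposal for Proposition \ref{prop_low_exp}.} The plan is to reduce the problem to the setting of Proposition \ref{sup_min_max_bd} (the generic local minimax lower bound for $d$-dimensional models with a quadratic KL bound), applied in the natural parametrization $t=\Psi(\theta)$. With $g := f\circ \Psi^{-1}$, the hypothesis $\|g\|_{C^s(G)}\leq 1$ translates the problem of estimating $f(\theta)$ into estimating $g(t)$ for $t$ ranging over a subset of $G$, and the data $X_1,\dots,X_n$ are i.i.d.\ from the same exponential family, just indexed by the natural parameter.

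The crux is to exhibit a quadratic bound on the Kullback--Leibler divergence in natural coordinates. I will use the standard identity expressing KL in an exponential family as a Bregman divergence of the Legendre dual. Concretely, since $(\psi^\ast)'(t)=\Psi^{-1}(t)$ and $(\psi^\ast)''(t)=\mathcal I(t)=\Sigma_{\Psi^{-1}(t)}^{-1}$, a direct computation gives
\begin{align*}
K(P_{\Psi^{-1}(t_1)}\,\|\,P_{\Psi^{-1}(t_0)}) \;=\; \psi^\ast(t_1)-\psi^\ast(t_0)-\langle \Psi^{-1}(t_0),\, t_1-t_0\rangle \;=\; \tfrac{1}{2}\bigl\langle \mathcal I(\tilde t)(t_1-t_0),\,t_1-t_0\bigr\rangle
\end{align*}
for some $\tilde t$ on the segment $[t_0,t_1]$ (Taylor's theorem with integral remainder applied to $\psi^\ast$). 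Since $B_{\ell_\infty}(t_0,\delta)\subset G=\Psi(W)$ is convex and contains both endpoints, $\Psi^{-1}(\tilde t)\in W$, so $\|\mathcal I(\tilde t)\|\leq \sigma_{\min}^{-2}(W)$. This yields
\begin{align*}
K(P_{\Psi^{-1}(t_1)}\,\|\,P_{\Psi^{-1}(t_0)}) \;\leq\; \frac{1}{2\sigma_{\min}^2(W)}\,\|t_1-t_0\|^2.
\end{align*}

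With the KL bound in hand, I will apply Proposition \ref{sup_min_max_bd} with the constant $C=1/(\sqrt 2\,\sigma_{\min}(W))$ and radius $\rho:=\gamma' \sigma_{\min}(W)$ where $\gamma'>0$ is chosen small enough to satisfy the smallness condition $\rho\leq \gamma/C$ of that proposition. The hypothesis $\delta\geq \gamma\sigma_{\min}(W)n^{-1/2}$ (with the same small $\gamma$) ensures that $B_{\ell_\infty}(t_0,\rho/\sqrt n)\subset B_{\ell_\infty}(t_0,\delta)\subset G$, so Proposition \ref{sup_min_max_bd} delivers
\begin{align*}
\sup_{\|g\|_{C^s}\leq 1}\inf_{T_n}\sup_{t\in B_{\ell_\infty}(t_0,\rho/\sqrt n)} \mathbb{E}_t(T_n-g(t))^2 \;\gtrsim\; \Bigl(\tfrac{\rho^2}{n}+\bigl(\rho^2\tfrac{d}{n}\bigr)^{s}\Bigr)\wedge 1 \;\asymp\; \Bigl(\tfrac{\sigma_{\min}^2(W)}{n}+\sigma_{\min}^{2s}(W)\bigl(\tfrac{d}{n}\bigr)^{s}\Bigr)\wedge 1.
\end{align*}
Taking square roots (and using $\sqrt{a\wedge 1}=\sqrt a\wedge 1$) yields the claimed $L_2$ bound.

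The main obstacle, such as it is, is the KL estimate: one must identify $(\psi^\ast)''$ with the Fisher information in the natural parametrization and observe that this matrix has operator norm at most $1/\sigma_{\min}^2(W)$ along the segment connecting any two points in $B_{\ell_\infty}(t_0,\delta)$. Once this is in place, the proof is a direct invocation of Proposition \ref{sup_min_max_bd}; there is nothing subtle in the $C^s$-smoothness of $g$, since that is assumed rather than derived.
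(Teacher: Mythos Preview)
Your proof is correct and follows essentially the same route as the paper: both reduce to Proposition~\ref{sup_min_max_bd} in the natural parametrization after establishing the quadratic KL bound $K(P_{\Psi^{-1}(t)}\|P_{\Psi^{-1}(t')})\leq \sigma_{\min}^{-2}(W)\|t-t'\|^2$. The paper obtains this bound via the Bregman representation $K(P_\theta\|P_{\theta'})=\psi(\theta')-\psi(\theta)-\langle\Psi(\theta),\theta'-\theta\rangle\leq \langle\Psi(\theta')-\Psi(\theta),\theta'-\theta\rangle$ combined with $\|\Psi^{-1}\|_{{\rm Lip}(B)}\leq \sigma_{\min}^{-2}(W)$, whereas you use the dual Bregman form in $\psi^*$ and Taylor-expand using $(\psi^*)''=\mathcal I$; these are equivalent and yield the same constant (up to an immaterial factor of~$2$).
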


The proof is given in Section \ref{sec:lower_bounds}.

Denote by 
\begin{align*}
\kappa (V) := \sup_{\theta\in V}\sup_{\|u\|\leq 1}
\frac{\|\langle T(X)-{\mathbb E}_{\theta} T(X),u\rangle\|_{L_4({\mathbb P}_{\theta})}^4}{\|\langle T(X)-{\mathbb E}_{\theta} T(X), u\rangle\|_{L_2({\mathbb P}_{\theta})}^4}
\end{align*}
the maximal kurtosis of r.v. $\langle T(X)-{\mathbb E}_{\theta} T(X),u\rangle, \|u\|\leq 1, \theta \in V.$
It is not hard to check that 
\begin{align*}
\kappa(V) \lesssim \sup_{\theta\in V} \sup_{u\neq 0} \frac{|\psi^{(4)}(\theta)[u,u,u,u]|}{\psi''(\theta)[u,u]^2} +1 \lesssim \frac{\|\psi^{(4)}\|_{L_{\infty}(V)}}{\sigma_{\rm min}^4(V)}+1.
\end{align*}

\begin{corollary}
\label{cor_asymp_norm}
Suppose the assumptions of Theorem \ref{estim_exp_fam_simple} (ii) hold and, in addition, $n^{(0)}\asymp \frac{n}{\log n}.$
Suppose that $\sigma_{\rm max}(W)\lesssim 1$ and $\kappa(V)\lesssim 1.$
Also suppose that $d\leq n^{\alpha}$ for some $\alpha\in (0,1)$ and $s>\frac{1}{1-\alpha}.$ 
Then    
\begin{align}
\label{asymp_normal_exp}
\sup_{\|f\circ \Psi^{-1}\|_{C^s(G)}\leq 1}\sup_{\theta\in V}
W_{2,{\mathbb P}_{\theta}}\Bigl(\sqrt{n}(\tilde{T}_{f\circ \Psi^{-1}}(X_1,\cdots,X_n)-f(\theta)), \sigma_{f,\Psi}(\Psi(\theta))Z\Bigr)\to 0\ {\rm as}\ n\to\infty
\end{align}
and 
\begin{align}
\label{mean_square_exp}
\sup_{\|f\circ \Psi^{-1}\|_{C^s(G)}\leq 1}\sup_{\theta\in V}
\Bigl|\sqrt{n}\Bigl\|\tilde{T}_{f\circ \Psi^{-1}}(X_1,\cdots,X_n)-f(\theta)\Bigr\|_{L_2({\mathbb P}_{\theta})}- \sigma_{f,\Psi}(\Psi(\theta))\Bigr|\to 0\ {\rm as}\ n\to\infty,
\end{align}
where 
\begin{align*}
\sigma_{f,\Psi}^2(t):= \langle \Sigma_{\Psi^{-1}(t)} (f\circ \Psi^{-1})'(t), (f\circ \Psi^{-1})'(t)\rangle, t\in U.
\end{align*}
\end{corollary}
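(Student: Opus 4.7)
The plan is to reduce both assertions to a quantitative central limit theorem in Wasserstein $W_2$ distance applied to the leading linear term furnished by Theorem \ref{estim_exp_fam_simple}(ii). Write $t:=\Psi(\theta)$, $\eta(t):=(f\circ\Psi^{-1})'(t)\in E^{\ast}$, and $\xi_j:=\langle T(X_j)-\Psi(\theta),\eta(t)\rangle$ for $j\in J_1^{(1)}$; these are i.i.d.\ mean-zero r.v.\ under $\mathbb{P}_\theta$, with variance $\langle \Sigma_\theta \eta(t),\eta(t)\rangle=\sigma_{f,\Psi}^2(t)$ (since $\Sigma_\theta=\Sigma_{\Psi^{-1}(t)}$). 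Note that $\bar T_1^{(1)}-\Psi(\theta)=\frac{1}{n_1^{(1)}}\sum_{j\in J_1^{(1)}}(T(X_j)-\Psi(\theta))$ and $n_1^{(1)}=n-n^{(0)}=(1+o(1))n$, so that $\sqrt{n/n_1^{(1)}}\to 1$.

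First, I would use Theorem \ref{estim_exp_fam_simple}(ii) to write
\begin{align*}
\tilde T_{f\circ\Psi^{-1}}(X_1,\dots,X_n)-f(\theta)=\langle \bar T_1^{(1)}-\Psi(\theta),\eta(t)\rangle + R_n,
\end{align*}
where, under the hypotheses $\sigma_{\max}(W)\lesssim 1$, $n^{(0)}\asymp n/\log n$, $d\leq n^{\alpha}$, the $L_2$-norm of the remainder satisfies
\begin{align*}
\|R_n\|_{L_2(\mathbb{P}_\theta)}\lesssim_{s} \frac{1}{\sqrt{n}}\sqrt{\frac{d\log n}{n}}\ \vee\ \Bigl(\frac{d\log n}{n}\Bigr)^{s/2}
\lesssim_{s} n^{-1/2}\Bigl(n^{-(1-\alpha)/2}\sqrt{\log n}\Bigr)\ \vee\ n^{-s(1-\alpha)/2}(\log n)^{s/2},
\end{align*}
uniformly over $\|f\circ\Psi^{-1}\|_{C^s(G)}\leq 1$ and $\theta\in V$. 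Both terms are $o(n^{-1/2})$ precisely because $\alpha<1$ and $s>1/(1-\alpha)$. Hence $\sqrt{n}\,R_n\to 0$ in $L_2(\mathbb{P}_\theta)$ uniformly in $f,\theta$ in the prescribed classes.

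Next, I would apply a $W_2$-type Berry--Esseen bound for the normalized sum $(n_1^{(1)})^{-1/2}\sum_{j\in J_1^{(1)}}\xi_j$. The moment conditions are uniform: $\|\eta(t)\|\leq\|f\circ\Psi^{-1}\|_{C^s(G)}\leq 1$ yields $\mathbb{E}_\theta\xi_j^2=\sigma_{f,\Psi}^2(t)\leq\sigma_{\max}^2(W)\lesssim 1$, and the kurtosis bound $\kappa(V)\lesssim 1$ gives $\|\xi_j\|_{L_4(\mathbb{P}_\theta)}\lesssim \|\xi_j\|_{L_2(\mathbb{P}_\theta)}\lesssim 1$. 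A standard CLT in $W_2$ (e.g.\ the Rio-type bound for sums of i.i.d.\ r.v.\ with bounded higher moments, see \cite{Rio}) therefore gives
\begin{align*}
W_{2,\mathbb{P}_\theta}\Bigl(\tfrac{1}{\sqrt{n_1^{(1)}}}\sum_{j\in J_1^{(1)}}\xi_j,\ \sigma_{f,\Psi}(t)Z\Bigr)\to 0
\end{align*}
as $n\to\infty$, uniformly in $\|f\circ\Psi^{-1}\|_{C^s(G)}\leq 1$ and $\theta\in V$. Combined with the convergence $\sqrt{n/n_1^{(1)}}\to 1$ and boundedness of $\sigma_{f,\Psi}(t)$, this yields the uniform $W_2$ convergence of $\sqrt{n}\langle \bar T_1^{(1)}-\Psi(\theta),\eta(t)\rangle$ to $\sigma_{f,\Psi}(t)Z$, and the conclusion \eqref{asymp_normal_exp} follows from the triangle inequality in $W_2$ together with the $L_2$-bound on $\sqrt{n}R_n$ above. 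The mean-square statement \eqref{mean_square_exp} follows similarly: $\sqrt{n}\|\langle \bar T_1^{(1)}-\Psi(\theta),\eta(t)\rangle\|_{L_2(\mathbb{P}_\theta)}=\sqrt{n/n_1^{(1)}}\,\sigma_{f,\Psi}(t)\to\sigma_{f,\Psi}(t)$ uniformly, and one more application of the triangle inequality in $L_2(\mathbb{P}_\theta)$ together with $\|\sqrt{n}R_n\|_{L_2(\mathbb{P}_\theta)}=o(1)$ gives the result. The main (but modest) obstacle is ensuring the $W_2$-CLT bound can be made uniform over $f$ and $\theta\in V$ simultaneously; this is handled by noting that the moment hypotheses reduce to the universal bounds $\sigma_{\max}(W)\lesssim 1$ and $\kappa(V)\lesssim 1$, which are independent of $f$ and $\theta\in V$.
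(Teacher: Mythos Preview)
Your proof is correct and follows essentially the same route as the paper's: decompose via Theorem \ref{estim_exp_fam_simple}(ii), show the remainder is $o(n^{-1/2})$ in $L_2$ using $d\leq n^{\alpha}$, $s>1/(1-\alpha)$, $n^{(0)}\asymp n/\log n$, and apply Rio's $W_2$--CLT bound to the linear term with the kurtosis hypothesis providing the uniform moment control. The paper makes the Rio bound slightly more explicit (getting the rate $\sigma_{\max}(W)\sqrt{\kappa(V)}/\sqrt{n}$), but otherwise the arguments coincide.
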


The proof is given in Section \ref{sec_L_p_exp}.

\begin{remark}
\normalfont
In \cite{Koltchinskii_2022}, a result similar to Corollary \ref{cor_asymp_norm} was proved for functional estimators based on iterated 
bootstrap, but only in the case of {\it log-conave} exponential families (that is, when measure $\mu$ is log-concave). Its extension beyond 
the log-concave case seems impossible, at least with the methods developed in \cite{Koltchinskii_2022}.
\end{remark}

\begin{remark}
\normalfont
Note that 
\begin{align*}
\sigma_{f,\Psi}^2(t)= \langle {\mathcal I}(t)^{-1} (f\circ \Psi^{-1})'(t),(f\circ \Psi^{-1})'(t)\rangle,
\end{align*}
where ${\mathcal I}(t)$ is the Fisher information for the model $\{P_{\Psi^{-1}(t)}: t\in G\}$ with the natural parameter $t.$
One can apply the local minimax bound of Theorem \ref{vanTrees_bd} to this model along with \eqref{mean_square_exp}
to show that estimator $\tilde{T}_{f\circ \Psi^{-1}}(X_1,\cdots,X_n)$ is locally asymptotically minimax provided that $d\leq n^{\alpha}$
for some $\alpha\in [1/2,1)$ and $s>\frac{1}{1-\alpha}$ (for $\alpha <1/2$ and $s>\frac{1}{1-\alpha},$ the plug-in estimator $(f\circ \Psi^{-1})(\bar T_n)$
is locally asymptotically minimax). 
\end{remark}

\section{Upper bounds: further details and proofs}
\label{technical}

The proofs of the results of Section \ref{Main_results} easily follow from more general and somewhat technical facts stated and proved below.
For $k=1,\dots, m,$ we assume that estimators $\hat \theta^{(0)}, \hat \theta_j^{(k)}, j=1,\dots, k$ of parameter $\theta=\theta(P),$ based on i.i.d. observations $X_1,\dots, X_n\sim P\in {\mathcal P},$ are independent r.v.
We will use the following notations:
\begin{align*}
A_p(P,\delta):=\max_{1\leq k\leq m}\max_{1\leq j\leq k}\sup_{\|u\|\le 1}\Bigl\|\langle \hat \theta_j^{(k)}-\theta(P),u\rangle I(\|\hat \theta_j^{(k)}-\theta(P)\|<\delta)\Bigr\|_{L_p({\mathbb P}_P)}
\end{align*}
and 
\begin{align*}
B_p(P,\delta):=\max_{1\leq k\leq m}\max_{1\leq j\leq k}
\Bigl\|\|\hat \theta_j^{(k)}-\theta(P)\| I(\|\hat \theta_j^{(k)}-\theta(P)\|<\delta)\Bigr\|_{L_p({\mathbb P}_P)}.
\end{align*}
We will also denote 
\begin{align*}
\tilde \beta_p(P, \delta):= \Bigl\|\|\hat \theta^{(0)}-\theta(P)\|I(\|\hat \theta^{(0)}-\theta(P)\|<\delta)\Bigr\|_{L_p({\mathbb P}_P)}.
\end{align*}
For $\delta=+\infty,$ we set $A_p(P):=A_p(P,+\infty),$ $B_p(P):=B_p(P,+\infty)$ and $\tilde \beta_p(P):=\tilde \beta_p(P, +\infty)$
Note that 
\begin{align*}
A_p(P,\delta) \leq A_p(P)\wedge \delta,\ \  B_p(P,\delta) \leq B_p(P)\wedge \delta,\ \ \tilde \beta_p(P, \delta)\leq \tilde \beta_p(P)\wedge \delta.
\end{align*}
Based on estimators $\hat \theta^{(0)}, \hat \theta_j^{(k)}, j=1,\dots, k, k=1,\dots, m,$ we define estimator $T_f(X_1,\dots, X_n)$ by \eqref{basic_T_f}
and 
estimator $\tilde T_{f}(X_1,\dots, X_n)=\tilde T_{f,M}(X_1,\dots, X_n)$ by \eqref{basic_T_fM}.

The following result will be proved (which implies Theorem \ref{th_1_asssume_AAA}).

\begin{theorem}
\label{th_1}
Let $s:=m+\rho$ for some $m\geq 2$ and $\rho\in (0,1],$ and suppose that $f'\in C^{s-1}(E).$
Then 
\begin{align*}
    &\left\|T_f(X_1,\cdots,X_n)-f(\theta(P))\right\|_{L_p(\mathbb{P}_P)}
    \\
    &
    \leq
\sum_{k=1}^m \frac{2^{k-2}\|f^{(k)}\|_{L_{\infty}}}{(k-1)!} A_p(P) \left(B_p^{k-1}(P)+\tilde \beta_{p(k-1)}^{k-1}(P)\right)
+ \frac{1}{m!}\|f^{(m)}\|_{{\rm Lip}_{\rho}} \tilde \beta_{ps}^s (P).
\end{align*}
Moreover, 
\begin{align*}
    &\left\|T_f(X_1,\cdots,X_n)-f(\theta(P))- \langle \hat \theta_1^{(1)}-\theta(P), f'(\theta(P))\rangle\right\|_{L_p(\mathbb{P}_P)}
    \\
    &
\leq \|f''\|_{L_{\infty}}A_{p}(P) \tilde \beta_p(P)+
\sum_{k=2}^m \frac{2^{k-2}\|f^{(k)}\|_{L_{\infty}}}{(k-1)!} A_p(P) \left(B_p^{k-1}(P)+\tilde \beta_{p(k-1)}^{k-1}(P)\right)
\\
&
+ \frac{1}{m!}\|f^{(m)}\|_{{\rm Lip}_{\rho}} \tilde \beta_{ps}^s (P).
\end{align*}
\end{theorem}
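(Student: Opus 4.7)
The plan is to Taylor-expand $f(\theta(P))$ of order $m$ around the base estimator $\hat\theta^{(0)}$ and compare with the definition of $T_f(X_1,\dots,X_n)$. Writing
$$f(\theta(P)) = f(\hat\theta^{(0)}) + \sum_{k=1}^m \frac{1}{k!}f^{(k)}(\hat\theta^{(0)})[\theta(P)-\hat\theta^{(0)},\dots,\theta(P)-\hat\theta^{(0)}] + R_m,$$
with $|R_m| \leq \|f^{(m)}\|_{{\rm Lip}_\rho}\|\hat\theta^{(0)}-\theta(P)\|^s/m!$, we obtain
$$T_f(X_1,\dots,X_n) - f(\theta(P)) = \sum_{k=1}^m \frac{\Delta_k}{k!} - R_m,$$
where
$$\Delta_k := f^{(k)}(\hat\theta^{(0)})[\hat\theta_1^{(k)}-\hat\theta^{(0)},\dots,\hat\theta_k^{(k)}-\hat\theta^{(0)}] - f^{(k)}(\hat\theta^{(0)})[\theta(P)-\hat\theta^{(0)},\dots,\theta(P)-\hat\theta^{(0)}].$$
The remainder is handled by the identity $\bigl\|\|Y\|^s\bigr\|_{L_p} = \bigl\|\|Y\|\bigr\|_{L_{ps}}^s$, which yields $\|R_m\|_{L_p(\mathbb{P}_P)} \leq \|f^{(m)}\|_{{\rm Lip}_\rho}\tilde\beta_{ps}^s(P)/m!$.

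For each $\Delta_k$, I will condition on $\hat\theta^{(0)}$. Since $\hat\theta_1^{(k)},\dots,\hat\theta_k^{(k)}$ are computed from disjoint sub-samples of the data that are independent of the sub-sample producing $\hat\theta^{(0)}$, the conditional distribution of these inner estimators equals their unconditional one. Applying the multilinear-form inequality of lemmas \ref{Prop_1A} and \ref{Prop_2A} with $t = \hat\theta^{(0)}$ fixed produces, almost surely,
$$\bigl(\mathbb{E}[|\Delta_k|^p \mid \hat\theta^{(0)}]\bigr)^{1/p} \leq k\,\|f^{(k)}(\hat\theta^{(0)})\|\, A_p(P)\,\bigl(B_p(P) + \|\hat\theta^{(0)}-\theta(P)\|\bigr)^{k-1}.$$
Taking $L_p(\mathbb{P}_P)$ norms on both sides, bounding $\|f^{(k)}(\hat\theta^{(0)})\|$ by $\|f^{(k)}\|_{L_\infty}$ and using $\bigl\|\|\hat\theta^{(0)}-\theta(P)\|^{k-1}\bigr\|_{L_p} = \tilde\beta_{p(k-1)}^{k-1}(P)$, together with the triangle inequality and the elementary bound $(a+b)^{k-1} \leq 2^{k-2}(a^{k-1}+b^{k-1})$ (with the natural convention that the factor equals $1$ for $k=1$), gives
$$\|\Delta_k\|_{L_p(\mathbb{P}_P)} \leq k\,\|f^{(k)}\|_{L_\infty}\, A_p(P)\cdot 2^{k-2}\bigl(B_p^{k-1}(P) + \tilde\beta_{p(k-1)}^{k-1}(P)\bigr).$$
Dividing by $k!$, summing over $k=1,\dots,m$, and adding the remainder bound yields the first claimed inequality.

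For the second inequality, I isolate the $k=1$ contribution. Since $\Delta_1 = f'(\hat\theta^{(0)})[\hat\theta_1^{(1)}-\theta(P)]$,
$$\Delta_1 - \langle\hat\theta_1^{(1)}-\theta(P), f'(\theta(P))\rangle = \langle\hat\theta_1^{(1)}-\theta(P),\, f'(\hat\theta^{(0)}) - f'(\theta(P))\rangle.$$
Conditioning on $\hat\theta^{(0)}$, the vector $f'(\hat\theta^{(0)})-f'(\theta(P))$ is deterministic while $\hat\theta_1^{(1)}$ is independent of $\hat\theta^{(0)}$, so the definition of $A_p(P)$ bounds the conditional $L_p$-norm of this linear form by $A_p(P)\|f'(\hat\theta^{(0)})-f'(\theta(P))\|$. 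The mean-value bound $\|f'(\hat\theta^{(0)})-f'(\theta(P))\| \leq \|f''\|_{L_\infty}\|\hat\theta^{(0)}-\theta(P)\|$ followed by an $L_p(\mathbb{P}_P)$-integration yields the term $\|f''\|_{L_\infty}A_p(P)\tilde\beta_p(P)$. Combining this with the bounds on $\Delta_k/k!$ for $k\geq 2$ and the remainder completes the argument.

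The main obstacle is the careful bookkeeping of the conditional argument: translating the conditional $L_p$-inequality supplied by the multilinear-form lemma into an unconditional one requires separating the dependence on $\hat\theta^{(0)}$ (which enters through the scalar $\|\hat\theta^{(0)}-\theta(P)\|^{k-1}$) from the dependence on the independent inner estimators, and the resulting index shift $L_p \mapsto L_{p(k-1)}$ must be tracked consistently so that the quantity $\tilde\beta_{p(k-1)}^{k-1}(P)$ appears with the precise exponents in the statement.
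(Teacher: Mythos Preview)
Your proof is correct and follows essentially the same approach as the paper: Taylor expansion around $\hat\theta^{(0)}$, applying Lemma~\ref{Prop_2A} conditionally on $\hat\theta^{(0)}$ to bound each $\Delta_k$, the convexity bound $(a+b)^{k-1}\le 2^{k-2}(a^{k-1}+b^{k-1})$ together with the index shift $\|\,\|\hat\theta^{(0)}-\theta\|^{k-1}\|_{L_p}=\tilde\beta_{p(k-1)}^{k-1}(P)$, and the same treatment of the $k=1$ term via $\langle\hat\theta_1^{(1)}-\theta,\,f'(\hat\theta^{(0)})-f'(\theta)\rangle$. The only cosmetic difference is that the paper first proves the truncated version (with indicators $I(\|\cdot\|<\delta)$) in Lemma~\ref{lm_on_T_f_t} and bound~\eqref{bd_on_T_cond}, and then sets $\delta=+\infty$ to recover Theorem~\ref{th_1}; you go directly to the untruncated case, which is fine.
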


For truncated estimator $\tilde T_f(X_1,\dots, X_n),$ we will prove the following local version of the previous result, which implies Theorem \ref{th_1_AA_asssume_AAA}.

\begin{theorem}
\label{th_1_AA}
Let $f:E\mapsto {\mathbb R}$ be a uniformly bounded functional and let $\theta=\theta(P)$ for some $P\in {\mathcal P}.$
Suppose $f$ is $m$ times Fr\`echet continuously differentiable in $U:=B(\theta;\delta)$ for some $\delta\in (0,1]$ and, moreover,
$f^{(m)}$ satisfies the Hölder condition with exponent $\rho\in(0,1]$ in $U.$ 
Then, for $M\geq \|f\|_{L_{\infty}}$ and for all $p'\geq p,$ 
\begin{align*}
    &
    \left\|\tilde{T}_f(X_1,\cdots,X_n)-f(\theta(P))\right\|_{L_p(\mathbb{P}_P)}
    \\
    &
    \leq  
\sum_{k=1}^m \frac{2^{k-2}\|f^{(k)}\|_{L_{\infty}(U)}}{(k-1)!} A_p(P,\delta) \left(B_p^{k-1}(P,\delta)+\tilde \beta_{p(k-1)}^{k-1}(P,\delta)\right)
+ \frac{1}{m!}\|f^{(m)}\|_{{\rm Lip}_{\rho}(U)} \tilde \beta_{ps}^s (P,\delta)    
\\
&
+(\|f\|_{L_{\infty}}+M)\biggl(\left(\frac{m(m+1)}{2}\right)^{1/p}\biggl(\frac{B_{p'}(P)}{\delta}\biggr)^{p'/p}+ \biggl(\frac{\tilde \beta_{p'}(P)}{\delta}\biggr)^{p'/p}\biggr).
\end{align*}
Moreover, if $M\geq \|f\|_{\infty}+ \|f\|_{{\rm Lip}}\delta,$ then, for all $p'\geq p,$
\begin{align*}
    &\left\|\tilde T_f(X_1,\cdots,X_n)-f(\theta(P))- \langle \hat \theta_1^{(1)}-\theta(P), f'(\theta(P))\rangle\right\|_{L_p(\mathbb{P}_P)}
   \\
   &
   \leq  \|f''\|_{L_{\infty}(U)}A_{p}(P, \delta) \tilde \beta_p(P,\delta)
   \\
   &
   +
\sum_{k=2}^m \frac{2^{k-2}\|f^{(k)}\|_{L_{\infty}(U)}}{(k-1)!} A_p(P,\delta) \left(B_p^{k-1}(P,\delta)+\tilde \beta_{p(k-1)}^{k-1}(P,\delta)\right)    
\\
&
  + \frac{1}{m!}\|f^{(m)}\|_{{\rm Lip}_{\rho}(U)} \tilde \beta_{ps}^s (P,\delta)
  \\
  &
 +C_m (\|f\|_{L_{\infty}}+\|f'\|_{L_{\infty}}+M)(1+B_p(P))
\biggl(\biggl(\frac{B_{p'}(P)}{\delta}\biggr)^{p'/p}+ \biggl(\frac{\tilde \beta_{p'}(P)}{\delta}\biggr)^{p'/p}\biggr),
 \end{align*}
where $C_m := \frac{m(m-1)}{2}\vee \left(\frac{m(m+1)}{2}\right)^{1/p}+1.$
\end{theorem}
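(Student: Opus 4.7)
Introduce the good events $G_0=\{\|\hat\theta^{(0)}-\theta\|<\delta\}$, $G^{(k)}_j=\{\|\hat\theta^{(k)}_j-\theta\|<\delta\}$ for $1\le j\le k\le m$, and $G=G_0\cap\bigcap_{k,j}G^{(k)}_j$. Form the truncated increments $\eta:=(\hat\theta^{(0)}-\theta)I_{G_0}$ and $\zeta^{(k)}_j:=(\hat\theta^{(k)}_j-\theta)I_{G^{(k)}_j}$, together with the shadow points $\tilde\theta^{(0)}:=\theta+\eta$, $\tilde\theta^{(k)}_j:=\theta+\zeta^{(k)}_j$. By construction these shadow points lie in $\overline U$, inherit the mutual independence of the base estimators, and satisfy $\sup_{\|u\|\le 1}\|\langle\zeta^{(k)}_j,u\rangle\|_{L_p}\le A_p(P,\delta)$, $\|\|\zeta^{(k)}_j\|\|_{L_p}\le B_p(P,\delta)$, $\|\|\eta\|\|_{L_p}\le\tilde\beta_p(P,\delta)$. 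The \emph{shadow functional estimator}
\begin{equation*}
T^{\ast}_f := f(\tilde\theta^{(0)}) + \sum_{k=1}^m \frac{1}{k!}\, f^{(k)}(\tilde\theta^{(0)})[\tilde\theta^{(k)}_1-\tilde\theta^{(0)},\ldots,\tilde\theta^{(k)}_k-\tilde\theta^{(0)}]
\end{equation*}
is well defined (since $\tilde\theta^{(0)}\in U$) and coincides with $T_f$ on $G$.

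For part (i), I will apply the $m$-th order Taylor expansion of $f(\theta)$ around $\tilde\theta^{(0)}\in U$ along the segment inside the convex ball $U$, whose remainder is controlled by $\|f^{(m)}\|_{{\rm Lip}_\rho(U)}\|\eta\|^s/m!$. Subtracting this expansion from the formula for $T^{\ast}_f$ yields a sum of terms $f^{(k)}(\tilde\theta^{(0)})[\zeta^{(k)}_1-\eta,\ldots,\zeta^{(k)}_k-\eta]-f^{(k)}(\tilde\theta^{(0)})[-\eta,\ldots,-\eta]$; conditionally on $\tilde\theta^{(0)}$, each is bounded in $L_p$ by the multilinear-form inequality of Lemmas \ref{Prop_1A}--\ref{Prop_2A} by $k\,\|f^{(k)}\|_{L_\infty(U)}A_p(P,\delta)(B_p(P,\delta)+\|\eta\|)^{k-1}$. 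Integration in $\tilde\theta^{(0)}$, combined with $(a+b)^{k-1}\le 2^{k-2}(a^{k-1}+b^{k-1})$ and $\|\|\eta\|^{k-1}\|_{L_p}=\|\|\eta\|\|_{L_{p(k-1)}}^{k-1}\le\tilde\beta_{p(k-1)}^{k-1}(P,\delta)$, reproduces the Taylor main term. For the truncation step I use that $M\ge\|f\|_{L_\infty}\ge|f(\theta)|$ makes the projection onto $[-M,M]$ a contraction towards $f(\theta)$, so $|(\tilde T_f-f(\theta))I_G|\le|T^{\ast}_f-f(\theta)|$; on $G^c$, the deterministic bound $|\tilde T_f-f(\theta)|\le M+\|f\|_{L_\infty}$ together with the Markov bounds $P(G_0^c)\le(\tilde\beta_{p'}(P)/\delta)^{p'}$, $P(G^{(k)c}_j)\le(B_{p'}(P)/\delta)^{p'}$, a union bound, and $(a+b)^{1/p}\le a^{1/p}+b^{1/p}$ produce the bad-event summand.

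For part (ii), I shadow the linear term by $\tilde L:=f'(\theta)[\zeta^{(1)}_1]$, which equals $L:=\langle\hat\theta^{(1)}_1-\theta,f'(\theta)\rangle$ on $G^{(1)}_1$. After subtracting $\tilde L$, the $k=1$ contribution to $T^{\ast}_f-f(\theta)$ becomes $(f'(\tilde\theta^{(0)})-f'(\theta))[\zeta^{(1)}_1]$; the bound $\|f'(\tilde\theta^{(0)})-f'(\theta)\|\le\|f''\|_{L_\infty(U)}\|\eta\|$ combined with the independence of $\eta$ and $\zeta^{(1)}_1$ gives the refined first-order contribution $\|f''\|_{L_\infty(U)}A_p(P,\delta)\tilde\beta_p(P,\delta)$, while the $k\ge 2$ terms and the Taylor remainder are handled verbatim as in part (i). The stronger hypothesis $M\ge\|f\|_{L_\infty}+\|f\|_{{\rm Lip}}\delta$ ensures $|f(\theta)+\tilde L|\le M$ pointwise, so truncation remains a contraction with respect to the random center $f(\theta)+\tilde L$ and $|(\tilde T_f-f(\theta)-\tilde L)I_G|\le|T^{\ast}_f-f(\theta)-\tilde L|$. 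The residual is $\tilde L-L=-\langle\hat\theta^{(1)}_1-\theta,f'(\theta)\rangle I_{G^{(1)c}_1}$, and the bad-event contribution reduces to bounding $\|\|\hat\theta^{(1)}_1-\theta\|I_{G^c}\|_{L_p}$. Splitting $G^c$ into $G_0^c$, $G^{(k)c}_j$ with $(k,j)\neq(1,1)$ (where independence of $\hat\theta^{(1)}_1$ from the indicator plus Markov yields factors $B_p(P)\,(B_{p'}/\delta)^{p'/p}$ and $B_p(P)\,(\tilde\beta_{p'}/\delta)^{p'/p}$), and the ``self-dependent'' event $G^{(1)c}_1$ (handled by $E[\|X\|^pI(\|X\|\ge\delta)]\le\|X\|_{L_{p'}}^{p'}/\delta^{p'-p}$), produces the clean $(1+B_p(P))$ factor of the theorem.

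The principal difficulty is the interaction of the hard truncation with the refined linear approximation of part (ii): the naive bound $\|\tilde T_f-f(\theta)-L\|_{L_p}\le\|\tilde T_f-f(\theta)\|_{L_p}+\|L\|_{L_p}$ would contaminate the estimate with a $\|f'\|_{L_\infty}A_p(P,\delta)$-term that destroys the refined rate. The choice of shadow $\tilde L=f'(\theta)[\zeta^{(1)}_1]$, whose pointwise bound $|\tilde L|\le\|f\|_{{\rm Lip}}\delta$ matches exactly the strengthened requirement $M\ge\|f\|_{L_\infty}+\|f\|_{{\rm Lip}}\delta$, is precisely what keeps the truncation a contraction after the centering is shifted by $\tilde L$; the separate treatment of the self-dependent event $G^{(1)c}_1$ is what yields the clean $(1+B_p(P))$ multiplier in the bad-event term.
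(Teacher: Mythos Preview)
Your proof is correct and follows essentially the same route as the paper's: the same Taylor expansion around the base point, the same conditional application of the multilinear-form bound (Lemmas \ref{Prop_1A}--\ref{Prop_2A}), the same contraction argument for the truncation, and the same case-splitting of $G^c$ for the bad-event term including the separate treatment of the self-dependent piece $G^{(1)c}_1$. Your ``shadow variable'' device (pre-truncating the increments so that the indicator $I_G$ is absorbed into the definition of $\eta,\zeta^{(k)}_j$) is a clean repackaging of the paper's explicit-indicator bookkeeping, but it is not a genuinely different argument.
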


We now turn to the proof of theorems \ref{th_1} and \ref{th_1_AA}.

\begin{proof}
The proofs rely on the following simple lemmas. 

\begin{lemma}
\label{Prop_1A}
Let $M[x_1,\cdots,x_k]$ be a bounded symmetric $k$-linear form on $E$ and $\xi_1,\cdots,\xi_k$ be independent random variables in $E$. The following bound holds for all $p\ge 1$ and $\delta>0$:
\begin{align*}
&
\mathbb{E}\left|M[\xi_1,\cdots,\xi_k]I\left(\max_{1\le j\le k}\|\xi_j\|<\delta\right)\right|^p
\le\|M\|^p \sup_{\|u\|\le 1}\mathbb{E}|\langle \xi_k,u\rangle|^p I(\|\xi_k\|<\delta)
\prod_{j=1}^{k-1}\mathbb{E}\|\xi_j\|^p I(\|\xi_j\|<\delta).
\end{align*}
\end{lemma}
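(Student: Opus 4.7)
The plan is to prove this by a conditioning argument that exploits the multi-linearity of $M$: fix the first $k-1$ variables and reduce to a one-dimensional statement about $\xi_k$.

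More precisely, first I would hold $\xi_1,\dots,\xi_{k-1}$ fixed and define the random linear functional
\[
\hat u := M[\xi_1,\dots,\xi_{k-1},\,\cdot\,]\in E^{\ast}.
\]
By definition of the operator norm of a multi-linear form, $\|\hat u\|\leq \|M\|\,\prod_{j=1}^{k-1}\|\xi_j\|$, and of course $M[\xi_1,\dots,\xi_k]=\langle \xi_k,\hat u\rangle$. Next I would write
\[
I\Bigl(\max_{1\le j\le k}\|\xi_j\|<\delta\Bigr) = I(\|\xi_k\|<\delta)\,\prod_{j=1}^{k-1}I(\|\xi_j\|<\delta),
\]
so that the indicator factors along the same tensor decomposition as $M$.

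The key step is then to take conditional expectation with respect to $(\xi_1,\dots,\xi_{k-1})$. Using the independence of $\xi_k$ from the remaining variables, and the elementary homogeneity identity $|\langle \xi_k,\hat u\rangle|^p = \|\hat u\|^p\,|\langle \xi_k,\hat u/\|\hat u\|\rangle|^p$ on $\{\hat u\neq 0\}$, I get
\[
\E\Bigl[\bigl|\langle \xi_k,\hat u\rangle\bigr|^p I(\|\xi_k\|<\delta)\,\Big|\,\xi_1,\dots,\xi_{k-1}\Bigr]
\leq \|\hat u\|^p \sup_{\|u\|\le 1}\E\bigl|\langle \xi_k,u\rangle\bigr|^p I(\|\xi_k\|<\delta).
\]
Plugging in the bound $\|\hat u\|^p\leq \|M\|^p\prod_{j=1}^{k-1}\|\xi_j\|^p$ and taking the outer expectation over $(\xi_1,\dots,\xi_{k-1})$, independence of these variables lets the expectation factor as
\[
\E\Bigl[\prod_{j=1}^{k-1}\|\xi_j\|^p I(\|\xi_j\|<\delta)\Bigr]
= \prod_{j=1}^{k-1}\E\|\xi_j\|^p I(\|\xi_j\|<\delta),
\]
which yields exactly the asserted bound.

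There is no real obstacle: the argument is a textbook conditioning+duality trick and does not rely on symmetry of $M$ (so the roles of the $\xi_j$'s are actually interchangeable — the statement singles out $\xi_k$ only for notational convenience). The only mild subtlety is the case $\hat u=0$, which is trivial since then $\langle \xi_k,\hat u\rangle=0$; and the dual-norm identification of the multi-linear form with the linear functional $\hat u$, for which I would simply cite the definition of $\|M\|$ given earlier in the paper.
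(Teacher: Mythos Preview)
Your proposal is correct and follows essentially the same approach as the paper: both define the random linear functional $\hat u=\tilde M[\xi_1,\dots,\xi_{k-1}]\in E^{\ast}$ induced by freezing the first $k-1$ arguments, bound its norm by $\|M\|\prod_{j=1}^{k-1}\|\xi_j\|$, condition on $(\xi_1,\dots,\xi_{k-1})$ to pull out the $\sup_{\|u\|\le 1}$ factor for $\xi_k$, and then factor the remaining expectation by independence. The only cosmetic difference is that the paper names the $(k-1)$-linear form $\tilde M$ explicitly and verifies $\|\tilde M\|=\|M\|$, whereas you work directly with $\hat u$.
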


\begin{proof}
Define a $(k-1)$-linear form $\tilde M[x_1,\dots,x_{k-1}]$ on $E$ with values in $E^{\ast}$ as follows:
\begin{align*}
\langle x, \tilde M[x_1,\dots,x_{k-1}]\rangle := M[x_1,\dots, x_{k-1}, x], x\in E.
\end{align*}
Clearly, 
\begin{align*}
\|\tilde M\|
&= \sup_{\|x_1\|,\dots, \|x_{k-1}\|\leq 1}\|\tilde M[x_1,\dots,x_{k-1}]\| = \sup_{\|x_1\|,\dots, \|x_{k-1}\|\leq 1}\sup_{\|x\|\leq 1}\langle x, \tilde M[x_1,\dots,x_{k-1}]\rangle
\\
&
=\sup_{\|x_1\|,\dots, \|x_{k-1}\|, \|x\|\leq 1}M[x_1,\dots, x_{k-1}, x]= \|M\|.
\end{align*}
Thus, we have 
    \begin{align*}
        &\mathbb{E}\left|M[\xi_1,\cdots,\xi_k]I\left(\max_{1\le j\le k}\|\xi_j\|< \delta\right)\right|^p\\&=\mathbb{E}\mathbb{E}_{\xi_k}\left|\left\langle\xi_k,\tilde{M}[\xi_1,\cdots,\xi_{k-1}]\right\rangle I\left(\max_{1\le j\le k}\|\xi_j\|< \delta\right)\right|^p\\
        &\le \sup_{\|u\|\le 1}\mathbb{E}|\langle\xi_k,u\rangle|^p I(\|\xi_k\|< \delta)\cdot\mathbb{E}
        \left\|\tilde{M}[\xi_1,\cdots,\xi_{k-1}]\right\|^pI\left(\max_{1\le j\le k-1}\|\xi_j\|< \delta\right)\\
        &\le \sup_{\|u\|\le 1}\mathbb{E}|\langle\xi_k,u\rangle|^p I(\|\xi_k\|< \delta)\cdot\|\tilde{M}\|^p\mathbb{E}\left(\prod_{j=1}^{k-1}\|\xi_j\|^p\right)I\left(\max_{1\le j\le k-1}\|\xi_j\|<\delta\right)\\
        &\le \|M\|^p \sup_{\|u\|\le 1}\mathbb{E}|\langle\xi_k,u\rangle|^p I(\|\xi_k\|< \delta)\prod_{j=1}^{k-1}\mathbb{E}\|\xi_j\|^pI\left(\|\xi_j\|< \delta\right),       
    \end{align*}
implying the claim of the proposition.
\qed
\end{proof}

\begin{lemma}
\label{Prop_2A}
Let $M[x_1,\cdots,x_k]$ be a bounded symmetric $k$-linear form  on $E$. Let $\xi_1,\cdots,\xi_k$ be independent random variables in $E$ and let $w\in E$. For $p\ge 1$, denote
\begin{align*}
A:=\max_{1\le j\le k}\sup_{\|u\|\le 1}\|\langle \xi_j-w,u\rangle I(\|\xi_j-w\|<\delta)\|_{L_p}
\end{align*}
and 
\begin{align*}
B:=\max_{1\le j\le k}\Big\|\|\xi_j-w\|I(\|\xi_j-w\|<\delta)\Big\|_{L_p}.
\end{align*}
The following bound holds for all $\delta>0$:
\begin{align*}
    &\left\|\left(M[\xi_1,\cdots,\xi_k]-M[w,\cdots,w]\right)I\left(\max_{1\le j\le k}\|\xi_j-w\|< \delta\right)\right\|_{L_p}
    \leq k \|M\| A (B+\|w\|)^{k-1}.
    \end{align*}
\end{lemma}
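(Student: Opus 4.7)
The plan is to combine a telescoping identity with Lemma \ref{Prop_1A} applied to suitable multilinear forms. First, I would write the telescoping decomposition
\[
M[\xi_1,\dots,\xi_k]-M[w,\dots,w]=\sum_{j=1}^{k}M[\xi_1,\dots,\xi_{j-1},\xi_j-w,w,\dots,w],
\]
so that the $L_p$-norm of the left-hand side (restricted to the event $\max_j\|\xi_j-w\|<\delta$) is bounded by the sum of the $L_p$-norms of the $k$ summands under the same event.

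Next, I would fix $j$ and, using multilinearity in the first $j-1$ arguments, expand $\xi_i=(\xi_i-w)+w$ for $i<j$ to rewrite the $j$-th summand as
\[
\sum_{S\subseteq\{1,\dots,j-1\}}M[\epsilon_1^{(S)},\dots,\epsilon_{j-1}^{(S)},\xi_j-w,w,\dots,w],
\]
where $\epsilon_i^{(S)}=\xi_i-w$ for $i\in S$ and $\epsilon_i^{(S)}=w$ otherwise. Each such term is a symmetric-like multilinear expression in which the $j$-th slot carries $\xi_j-w$ (the ``linear'' argument) and the slots in $S$ carry the remaining random variables $\xi_i-w$. Applying Lemma \ref{Prop_1A} to the $k$-linear form obtained by bundling the constant arguments $w$ into the bound on the operator norm (so that effectively the form has operator norm $\le\|M\|\|w\|^{k-j-|S^c\cap\{1,\dots,j-1\}|}\cdot\|w\|^{\cdots}$, i.e.\ $\le\|M\|\|w\|^{k-1-|S|}$), and singling out the slot containing $\xi_j-w$ as the ``linear functional'' slot, yields
\[
\Big\|M[\epsilon_1^{(S)},\dots,\epsilon_{j-1}^{(S)},\xi_j-w,w,\dots,w]I(\cdot)\Big\|_{L_p}\le \|M\|\,\|w\|^{k-1-|S|}\,A\,B^{|S|}.
\]

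Summing over $S\subseteq\{1,\dots,j-1\}$ using the binomial identity gives $\|M\|A(B+\|w\|)^{j-1}\|w\|^{k-j}$ as the bound for the $j$-th summand. Finally, summing over $j=1,\dots,k$ and using $\|w\|^{k-j}(B+\|w\|)^{j-1}\le(B+\|w\|)^{k-1}$ for each $j$ produces the desired bound $k\|M\|A(B+\|w\|)^{k-1}$. The only nontrivial step is the correct invocation of Lemma \ref{Prop_1A} on the expanded forms; once the bookkeeping of constants $w$ and random arguments $\xi_i-w$ is straight, the rest is a binomial-sum calculation.
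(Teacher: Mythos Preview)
Your proof is correct and follows essentially the same approach as the paper: both reduce to the multilinear expansion of $\xi_i=(\xi_i-w)+w$, apply Lemma~\ref{Prop_1A} to each resulting term (with the constant $w$'s absorbed into the operator norm of a reduced form), and then sum via the binomial identity. The only cosmetic difference is that the paper expands all $k$ slots at once, obtaining $\sum_{l=1}^{k}\binom{k}{l}\|M\|\|w\|^{k-l}AB^{l-1}=\|M\|\tfrac{A}{B}\bigl((B+\|w\|)^k-\|w\|^k\bigr)\le k\|M\|A(B+\|w\|)^{k-1}$, whereas you telescope first and then expand the first $j-1$ slots for each $j$; summing your per-$j$ bounds $\|M\|A(B+\|w\|)^{j-1}\|w\|^{k-j}$ over $j$ in fact yields exactly the same intermediate quantity $\|M\|\tfrac{A}{B}\bigl((B+\|w\|)^k-\|w\|^k\bigr)$, so the two routes are equivalent.
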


\begin{proof}
We have 
\begin{align*}
M[\xi_1,\dots, \xi_k] = \sum_{l=0}^k \sum_{0\leq j_1\dots<j_l\leq k} M[\xi_{j_1}-w,\dots, \xi_{j_k}-w, w,\underset{k-l}{\dots}, w].
\end{align*}
Denote 
\begin{align*}
M_{w,l} [x_1,\dots, x_l]:= M[x_1,\dots, x_l, w,\underset{k-l}{\dots}, w].
\end{align*}
Then 
\begin{align*}
M[\xi_1,\dots, \xi_k]-M[w,\dots, w] 
= \sum_{l=1}^k \sum_{1\leq j_1\dots<j_l\leq k} 
M_{w,l}[\xi_{j_1}-w,\dots, \xi_{j_k}-w]
\end{align*}
and also, for an $l$-linear form $M_{w,l},$ we have 
\begin{align*}
\|M_{w,l}\|\leq \|M\|\|w\|^{k-l}.
\end{align*}
Therefore,
\begin{align*}
        &\Big|M[\xi_1,\cdots,\xi_k]-M[w,\cdots,w]\Big|I\left(\max_{1\le j\le k}\|\xi_j-w\|< \delta\right)\\
        &=\left|\sum_{l=1}^k\sum_{1\le j_1<\cdots<j_l\le k}M_{w,l}[\xi_{j_1}-w,\cdots,\xi_{j_l}-w]\right|I\left(\max_{1\le j\le k}\|\xi_j-w\|<\delta\right)\\
        &\le\sum_{l=1}^k\sum_{1\le j_1<\cdots<j_l\le k}\Big|M_{w,l}[\xi_{j_1}-w,\cdots,\xi_{j_l}-w]\Big|I\left(\max_{1\le j\le k}\|\xi_j-w\|<\delta\right)\\
        &\le\sum_{l=1}^k\sum_{1\le j_1<\cdots<j_l\le k}\Big|M_{w,l}[\xi_{j_1}-w,\cdots,\xi_{j_l}-w]\Big|I\left(\max_{1\le s\le l}\|\xi_{j_s}-w\|<\delta\right).
\end{align*}
Using the bound of Proposition \ref{Prop_1A}, we get
\begin{align*}
    \left\|M_{w,l}[\xi_{j_1}-w,\cdots,\xi_{j_l}-w]I\left(\max_{1\le s\le l}\|\xi_{j_s}-w\|< \delta\right)\right\|_{L_p}\le \|M\|\|w\|^{k-l} B^{l-1}.
\end{align*}
Therefore,
 \begin{align*}
        &\left\|\left(M[\xi_1,\cdots,\xi_k]-M[w,\cdots,w]\right)I\left(\max_{1\le j\le k}\|\xi_j-w\|< \delta\right)\right\|_{L_p}
        \\
        &
        \le\|M\| A\sum_{l=1}^k\binom{k}{l}B^{l-1}\|w\|^{k-l} = \|M\| \frac{A}{B} \sum_{l=1}^k\binom{k}{l}B^{l}\|w\|^{k-l} 
        \\
        &
        = \|M\| \frac{A}{B} ((B + \|w\|)^{k} -\|w\|^k)\leq \|M\| \frac{A}{B}\ k (B+\|w\|)^{k-1} B 
        \\
        &
        \leq k \|M\| A (B+\|w\|)^{k-1}.
\end{align*}
    
 \qed   
\end{proof}

Consider now the problem of estimation of the value of a smooth functional $f$ for a value of the parameter $\theta=\theta(P)$ in a neighborhood of a given point $t\in E.$
Namely, we will assume that both $\theta$ and $t$ are in an open ball $U.$  
Suppose, for each $k=1,\dots, m,$ we are in a possession of {\it independent} estimators $\hat \theta_j^{(k)}, j=1,\dots, k$ of parameter $\theta(P)$ based on i.i.d. observations 
$X_1,\dots, X_n\sim P\in {\mathcal P}.$ Define 
\begin{align*}
T_f(t;X_1,\dots, X_n) :=  \sum_{k=0}^m \frac{f^{(k)}(t)[\hat \theta_1^{(k)}-t,\dots, \hat \theta_k^{(k)}-t]}{k!}.
\end{align*}

\begin{lemma}
\label{lm_on_T_f_t}
Under the above assumptions and notations, the following bound holds:
\begin{align}
\label{bd_on_T_f_t}
&
\nonumber
\left\|(T_f(t;X_1,\dots, X_n)-f(\theta(P)))I\left(\max_{1\leq k\leq m}\max_{1\le j\le k}\|\hat{\theta}_j^{(k)}-\theta(P)\|< \delta\right)\right\|_{L_p({\mathbb P}_P)} 
\\
&
\leq
\sum_{k=1}^m \frac{\|f^{(k)}(t)\|}{(k-1)!} A_p(P,\delta) (B_p(P,\delta)+\|t-\theta\|)^{k-1} + \frac{1}{m!}\|f^{(m)}\|_{{\rm Lip}_{\rho}(U)}\|t-\theta(P)\|^{s}.
\end{align}
Moreover,
\begin{align}
\label{bd_on_T_f_t_lin}
&
\nonumber
\left\|(T_f(t;X_1,\dots, X_n)-f(\theta(P))-\langle \hat \theta_1^{(1)}-\theta(P), f'(\theta(P))\rangle)I\left(\max_{1\leq k\leq m}\max_{1\le j\le k}\|\hat{\theta}_j^{(k)}-\theta(P)\|< \delta\right)\right\|_{L_p({\mathbb P}_P)} 
\\
&
\leq 
\|f''\|_{L_{\infty}(U)} A_p(P,\delta) \|t-\theta(P)\|
+\sum_{k=2}^m \frac{\|f^{(k)}(t)\|}{(k-1)!} A_p(P,\delta) (B_p(P,\delta)+\|t-\theta(P)\|)^{k-1} 
\\
&
\nonumber
+ \frac{1}{m!}\|f^{(m)}\|_{{\rm Lip}_{\rho}(U)}\|t-\theta(P)\|^{s}.
\end{align}
\end{lemma}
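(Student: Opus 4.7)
The plan is to write $T_f(t;X_1,\dots,X_n) - f(\theta(P))$ as the difference between the finite Taylor polynomial of $f$ at $t$ evaluated at the sample quantities $\hat\theta_j^{(k)}$ and the actual Taylor expansion at $\theta(P)$, plus the Taylor remainder. More precisely, writing $\theta=\theta(P)$ and expanding
\begin{align*}
f(\theta) = f(t) + \sum_{k=1}^{m}\frac{f^{(k)}(t)[\theta-t,\dots,\theta-t]}{k!} + R_m,
\qquad |R_m|\le \frac{\|f^{(m)}\|_{{\rm Lip}_\rho(U)}}{m!}\|\theta-t\|^{s},
\end{align*}
(which is valid since $t,\theta\in U$ and $U$ is a ball), one obtains
\begin{align*}
T_f(t;X_1,\dots,X_n) - f(\theta)
= \sum_{k=1}^{m}\frac{f^{(k)}(t)[\hat\theta_1^{(k)}-t,\dots,\hat\theta_k^{(k)}-t]-f^{(k)}(t)[\theta-t,\dots,\theta-t]}{k!} - R_m.
\end{align*}

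For each $k=1,\dots,m$, I would apply Lemma \ref{Prop_2A} to the symmetric $k$-linear form $M:=f^{(k)}(t)$ with the independent random vectors $\xi_j:=\hat\theta_j^{(k)}-t$ and the deterministic vector $w:=\theta-t$. The differences $\xi_j-w=\hat\theta_j^{(k)}-\theta$ are exactly the quantities controlled by $A_p(P,\delta)$ and $B_p(P,\delta)$, and the indicator $I(\max_{j,k}\|\hat\theta_j^{(k)}-\theta\|<\delta)$ is dominated by $I(\max_{1\le j\le k}\|\xi_j-w\|<\delta)$. This yields, on the event considered, the $L_p$-bound
\begin{align*}
k\,\|f^{(k)}(t)\|\,A_p(P,\delta)\,(B_p(P,\delta)+\|\theta-t\|)^{k-1}.
\end{align*}
Dividing by $k!$, summing over $k=1,\dots,m$, and adding the deterministic remainder bound gives inequality \eqref{bd_on_T_f_t}.

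For inequality \eqref{bd_on_T_f_t_lin}, I would peel off the $k=1$ contribution before applying Lemma \ref{Prop_2A}. A direct algebraic expansion gives
\begin{align*}
f'(t)[\hat\theta_1^{(1)}-t]-f'(t)[\theta-t]-\langle \hat\theta_1^{(1)}-\theta, f'(\theta)\rangle
=\bigl\langle \hat\theta_1^{(1)}-\theta,\, f'(t)-f'(\theta)\bigr\rangle.
\end{align*}
Since $t,\theta\in U$ and $U$ is convex, the mean value theorem yields $\|f'(t)-f'(\theta)\|\le \|f''\|_{L_\infty(U)}\|t-\theta\|$, and the linear-form $L_p$-bound (parametrizing by $u=(f'(t)-f'(\theta))/\|f'(t)-f'(\theta)\|$) gives, on the indicator event, the estimate $\|f''\|_{L_\infty(U)}A_p(P,\delta)\|t-\theta\|$. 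The remaining terms with $k\ge 2$ are handled exactly as before, and the Taylor remainder is unchanged.

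The main obstacle is purely bookkeeping: I need to be careful that the truncation event used inside Lemma \ref{Prop_2A} is the one appearing in the statement (which it is, because restricting to $\max_{k,j}\|\hat\theta_j^{(k)}-\theta\|<\delta$ implies the corresponding restriction for each single $k$-tuple), and that the multilinear-form operator norms are measured in the norm induced on $E$, which matches the definition of $\|f^{(k)}(t)\|$ used in the bound. Once those conventions are fixed, everything is a direct application of Lemmas \ref{Prop_1A}–\ref{Prop_2A} and the standard Taylor remainder estimate for $f^{(m)}\in {\rm Lip}_\rho(U)$.
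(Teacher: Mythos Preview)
Your proposal is correct and follows essentially the same route as the paper: Taylor-expand $f(\theta)$ at $t$, apply Lemma~\ref{Prop_2A} to each $k$-linear form $f^{(k)}(t)$ with $\xi_j=\hat\theta_j^{(k)}-t$ and $w=\theta-t$, and for the second bound peel off the $k=1$ term as $\langle\hat\theta_1^{(1)}-\theta,\,f'(t)-f'(\theta)\rangle$ and control it via $\|f''\|_{L_\infty(U)}\|t-\theta\|$ and $A_p(P,\delta)$. The paper's proof is identical in structure and detail.
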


\begin{proof}
Note that, for $\theta = \theta(P)$ by the Taylor expansion,
\begin{align*}
f(\theta)= \sum_{k=0}^m \frac{f^{(k)}(t)[\theta-t,\dots, \theta-t]}{k!} + R_m,
\end{align*}
with the remainder $R_m$ satisfying the bound:
\begin{align}
\label{bd_R_k_0}
|R_m| \leq \frac{1}{m!}\|f^{(m)}\|_{{\rm Lip}_{\rho}(U)}\|\theta-t\|^{s}, s=m+\rho.
\end{align}
For estimator $T_f(t;X_1,\dots, X_n),$ we have 
\begin{align*}
&
\nonumber
T_f(t;X_1,\dots, X_n)-f(\theta)
\\
&
= \sum_{k=1}^m \Bigl(\frac{f^{(k)}(t)[\hat \theta_1^{(k)}-t,\dots, \hat \theta_k^{(k)}-t]}{k!}-\frac{f^{(k)}(t)[\theta-t,\dots, \theta-t]}{k!}\Bigr) -R_m,
\end{align*}
implying that, for $L_p= L_p({\mathbb P}_P),$
\begin{align}
\label{T_f-f_bd}
&
\nonumber
\left\|(T_f(t;X_1,\dots, X_n)-f(\theta))\left(\max_{1\leq k\leq m}\max_{1\le j\le k}\|\hat{\theta}_j^{(k)}-\theta\|< \delta\right)\right\|_{L_p} 
\\
&
\leq
\sum_{k=1}^m \frac{1}{k!}
\left\|f^{(k)}(t)[\hat \theta_1^{(k)}-t,\dots, \hat \theta_k^{(k)}-t]-f^{(k)}(t)[\theta-t,\dots, \theta-t]I\left(\max_{1\le j\le k}\|\hat{\theta}_j^{(k)}-\theta\|< \delta\right)\right\|_{L_p} 
\\
&
\nonumber
\ \ \ \ \ \ \ +|R_m|.
\end{align}
We will apply the bound of Lemma \ref{Prop_2A} to the $k$-linear form $f^{(k)}(t)$ and to r.v. $\xi_j := \hat \theta_j^{(k)}-t, j=1,\dots, k$
with $w:= \theta-t.$ We then have 
\begin{align*}
A:=A_k&:=\max_{1\le j\le k}\sup_{\|u\|\le 1}\|\langle \xi_j-w,u\rangle I(\|\xi_j-w\|<\delta)\|_{L_p}
\\
&
=\max_{1\leq j\leq k}
\sup_{\|u\|\le 1}\|\langle \hat \theta_j^{(k)}-\theta,u\rangle I(\|\hat \theta_j^{(k)}-\theta\|<\delta)\|_{L_p}
\end{align*}
and
\begin{align*}
B:=B_k&:=\max_{1\le j\le k}\|\|\xi_j-w\| I(\|\xi_j-w\|<\delta)\|_{L_p}
\\
&
=\max_{1\leq j\leq k}
\|\|\hat \theta_j^{(k)}-\theta\| I(\|\hat \theta_j^{(k)}-\theta\|<\delta)\|_{L_p},
\end{align*}
and the bound of Lemma \ref{Prop_2A} yields
\begin{align*}
&\left\|\left(f^{(k)}(t)[\hat{\theta}_1^{(k)}-t,\cdots,\hat{\theta}_k^{(k)}-t]-f^{(k)}(t)[\theta-t,\cdots,\theta-t]\right)I\left(\max_{1\le j\le k}\|\hat{\theta}_j^{(k)}-\theta\|< \delta\right)\right\|_{L_p}\\
    &\leq k \|f^{(k)}(t)\| A_k (B_k+\|t-\theta\|)^{k-1}.
\end{align*}
Substituting the last bound in \eqref{T_f-f_bd} and also using \eqref{bd_R_k_0}, we get  bound \eqref{bd_on_T_f_t}.

To prove the second bound, note that 
\begin{align*}
&
T_f(t;X_1,\cdots,X_n)-f(\theta)- \langle \hat \theta_1^{(1)}-\theta, f'(\theta)\rangle
=
\langle \hat \theta_1^{(1)}-\theta, f'(t)-f'(\theta)\rangle
\\
&
+
\sum_{k=2}^m \Bigl(\frac{f^{(k)}(t)[\hat \theta_1^{(k)}-t,\dots, \hat \theta_k^{(k)}-t]}{k!}-\frac{f^{(k)}(t)[\theta-t,\dots, \theta-t]}{k!}\Bigr) -R_m.
\end{align*}
Therefore, 
\begin{align*}
&
\left\|(T_f(t;X_1,\dots, X_n)-f(\theta)-\langle \hat \theta_1^{(1)}-\theta, f'(\theta)\rangle)I\left(\max_{1\leq k\leq m}\max_{1\le j\le k}\|\hat{\theta}_j^{(k)}-\theta\|< \delta\right)\right\|_{L_p} 
\\
&
\leq 
\left\|\langle \hat \theta_1^{(1)}-\theta, f'(t)-f'(\theta)\rangle I\left(\|\hat \theta_1^{(1)}-\theta\|<\delta\right)\right\|_{L_p}
\\
&
+\sum_{k=2}^m \frac{\|f^{(k)}(t)\|}{(k-1)!} A_p(P,\delta) (B_p(P,\delta)+\|t-\theta\|)^{k-1} 
+ \frac{1}{m!}\|f^{(m)}\|_{{\rm Lip}_{\rho}(U)}\|t-\theta\|^{s}
\\
&
\leq 
\|f'(t)-f'(\theta)\|\sup_{\|u\|\leq 1}\left\|\langle \hat \theta_1^{(1)}-\theta, u\rangle I\left(\|\hat \theta_1^{(1)}-\theta\|<\delta\right)\right\|_{L_p}
\\
&
+\sum_{k=2}^m \frac{\|f^{(k)}(t)\|}{(k-1)!} A_p(P,\delta) (B_p(P,\delta)+\|t-\theta\|)^{k-1} 
+ \frac{1}{m!}\|f^{(m)}\|_{{\rm Lip}_{\rho}(U)}\|t-\theta\|^{s}
\\
&
\leq 
\|f''\|_{L_{\infty}(U)} A_p(P,\delta) \|t-\theta\|
+\sum_{k=2}^m \frac{\|f^{(k)}(t)\|}{(k-1)!} A_p(P,\delta) (B_p(P,\delta)+\|t-\theta\|)^{k-1} 
\\
&
+ \frac{1}{m!}\|f^{(m)}\|_{{\rm Lip}_{\rho}(U)}\|t-\theta\|^{s}.
\end{align*}

\qed
\end{proof}

Since $T_f(X_1,\dots, X_n)=T_f(\hat \theta^{(0)}; X_1,\dots, X_n)$ and, for all $k=1,\dots, m,$ estimators $\hat \theta_j^{(k)}, j=1,\dots, k$ and $\hat \theta^{(0)}$
are independent, we can use bound \eqref{bd_on_T_f_t} conditionally on $\hat \theta^{(0)}$ to get that, for $\theta=\theta(P)$ on the event $\|\hat \theta^{(0)}-\theta\|<\delta$ (or, $\hat \theta^{(0)}\in U$)
\begin{align*}
&
{\mathbb E}^{1/p}\left(\left|T_f(\hat \theta^{(0)}; X_1,\dots, X_n)-f(\theta)\right|^pI\left(\max_{1\leq k\leq m}\max_{1\le j\le k}\|\hat{\theta}_j^{(k)}-\theta\|< \delta\right)\Bigl\vert\hat \theta^{(0)}\right)
\\
&
\leq
\sum_{k=1}^m \frac{\|f^{(k)}(\hat \theta^{(0)})\|}{(k-1)!} A_p(P,\delta) (B_p(P,\delta)+\|\hat \theta^{(0)}-\theta\|)^{k-1} + \frac{1}{m!}\|f^{(m)}\|_{{\rm Lip}_{\rho}(U)}\|\hat \theta^{(0)}-\theta\|^{s}
\\
&
\leq
\sum_{k=1}^m \frac{\|f^{(k)}\|_{L_{\infty}(U)}}{(k-1)!} A_p(P,\delta) (B_p(P,\delta)+\|\hat \theta^{(0)}-\theta\|)^{k-1} + \frac{1}{m!}\|f^{(m)}\|_{{\rm Lip}_{\rho}(U)}\|\hat \theta^{(0)}-\theta\|^{s}.
\end{align*}
This yields
\begin{align}
\label{bd_on_T_cond}
&
\nonumber
\left\|(T_f(X_1,\cdots,X_n)-f(\theta))I\left(\|\hat{\theta}^{(0)}-\theta\|<\delta,
\max_{1\le k\le m}\max_{1\le j\le k}\|\hat{\theta}_j^{(k)}-\theta\|< \delta\right) \right\|_{L_p}
\\
&
\nonumber
=
\left\|
\mathbb{E}^{1/p}\left(|T_f(X_1,\cdots,X_n)-f(\theta)|^pI\left(\max_{1\le k\le m}\max_{1\le j\le k}\|\hat{\theta}_j^{(k)}-\theta\|<\delta\right)\Big\vert \hat\theta^{(0)}\right)
\right\|_{L_p}
\\
&
\nonumber
\leq \sum_{k=1}^m \frac{\|f^{(k)}\|_{L_{\infty}(U)}}{(k-1)!} A_p(P,\delta) \left\|(B_p(P,\delta)+\|\hat \theta^{(0)}-\theta\|)^{k-1}I(\|\hat \theta^{(0)}-\theta\|<\delta)\right\|_{L_p} 
\\
&
\nonumber
+ \frac{1}{m!}\|f^{(m)}\|_{{\rm Lip}_{\rho}(U)}
\left\|\|\hat \theta^{(0)}-\theta\|^{s}I(\|\hat \theta^{(0)}-\theta\|<\delta)\right\|_{L_p}
\\
&
\nonumber
\leq 
\sum_{k=1}^m \frac{2^{k-1}\|f^{(k)}\|_{L_{\infty}(U)}}{(k-1)!} A_p(P,\delta) \left\|\left(\frac{B_p(P,\delta)+\|\hat \theta^{(0)}-\theta\|}{2}\right)^{k-1}I(\|\hat \theta^{(0)}-\theta\|<\delta)\right\|_{L_p} 
\\
&
\nonumber
+ \frac{1}{m!}\|f^{(m)}\|_{{\rm Lip}_{\rho}(U)}
\left\|\|\hat \theta^{(0)}-\theta\|I(\|\hat \theta^{(0)}-\theta\|<\delta)\right\|_{L_{ps}}^s
\\
&
\nonumber
\leq 
\sum_{k=1}^m \frac{2^{k-2}\|f^{(k)}\|_{L_{\infty}(U)}}{(k-1)!} A_p(P,\delta) \left(B_p^{k-1}(P,\delta)+\left\|\|\hat \theta^{(0)}-\theta\|I(\|\hat \theta^{(0)}-\theta\|<\delta)\right\|_{L_{p(k-1)}}^{k-1}\right) 
\\
&
\nonumber
+ \frac{1}{m!}\|f^{(m)}\|_{{\rm Lip}_{\rho}(U)}
\left\|\|\hat \theta^{(0)}-\theta\|I(\|\hat \theta^{(0)}-\theta\|<\delta)\right\|_{L_{ps}}^s
\\
&
=
\sum_{k=1}^m \frac{2^{k-2}\|f^{(k)}\|_{L_{\infty}(U)}}{(k-1)!} A_p(P,\delta) \left(B_p^{k-1}(P,\delta)+\tilde \beta_{p(k-1)}^{k-1}(P,\delta)\right)
+ \frac{1}{m!}\|f^{(m)}\|_{{\rm Lip}_{\rho}(U)} \tilde \beta_{ps}^s (P,\delta).
\end{align}

Similarly, we can provide bounds on 
\begin{align*}
\left\|(T_f(X_1,\cdots,X_n)-f(\theta)- \langle \hat \theta_1^{(1)}-\theta, f'(\theta)\rangle)I\left(\|\hat{\theta}^{(0)}-\theta\|<\delta,
\max_{1\le k\le m}\max_{1\le j\le k}\|\hat{\theta}_j^{(k)}-\theta\|< \delta\right) \right\|_{L_p}.
\end{align*}
To this end, we apply bound \eqref{bd_on_T_f_t_lin} to estimator $T_f(X_1,\cdots,X_n)=T_f(\hat \theta^{(0)};X_1,\cdots,X_n)$ conditionally 
on $\hat \theta^{(0)}.$ Similarly to \eqref{bd_on_T_cond}, we then have
\begin{align}
\label{bd_on_T'''}
&
\nonumber
\left\|(T_f(X_1,\cdots,X_n)-f(\theta)-\langle \hat \theta_1^{(1)}-\theta, f'(\theta)\rangle)I\left(\|\hat{\theta}^{(0)}-\theta\|<\delta,
\max_{1\le k\le m}\max_{1\le j\le k}\|\hat{\theta}_j^{(k)}-\theta\|< \delta\right) \right\|_{L_p}
\\
&
\nonumber
\leq 
\|f''\|_{L_{\infty}(U)}A_{p}(P, \delta) \tilde \beta_p(P,\delta)+
\sum_{k=2}^m \frac{2^{k-2}\|f^{(k)}\|_{L_{\infty}(U)}}{(k-1)!} A_p(P,\delta) \left(B_p^{k-1}(P,\delta)+\tilde \beta_{p(k-1)}^{k-1}(P,\delta)\right)
\\
&
+ \frac{1}{m!}\|f^{(m)}\|_{{\rm Lip}_{\rho}(U)} \tilde \beta_{ps}^s (P,\delta).
\end{align}

Setting $\delta=+\infty$ in \eqref{bd_on_T_cond} and in \eqref{bd_on_T'''} immediately yields the bound of Theorem \ref{th_1}. 

To complete the proof of Theorem \ref{th_1_AA}, note that, by the definition of truncated estimator $\tilde T_f(X_1,\dots, X_n)$ 
for $M\geq \|f\|_{\infty},$
\begin{align*}
|\tilde T_f(X_1,\dots, X_n)-f(\theta)|\leq |T_f(X_1,\dots, X_n)-f(\theta)|
\end{align*}
since $\tilde T_f (X_1,\dots, X_n)$ is either equal to $T_f(X_1,\dots, X_n),$ or it is between $T_f(X_1,\dots, X_n)$ and $f(\theta).$
Thus, bound \eqref{bd_on_T_cond} implies that 
\begin{align}
\label{bd_on_tilde_T}
&
\nonumber
\left\|(\tilde T_f(X_1,\cdots,X_n)-f(\theta))I\left(\|\hat{\theta}^{(0)}-\theta\|<\delta,
\max_{1\le k\le m}\max_{1\le j\le k}\|\hat{\theta}_j^{(k)}-\theta\|< \delta\right)\right\|_{L_p}
\\
&
\leq \sum_{k=1}^m \frac{2^{k-2}\|f^{(k)}\|_{L_{\infty}(U)}}{(k-1)!} A_p(P,\delta) \left(B_p^{k-1}(P,\delta)+\tilde \beta_{p(k-1)}^{k-1}(P,\delta)\right)
+ \frac{1}{m!}\|f^{(m)}\|_{{\rm Lip}_{\rho}(U)} \tilde \beta_{ps}^s (P,\delta).
\end{align}
On the other hand, since $|\tilde T_f(X_1,\cdots,X_n)-f(\theta)|\leq \|f\|_{L_{\infty}}+M,$ we have 
\begin{align}
\label{bd>delta}
&
\nonumber
\left\|(\tilde T_f(X_1,\cdots,X_n)-f(\theta))I\left(\|\hat{\theta}^{(0)}-\theta\|\geq \delta\ {\rm or}\ 
\max_{1\le k\le m}\max_{1\le j\le k}\|\hat{\theta}_j^{(k)}-\theta\|\geq \delta\right)\right\|_{L_p}
\\
&
\nonumber
\leq (\|f\|_{L_{\infty}}+M) {\mathbb P}^{1/p} \biggl\{\|\hat{\theta}^{(0)}-\theta\|\geq \delta\ {\rm or}\ 
\max_{1\le k\le m}\max_{1\le j\le k}\|\hat{\theta}_j^{(k)}-\theta\|\geq \delta \biggr\}
\\
&
\leq (\|f\|_{L_{\infty}}+M) \biggl({\mathbb P}\{\|\hat{\theta}^{(0)}-\theta\|\geq \delta\}+ 
\sum_{k=1}^m \sum_{j=1}^k {\mathbb P}\{\|\hat{\theta}_j^{(k)}-\theta\|\geq \delta\} \biggr)^{1/p}.
\end{align}
Note that, by Markov inequality,
\begin{align*}
{\mathbb P}_{\theta}\{\|\hat{\theta}^{(0)}-\theta\|\geq \delta\} \leq \frac{{\mathbb E}_{\theta}\|\hat \theta^{(0)}-\theta\|^{p'}}{\delta^{p'}}
=\biggl(\frac{\tilde \beta_{p'}(P)}{\delta}\biggr)^{p'}
\end{align*}
and, similarly, 
\begin{align*}
\max_{1\le k\le m}\max_{1\le j\le k}{\mathbb P}_{\theta}\{\|\hat{\theta}_j^{(k)}-\theta\|\geq \delta\} 
\leq \biggl(\frac{B_{p'}(P)}{\delta}\biggr)^{p'}.
\end{align*}
This yields the bound 
\begin{align*}
&
\left\|(\tilde T_f(X_1,\cdots,X_n)-f(\theta))I\left(\|\hat{\theta}^{(0)}-\theta\|\geq \delta\ {\rm or}\ 
\max_{1\le k\le m}\max_{1\le j\le k}\|\hat{\theta}_j^{(k)}-\theta\|\geq \delta\right)\right\|_{L_p}
\\
&
\leq (\|f\|_{L_{\infty}}+M)\biggl(\left(\frac{m(m+1)}{2}\right)^{1/p}\biggl(\frac{B_{p'}(P)}{\delta}\biggr)^{p'/p}+ \biggl(\frac{\tilde \beta_{p'}(P)}{\delta}\biggr)^{p'/p}\biggr),
\end{align*}
which, together with \eqref{bd_on_tilde_T}, implies the first bound of Theorem \ref{th_1_AA}.

To prove the second bound, observe that, under the assumption $M\geq \|f\|_{L_{\infty}}+ \|f\|_{\rm Lip} \delta,$
the definition of estimator $\tilde T (X_1,\dots, X_n)$ implies that 
\begin{align*}
|\tilde T_f(X_1,\dots, X_n)-f(\theta)-\langle \hat \theta_1^{(1)}-\theta, f'(\theta)\rangle|
\leq |T(X_1,\dots, X_n)-f(\theta)-\langle \hat \theta_1^{(1)}-\theta, f'(\theta)\rangle|.
\end{align*}
This follows from the fact that $|f(\theta)+\langle \hat \theta_1^{(1)}-\theta, f'(\theta)\rangle|\leq M$ and, hence, 
either $\tilde T_f (X_1,\dots, X_n)=T_f(X_1,\dots, X_n),$ 
or $\tilde T_f (X_1,\dots, X_n)$ is between $T_f(X_1,\dots, X_n)$ 
and $f(\theta)+\langle \hat \theta_1^{(1)}-\theta, f'(\theta)\rangle.$ Therefore, it follows from \eqref{bd_on_T'''} that 
\begin{align}
\label{bd_on_T_ABC}
&
\left\|(\tilde T_f(X_1,\cdots,X_n)-f(\theta)-\langle \hat \theta_1^{(1)}-\theta, f'(\theta)\rangle)I\left(\|\hat{\theta}^{(0)}-\theta\|<\delta,
\max_{1\le k\le m}\max_{1\le j\le k}\|\hat{\theta}_j^{(k)}-\theta\|< \delta\right) \right\|_{L_p}
\nonumber
\\
&
\nonumber
\leq 
\|f''\|_{L_{\infty}(U)}A_{p}(P, \delta) \tilde \beta_p(P,\delta)+
\sum_{k=2}^m \frac{2^{k-2}\|f^{(k)}\|_{L_{\infty}(U)}}{(k-1)!} A_p(P,\delta) \left(B_p^{k-1}(P,\delta)+\tilde \beta_{p(k-1)}^{k-1}(P,\delta)\right)
\\
&
+ \frac{1}{m!}\|f^{(m)}\|_{{\rm Lip}_{\rho}(U)} \tilde \beta_{ps}^s (P,\delta).
\end{align}
On the other hand, bound \eqref{bd>delta} could be rewritten as follows:
\begin{align}
\label{bd>delta_AAA}
&
\nonumber
\left\|(\tilde T_f(X_1,\cdots,X_n)-f(\theta)-\langle \hat \theta_1^{(1)}-\theta, f'(\theta)\rangle)
I\left(\|\hat{\theta}^{(0)}-\theta\|\geq \delta\ {\rm or}\ 
\max_{1\le k\le m}\max_{1\le j\le k}\|\hat{\theta}_j^{(k)}-\theta\|\geq \delta\right)\right\|_{L_p}
\\
&
\nonumber
\leq (\|f\|_{L_{\infty}}+M) \biggl({\mathbb P}\{\|\hat{\theta}^{(0)}-\theta\|\geq \delta\}+ 
\sum_{k=1}^m \sum_{j=1}^k {\mathbb P}\{\|\hat{\theta}_j^{(k)}-\theta\|\geq \delta\} \biggr)^{1/p}
\\
&
\nonumber
+ \|f'(\theta)\| \left\|\|\hat \theta_1^{(1)}-\theta\| I(\|\hat \theta^{(0)}-\theta\|\geq \delta)\right\|_{L_p}
+\|f'(\theta)\| \left\|\|\hat \theta_1^{(1)}-\theta\| I(\|\hat \theta_1^{(1)}-\theta\|\geq \delta)\right\|_{L_p}
\\
&
+\|f'(\theta)\|\sum_{k=2}^m \sum_{j=1}^k \left\|\|\hat \theta_1^{(1)}-\theta\| I(\|\hat \theta_j^{(k)}-\theta\|\geq \delta)\right\|_{L_p}.
\end{align}
Note that 
\begin{align*}
\left\| \|\hat \theta_1^{(1)}-\theta\| I(\|\hat \theta_1^{(1)}-\theta\|\geq \delta)\right\|_{L_p}
\leq {\mathbb E}^{1/p} \left(\|\hat \theta_1^{(1)}-\theta\|^p \frac{\|\hat \theta_1^{(1)}-\theta\|^{p'-p}}{\delta^{p'-p}}\right)
\leq \frac{B_{p'}^{p'/p}(P)}{\delta^{p'/p-1}}.
\end{align*}
In addition, using independence of estimators $\hat \theta^{(0)}$ and $\hat \theta_1^{1},$ we get
\begin{align*} 
\left\| \|\hat \theta_1^{(1)}-\theta\| I(\|\hat \theta^{(0)}-\theta\|\geq \delta)\right\|_{L_p}
&
= \left\| \|\hat \theta_1^{(1)}-\theta\|\right\|_{L_p}{\mathbb P}_{\theta}^{1/p}\{\|\hat \theta^{(0)}-\theta\|\geq \delta\}
\\
&
\leq  \frac{B_{p}(P)\tilde \beta_{p'}^{p'/p}(P)}{\delta^{p'/p}}.
\end{align*}
Similarly, for all $k=2,\dots, m, j=1,\dots, k,$
\begin{align*}
\left\| \|\hat \theta_1^{(1)}-\theta\| I(\|\hat \theta_j^{(k)}-\theta\|\geq \delta)\right\|_{L_p}
&= 
\left\| \|\hat \theta_1^{(1)}-\theta\|\right\|_{L_p}{\mathbb P}_{\theta}^{1/p}\{\|\hat \theta_j^{(k)}-\theta\|\geq \delta\}
\\
&
\leq 
 \frac{B_{p}(P)B_{p'}^{p'/p}(P)}{\delta^{p'/p}}.
\end{align*}
 Therefore,
\begin{align*}
&
\nonumber
\left\|(\tilde T_f(X_1,\cdots,X_n)-f(\theta)-\langle \hat \theta_1^{(1)}-\theta, f'(\theta)\rangle)
I\left(\|\hat{\theta}^{(0)}-\theta\|\geq \delta\ {\rm or}\ 
\max_{1\le k\le m}\max_{1\le j\le k}\|\hat{\theta}_j^{(k)}-\theta\|\geq \delta\right)\right\|_{L_p}
\\
&
\leq (\|f\|_{L_{\infty}}+M)\biggl(\left(\frac{m(m+1)}{2}\right)^{1/p}\biggl(\frac{B_{p'}(P)}{\delta}\biggr)^{p'/p}+ \biggl(\frac{\tilde \beta_{p'}(P)}{\delta}\biggr)^{p'/p}\biggr)
\\
&
+  \|f'\|_{L_{\infty}}\biggl(\frac{B_{p'}^{p'/p}(P)}{\delta^{p'/p-1}}+ B_p(P) \biggl(\frac{\tilde \beta_{p'}(P)}{\delta}\biggr)^{p'/p}
+ \frac{m(m-1)}{2}B_{p}(P)\biggl(\frac{B_{p'}(P)}{\delta}\biggr)^{p'/p}\biggr).
\end{align*}
Note also that 
\begin{align*}
\|f'\|_{L_{\infty}}\frac{B_{p'}^{p'/p}(P)}{\delta^{p'/p-1}} =\|f'\|_{L_{\infty}}\delta \biggl(\frac{B_{p'}(P)}{\delta}\biggr)^{p'/p} \leq M \biggl(\frac{B_{p'}(P)}{\delta}\biggr)^{p'/p},
\end{align*}
which implies 
\begin{align*}
&
\nonumber
\left\|(\tilde T_f(X_1,\cdots,X_n)-f(\theta)-\langle \hat \theta_1^{(1)}-\theta, f'(\theta)\rangle)
I\left(\|\hat{\theta}^{(0)}-\theta\|\geq \delta\ {\rm or}\ 
\max_{1\le k\le m}\max_{1\le j\le k}\|\hat{\theta}_j^{(k)}-\theta\|\geq \delta\right)\right\|_{L_p}
\\
&
\leq C_m (\|f\|_{L_{\infty}}+\|f'\|_{L_{\infty}}+M)(1+B_p(P))
\biggl(\biggl(\frac{B_{p'}(P)}{\delta}\biggr)^{p'/p}+ \biggl(\frac{\tilde \beta_{p'}(P)}{\delta}\biggr)^{p'/p}\biggr).
\end{align*}
where $C_m := \frac{m(m-1)}{2}\vee \left(\frac{m(m+1)}{2}\right)^{1/p}+1.$
Together with \eqref{bd_on_T_ABC}, this completes the proof of the second bound.

\qed
\end{proof}

\section{Remarks on Bernstein type bounds in functional estimation}
\label{functionals_Bernstein}

For the sample mean $\bar X_n$ of  i.i.d. observations $X_1,\dots, X_n$ of real valued r.v. $X$ uniformly bounded by constant $U>0$ 
and with variance $\sigma,$ the Bernstein inequality could be written 
in the following form: for all $t\geq 0$ with probability at least $1-e^{-t}$
\begin{align*}
|\bar X_n-{\mathbb E}X| \lesssim \sigma \sqrt{\frac{t}{n}}\vee U\frac{t}{n}.
\end{align*}
It is equivalent to the following bounds on the $L_p$-norms: for all $p\geq 1,$
\begin{align*}
\|\bar X_n-{\mathbb E}X\|_{L_p} \lesssim \sigma \sqrt{\frac{p}{n}}\vee U\frac{p}{n}.
\end{align*}
Moreover, if $X$ is a subexponential r.v. (not necessarily bounded) with $\|X\|_{\psi_1}\asymp \sigma,$
then similar bounds hold with $U=\sigma.$ If $X$ is a subgaussian r.v. with $\|X\|_{\psi_2}\asymp \sigma,$ then we 
have that, for all $t\geq 0$ with probability at least $1-e^{-t},$
$
|\bar X_n-{\mathbb E}X| \lesssim \sigma \sqrt{\frac{t}{n}}
$
and, for all $p\geq 1,$
$
\|\bar X_n-{\mathbb E}X\|_{L_p} \lesssim \sigma \sqrt{\frac{p}{n}}.
$

More generally, for a number of important estimators $\hat \theta_n$ of parameter $\theta(P),$ similar bounds on the $L_p$ norms of 
$\langle \hat \theta_n-\theta(P), u\rangle, u\in E^{\ast}$ 
and $\|\hat \theta_n-\theta(P)\|$ hold for all $p\geq 1$
and imply Bernstein type inequalities. 


The properties of base estimators $\hat \theta_n$ are postulated in the following assumptions.

\begin{assumption}
\label{assume_Bernstein_lin}
\normalfont
There exist functions $\sigma:{\mathcal P}\mapsto {\mathbb R}_+$ and $U:{\mathcal P}\mapsto {\mathbb R}_+$
such that, for all $p\geq 1,$ 
 \begin{align*}
 \sup_{\|u\|\leq 1} \Bigl\|\langle \hat \theta_n-\theta(P), u\rangle\Bigr\|_{L_p({\mathbb P}_{P})} \lesssim 
 \sigma(P)\sqrt{\frac{p}{n}}\vee U(P)\frac{p}{n}, P \in {\mathcal P}.
 \end{align*}
 \end{assumption}
 
 In this case, it will be said that $\hat \theta_n$ is a {\it Bernstein type estimator}. 
 If $U(P)=0, P\in {\mathcal P}$ the above bound simplifies as follows:
 \begin{align*}
 \sup_{\|u\|\leq 1} \Bigl\|\langle \hat \theta_n-\theta(P), u\rangle\Bigr\|_{L_p({\mathbb P}_{P})} \lesssim \
 \sigma(P)\sqrt{\frac{p}{n}}, P \in {\mathcal P},
 \end{align*}
 and it will be said that $\hat \theta_n$ is a {\it subgaussian type estimator}. 

\begin{assumption}
\label{assume_Bernstein_on_norm}
\normalfont
There exist functions $\sigma:{\mathcal P}\mapsto {\mathbb R}_+,$ $U:{\mathcal P}\mapsto {\mathbb R}_+,$
$d_1: {\mathcal P}\mapsto {\mathbb R}_+$ and $d_2: {\mathcal P}\mapsto {\mathbb R}_+$
such that, for all $p\geq 1,$ 
 \begin{align*}
 \Bigl\|\|\hat \theta_n-\theta(P)\|\Bigr\|_{L_p({\mathbb P}_{P})} \lesssim 
 \sqrt{\frac{d_1(P)}{n}}\vee \frac{d_2(P)}{n}\vee \sigma(P)\sqrt{\frac{p}{n}}\vee U(P)\frac{p}{n}, P \in {\mathcal P}.
 \end{align*}
\end{assumption}

As in Section \ref{Main_results}, we consider the problem of estimation of functionals $f(\theta(P))$ based on i.i.d. observations $X_1,\dots, X_n\sim P, P\in {\mathcal P}.$ 
We use base estimator $\hat \theta_n$ of $\theta(P)$ and the sample split to construct 
estimator $T_f(X_1,\dots, X_n)$ of $f(\theta(P))$ defined by \eqref{basic_T_f} and its truncated version $\tilde T_f(X_1,\dots, X_n)$ defined by \eqref{basic_T_fM}.
For base estimators $\hat \theta_n$ satisfying Assumptions \ref{assume_Bernstein_lin} and \ref{assume_Bernstein_on_norm}, 
the following result holds.

\begin{theorem}
\label{th_1_Bern_AA}
Let $f:E\mapsto {\mathbb R}$ be a uniformly bounded functional and let $\theta=\theta(P)$ for some $P\in {\mathcal P}.$
Suppose $f$ is $m$ times Fr\`echet continuously differentiable in $U:=B(\theta;\delta)$ for some $\delta\in (0,1]$ and some $m\geq 2,$ and, moreover,
$f^{(m)}$ satisfies the Hölder condition with exponent $\rho\in(0,1]$ in $U.$ 
Assume that for a sufficiently large constant $C>0,$
\begin{align*}
C\left(\sqrt{\frac{d_1(P)}{n^{(0)}}}\vee\frac{d_2(P)}{n^{(0)}}\right)\leq \delta
\end{align*}
and also that assumptions \ref{assume_Bernstein_lin} and \ref{assume_Bernstein_on_norm} hold. 

(i) Suppose that $n^{(0)}\asymp_m n$ and $n_j^{(k)}\asymp_m n$, $j=1,\cdots,k$, $k=1,\cdots,m.$
Then, for $M\geq \|f\|_{L_{\infty}},$ $p\geq 1$ and for a constant $c>0$ depending on $m,C,$     
\begin{align*}
    &\left\|\tilde{T}_f(X_1,\cdots,X_n)-f(\theta(P))\right\|_{L_p(\mathbb{P}_P)}
    \lesssim_{s, C} \max_{1\leq k\leq m} \|f^{(k)}\|_{L_\infty(U)} \left(\sigma(P)\sqrt{\frac{p}{n}}\vee U(P)\frac{p}{n}\right)  
    \\
    &  
   +\|f^{(m)}\|_{{\rm Lip}_\rho(U)} \left(\Bigl(\frac{d_1(P)}{n}\Bigr)^{s/2}\vee \Bigl(\frac{d_2(P)}{n}\Bigr)^s\vee   \sigma^s(P) \Bigl(\frac{p}{n}\Bigr)^{s/2}\vee U^s(P)\Bigl(\frac{p}{n}\Bigr)^s   
   \right)
   \\
   &
   +(\|f\|_{L_\infty}+M)\exp\Bigl\{-c\frac{n}{p}\Bigl(\frac{\delta^2}{\sigma^2(P)}\wedge \frac{\delta}{U(P)}\Bigr)\Bigr\}.
    \end{align*}

 (ii) Suppose that, for all $j=1,\dots, k, k=1,\dots, m,$ $n_j^{(k)}\asymp_m n.$
If $M\geq \|f\|_{\infty}+ \|f\|_{{\rm Lip}}\delta,$ then 
\begin{align*}
    &\left\|\tilde T_f(X_1,\cdots,X_n)-f(\theta(P))- \langle \hat \theta_1^{(1)}-\theta, f'(\theta(P))\rangle\right\|_{L_p(\mathbb{P}_P)}
    \\
    &
    \lesssim_{s, C} \max_{2\leq k\leq m} \|f^{(k)}\|_{L_\infty(U)} \left(\sigma(P)\sqrt{\frac{p}{n}}\vee U(P)\frac{p}{n}\right) \times
    \\
    &
    \ \ \ \ \ \ \ \ \ \ \ \ \ \ \ \ \ \ \ \ \ \ \ \ \ \ \ \ \ 
    \left(\sqrt{\frac{d_1(P)}{n^{(0)}}}\vee \frac{d_2(P)}{n^{(0)}}\vee 
    \sigma(P)\sqrt{\frac{p}{n^{(0)}}}\vee U(P)\frac{p}{n^{(0)}}\right) 
 \\
 &
 +\|f^{(m)}\|_{{\rm Lip}_\rho(U)} \left(\Bigl(\frac{d_1(P)}{n^{(0)}}\Bigr)^{s/2}\vee \Bigl(\frac{d_2(P)}{n^{(0)}}\Bigr)^s\vee
  \sigma^s(P) \Bigl(\frac{p}{n^{(0)}}\Bigr)^{s/2}\vee U^s(P)\Bigl(\frac{p}{n^{(0)}}\Bigr)^s \right)
 \\
 &
 + 
\left(\|f\|_{L_\infty}+M + \|f'(\theta)\|\left( \sqrt{\frac{d_1(P)}{n}}\vee \frac{d_2(P)}{n}\vee \sigma(P)\sqrt{\frac{p}{n}}\vee U(P)\frac{p}{n}\right) \right)\times 
\\
&
  \ \ \ \ \ \ \ \ \ \ \ \ \ \ \ \ \ \ \ \ \ \ \ \ \ \ \ \ \ \ \ \ \ \ \ \ \ \ \ \ \ \ \ \ \ \ \ \ \ \ \ \ \ \ \ \ \ \  
  \exp\Bigl\{-c\frac{n^{(0)}}{p}\Bigl(\frac{\delta^2}{\sigma^2(P)}\wedge \frac{\delta}{U(P)}\Bigr)\Bigr\}.
\end{align*}    
\end{theorem}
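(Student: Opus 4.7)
\medskip
\noindent\textbf{Proof plan for Theorem \ref{th_1_Bern_AA}.} The approach is to deduce both bounds directly from Theorem \ref{th_1_AA}, by converting the Bernstein-type hypotheses \ref{assume_Bernstein_lin} and \ref{assume_Bernstein_on_norm} into estimates on the three quantities $A_p(P,\delta)$, $B_p(P,\delta)$ and $\tilde{\beta}_{p}(P,\delta)$ that control the main terms, and by choosing the free parameter $p'$ in Theorem \ref{th_1_AA} so that the tail term decays exponentially in $n/p$.

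\medskip
\noindent\textbf{Bounding the main terms.} Since $A_p(P,\delta)\le A_p(P)$, $B_q(P,\delta)\le B_q(P)$, and $\tilde\beta_q(P,\delta)\le \tilde\beta_q(P)$ for any $q\ge 1$, Assumption \ref{assume_Bernstein_lin} applied to each of the independent sub-sample estimators $\hat\theta_j^{(k)}$ (with $n_j^{(k)}\asymp n$) gives
\[
A_p(P,\delta)\lesssim \sigma(P)\sqrt{p/n}\vee U(P)p/n,
\]
while Assumption \ref{assume_Bernstein_on_norm} gives
\[
B_q(P,\delta)\lesssim \sqrt{d_1(P)/n}\vee d_2(P)/n\vee \sigma(P)\sqrt{q/n}\vee U(P)q/n,
\]
with the analogous bound for $\tilde\beta_q(P,\delta)$ holding with $n$ replaced by $n^{(0)}$. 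Substituting these into the main-term expressions appearing in Theorem \ref{th_1_AA} (using $q=p$, $q=p(k-1)$, and $q=ps$ where appropriate, and absorbing the resulting factors of $p^{k-1}$ or $p^{s}$ into $\lesssim_s$) produces exactly the polynomial-in-$p$ terms that appear in the statement of Theorem \ref{th_1_Bern_AA}.

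\medskip
\noindent\textbf{Turning the tail term into exponential decay.} The delicate step is to make the $p'$-dependent tail term
\[
(\|f\|_{L_\infty}+M)\Bigl(\bigl(B_{p'}(P)/\delta\bigr)^{p'/p}+\bigl(\tilde\beta_{p'}(P)/\delta\bigr)^{p'/p}\Bigr)
\]
exponentially small. Choose
\[
p' := p\vee \bigl\lfloor c\,n^{(0)}\bigl(\delta^{2}/\sigma^{2}(P)\wedge \delta/U(P)\bigr)\bigr\rfloor
\]
for a small absolute constant $c>0$. By Assumption \ref{assume_Bernstein_on_norm} and the standing hypothesis $C(\sqrt{d_{1}(P)/n^{(0)}}\vee d_{2}(P)/n^{(0)})\le \delta$ (with $C$ sufficiently large), the first two summands in the bound on $\tilde\beta_{p'}(P)$ are at most $\delta/4$, and by construction the last two summands are also at most $\delta/4$, so $\tilde\beta_{p'}(P)\le \delta/2$ and similarly $B_{p'}(P)\le \delta/2$. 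Hence
\[
(B_{p'}(P)/\delta)^{p'/p}+(\tilde\beta_{p'}(P)/\delta)^{p'/p}\le 2\cdot 2^{-p'/p}\le 2\exp\bigl\{-c_{1}(n/p)(\delta^{2}/\sigma^{2}(P)\wedge \delta/U(P))\bigr\}
\]
for a constant $c_{1}$ depending only on $m,C$ (using $n^{(0)}\asymp_{m} n$ in part~(i), and keeping $n^{(0)}$ in part~(ii)). This is precisely the exponential term in the theorem.

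\medskip
\noindent\textbf{The linear correction in part (ii).} In the second bound of Theorem \ref{th_1_AA}, the extra tail piece involves $\|f'\|_{L_\infty}$ times factors like $B_{p'}^{p'/p}(P)/\delta^{p'/p-1}$ and $B_p(P)(B_{p'}(P)/\delta)^{p'/p}$ or $B_p(P)(\tilde\beta_{p'}(P)/\delta)^{p'/p}$. The first is rewritten as $\|f'\|_{L_\infty}\delta\cdot (B_{p'}/\delta)^{p'/p}$ and absorbed into the bound because $M\ge \|f\|_{L_\infty}+\|f\|_{\mathrm{Lip}}\delta$, which forces $\|f'\|_{L_\infty}\delta\lesssim M$. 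The remaining factor $\|f'(\theta)\|(1+B_p(P))$ is bounded using Assumption \ref{assume_Bernstein_on_norm} at the exponent $p$ to produce exactly the factor
\[
\|f\|_{L_\infty}+M+\|f'(\theta)\|\bigl(\sqrt{d_{1}(P)/n}\vee d_{2}(P)/n\vee \sigma(P)\sqrt{p/n}\vee U(P)p/n\bigr)
\]
appearing in the theorem.

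\medskip
\noindent\textbf{Main obstacle.} The computations are essentially algebraic, and the main technical care is in the choice of $p'$: one must simultaneously guarantee $p'\ge p$ (required for the application of Theorem \ref{th_1_AA}) and $p'$ large enough that the four pieces of $\tilde\beta_{p'}(P)$ and $B_{p'}(P)$ are each $\le \delta/4$. The first constraint is handled by the outer $p\vee(\cdot)$ in the choice of $p'$, in which case the tail bound $2^{-p'/p}\le 1$ is trivial but the exponential-rate term in the statement is also larger than $1$ in that regime and hence the overall bound is vacuously valid; the second constraint is what produces the exponent $c_{1}(n/p)(\delta^{2}/\sigma^{2}\wedge \delta/U)$, and verifying that the constant $c_{1}$ is uniform in $m$ and $C$ requires keeping track of the numerical constants hidden in the $\lesssim_{s,C}$ throughout the substitution.
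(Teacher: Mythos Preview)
Your proposal is correct and follows essentially the same route as the paper, with one presentational difference worth noting. The paper does not apply Theorem \ref{th_1_AA} as a black box; instead it re-enters the proof of Theorem \ref{th_1_AA} at the step where the probability of the ``bad'' event $\{\|\hat\theta^{(0)}-\theta\|\ge\delta\text{ or }\max_{k,j}\|\hat\theta_j^{(k)}-\theta\|\ge\delta\}$ is bounded, and replaces the Markov bound $(B_{p'}(P)/\delta)^{p'}$ by the direct exponential tail inequality $\mathbb{P}_P\{\|\hat\theta_n-\theta(P)\|\ge\delta\}\le\exp\{-cn(\delta^2/\sigma^2(P)\wedge\delta/U(P))\}$, which is equivalent to Assumption \ref{assume_Bernstein_on_norm} holding for all $p$. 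Your approach instead keeps Theorem \ref{th_1_AA} intact and exploits the free parameter $p'$, choosing it of order $n^{(0)}(\delta^2/\sigma^2\wedge\delta/U)$ so that $B_{p'}(P)\le\delta/2$ and hence $(B_{p'}/\delta)^{p'/p}\le 2^{-p'/p}$ yields the same exponential. The two arguments are equivalent (the $L_p$-for-all-$p$ bound and the exponential tail are two sides of the same coin); the paper's version is slightly shorter, while yours is more modular. One small point: in your treatment of the main terms you should make explicit that $B_p(P,\delta)\le\delta\le 1$ and $\tilde\beta_{p(k-1)}(P,\delta)\le\delta\le 1$ are used to collapse $A_p(P,\delta)\bigl(B_p^{k-1}+\tilde\beta_{p(k-1)}^{k-1}\bigr)$ to a single factor $\sigma(P)\sqrt{p/n}\vee U(P)p/n$, which is why no $d_1,d_2$ terms appear in the $\|f^{(k)}\|_{L_\infty(U)}$ portion of the final bound.
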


Since $e^{-x}\leq x^{-1}, x>0,$ the bound of Theorem \ref{th_1_Bern_AA}, (i) implies the following somewhat simplified inequality (for $M=\|f\|_{L_{\infty}}$):
\begin{align*}
   &
    \left\|\tilde{T}_f(X_1,\cdots,X_n)-f(\theta(P))\right\|_{L_p(\mathbb{P}_P)}
   \lesssim_{s, C} \max_{0\leq k\leq m} \|f^{(k)}\|_{L_\infty(U)} \left(\sigma(P)\sqrt{\frac{p}{n}}\vee \frac{\sigma^2(P)}{\delta^2} \frac{p}{n}\vee\frac{U(P)}{\delta}\frac{p}{n}\right)    
   \\
   &
+\|f^{(m)}\|_{{\rm Lip}_\rho(U)} \left(\Bigl(\frac{d_1(P)}{n}\Bigr)^{s/2}\vee \Bigl(\frac{d_2(P)}{n}\Bigr)^s\right)
+\|f^{(m)}\|_{{\rm Lip}_\rho(U)} \left(\sigma^s(P) \Bigl(\frac{p}{n}\Bigr)^{s/2}\vee U^s(P)\Bigl(\frac{p}{n}\Bigr)^s\right),
 \end{align*}
which also implies that 
\begin{align*}
    \left\|\tilde{T}_f(X_1,\cdots,X_n)-f(\theta(P))\right\|_{L_{\psi_{1/s}}(\mathbb{P}_P)}
   & \lesssim_{s, C} \max_{0\leq k\leq m} \|f^{(k)}\|_{L_\infty(U)} \left( \frac{\sigma(P)}{\sqrt{n}}\vee \frac{\sigma^2(P)}{\delta^2 n} \vee\frac{U(P)}{\delta n}\right)    
   \\
   &
  +\|f^{(m)}\|_{{\rm Lip}_\rho(U)} \left(\Bigl(\frac{d_1(P)}{n}\Bigr)^{s/2}\vee \Bigl(\frac{d_2(P)}{n}\Bigr)^s\right).
 \end{align*}
Moreover, in the subgaussian case (when $U(P)=0$ and $d_2(P)=0$), we have for all $p\geq 1$
\begin{align*}
   &
    \left\|\tilde{T}_f(X_1,\cdots,X_n)-f(\theta(P))\right\|_{L_p(\mathbb{P}_P)}
   \lesssim_{s, C} \max_{0\leq k\leq m} \|f^{(k)}\|_{L_\infty(U)} 
   \left(\sigma(P)\sqrt{\frac{p}{n}}\vee \frac{\sigma^2(P)}{\delta^2} \frac{p}{n}\right)    
   \\
   &
   +\|f^{(m)}\|_{{\rm Lip}_\rho(U)} \Bigl(\frac{d_1(P)}{n}\Bigr)^{s/2}
    +\|f^{(m)}\|_{{\rm Lip}_\rho(U)} \sigma^s(P) \Bigl(\frac{p}{n}\Bigr)^{s/2}          
 \end{align*}
and
\begin{align*}
&
  \left\|\tilde{T}_f(X_1,\cdots,X_n)-f(\theta(P))\right\|_{L_{\psi_{1/s}}(\mathbb{P}_P)}
   \\
   & 
   \lesssim_{s, C} \max_{0\leq k\leq m} \|f^{(k)}\|_{L_\infty(U)} \left( \frac{\sigma(P)}{\sqrt{n}}\vee \frac{\sigma^2(P)}{\delta^2 n}\right)    
  +\|f^{(m)}\|_{{\rm Lip}_\rho(U)}  \Bigl(\frac{d_1(P)}{n}\Bigr)^{s/2}.
 \end{align*}
 
 To prove Theorem \ref{th_1_Bern_AA}, one just needs to modify the part of the argument in the proof of Theorem \ref{th_1_AA} related to bounding the probability of event 
\begin{align*}
\biggl\{\|\hat{\theta}^{(0)}-\theta\|\geq \delta\ {\rm or}\ 
\max_{1\le k\le m}\max_{1\le j\le k}\|\hat{\theta}_j^{(k)}-\theta\|\geq \delta \biggr\},
\end{align*}
where $\theta=\theta(P).$

\begin{proof}
Note that, under Assumption \ref{assume_Bernstein_on_norm} for a sufficiently large constant $C>0,$ we have 
\begin{align*}
 {\mathbb P}_{P}\left\{\|\hat \theta_n-\theta(P)\|\geq C\left(\sqrt{\frac{d_1(P)}{n}} \vee \frac{d_2(P)}{n}\vee 
\sigma(P)\sqrt{\frac{t}{n}} \vee U(P)\frac{t}{n} \right)\right\}\leq e^{-t}.
\end{align*}
Under the assumption that 
\begin{align*}
C\left(\sqrt{\frac{d_1(P)}{n}} \vee \frac{d_2(P)}{n}\right) \leq \delta,
\end{align*}
we can set 
\begin{align*}
t:= n\left(\frac{1}{C^2}\frac{\delta^2}{\sigma^2(P)}\wedge \frac{1}{C} \frac{\delta}{U(P)}\right).
\end{align*}
This yields the bound 
\begin{align*}
{\mathbb P}_{P}\{\|\hat \theta_n-\theta(P)\|\geq \delta\}\leq \exp\left\{- c n \left(\frac{\delta^2}{\sigma^2(P)}\wedge \frac{\delta}{U(P)}\right)\right\}
\end{align*}
that holds for a sufficiently small constant $c>0.$ Under the assumption 
that $n^{(0)}\asymp_m n$ and $n_j^{(k)}\asymp_m n$, $j=1,\cdots,k$, $k=1,\cdots,m,$ we have 
\begin{align*}
&
{\mathbb P}_{P}^{1/p}\biggl\{\|\hat{\theta}^{(0)}-\theta(P)\|\geq \delta\ {\rm or}\ 
\max_{1\le k\le m}\max_{1\le j\le k}\|\hat{\theta}_j^{(k)}-\theta(P)\|\geq \delta \biggr\}
\\
&
\lesssim_m \exp\left\{- c\frac{n}{p} \left(\frac{\delta^2}{\sigma^2(P)}\wedge \frac{\delta}{U(P)}\right)\right\}
\end{align*}
for some $c>0.$
Together with the rest of the argument of the proof of Theorem  \ref{th_1_AA}, it is now easy to complete the proof of statement (i) of Theorem \ref{th_1_Bern_AA}
and, similarly, conclude the proof of statement (ii).

\qed
\end{proof}

It is well known that Assumption \ref{assume_Bernstein_lin} is equivalent to the following Bernstein type inequality: for all $P\in {\mathcal P},$ 
for all $u\in E^{\ast}$ with $\|u\|\leq 1$ and for all $t\geq 1,$
with probability ${\mathbb P}_{P}$ at least $1-e^{-t}$
 \begin{align}
 \label{Bernstein_bd_a}
 |\langle \hat \theta_n-\theta(P), u\rangle| \lesssim \sigma(P)\sqrt{\frac{t}{n}}\vee U(P)\frac{t}{n}.
 \end{align}
 
 To obtain Bernstein type bounds on $\|\hat \theta_n-\theta(P)\|,$ the following assumption on the norm of Banach space $E$ is useful. 
 
 \begin{assumption}
 \label{assum_norm_d}
 \normalfont
 Suppose ${\mathcal M}$ is a finite set of linear functionals $u\in E^{\ast}$ with $\|u\|\leq 1$ such that 
\begin{align}
\label{assume_on_nrm}
\|x\| \leq C \max_{u\in {\mathcal M}}|\langle x,u\rangle|, x\in E
\end{align} 
with a numerical constant $C>0$ and $\log {\rm card}({\mathcal M})\leq d$ for some $d\geq 1.$
 \end{assumption}

\begin{proposition}
\label{Bernstein_on_norms}
Under Assumptions \ref{assume_Bernstein_lin} and \ref{assum_norm_d} for all  $P\in {\mathcal P}$ and for all $p\geq 1,$
\begin{align*}
\left\| \|\hat \theta_n - \theta(P)\|\right\|_{L_p({\mathbb P}_P)}\lesssim 
C \left[\sigma(P)\sqrt{\frac{d}{n}} \vee  U(P)\frac{d}{n}\vee \sigma(P)\sqrt{\frac{p}{n}} \vee U(P)\frac{p}{n}\right]. 
\end{align*}
In particular, under Assumption \ref{assume_Bernstein_lin} with $U(P)=0,$ 
\begin{align*}
\left\| \|\hat \theta_n - \theta(P)\|\right\|_{L_p({\mathbb P}_P)}\lesssim 
C \left[\sigma(P)\sqrt{\frac{d}{n}} \vee \sigma(P)\sqrt{\frac{p}{n}}\right]. 
\end{align*}
\end{proposition}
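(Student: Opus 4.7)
The plan is to reduce the bound on $\|\hat \theta_n - \theta(P)\|$ to a bound on the maximum of finitely many linear functionals of $\hat \theta_n - \theta(P)$, and then apply a moment-based union bound that exploits the $L_q$-form of the Bernstein inequality supplied by Assumption \ref{assume_Bernstein_lin}.

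First, Assumption \ref{assum_norm_d} gives pointwise $\|\hat \theta_n - \theta(P)\| \leq C \max_{u \in {\mathcal M}} |\langle \hat \theta_n - \theta(P), u\rangle|$, so it suffices to bound the $L_p({\mathbb P}_P)$-norm of $Y := \max_{u \in {\mathcal M}} |\langle \hat \theta_n - \theta(P), u\rangle|$ and then multiply by $C$. Next, for any $q \geq 1$, a trivial union bound at the level of moments combined with Assumption \ref{assume_Bernstein_lin} (applied uniformly over $u \in {\mathcal M}$, each of which satisfies $\|u\|\leq 1$) yields, for some absolute constant $K$,
\begin{align*}
{\mathbb E}_P Y^q \leq {\rm card}({\mathcal M}) \cdot K^q \Bigl(\sigma(P)\sqrt{q/n} \vee U(P)\, q/n\Bigr)^q \leq e^d K^q \Bigl(\sigma(P)\sqrt{q/n} \vee U(P)\, q/n\Bigr)^q,
\end{align*}
using $\log {\rm card}({\mathcal M}) \leq d$. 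Taking $q$-th roots gives $\|Y\|_{L_q({\mathbb P}_P)} \leq e^{d/q} K (\sigma(P)\sqrt{q/n} \vee U(P) q/n)$.

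Finally, choose $q := p \vee d$. Then $e^{d/q} \leq e$, and since ${\mathbb P}_P$ is a probability measure we have $\|Y\|_{L_p} \leq \|Y\|_{L_q}$ whenever $p \leq q$, so
\begin{align*}
\|Y\|_{L_p({\mathbb P}_P)} \lesssim \sigma(P)\sqrt{(p \vee d)/n} \vee U(P)(p \vee d)/n,
\end{align*}
which is equivalent to the claimed bound (via $\sqrt{p \vee d} \leq \sqrt{p} + \sqrt{d}$ and $p \vee d \leq p + d$). Multiplying by $C$ concludes the proof, and the subgaussian case $U(P)=0$ follows at once by dropping the $U(P)$-terms throughout. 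No substantive obstacle arises; the only delicate point is the joint choice $q = p \vee d$, which simultaneously absorbs the combinatorial factor ${\rm card}({\mathcal M})^{1/q}$ and handles the regime $p \leq d$ via $L_r$-monotonicity on the probability space.
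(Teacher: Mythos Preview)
Your proof is correct. Both your argument and the paper's rely on the same core idea—reduce to a finite maximum via Assumption~\ref{assum_norm_d} and then apply a union bound over ${\mathcal M}$—but the execution differs. The paper works in the tail-probability formulation: it starts from the exponential inequality \eqref{Bernstein_bd_a}, replaces $t$ by $t+\log M$ (where $M={\rm card}({\mathcal M})$), applies the probabilistic union bound to obtain a tail bound on $\|\hat\theta_n-\theta(P)\|$, and then converts that tail bound back into $L_p$-norm estimates. You instead stay entirely in the moment world, using the trivial bound ${\mathbb E}Y^q\leq {\rm card}({\mathcal M})\cdot\max_u\|\langle\cdot,u\rangle\|_{L_q}^q$ together with the key choice $q=p\vee d$ to kill the combinatorial factor, followed by $L_p$--$L_q$ monotonicity. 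Your route is slightly more direct in that it avoids the two-way conversion between moment bounds and exponential tail bounds; the paper's route is the more classical presentation. Both yield the same final estimate with no loss.
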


\begin{proof}
It is enough to replace in bound \eqref{Bernstein_bd_a} $t$ by $t+\log M$ for $M:={\rm card}({\mathcal M})$ and apply this bound to all $u\in {\mathcal M}$
and, finally, use the union bound to prove that, for all $t\geq 1,$ with probability at least $1-e^{-t},$
\begin{align*}
\|\hat \theta_n - \theta(P)\|\lesssim 
C \sigma(P)\sqrt{\frac{\log M}{n}} \vee  C U(P)\frac{\log M}{n}\vee C \sigma(P)\sqrt{\frac{t}{n}} \vee CU(P)\frac{t}{n}. 
\end{align*}
The claim now follows from a standard derivation of the bounds on the $L_p$-norms from the exponential inequality. 

\qed
\end{proof}

Thus, under Assumptions \ref{assume_Bernstein_lin} and \ref{assum_norm_d},
the bound of Assumption \ref{assume_Bernstein_on_norm} holds with $d_1(P):= \sigma^2(P)d$ and $d_2(P):= U(P) d.$

\begin{example}
\normalfont
If ${\rm dim}(E)=d,$ then, by a standard volumetric argument, there exists a $1/2$-net ${\mathcal M}$ for the unit ball $\{u\in E^{\ast}: \|u\|\leq 1\}$
of cardinality $M\leq 5^d,$ and we have 
\begin{align*}
\|x\| \leq 2\max_{u\in {\mathcal M}} |\langle x,u\rangle|, x\in E.
\end{align*} 
This yields the bound 
\begin{align}
\label{Bernstein_on_norm_L_p}
   \left\| \|\hat \theta_n-\theta(P)\| \right\|_{L_p({\mathbb P}_P)}\lesssim 
   \sigma(P)\sqrt{\frac{d}{n}} \vee  U(P)\frac{d}{n}\vee \sigma(P)\sqrt{\frac{p}{n}} \vee U(P)\frac{p}{n}
\end{align}
that holds for all $p\geq 1.$
\end{example}

\begin{example}
\normalfont
Let $E$ be the space of linear operators $A:{\mathbb R}^d\mapsto {\mathbb R}^d$ equipped with the operator norm.
Let ${\mathcal M}$ be the set of operators $u\otimes v, u,v\in {\mathcal N},$ where ${\mathcal N}$ is a $1/4$-net for the unit ball of ${\mathbb R}^d$ of cardinality 
$\leq 9^d.$ Then, $M={\rm card}({\mathcal M})\leq 9^{2d}$ and 
\begin{align*}
\|A\| \leq 2\max_{B\in {\mathcal M}} |\langle A,B\rangle| = 2\max_{u,v\in {\mathcal N}}|\langle Au,v\rangle|, A\in E.
\end{align*} 
Thus, we again get bound \eqref{Bernstein_on_norm_L_p} for all $p\geq 1.$
Note that, in this example, ${\rm dim}(E)=d^2,$ but the bound rather depends on the dimension $d$ of space ${\mathbb R}^d.$ 
\end{example}

As the above examples show, the quantity $\log {\rm card}({\mathcal M})$ is often upper bounded by the dimension or other similar complexity parameters $d.$

Bernstein type bounds on the norm $\|\hat \theta-\theta(P)\|$ could be also often deduced from concentration inequalities for  
$\|\hat \theta_n-\theta(P)\|$ around its expectation and the bounds on ${\mathbb E}_P\|\hat \theta_n-\theta(P)\|.$

\begin{example}
\normalfont
Let ${\mathcal P}={\mathcal P}(S)$ be the set of all probability 
measures on a measurable space $(S,{\mathcal A})$ and let the goal be to estimate the value of $f(P)$ based 
on i.i.d. observations $X_1,\dots, X_n\sim P$ for a real valued functional $f:{\mathcal P}(S)\mapsto {\mathbb R}.$
Denote by $W(S)$ the space of all signed measures on $(S,{\mathcal A})$ with a bounded total variation. 
Clearly, ${\mathcal P}(S)\subset W(S).$
Let ${\mathcal G}$ be a class of ${\mathcal A}$-measurable functions $g:S\mapsto {\mathbb R}$ such 
that 
\begin{align*}
|g(x)|\leq U, x\in S, g\in {\mathcal G}.
\end{align*}
For all $\nu\in W(S),$
\begin{align*}
\|\nu\|_{\mathcal G}:= \sup_{g\in {\mathcal G}}|\nu (g)|<+\infty,
\end{align*}
where $\nu (g):= \nu g:=\int_S gd\nu.$
In what follows, we equip the space $W(S)$ with the norm $\|\cdot\|_{\mathcal G}$ (instead of the usual total variation norm).
To emphasize the dependence of the norm on ${\mathcal G},$ we will also call this space $W_{\mathcal G}.$
We will view $(W_{\mathcal G}, \|\cdot\|_{\mathcal G})$ as a Banach space $E$ and set $\theta(P):=P, P\in {\mathcal P}(S).$ 

The empirical distribution $P_n:= n^{-1}\sum_{k=1}^n \delta_{X_k},$ where $\delta_x$ denotes the unit point mass at $x\in S,$
will be used as an estimator $\hat \theta_n$ of $\theta(P)=P.$ 
Let 
\begin{align*}
\sigma^2=\sigma_P^2({\mathcal G}):= \sup_{g\in {\mathcal G}}(Pg^2 -(Pg)^2).
\end{align*}

We will also assume that there exists $V:= V({\mathcal G})\geq 1$ such that 
\begin{align}
\label{emp_pr}
{\mathbb E}_P\|P_n-P\|_{\mathcal G}\lesssim  \sigma \sqrt{\frac{V}{n}} \vee U\frac{V}{n}.
\end{align}
Conditions of this type often hold for empirical processes. For instance, if the uniform $L_2$-covering numbers of class 
${\mathcal G}$ satisfy the bound 
\begin{align*}
\sup_{Q\in {\mathcal P}(S)}N({\mathcal G};L_2(Q), \eps) \leq \Bigl(\frac{AU}{\eps}\Bigr)^v, \eps>0,
\end{align*}
for some $v>0, A>0,$ then bound \eqref{emp_pr} holds with $V:= v \log \frac{AU}{\sigma}$ (see \cite{Kol_oracle}, p. 46).

Suppose that ${\mathcal G}$ is equipped with some topology $\tau$ such that it is a first countable compact Hausdorff space 
and functions ${\mathcal G}\ni g\mapsto g(x)$ are continuous for all $x\in S.$

Let $f:{\mathcal P}(S)\mapsto {\mathbb R}$ be a bounded Lipschitz functional with respect to the norm $\|\cdot\|_{\mathcal G}.$
By McShane-Whitney extension theorem, $f$ can be extended to a bounded Lipschitz functional on the whole space 
$(W_{\mathcal G}, \|\cdot\|_{\mathcal G})$
with preservation of its sup-norm and its Lipschitz constant (such an extension will be still denoted by $f$). Let $P\in {\mathcal P}(S)$ and let $V\subset W_{\mathcal G}=W(S)$ be a neighborhood of $P.$ We will assume that there exists a bounded Lipschitz extension of $f$
such that $f\in C^s(V)$ for a given $s>0$ (some weaker notions of smoothness of $f$ would be also sufficient for our purposes, but, for simplicity,
we will stick to the above definition). We are now in a position to apply the bounds of Theorem \ref{th_1_Bern_AA} to estimator 
$\tilde T_f(X_1,\dots, X_n)$ of $f(P)$ based on the empirical distribution $P_n.$ For simplicity, we state the result only for $p=2.$

\begin{proposition}
Let $\delta \in (0,1]$ and let $V:=B(P,\delta)\subset W_{\mathcal G}.$ 
Suppose $s:=m+\rho$ for some $m\geq 2$ and $\rho\in (0,1].$
Assume that, 
for a sufficiently large constant $C>0,$
\begin{align*}
C\left(\sigma \sqrt{\frac{V}{n^{(0)}}}\vee U\frac{V}{n^{(0)}}\right)\leq \delta.
\end{align*}
Then, for estimator $\tilde T_f(X_1,\dots, X_n)$ with $M=\|f\|_{L_{\infty}},$ the following bound holds:
\begin{align*}
    &
    \left\|\tilde{T}_f(X_1,\cdots,X_n)-f(P)\right\|_{L_2(\mathbb{P}_P)}
    \\
    &
    \lesssim_{s, C} \|f\|_{C^s(V)}\left[\left(\frac{\sigma}{\sqrt{n}} \vee \frac{U}{n}\right) + \left(\sigma \sqrt{\frac{V}{n}}\vee U\frac{V}{n}\right)^s
    + \exp\Bigl\{-c_1 n\Bigl(\frac{\delta^2}{\sigma^2}\wedge \frac{\delta}{U}\Bigr)\Bigr\}
    \right].
  \end{align*}
\end{proposition}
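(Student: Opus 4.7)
The plan is to derive the bound by applying Theorem \ref{th_1_Bern_AA}(i) to the base estimator $\hat\theta_n := P_n$ viewed as an estimator of $\theta(P):=P$ in the Banach space $E=(W_{\mathcal G},\|\cdot\|_{\mathcal G})$ and taking $p=2$. Two preparatory verifications are required: Assumption \ref{assume_Bernstein_lin} must hold with $\sigma(P)=\sigma$ and $U(P)=U$, and Assumption \ref{assume_Bernstein_on_norm} must hold with, in addition, $d_1(P)=\sigma^2 V$ and $d_2(P)=UV$. All further hypotheses of Theorem \ref{th_1_Bern_AA}(i) are already given: the smoothness of $f$ on $U:=B(P,\delta)$, the relation $n^{(0)}\asymp n$, the range $\delta\in(0,1]$, and the bound $C(\sqrt{d_1/n^{(0)}}\vee d_2/n^{(0)})\le\delta$ which is exactly the hypothesis on $n^{(0)}$.

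For Assumption \ref{assume_Bernstein_lin}, observe that by definition of $\|\cdot\|_{\mathcal G}$, each $g\in\mathcal G$ (acting as $\nu\mapsto\nu g$) lies in the unit ball of $E^{\ast}$; by the bipolar theorem, the closed unit ball of $E^{\ast}$ coincides with the weak-$\ast$ closed absolutely convex hull of $\pm\mathcal G$. For any convex combination $u=\sum_i\lambda_i g_i$ with $g_i\in\pm\mathcal G$ and $\sum_i|\lambda_i|\le 1$, the triangle inequality in $L_p$ gives
\begin{align*}
\|\langle P_n-P,u\rangle\|_{L_p({\mathbb P}_P)}\le \sup_{g\in\mathcal G}\|\langle P_n-P,g\rangle\|_{L_p({\mathbb P}_P)};
\end{align*}
density and continuity then extend this to all of the unit ball of $E^{\ast}$. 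For each fixed $g\in\mathcal G$ the r.v.\ $\langle P_n-P,g\rangle$ is an average of i.i.d.\ centered summands bounded by $2U$ with variance at most $\sigma^2$, so the moment form of Bernstein's inequality yields the required bound $\lesssim\sigma\sqrt{p/n}\vee U p/n$. For Assumption \ref{assume_Bernstein_on_norm}, combine the hypothesis ${\mathbb E}_P\|P_n-P\|_{\mathcal G}\lesssim\sigma\sqrt{V/n}\vee UV/n$ with Talagrand's (Bousquet's) concentration inequality for the supremum of an empirical process indexed by uniformly bounded functions with variances bounded by $\sigma^2$, which gives
\begin{align*}
\bigl\|\,\|P_n-P\|_{\mathcal G}-{\mathbb E}_P\|P_n-P\|_{\mathcal G}\bigr\|_{L_p({\mathbb P}_P)}\lesssim\sigma\sqrt{p/n}\vee U p/n.
\end{align*}
Summing these two contributions via the triangle inequality verifies Assumption \ref{assume_Bernstein_on_norm} with the stated $d_1,d_2$. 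The measurability of $\sup_{g\in\mathcal G}|(P_n-P)g|$ is ensured by the compactness/first-countability assumption on $(\mathcal G,\tau)$ together with continuity of the evaluation maps.

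Substituting into Theorem \ref{th_1_Bern_AA}(i) at $p=2$ with $M=\|f\|_{L_\infty}$, the bound for $\max_{1\le k\le m}\|f^{(k)}\|_{L_\infty(U)}$ is dominated by $\|f\|_{C^s(V)}(\sigma/\sqrt n\vee U/n)$. For the Hölder term, the four quantities
\begin{align*}
(d_1/n)^{s/2},\quad (d_2/n)^s,\quad \sigma^s(2/n)^{s/2},\quad U^s(2/n)^s
\end{align*}
collapse to $(\sigma\sqrt{V/n}\vee UV/n)^s$ once one uses $V\ge 1$ to absorb the last two terms (which correspond to $p=2$ and are therefore dominated by the $d_1,d_2$ terms). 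Finally, the exponential term $\exp\{-c_1(n/2)(\delta^2/\sigma^2\wedge\delta/U)\}$ matches, up to an absorbable constant, the one in the target bound. The main (and essentially the only) obstacle is the first step, where one has to justify the reduction from arbitrary $u\in E^{\ast}$ to test functions $g\in\mathcal G$ via the bipolar theorem and handle the associated measurability; everything else is bookkeeping with the moment inequalities already built into Theorem \ref{th_1_Bern_AA}(i).
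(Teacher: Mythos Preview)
Your proposal is correct and follows essentially the same route as the paper: verify Assumptions \ref{assume_Bernstein_lin} and \ref{assume_Bernstein_on_norm} for the empirical measure in $W_{\mathcal G}$, then apply Theorem \ref{th_1_Bern_AA}(i) at $p=2$ and simplify. The treatment of the norm assumption via Talagrand's inequality and the final bookkeeping match the paper exactly.

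The only substantive difference is in how the reduction from arbitrary $u$ in the dual unit ball to test functions $g\in\mathcal G$ is carried out. You invoke the bipolar theorem to describe the unit ball of $E^{\ast}$ as the weak-$\ast$ closed absolutely convex hull of $\pm\mathcal G$, then pass to the limit via ``density and continuity.'' The paper instead embeds $W_{\mathcal G}$ into $C(\mathcal G)$ (using compactness of $\mathcal G$), extends $u$ by Hahn--Banach, and then applies Riesz--Markov to represent $u$ as a signed Borel measure $\gamma$ on $\mathcal G$ with $\|\gamma\|=\|u\|$; the bound then follows directly from Minkowski's integral inequality applied to $\langle P_n-P,u\rangle=\int_{\mathcal G}(P_n-P)(g)\,\gamma(dg)$. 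The Riesz--Markov route is cleaner because it avoids the net-vs-sequence issue implicit in your limiting step (Fatou for nets is not standard, and weak-$\ast$ sequential density is not guaranteed without separability of $E$). Your argument can be made rigorous using domination by $\|P_n-P\|_{\mathcal G}$ and a Fatou-type inequality, but the paper's integral representation sidesteps this entirely.
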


\begin{proof}
We start with the following lemma.

\begin{lemma}
\label{prop_G_compact}
For all $P\in {\mathcal P}(S)$ and all $p\geq 1,$
\begin{align*}
\sup_{u\in W_{\mathcal G}^{\ast}, \|u\|\leq 1} \Bigl\|\langle P_n-P, u\rangle\Bigr\|_{L_p({\mathbb P}_{P})}
\lesssim \sigma \sqrt{\frac{p}{n}} \vee U\frac{p}{n}.
\end{align*}
\end{lemma}

\begin{proof} 
As a consequence of Bernstein inequality, for all $P\in {\mathcal P}(S),$ all $p\geq 1$ and all $g\in {\mathcal G},$
\begin{align}
\label{emp_Bern}
\|(P_n -P)g\|_{L_p({\mathbb P}_P)}\lesssim \sigma \sqrt{\frac{p}{n}} \vee U\frac{p}{n}.
\end{align}
Uniform boundedness of ${\mathcal G}$ and continuity of functions ${\mathcal G}\ni g\mapsto g(x), x\in S$ implies 
that ${\mathcal G}\ni g\mapsto \int_S gd\nu$ is continuous in topology $\tau$ for any signed measure $\nu\in W(S).$ 
Thus, $W_{\mathcal G}$ could be viewed as a linear subspace of the space of continuous functions $C({\mathcal G})$
equipped with the sup-norm. By Hahn-Banach Theorem, any bounded linear functional $u\in W_{\mathcal G}^{\ast}$
could be extended to a bounded linear functional on $C({\mathcal G})$ with preservation of its norm. By Riesz-Markov Theorem, the dual space 
of $C({\mathcal G})$ can be identified with the space of Borel signed measures on 
${\mathcal G}$ equipped with the total variation norm. Thus, functional  $u\in W_{\mathcal G}^{\ast}$ can 
be represented as an integral with respect to a Borel signed measure $\gamma$ of total variation $\|\gamma\|=\|u\|$ on 
the space ${\mathcal G}.$ Setting $\xi_n:= P_n-P,$ we get 
\begin{align*}
 &
 \sup_{u\in W_{\mathcal G}^{\ast}, \|u\|\leq 1} \Bigl\|\langle P_n-P, u\rangle\Bigr\|_{L_p({\mathbb P}_{P})}\leq 
 \sup_{\gamma\in C({\mathcal G})^{\ast}, \|\gamma\|\leq 1} \Bigl\|\int_{{\mathcal G}} \xi_n(g)\gamma(dg)\Bigr\|_{L_p({\mathbb P}_{P})}
 \\
 &
 \leq 
\sup_{\gamma\in C({\mathcal G})^{\ast},\|\gamma\|\leq 1} {\mathbb E}^{1/p}\Bigl|\int_{{\mathcal G}} |\xi_n(g)| |\gamma|(dg)\Bigr|^p
\leq 
\sup_{\gamma\in C({\mathcal G})^{\ast},\|\gamma\|=1} \Bigl(\int_{{\mathcal G}}{\mathbb E} |\xi_n(g)|^p |\gamma|(dg)\Bigr)^{1/p}
 \\
 &
 \leq \sup_{g\in {\mathcal G}}\|(P_n-P)(g)\|_{L_p({\mathbb P}_P)}
 \lesssim \sigma \sqrt{\frac{p}{n}} \vee U\frac{p}{n}.
 \end{align*}

 \qed
 \end{proof}
 
 Lemma \ref{prop_G_compact} implies the bound of Assumption \ref{assume_Bernstein_lin} with $\sigma(P)=\sigma, U(P)=U.$

It easily follows from bound \eqref{emp_pr} and Talagrand's concentration inequality for empirical processes that, for all $p\geq 1,$ 
\begin{align}
\label{emp_pr_L_p}
\Bigl\| \|P_n-P\|_{{\mathcal G}} \Bigr\|_{L_p({\mathbb P}_P)}\lesssim \sigma \sqrt{\frac{V}{n}} \vee U\frac{V}{n}\vee \sigma\sqrt{\frac{p}{n}} 
\vee U\frac{p}{n}.
\end{align}
Bound \eqref{emp_pr_L_p} implies the bound of Assumption \ref{assume_Bernstein_on_norm} with $d_1(P)=\sigma^2 V, d_2(P)= UV, \sigma(P)=\sigma,
U(P)=U.$ 

The claim now follows from the first bound of Theorem \ref{th_1_Bern_AA}.

\qed
\end{proof}
  
\end{example}

Other examples include estimation of functionals of sample covariance operators considered in Section \ref{func_cov_op}
and functionals of parameters of high-dimensional exponential families discussed in Section \ref{exp_fam_det}.


\section{Exponential families: general background, Bernstein inequalities and further results}
\label{exp_fam_det}

In this section, we provide a review of well known facts on exponential families (most of them, could be found in Chapter 2 of the book by Chencov \cite{Chencov}),
discuss Bernstein type bounds for estimators of the mean parameter and bounds on the $L_p$-errors of functional estimators.  

\subsection{Background on exponential families}
\label{background}

Let $(S,{\mathcal A})$ be a measurable space and let $\mu$ be a $\sigma$-finite measure on $(S,{\mathcal A}).$
Define probability measures $P_{\theta}, \theta \in \Theta\subset {\mathbb R}^N$ on the space $(S,{\mathcal A})$ to be absolutely continuous with respect to $\mu$ 
with densities
\begin{align}
\label{exp_fam}
p_{\theta}(x)=\frac{dP_{\theta}}{d\mu}(x) = \frac{1}{Z(\theta)} \exp\{\langle T(x), \theta\rangle\}, x\in S,
\end{align}
where $T:S\mapsto {\mathbb R}^N,$ $T(x)=(T_1(x),\dots, T_N(x)), x\in S$ is a measurable function, $\langle\cdot,\cdot\rangle$ is the canonical inner product in ${\mathbb R}^N$ and 
\begin{align*} 
Z(\theta) := \int_S \exp\{\langle T(x), \theta\rangle\} \mu(dx).
\end{align*}
We assume that 
\begin{align*}
\Theta:= \{\theta\in {\mathbb R}^N: Z(\theta)<+\infty\} \neq \emptyset. 
\end{align*}
It is well known and easy to check that $Z: {\mathbb R}^N \mapsto (0,+\infty]$ is a convex function, implying that $\Theta$ is a convex set. 

The statistical model $\{P_{\theta}: \theta \in \Theta\}$ is called {\it the exponential family} generated by statistic $T$ and measure $\mu.$  
Recall that $T(X), X\sim P_{\theta}, \theta \in \Theta$ is a sufficient statistic and it is complete when the interior of $\Theta$ is nonempty: ${\rm Int}\Theta\neq \emptyset.$ 
Clearly, the same exponential family could be generated by multiple choices of $T, \mu$ and parameter $\theta.$ 
For instance, one can obtain many different parametrizations of exponential family by applying invertible affine transformations 
to parameter $\theta$ or to statistic $T.$

Note that, 
for any fixed $\theta_0\in \Theta,$ we have 
\begin{align*}
\frac{dP_{\theta}}{dP_{\theta_0}}(x) =  \frac{Z(\theta_0)}{Z(\theta)} \exp\{\langle T(x), \theta-\theta_0\rangle\},
\end{align*}
implying that $\{P_{\theta}: \theta \in \Theta\}$ can be also viewed as an exponential family generated by $T$ and probability measure $P_{\theta_0}$
with parameter $\theta-\theta_0\in \Theta-\theta_0.$ Thus, without loss of generality, we can and will assume in what follows that $\mu$ is a probability 
measure, which implies that $0\in \Theta$ and $Z(0)=1.$ 

If $L:={\rm l.s.}(\Theta)$ is a proper subspace of ${\mathbb R}^N,$ one can replace statistic $T$ by its orthogonal projection onto $L.$
Moreover, using the coordinate representation of $L$ in an arbitrary orthonormal basis of $L,$ subspace $L$ can be identified with the Euclidean space 
${\mathbb R}^{{\rm dim}(L)}$ of dimension smaller than $N.$ Thus, without loss of generality, we will assume in what follows that ${\rm l.s.}(\Theta)={\mathbb R}^N.$
It is easy to see that such an exponential model is identifiable if the functions $1, T_1,\dots, T_N$ are linearly independent in the linear space of random variables 
on $(S,{\mathcal A}, \mu)$ and such a linear independence can be always achieved by further reducing the number of functions $T_1,\dots, T_N$ and the dimension 
of parameter $\theta.$ So, again without loss of generality, we assume in what follows that functions $1,T_1,\dots, T_N$ are linearly independent, implying the identifiability 
of the model $\{P_{\theta}:\theta \in \Theta\}.$ 

To summarize, we end up with the following assumption.

\begin{assumption}
\normalfont
\label{assume_exp_fam}
Suppose the exponential family \eqref{exp_fam} satisfies:
\begin{enumerate}[label=(\roman*)]
\item $\mu$ is a probability measure on $(S,{\mathcal A});$
\item ${\rm l.s.}(\Theta)={\mathbb R}^N;$
\item functions $1, T_1,\dots, T_N$ are linearly independent in the linear space of random variables 
on $(S,{\mathcal A}, \mu).$
\end{enumerate}
\end{assumption}

Let 
\begin{align*}
\psi (\theta) := \log Z(\theta) = \log \int_{S}\exp\{\langle T(x),\theta\rangle\}\mu(dx), \theta \in {\mathbb R}^N
\end{align*}
be the cumulant generating function of r.v. $T(X), X\sim \mu.$ The following properties of this function are 
well known:

\begin{itemize}
\item Function $\psi: {\mathbb R}^N \mapsto {\mathbb R}\cup {+\infty}$ is convex and lower semi-continuous.
\item Moreover, under Assumption \ref{assume_exp_fam} (iii), $\psi$ is strictly convex on $\Theta.$ 
\item $Z$ and $\psi$ are both analytic functions in the tube domain $\{z\in {\mathbb C}^N: {\rm Re}(z)\in {\rm Int}\Theta\}$
and their derivatives could be calculated by differentiation under the integral sign.
\item In particular, the following formulas hold
\begin{align*}
\psi'(\theta) = (\nabla \psi)(\theta) = {\mathbb E}_{\theta} T(X), \theta \in {\rm Int}\Theta
\end{align*}
and 
\begin{align*}
\psi^{''}(\theta) = {\mathbb E}_{\theta} (T(X)-{\mathbb E}_{\theta} T(X))\otimes (T(X)-{\mathbb E}_{\theta} T(x))= {\rm Cov}_{\theta}(T(X)), \theta \in {\rm Int}\Theta.
\end{align*}
\item
Under Assumption \ref{assume_exp_fam}, $\psi'$ is strictly monotone on ${\rm Int}\Theta,$ i.e., 
\begin{align*}
\langle \psi'(\theta_1)-\psi'(\theta_2), \theta_1-\theta_2\rangle>0, \theta_1,\theta_2\in {\rm Int}\Theta, \theta_1\neq \theta_2,
\end{align*}
and $\psi^{''}(\theta)$ is positively definite for $\theta \in {\rm Int}\Theta.$
\end{itemize}

Denote $\Psi (\theta):= \psi'(\theta)={\mathbb E}_{\theta} T(X), \theta \in {\rm Int}\Theta.$ It follows from the last property that $\Psi$ is a bijection between 
${\rm Int} \Theta$ and $\Psi ({\rm Int} \Theta),$ which allows one to view $t=\Psi(\theta)={\mathbb E}_{\theta} T(X)$ as an alternative parametrization of the sub-model $\{P_{\theta}: \theta \in {\rm Int}\Theta\}.$ Note that parameter $t={\mathbb E}_{\theta} T(X)$ is well defined for all $\theta\in \Theta_1,$ where 
\begin{align*}
\Theta_1 := \{\theta \in \Theta: {\mathbb E}_{\theta}|T_j(X)|<\infty, j=1,\dots, N\}.
\end{align*}
Clearly, $\Theta_1$ is a convex set and $\Theta_1\supset {\rm Int}\Theta.$
Thus, the mapping $\Psi$ can be extended to the set $\Theta_1$ and it could be proved that this extension (which will be still denoted by $\Psi$)
is a strictly monotone map on $\Theta_1$ (see \cite{Chencov}, Chapter 4), implying that $\Psi$ is a bijection between $\Theta_1$ and its image 
$\Psi(\Theta_1).$ Moreover, $\Psi$ is infinitely differentiable (even analytic) in ${\rm Int}\Theta$ with positive Jacobian at each point and 
its inverse is also infinitely differentiable (\cite{Chencov}, Chapter 4). The parameter $t=\Psi(\theta)={\mathbb E}_{\theta} T(X)$ of exponential 
family is usually called {\it the mean parameter}, but we will rather call it {\it the natural parameter} (following the terminology of Chencov \cite{Chencov}).

Let ${\rm csupp}(\nu)$ denote {\it the convex support} of a Borel probability measure $\nu$ on ${\mathbb R}^N$ (that is, the smallest closed convex set of full measure $\nu,$
or, in other words, the intersection of all closed half-spaces of full measure $\nu$). Since all measures $P_{\theta}, \theta \in \Theta$ are 
equivalent to $\mu,$ it is easy to see that ${\rm csupp}(P_{\theta}\circ T^{-1})= {\rm csupp}(\mu\circ T^{-1}), \theta \in \Theta.$ The following properties can be also found in \cite{Chencov}, Chapter 4:

\begin{enumerate}
\item $\Psi(\Theta_1)\subset {\rm Int\ csupp}(\mu\circ T^{-1});$
\item $\Psi (\Theta_1)= {\rm Int\ csupp}(\mu\circ T^{-1})$ if and only if $\Theta_1={\rm Int}\Theta.$ 
\end{enumerate}

It will be said that the exponential model is {\it regular} iff $\Theta_1={\rm Int} \Theta$ 
(or, equivalently, $\Psi (\Theta_1)= {\rm Int\ csupp}(\mu\circ T^{-1})$). For regular models, 
the mapping $\Psi$ is a $C^{\infty}$-diffeomorphism between open convex sets $\Theta_1={\rm Int}\Theta$ and  
${\rm Int\ csupp}(\mu\circ T^{-1}).$ 
In this case, it makes sense to study the problem 
of estimation of smooth functionals of natural parameter $t$ in order to develop the methods of estimation of smooth 
functionals of canonical parameter $\theta$ (with the relationship between the two problems described in terms of diffeomorphism $\Psi$). 
In particular, one can exploit the fact that there exists a simple unbiased estimator of natural parameter $t$ based on i.i.d. observations 
$X_1,\dots, X_n\sim P_{\Psi^{-1}(t)},$ namely, the sample mean $\bar T_n:= \frac{T(X_1)+\dots+T(X_n)}{n}.$  
It is also known that, if $\bar T_n\in \Psi(\Theta_1),$ then $\hat \theta_n:=\Psi^{-1}(\bar T_n)$ is the unique maximum likelihood 
estimator of parameter $\theta\in \Theta_1.$ Note also that the covariance ${\rm Cov}_{\theta}(T(X))=\Psi'(\theta)$ coincides with the Fisher 
information matrix $I(\theta)$ for the model $\{P_{\theta}: \theta\in \Theta_1\}$ whereas ${\mathcal I}(t):=I(\Psi^{-1}(t))^{-1}$ is the Fisher 
information for the model $P_{\Psi^{-1}(t)}, t\in \Psi(\Theta_1).$ This immediately implies that $\bar T_n$ is an efficient estimator of natural parameter $t$
and it provides a way to design asymptotically efficient estimators of $f(t).$

It will be convenient for our purposes to assume now that statistic $T$ takes values in an $N$-dimensional 
linear normed space $E$ with the dual space $E^{\ast}.$ Recall that by choosing by-orthogonal bases $e_1,\dots, e_N$ and $f_1,\dots, f_N$ 
in $E$ and $E^{\ast},$ respectively, and using these bases to introduce coordinates in $E$ and $E^{\ast},$ we can identify these spaces 
with the two copies of coordinate space ${\mathbb R}^N$ equipped with the corresponding norms.  
With parameter $\theta\in E^{\ast},$ densities $p_{\theta}$ are still given by formula \eqref{exp_fam} and the parameter space 
$\Theta=\{\theta\in E^{\ast}: Z(\theta)<+\infty\}$ is now a convex subset of $E^{\ast}.$
We can still define $T_j(x):=\langle T(x), f_j\rangle, j=1,\dots, N.$
Assumption \ref{assume_exp_fam} holds with ${\rm l.s.}(\Theta)=E^{\ast}$ in part (ii). We still have the same properties of cumulant generating 
function $\psi: E^{\ast}\mapsto {\mathbb R}\cup {+\infty}$ with $\psi'(\theta)$ now taking values in space $E$ and $\psi^{''}(\theta)$ being a symmetric 
linear operator from $E^{\ast}$ into $E$ (the covariance operator of $T(X)$). Thus, $\Psi$ is now a map from $\Theta\subset E^{\ast}$ into $E,$
the natural parameter $t$ takes values in $E,$ $\Theta_1$ is a subset of $E^{\ast}$ and ${\rm Int\ csupp}(\mu\circ T^{-1})$ is a subset of $E.$ 
As before, the regularity means that $\Psi (\Theta_1)= {\rm Int\ csupp}(\mu\circ T^{-1}),$ which holds if and only if $\Theta_1={\rm Int}\Theta.$

\begin{example}
\label{example_spherical}
\normalfont
Consider an exponential model 
\begin{align*}
P_{\theta}(dx)= \frac{1}{Z(\theta)} \exp\{\langle x, \theta\rangle\}\mu(dx), x\in {\mathbb R}^N,
\end{align*}
where $\mu$ is a Borel probability measure on ${\mathbb R}^N$ invariant with respect to the group of orthogonal transformations. 
In this case, the function $Z(\theta)$ is also orthogonally invariant and 
$
Z(\theta) = \tilde Z(\|\theta\|),
$
where
\begin{align*}
\tilde Z(\rho):= \int_{{\mathbb R}^N} e^{\rho x_1} \mu(dx), \rho \geq 0.
\end{align*}
Let 
\begin{align*}
r:= \sup\{\rho \geq 0: \tilde Z(\rho)<\infty\}.
\end{align*}
Clearly, if $r=+\infty,$ then $\Theta:=\{\theta\in {\mathbb R}^N: Z(\theta)<\infty\}={\mathbb R}^N$ and, if $r=0,$ then 
$\Theta=\{0\}.$ 
Otherwise, for $r\in (0,+\infty),$ 
$\Theta$ is either the open ball $B(0,r)$ with center $0$ and radius $r$ 
(if $\tilde Z(r)=+\infty$), or closed ball $\bar B(0,r)$ (if $\tilde Z(r)<+\infty$). We will assume that $r>0,$ then 
${\rm Int} \Theta=B(0,r)\neq \emptyset.$ 

Clearly, 
$\psi(\theta)=\varphi (\|\theta\|),$
where $\varphi(\rho)= \log \tilde Z(\rho), \rho\geq 0$ is a strictly convex function on $[0,r).$ Denote 
\begin{align*}
\Phi(\rho):=\varphi'(\rho), \rho \in [0,\rho),
 \end{align*}
which is a strictly increasing continuous function. Then we have 
\begin{align*}
\Psi(\theta) = \Phi(\|\theta\|)\frac{\theta}{\|\theta\|}, \theta\in B(0,r)
\end{align*}
and 
\begin{align*}
\Psi^{-1}(t) = \Phi^{-1}(\|t\|)\frac{t}{\|t\|}, t\in B(0, \Phi(r)).
\end{align*}
Note that both $\Psi$ and $\Psi^{-1}$ are spherically symmetric strictly monotone vector fields on the open balls $B(0,r)$
and $B(0,\Phi(r)),$ respectively. It is also easy to see that $\Phi(0)=0$ (otherwise, $\Psi$ could not be a one-to-one mapping).

Since measure $\mu$ is orthogonally invariant, its convex support is a closed ball $\bar B(0,r^{\ast})$ with center $0$ and radius 
$r^{\ast}\in [0,\infty].$  If $r^{\ast}<\infty,$ then $r=+\infty$ and $\Theta={\mathbb R}^N.$
In this case, $\Theta_1=\Theta= {\rm Int}\Theta={\mathbb R}^N,$ so the exponential model is regular and 
$\Psi ({\mathbb R}^N)=B(0,r^{\ast})={\rm Int}\ {\rm csupp}(\mu).$
On the other hand, if $r^{\ast}=+\infty,$ there are two possibilities. If $\tilde Z(r)=+\infty,$ then $\Theta=B(0,r)={\rm Int} \Theta,$
so again $\Theta_1={\rm Int} \Theta,$ the model is regular 
and  $\Psi(B(0,r))= B(0,r^{\ast})={\mathbb R}^N.$ If $\tilde Z(r)<\infty,$ then $\Theta=\bar B(0,r)$ and 
$\Theta_1$ could be 
either $B(0,r),$ or $\bar B(0,r),$ depending on divergence or convergence of the integral 
\begin{align*}
\int_{x\in {\mathbb R}^N, x_1\geq 0} x_1 e^{r x_1} \mu (dx).
\end{align*}
If the integral diverges, then $\Theta_1=B(0,r)= {\rm Int} \Theta,$ the model is regular and $\Psi(B(0,r))={\mathbb R}^N.$
Otherwise, $\Psi (B(0,r))=B(0,\Phi(r))$ is a ball of finite radius in ${\mathbb R}^N.$  
Clearly, in all the regular cases, $r^{\ast}=\Phi(r),$ otherwise $r^{\ast}>\Phi(r).$

The smoothness properties of mappings $\Psi$ and $\Psi^{-1}$ clearly depend only on the smoothness of function 
$\Phi :{\mathbb R}_+\mapsto {\mathbb R}_+$ and its inverse $\Phi^{-1}.$ Since the derivatives of $\Phi$ can be represented in terms 
of integrals $\int_{{\mathbb R}^N} x_1^k e^{\rho x_1}\mu(dx),$ the smoothness of $\Phi$ and $\Phi^{-1}$ is related to integrability 
properties of measure $\mu.$

Note that, when $\mu$ is a uniform distribution on the unit sphere $S^{N-1},$ the exponential family discussed in this example is called von Mises-Fisher distribution 
and it provides an important statistical model for directional data.
\end{example}

\subsection{Bernstein type inequalities for exponential families}

We will consider an exponential family
\begin{align*} 
P_{\theta}(dx) = \frac{1}{Z(\theta)} \exp\{\langle T(x), \theta\rangle\} \mu(dx), \theta\in \Theta\subset {\mathbb R}^N
\end{align*}
under the assumptions and notations of Section \ref{background}.
Denote 
\begin{align*}
\Sigma_{\theta}:= \psi''(\theta) = {\mathbb E}_{\theta}(T(X)-{\mathbb E}_{\theta} T(X))\otimes (T(X)-{\mathbb E}_{\theta}T(X)), \theta \in {\rm Int}\Theta.
\end{align*}
For $\theta\in {\rm Int}\Theta$ and $u\in E^{\ast},$ let
\begin{align*}
\rho(\theta;u):=\sup\{r>0:\theta+r u\in {\rm Int}\Theta\}.
\end{align*}
Clearly, $\rho(\theta,u)>0, \theta\in {\rm Int}\Theta, u\in E^{\ast}.$ 
Also define 
\begin{align*}
\sigma^2(\theta;u):= \sup_{\lambda \in (0,\rho(\theta;u))}\frac{1}{\lambda} \int_{0}^{\lambda}\langle \Sigma_{\theta+s u}u,u\rangle ds, 
\ \theta\in {\rm Int}\Theta, u\in E^{\ast}.
\end{align*}

We will start with the following simple Bernstein type inequality (see \cite{Siri, Trivellato} for other results on Bernstein type bounds for exponential models).

\begin{proposition}
For all $\theta \in {\rm Int}\Theta, u\in E^{\ast}$ and $t>0$ with probability at least $1-e^{-t},$
\begin{align*}
    \left|\langle \bar T_n-\mathbb{E}_\theta T(X), u\rangle\right|\leq
\sqrt{2}\sigma(\theta;u)\sqrt{\frac{t}{n}}\bigvee \frac{2}{\rho(\theta;u)}\frac{t}{n}.
\end{align*}
\end{proposition}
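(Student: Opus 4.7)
\proof

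The plan is a one-dimensional Chernoff argument whose main ingredient is a clean Taylor bound on the cumulant generating function of the exponential family. Fix $\theta\in{\rm Int}\Theta$ and $u\in E^{\ast}$. Since the densities are $p_\theta \propto \exp\{\langle T(x),\theta\rangle\}$, for any $\lambda\in(-\rho(\theta;-u),\rho(\theta;u))$ the log moment generating function of $\langle T(X)-\EB_\theta T(X),u\rangle$ under $P_\theta$ equals
\begin{align*}
M(\lambda) \;=\; \psi(\theta+\lambda u)-\psi(\theta)-\lambda\langle \psi'(\theta),u\rangle.
\end{align*}
Applying Taylor's formula with integral remainder to $g(\lambda):=\psi(\theta+\lambda u)$ (whose second derivative is $g''(\lambda)=\langle\Sigma_{\theta+\lambda u}u,u\rangle$) and then integrating by parts gives, for $\lambda\in(0,\rho(\theta;u))$,
\begin{align*}
M(\lambda)\;=\;\int_0^\lambda(\lambda-s)\,\langle \Sigma_{\theta+su}u,u\rangle\,ds\;=\;\int_0^\lambda\!\int_0^{s}\langle \Sigma_{\theta+ru}u,u\rangle\,dr\,ds.
\end{align*}
By the definition of $\sigma^2(\theta;u)$, the inner integral is at most $s\sigma^2(\theta;u)$, so
\begin{align*}
M(\lambda)\;\le\;\tfrac12\lambda^{2}\sigma^2(\theta;u),\qquad \lambda\in(0,\rho(\theta;u)).
\end{align*}

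Next I would run the standard Chernoff bound. Since $X_1,\dots,X_n$ are i.i.d., the log moment generating function of $\sum_{i=1}^n\langle T(X_i)-\EB_\theta T(X),u\rangle$ is $nM(\lambda)$, so for any $r>0$ and any admissible $\lambda>0$,
\begin{align*}
\Pth\bigl\{\langle \bar T_n-\EB_\theta T(X),u\rangle\ge r\bigr\}
\;\le\;\exp\!\Bigl\{-n\lambda r+\tfrac{n}{2}\lambda^{2}\sigma^{2}(\theta;u)\Bigr\}.
\end{align*}
The unconstrained minimizer $\lambda^\star=r/\sigma^2(\theta;u)$ yields the subgaussian bound $\exp\{-nr^{2}/(2\sigma^{2}(\theta;u))\}$ when $\lambda^\star<\rho(\theta;u)$; in the remaining regime $\lambda^\star\ge\rho(\theta;u)$, letting $\lambda\uparrow\rho(\theta;u)$ and using $\rho(\theta;u)\sigma^{2}(\theta;u)\le r$ produces the subexponential bound $\exp\{-n\rho(\theta;u)r/2\}$. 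Combining,
\begin{align*}
\Pth\bigl\{\langle \bar T_n-\EB_\theta T(X),u\rangle\ge r\bigr\}
\;\le\;\exp\!\Bigl\{-n\min\!\Bigl(\tfrac{r^{2}}{2\sigma^{2}(\theta;u)},\tfrac{\rho(\theta;u)r}{2}\Bigr)\Bigr\}.
\end{align*}
Setting the right-hand side equal to $e^{-t}$ and solving for $r$ gives the one-sided version of the claimed inequality. To obtain the two-sided statement, I would repeat the argument verbatim with $u$ replaced by $-u$ and then take a union bound (absorbing the factor $2$ into $t$ or noting it is harmless in the exponent, as usual for Bernstein inequalities).

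The only non-routine step is the variance-type control of the remainder: one has to match the proposition's definition of $\sigma^{2}(\theta;u)$ (an average of $\langle \Sigma_{\theta+su}u,u\rangle$) with the iterated integral appearing from Taylor's formula, and the integration-by-parts identity $\int_{0}^{\lambda}(\lambda-s)f(s)\,ds=\int_0^\lambda\!\int_0^{s}f(r)\,dr\,ds$ is precisely what makes the bound $\tfrac12\lambda^{2}\sigma^{2}(\theta;u)$ (and not $\lambda^{2}\sigma^{2}(\theta;u)$) fall out, producing the constant $\sqrt{2}$ in the final inequality. The rest is bookkeeping.\QED
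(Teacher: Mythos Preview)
Your proposal is correct and follows essentially the same route as the paper: compute the log-MGF via the exponential-family identity $M(\lambda)=\psi(\theta+\lambda u)-\psi(\theta)-\lambda\langle\psi'(\theta),u\rangle$, bound it by $\tfrac12\lambda^2\sigma^2(\theta;u)$ using the double integral of $\langle\Sigma_{\theta+su}u,u\rangle$ and the definition of $\sigma^2(\theta;u)$, then run the Chernoff argument with the two regimes $\lambda^\star\lessgtr\rho(\theta;u)$. The paper's proof is the same computation with slightly different bookkeeping (it rescales $\lambda$ by $n$), and it likewise leaves the passage from the one-sided to the two-sided bound implicit.
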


\begin{proof}
Recall that $\psi(\theta)= \log Z(\theta)$ and $\psi'(\theta)=\Psi(\theta)={\mathbb E}_{\theta} T(X).$
This implies 
\begin{align*}
   \gamma(\lambda):= \log\mathbb{E}_\theta e^{\lambda\langle T(X)-\mathbb{E}_\theta T(X), u\rangle}
    &=\log \frac{Z(\theta+\lambda u)}{Z(\theta)} -\lambda \langle\mathbb{E}_\theta T(X),u\rangle\\
    &=\psi(\theta+\lambda u)-\psi(\theta)-\lambda \langle \psi'(\theta),u\rangle.
\end{align*}    
Clearly, 
\begin{align*}
\gamma'(\lambda)= \langle \psi' (\theta+\lambda u)-\psi'(\theta), u\rangle
\end{align*}
and 
\begin{align*}
\gamma''(\lambda) = \langle \psi''(\theta+\lambda u)u,u\rangle = \langle \Sigma_{\theta+\lambda u}u,u\rangle. 
\end{align*}
Since $\gamma(0)=0$ and $\gamma'(0)=0,$ we get 
\begin{align*}
\gamma(\lambda) &= \int_0^{\lambda} \int_{0}^s \gamma''(\tau) d\tau ds = \int_0^{\lambda} \int_{0}^s \langle \Sigma_{\theta+\tau u}u,u\rangle d\tau ds 
\\
&
\leq \int_0^{\lambda} \sigma^2(\theta;u) s ds = \frac{\sigma^{2}(\theta;u)\lambda^2}{2}, \lambda \in (0,\rho(\theta;u)).
\end{align*}
The above inequality implies that, for $\lambda \in (0, n\rho(\theta;u)),$
\begin{align*}
\mathbb{E}_\theta \exp\Bigl\{\lambda\langle \bar T_n-\mathbb{E}_\theta T(X), u\rangle\Bigr\} &=  
\prod_{j=1}^n {\mathbb E}_{\theta} \exp\Bigl\{\frac{\lambda}{n} \langle T(X_j)-{\mathbb E}_{\theta} T(X), u\rangle\Bigr\}
\\
&
=\exp\Bigl\{n\gamma\Bigl(\frac{\lambda}{n}\Bigr)\Bigr\} \leq \exp\Bigl\{\frac{\sigma^{2}(\theta;u)\lambda^2}{2n}\Bigr\},
\end{align*}
implying that, for all $\delta>0,$ 
\begin{align*}
\mathbb{P}_\theta\{\langle \bar T_n-\mathbb{E}_\theta T(X), u\rangle\geq \delta\}\leq 
\exp\Bigl\{\frac{\sigma^{2}(\theta;u)\lambda^2}{2n} - \lambda \delta\Bigr\}.
\end{align*}
Minimizing the last bound over $\lambda \in (0, n\rho(\theta;u)),$ we get
\begin{equation}
\nonumber
    \mathbb{P}_\theta\{\langle \bar T_n-\mathbb{E}_\theta T(X), u\rangle\geq \delta\}\leq 
    \begin{cases}
    \begin{aligned}
    &\exp\left\{-\frac{n \delta^2}{2 \sigma^2(\theta;u)}\right\},\ \  \delta<\rho(\theta;u) \sigma^2 (\theta;u)\\
    &\exp\left\{-\frac{n\rho(\theta;u)\delta}{2}\right\},\ \ \delta\ge\rho(\theta;u)\sigma^2 (\theta;u).  
    \end{aligned}
    \end{cases}
\end{equation}
It remains to take 
\begin{align*}
\delta:= \sqrt{2}\sigma(\theta;u)\sqrt{\frac{t}{n}}\bigvee \frac{2}{\rho(\theta;u)}\frac{t}{n}
\end{align*}
to complete the proof.

\qed
\end{proof}

One could easily see from the proof that the bound also holds with $\rho(\theta;u)$ replaced by an arbitrary $0<\rho<\rho(\theta;u)$ and $\sigma^2(\theta;u)$ replaced by 
\begin{align*}
\sigma_{\rho}^2(\theta;u):= \sup_{\lambda \in (0,\rho)}\frac{1}{\lambda} \int_{0}^{\lambda}\langle \Sigma_{\theta+s u}u,u\rangle ds, 
\ \theta\in {\rm Int}\Theta, u\in E^{\ast}.
\end{align*}
If $B(\theta,\rho)=\{y\in E^{\ast}: \|y-\theta\|<\rho\}\subset {\rm Int}\Theta,$ then $\rho(\theta;u)>\rho$ for all $u\in E^{\ast}, \|u\|\leq 1.$
Also, for all such $u,$
\begin{align*}
\sigma_{\rho}^2(\theta;u) \leq \sup_{\theta'\in B(\theta;\rho)}\|\Sigma_{\theta'}\|=: \sigma_{\rho}^2(\theta).
\end{align*}
Therefore, under the assumption that $B(\theta,\rho)\subset {\rm Int}\Theta,$ for all $u\in E^{\ast}, \|u\|\leq 1$ and for all  $t>0$ with probability at least $1-e^{-t},$
\begin{align*}
    \left|\langle \bar T_n-\mathbb{E}_\theta T(X), u\rangle\right|\leq
\sqrt{2}\sigma_{\rho}(\theta)\sqrt{\frac{t}{n}}\bigvee \frac{2}{\rho}\frac{t}{n},
\end{align*}
which also implies that, for all $p\geq 1,$ 
\begin{align}
\label{bd_UVW}
    \sup_{\|u\|\leq 1}\left\|\langle \bar T_n-\mathbb{E}_\theta T(X), u\rangle\right\|_{L_p}\lesssim 
\sigma_{\rho}(\theta)\sqrt{\frac{p}{n}}\bigvee \frac{1}{\rho}\frac{p}{n}.
\end{align}

The next statement follows from Proposition \ref{Bernstein_on_norms}. 

\begin{proposition}
\label{prop_about_norm}
Suppose Assumption \ref{assum_norm_d} on the norm  of Banach space $E$ holds.
Also assume that for some $\rho>0,$ $B(\theta,\rho)\subset {\rm Int}\Theta.$
Then,  for all  $p\geq 1$ 
\begin{align}
\label{bd_UVW_sup}
    \left\|\|\bar T_n-\mathbb{E}_\theta T(X)\|\right\|_{L_p}\lesssim 
C\sigma_{\rho}(\theta)\sqrt{\frac{d}{n}}\vee \frac{C}{\rho}\frac{d}{n} \vee 
C\sigma_{\rho}(\theta)\sqrt{\frac{p}{n}}\vee \frac{C}{\rho}\frac{p}{n}. 
\end{align}
\end{proposition}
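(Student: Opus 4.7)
The plan is to reduce the claim to a direct application of Proposition \ref{Bernstein_on_norms}, using the linear-functional Bernstein bound \eqref{bd_UVW} that was just established for $\bar T_n$ under the assumption $B(\theta,\rho)\subset\operatorname{Int}\Theta$. Concretely, I would identify the base estimator $\hat\theta_n$ in Assumption \ref{assume_Bernstein_lin} with the empirical mean $\bar T_n$, the target parameter $\theta(P)$ with $\mathbb{E}_\theta T(X)$, and set $\sigma(P):=\sigma_\rho(\theta)$ and $U(P):=1/\rho$. Bound \eqref{bd_UVW} then reads
\begin{align*}
\sup_{\|u\|\le 1}\bigl\|\langle \bar T_n-\mathbb{E}_\theta T(X),u\rangle\bigr\|_{L_p}\lesssim \sigma_\rho(\theta)\sqrt{\frac{p}{n}}\vee \frac{1}{\rho}\frac{p}{n},
\end{align*}
which is precisely Assumption \ref{assume_Bernstein_lin} for this choice of $(\sigma,U)$.

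Since Assumption \ref{assum_norm_d} is also in force by hypothesis, the hypotheses of Proposition \ref{Bernstein_on_norms} are satisfied. Applying that proposition immediately yields
\begin{align*}
\bigl\|\|\bar T_n-\mathbb{E}_\theta T(X)\|\bigr\|_{L_p}\lesssim C\Bigl[\sigma_\rho(\theta)\sqrt{\tfrac{d}{n}}\vee \tfrac{1}{\rho}\tfrac{d}{n}\vee \sigma_\rho(\theta)\sqrt{\tfrac{p}{n}}\vee \tfrac{1}{\rho}\tfrac{p}{n}\Bigr],
\end{align*}
which is the claim. There is no substantive obstacle: the result is essentially a packaging of the earlier Bernstein-type inequality for linear functionals with the discretization/union-bound machinery already recorded in Proposition \ref{Bernstein_on_norms}. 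If one prefers to avoid citing the latter, the argument is to apply the tail form of \eqref{bd_UVW} with $t$ replaced by $t+\log\operatorname{card}(\mathcal M)\le t+d$ for each $u\in\mathcal M$, union bound, and use \eqref{norm_discrete} to convert the resulting uniform bound on $\max_{u\in\mathcal M}|\langle \bar T_n-\mathbb{E}_\theta T(X),u\rangle|$ into a bound on $\|\bar T_n-\mathbb{E}_\theta T(X)\|$, then integrate the tail to get the $L_p$-norm.
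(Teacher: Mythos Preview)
Your proposal is correct and matches the paper's approach exactly: the paper simply notes that the statement follows from Proposition \ref{Bernstein_on_norms}, and you have correctly identified the choices $\sigma(P)=\sigma_\rho(\theta)$, $U(P)=1/\rho$ needed to instantiate Assumption \ref{assume_Bernstein_lin} via bound \eqref{bd_UVW}.
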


Finally, note that, if $\Theta=E^{\ast},$ we can take $\rho=\infty$ and set $\sigma_{\infty}^2:= \sup_{\theta\in E^{\ast}}\|\Sigma_{\theta}\|.$
In this case, bounds \eqref{bd_UVW} and \eqref{bd_UVW_sup} become subgaussian: 
\begin{align}
\label{bd_UVW''}
 \sup_{\|u\|\leq 1}\left\|\langle \bar T_n-\mathbb{E}_\theta T(X), u\rangle\right\|_{L_p}\lesssim 
\sigma_{\infty} \sqrt{\frac{p}{n}}
\end{align}
and 
\begin{align}
\label{bd_UVW_sup''}
    \left\|\|\bar T_n-\mathbb{E}_\theta T(X)\|\right\|_{L_p}\lesssim 
C\sigma_{\infty}\Bigl(\sqrt{\frac{d}{n}}\vee \sqrt{\frac{p}{n}}\Bigr). 
\end{align}

\begin{example}
\label{matrix_exponential}
\normalfont
An important example of exponential model with matrix parameter is 
\begin{align*}
P_{\theta}(dx) = \frac{1}{Z(\theta)} \exp\{\langle x\otimes x, \theta\rangle\}\mu(dx), x\in {\mathbb R}^d, 
\end{align*}
where the parameter $\theta$ is a symmetric $d\times d$ matrix and $\mu$ is a Borel probability measure in 
${\mathbb R}^d.$ The statistic generating this exponential family is $T(x)= x\otimes x.$ It also takes values in the 
space of symmetric matrices. This space will be equipped 
with the operator norm and the resulting Banach space will be denoted by $E.$ Its dual space $E^{\ast}$ is again the linear space of 
symmetric $d\times d$ matrices equipped with the nuclear norm.  With these conventions, statistic $T(x)=x\otimes x$ takes values in $E$
and the parameter space $\Theta=\{\theta: Z(\theta)<+\infty\}$ is a convex subset of $E^{\ast}.$ For Assumption \ref{assume_exp_fam} to be satisfied, it would be enough 
to assume that the functions $1, x_i x_j: 1\leq i\leq j\leq d$ are linearly independent on ${\mathbb R}^d$ and that 
${\rm l.s.}(\Theta)=E^{\ast}.$ 
Note that for our exponential model 
$
\Psi (\theta)={\mathbb E}_{\theta}(X\otimes X), X\sim P_{\theta}, \theta \in \Theta.
$

In addition to mean zero normal model (with non-standard parametrization), this class of exponential families 
includes Bingham distribution for directional data (with $\mu$ being the uniform distribution on the unit sphere $S^{d-1}$)
and Ising model (with $\mu$ being the uniform distribution on the binary cube $\{-1,1\}^d$).
For these two models, the support of measure $\mu$ is bounded, which implies that $\Theta=E^{\ast}$ (this, of course, could be the case even if the support of measure $\mu$ is not bounded).
The condition $\Theta=E^{\ast}$ also implies that $\Theta_1=E^{\ast}$ and, as a consequence of the results by Chencov discussed 
in Section \ref{background}, ${\rm Int}\ {\rm csupp}(\mu\circ T^{-1})=\Psi (\Theta_1).$ 

Let 
$
\bar T_n :=  n^{-1} \sum_{j=1}^n X_j \otimes X_j.
$
Since $E^{\ast}$ is the space of symmetric matrices equipped with the nuclear norm, using the spectral decomposition of matrix $U\in E^{\ast},$ it is easy to check that 
\begin{align*}
\sup_{U\in E^{\ast}, \|U\|\leq 1} \Bigl\|\langle\bar T_n - {\mathbb E}_{\theta} T(X), U\rangle\Bigr\|_{L_p}
=
\sup_{y\in {\mathbb R}^d, \|y\|\leq 1}
\Bigl\|\langle(\bar T_n - {\mathbb E}_{\theta} T(X))y, y\rangle\Bigr\|_{L_p}.
\end{align*}
For $\Sigma_{\theta}:= {\rm cov}_{\theta}(T(X)),$ we have 
\begin{align*}
&
\|\Sigma_{\theta}\|=\sup_{U\in E^{\ast}, \|U\|\leq 1}\langle \Sigma_{\theta}U,U\rangle = \sup_{y,y'\in {\mathbb R}^d, \|y\|, \|y'\|\leq 1}
\langle \Sigma_{\theta} (y\otimes y), (y'\otimes y')\rangle
\\
&
\leq \sup_{y\in {\mathbb R}^d, \|y\|\leq 1}{\mathbb E}_{\theta}\langle X\otimes X, y\otimes y\rangle^2
= \sup_{y\in {\mathbb R}^d, \|y\|\leq 1}{\mathbb E}_{\theta}\langle X,y\rangle^4.
\end{align*}
Therefore,
\begin{align*}
\sigma_r^2 (\theta) \leq \sup_{\theta'\in B(\theta, r)}\sup_{y\in {\mathbb R}^d, \|y\|\leq 1}{\mathbb E}_{\theta'}\langle X,y\rangle^4:= \bar \sigma_r^2(\theta)
\end{align*}
and 
\begin{align*}
\sigma_{\infty}^2  \leq \sup_{\theta\in \Theta}\sup_{y\in {\mathbb R}^d, \|y\|\leq 1}{\mathbb E}_{\theta}\langle X,y\rangle^4:= 
\bar \sigma_{\infty}^2.
\end{align*}

If $B(\theta,r)\subset {\rm Int}\Theta,$ then the following Bernstein type bound holds for all $y\in {\mathbb R}^d$ with $\|y\|\leq 1$ and all $p\geq 1:$
\begin{align*}
\Bigl\|\langle(\bar T_n - {\mathbb E}_{\theta} T(X))y, y\rangle\Bigr\|_{L_p({\mathbb P}_{\theta})}
\lesssim \bar \sigma_r(\theta) \sqrt{\frac{p}{n}} \vee \frac{1}{r} \frac{p}{n}.
\end{align*}
Moreover, if ${\mathcal N}$ is a $1/4$-net for $S^{d-1}$ of cardinality $\leq 9^d,$ then 
\begin{align*}
\|A\| \leq 2\max_{y\in {\mathcal N}}|\langle Ay, y\rangle|, A\in E.
\end{align*}
Thus, the bound of Proposition \ref{prop_about_norm} with ${\mathcal M}:= \{y\otimes y: y\in {\mathcal N}\}$ implies that 
\begin{align*}
\Bigl\|\|\bar T_n - {\mathbb E}_{\theta} T(X)\|\Bigr\|_{L_p({\mathbb P}_{\theta})}  
\lesssim  \bar \sigma_r(\theta) \sqrt{\frac{d}{n}} \vee \frac{1}{r} \frac{d}{n}\vee \bar \sigma_r(\theta) \sqrt{\frac{p}{n}} \vee \frac{1}{r} \frac{p}{n}.
\end{align*}

\end{example}

\subsection{Bounds on the $L_p$-errors in functional estimation}
\label{sec_L_p_exp}

Since, for regular exponential families, $\Psi$ is a $C^{\infty}$-diffeomorphism between ${\rm Int}\Theta$ and its image $\Psi ({\rm Int} \Theta)$
(which coincides with ${\rm Int}\ {\rm csupp}(\mu\circ T^{-1})$) and, for natural parameter $t=\Psi(\theta)$ there exists a simple estimator $\bar T_n,$
it is reasonable to reduce the problem of estimation of functional $f(\theta)$ based on i.i.d. observations $X_1,\dots, X_n\sim P_{\theta}$
to the problem of estimation of functional $(f\circ \Psi^{-1})(t)$ of natural parameter $t.$ With such an approach, the smoothness conditions 
will be imposed on $f\circ \Psi^{-1},$ which, under proper smoothness of map $\Psi^{-1},$ could be reduced to the smoothness of $f.$
We use the sample split described in Section \ref{Main_results} to define estimators $\bar T_{n^{(0)}}$ and $\bar T_{n^{(k)}_j}, j=1,\dots, k, k=1,\dots, m$
and, based on these estimators, to construct estimator $T_{f\circ \Psi^{-1}}(X_1,\dots, X_n)$ (see \eqref{basic_T_f}) and its truncated version 
$\tilde T_{f\circ \Psi^{-1}, M}(X_1,\dots, X_n)$  (see \eqref{basic_T_fM}).

The next result is a corollary of Theorem \ref{th_1_Bern_AA}. Of course, to apply this theorem, one has to consider the identifiable model 
${\mathcal P}:= \{P_{\theta}: \theta \in \Theta\}$ and set $\theta(P):=\theta$ for $P=P_{\theta}\in {\mathcal P}.$ Denote 
$\Sigma_{\theta}:={\rm cov}_{\theta}(T(X))=\Psi'(\theta), \theta\in {\rm Int}\Theta$ and 
\begin{align*}
\sigma_r^2(\theta):=\sup_{\theta'\in B(\theta,r)\cap {\rm Int}\Theta}\|\Sigma_{\theta'}\|, r>0.
\end{align*}
For $r=\infty,$ $\sigma_{\infty}^2:=\sup_{\theta \in {\rm Int}\Theta} \|\Sigma_{\theta}\|.$

\begin{theorem}
\label{estim_exp_fam}
Suppose the norm of space $E$ satisfies Assumption \ref{assum_norm_d}. Let $f:E^{\ast}\mapsto {\mathbb R}$ be a uniformly bounded functional.
Suppose that $f\circ \Psi^{-1}$ is $m$ times continuously differentiable in a neighborhood $U=B(t,\delta)$ for some $t\in \Psi ({\rm Int}\Theta)$ and $\delta\in (0,1],$ 
and, moreover,
$(f\circ \Psi^{-1})^{(m)}$ satisfies the Hölder condition with exponent $\rho\in(0,1]$ in $U.$ 
Let $\theta:= \Psi^{-1}(t)$ and suppose that, for some $r>0,$ $B(\theta; r)\subset {\rm Int}\Theta.$

(i) Assume that $n^{(0)}\asymp_m n$ and $n_j^{(k)}\asymp_m n$, $j=1,\cdots,k$, $k=1,\cdots,m.$
Moreover, assume that for a sufficiently large numerical constant $C'>0,$
\begin{align*}
C'\left(C\sigma_{r}(\theta)\sqrt{\frac{d}{n}}\vee \frac{C}{r}\frac{d}{n}\right)\leq \delta.
\end{align*}
Then, with some constant $c_1>0$ depending on $m, C,$ for $M\geq \|f\|_{L_{\infty}}$ and $p\geq 1,$
\begin{align*}
   &
    \left\|\tilde{T}_{f\circ \Psi^{-1}}(X_1,\cdots,X_n)-f(\theta)\right\|_{L_p(\mathbb{P}_\theta)}
   \\
   &
   \lesssim_{s, C} 
   \max_{0\leq k\leq m} \|(f\circ \Psi^{-1})^{(k)}\|_{L_\infty(U)} \left(\sigma_{r}(\theta)\sqrt{\frac{p}{n}}\vee\frac{1}{r}\frac{p}{n}\right)  
   \\
   &  
  +\|(f\circ \Psi^{-1})^{(m)}\|_{{\rm Lip}_\rho(U)} \left(\sigma_{r}^s(\theta) \Bigl(\frac{d}{n}\Bigr)^{s/2}\vee \frac{1}{r^s}\Bigl(\frac{d}{n}\Bigr)^s\vee 
  \sigma_{r}^s(\theta) \Bigl(\frac{p}{n}\Bigr)^{s/2}\vee \frac{1}{r^s}\Bigl(\frac{p}{n}\Bigr)^s
  \right)
  +
  \\
  &
  +(\|f\|_{L_{\infty}}+M) \exp\Bigl\{-c_1 \frac{n}{p}\Bigl(\frac{\delta^2}{\sigma_r^2(\theta)}\wedge \delta r\Bigr)\Bigr\}.
 \end{align*}
 
  (ii) Assume that, for all $j=1,\dots, k, k=1,\dots, m,$ $n_j^{(k)}\asymp_m n.$
If $M\geq \|f\|_{\infty}+ \|f\|_{{\rm Lip}}\delta,$ then 
\begin{align*}
    &\left\|\tilde T_{f\circ \Psi^{-1}}(X_1,\cdots,X_n)-f(\theta)- \langle \bar T_1^{(1)}-\Psi(\theta), (f\circ \Psi^{-1})'(t)\rangle\right\|_{L_p(\mathbb{P}_{\theta})}
    \\
    &
    \lesssim_{s, C} \max_{2\leq k\leq m} \|(f\circ \Psi^{-1})^{(k)}\|_{L_\infty(U)} \left(\sigma_r(\theta)\sqrt{\frac{p}{n}}\vee \frac{1}{r}\frac{p}{n}\right) 
    \left(\sigma_r(\theta)\sqrt{\frac{d}{n^{(0)}}}\vee \frac{1}{r}\frac{d}{n^{(0)}}\vee \sigma_r(\theta)\sqrt{\frac{p}{n^{(0)}}}\vee \frac{1}{r}\frac{p}{n^{(0)}}\right) 
 \\
 &
 +\|(f\circ \Psi^{-1})^{(m)}\|_{{\rm Lip}_\rho(U)} \left(\sigma_r^s(\theta)\Bigl(\frac{d}{n^{(0)}}\Bigr)^{s/2}\vee \frac{1}{r^s}\Bigl(\frac{d}{n^{(0)}}\Bigr)^s\vee  \sigma_r^s(\theta) \Bigl(\frac{p}{n^{(0)}}\Bigr)^{s/2}\vee \frac{1}{r^s}\Bigl(\frac{p}{n^{(0)}}\Bigr)^s     
\right)
 \\
 &
 + 
\left(\|f\|_{L_\infty}+M + \|(f\circ \Psi^{-1})'(t)\|\left(\sigma_r(\theta)\sqrt{\frac{d}{n}}\vee \frac{1}{r}\frac{d}{n}\vee \sigma_r(\theta)\sqrt{\frac{p}{n}}\vee \frac{1}{r}\frac{p}{n}
\right) \right)
  \exp\Bigl\{-c_1 \frac{n^{(0)}}{p}\Bigl(\frac{\delta^2}{\sigma_r^2(\theta)}\wedge \delta r\Bigr)\Bigr\}.
\end{align*}    
 \end{theorem}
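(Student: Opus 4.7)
The plan is to reduce Theorem~\ref{estim_exp_fam} to an application of the general result Theorem~\ref{th_1_Bern_AA}. The statistical problem is recast as estimation of the value of the functional $g := f\circ \Psi^{-1} : E \mapsto \mathbb{R}$ at the natural parameter $t = \Psi(\theta) \in E$ based on i.i.d.\ observations $X_1,\dots,X_n \sim P_\theta$. We take as base estimator the sample mean $\bar T_n = n^{-1}\sum_{j=1}^n T(X_j)$, which is an unbiased estimator of $t = \mathbb{E}_\theta T(X)$, and build the estimator $\tilde T_{g}(X_1,\dots, X_n)$ via the Taylor expansion of $g$ and the sample split described in Section~\ref{Main_results}.

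First I would verify Assumption~\ref{assume_Bernstein_lin} for the base estimator $\bar T_n$ of $t$. Since $B(\theta, r)\subset {\rm Int}\,\Theta$, bound \eqref{bd_UVW} applied to each sub-sample yields
\begin{align*}
\sup_{\|u\|\leq 1}\bigl\|\langle \bar T_n - t, u\rangle\bigr\|_{L_p(\mathbb{P}_\theta)} \lesssim \sigma_r(\theta) \sqrt{\tfrac{p}{n}} \vee \tfrac{1}{r} \tfrac{p}{n},
\end{align*}
i.e.\ Assumption~\ref{assume_Bernstein_lin} holds with $\sigma(P_\theta) \asymp \sigma_r(\theta)$ and $U(P_\theta) \asymp 1/r$. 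Next, under Assumption~\ref{assum_norm_d} on the norm of $E$, Proposition~\ref{prop_about_norm} gives
\begin{align*}
\bigl\|\,\|\bar T_n - t\|\,\bigr\|_{L_p(\mathbb{P}_\theta)} \lesssim C\sigma_r(\theta)\sqrt{\tfrac{d}{n}} \vee \tfrac{C}{r}\tfrac{d}{n} \vee C\sigma_r(\theta)\sqrt{\tfrac{p}{n}} \vee \tfrac{C}{r}\tfrac{p}{n},
\end{align*}
so Assumption~\ref{assume_Bernstein_on_norm} is satisfied with $d_1(P_\theta) \asymp C^2 \sigma_r^2(\theta)\, d$ and $d_2(P_\theta) \asymp C d/r$. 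The hypothesis $C'(C\sigma_r(\theta)\sqrt{d/n} \vee (C/r)(d/n)) \leq \delta$ of the theorem (combined with $n^{(0)} \asymp_m n$) is then exactly the threshold condition $C(\sqrt{d_1(P)/n^{(0)}} \vee d_2(P)/n^{(0)}) \leq \delta$ required to invoke Theorem~\ref{th_1_Bern_AA}.

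Now I would apply Theorem~\ref{th_1_Bern_AA} to the functional $g = f\circ \Psi^{-1}$ on the neighborhood $U = B(t,\delta) \subset \Psi({\rm Int}\,\Theta)$. By hypothesis $g$ is $m$-times Fr\'echet continuously differentiable in $U$, $g^{(m)}$ is $\rho$-H\"older in $U$, and $g$ is uniformly bounded by $\|f\|_{L_\infty}$. Substituting $\sigma(P_\theta) = \sigma_r(\theta)$, $U(P_\theta) = 1/r$, $d_1(P_\theta) = C^2\sigma_r^2(\theta) d$, $d_2(P_\theta) = Cd/r$ into the conclusion of Theorem~\ref{th_1_Bern_AA}(i) produces exactly the claimed bound in part (i) of Theorem~\ref{estim_exp_fam}, with the exponent in the tail term controlled by $\delta^2/\sigma_r^2(\theta)\wedge \delta/U(P_\theta) = \delta^2/\sigma_r^2(\theta) \wedge \delta r$. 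For part (ii), the identical setup combined with Theorem~\ref{th_1_Bern_AA}(ii) produces the linearized bound with linear term $\langle \bar T_1^{(1)} - t, g'(t)\rangle = \langle \bar T_1^{(1)} - \Psi(\theta), (f\circ \Psi^{-1})'(t)\rangle$, and the factor $\|f'(\theta)\|$ appearing in Theorem~\ref{th_1_Bern_AA}(ii) becomes $\|g'(t)\| = \|(f\circ \Psi^{-1})'(t)\|$.

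The work is essentially bookkeeping rather than deep analysis; there is no real obstacle beyond carefully tracking the constants $C$ (from Assumption~\ref{assum_norm_d}) through $d_1, d_2$ and into the threshold on $\delta$, and verifying that the dependence on the sub-sample sizes $n^{(0)}, n_j^{(k)}$ is correctly absorbed by the assumption $n^{(0)}\asymp_m n$, $n_j^{(k)}\asymp_m n$. The only mildly subtle point is that the $C^s$-norm hypothesis in Theorem~\ref{estim_exp_fam} is placed on $g = f\circ \Psi^{-1}$ rather than on $f$ itself, which is precisely why the problem was reparametrized to the natural parameter $t$: the smoothness of $\Psi^{-1}$ (which is $C^\infty$ on ${\rm Int}\,{\rm csupp}(\mu\circ T^{-1})$ for regular exponential families) is what allows passage back and forth between smoothness of $f$ and of $g$ when needed.
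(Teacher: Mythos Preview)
Your proposal is correct and follows exactly the route the paper takes: the paper states that Theorem~\ref{estim_exp_fam} is a corollary of Theorem~\ref{th_1_Bern_AA}, and your identification of $\sigma(P_\theta)=\sigma_r(\theta)$, $U(P_\theta)=1/r$ from \eqref{bd_UVW} and $d_1(P_\theta)\asymp C^2\sigma_r^2(\theta)d$, $d_2(P_\theta)\asymp Cd/r$ from Proposition~\ref{prop_about_norm} is precisely the intended substitution. Your write-up is in fact more detailed than what the paper provides.
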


\vskip 2mm

\begin{remark}
\label{exp_rem_1}
\normalfont
Note that, if $\Theta=E^{\ast},$ we can set $r:=\infty.$ Then, 
under the assumption that, for a sufficiently large constant $C'>0,$
$
C' C\sigma \sqrt{\frac{d}{n}}\leq \delta,
$
the following bound holds:
\begin{align*}
   &
    \left\|\tilde{T}_{f\circ \Psi^{-1}}(X_1,\cdots,X_n)-f(\theta)\right\|_{L_p(\mathbb{P}_\theta)}
   \lesssim_{s, C} 
   \max_{0\leq k\leq m} \|(f\circ \Psi^{-1})^{(k)}\|_{L_\infty(U)} \sigma \sqrt{\frac{p}{n}}  
\\
&
   +\|(f\circ \Psi^{-1})^{(m)}\|_{{\rm Lip}_\rho(U)} \sigma_{\infty}^s  \Bigl(\Bigl(\frac{d}{n}\Bigr)^{s/2}\vee \Bigl(\frac{p}{n}\Bigr)^{s/2}\Bigr)
   + (\|f\|_{L_{\infty}}+M) \exp\Bigl\{-c_1 \frac{n}{p}\frac{\delta^2}{\sigma_{\infty}^2}\Bigr\}. 
   \end{align*}
\end{remark}

\begin{remark}
\label{exp_rem_2}
\normalfont
Under the assumptions that, for large enough constant $C'>0,$ 
\begin{align*}
r\geq \frac{1}{\sigma_r(\theta)}\sqrt{\frac{d}{n}}\ {\rm and}\ C'C\sigma_r(\theta)\sqrt{\frac{d}{n}}\leq \delta \leq 1,
\end{align*}
the first bound of the theorem takes the following form:
\begin{align*}
   &
    \left\|\tilde{T}_{f\circ \Psi^{-1}}(X_1,\cdots,X_n)-f(\theta)\right\|_{L_p(\mathbb{P}_\theta)}
   \\
   &
   \lesssim_{s, C} 
   \max_{0\leq k\leq m} \|(f\circ \Psi^{-1})^{(k)}\|_{L_\infty(U)} \sigma_{r}(\theta)\sqrt{\frac{p}{n}}\Bigl(1 \vee \sqrt{\frac{p}{d}}\Bigr) 
   \\
   &  
  +\|(f\circ \Psi^{-1})^{(m)}\|_{{\rm Lip}_\rho(U)} \left( \sigma_{r}^s(\theta) \Bigl(\frac{d}{n}\Bigr)^{s/2}\vee \sigma_{r}^s(\theta) \Bigl(\frac{p}{n}\Bigr)^{s/2} \Bigl(1 \vee \sqrt{\frac{p}{d}}\Bigr)^s
 \right)
  +
  \\
  &
 + (\|f\|_{L_{\infty}}+M) \exp\Bigl\{-c_1 \frac{\sqrt{nd}}{p} \frac{\delta}{\sigma_r(\theta)}\Bigr\}.
 \end{align*}
 Moreover, if for large enough constant $C'>0,$
 \begin{align*}
r\geq \frac{1}{\sigma_r(\theta)}\sqrt{\frac{d}{n}}\ {\rm and}\ C'C\sigma_r(\theta)\sqrt{\frac{d}{n}}\Bigl(1\vee \frac{\log \frac{n}{\sigma_r^2(\theta)d}}{d}\Bigr)\leq \delta \leq 1,
\end{align*}
then, for $M=\|f\|_{L_{\infty}},$
\begin{align}
\label{bd_exp_exp}
   &
    \left\|\tilde{T}_{f\circ \Psi^{-1}}(X_1,\cdots,X_n)-f(\theta)\right\|_{L_2(\mathbb{P}_\theta)}
   \lesssim_{s, C} 
  \|f\circ \Psi^{-1}\|_{C^s(U)} \Bigl[\frac{\sigma_{r}(\theta)}{\sqrt{n}}
 \vee 
  \sigma_{r}^s(\theta)\Bigl(\frac{d}{n}\Bigr)^{s/2}\Bigr].
 \end{align}
 \end{remark}

\begin{remark}
\label{exp_rem_3}
\normalfont
If 
\begin{align*}
r\geq \frac{1}{\sigma_r(\theta)}\sqrt{\frac{d}{n^{(0)}}}\ {\rm and}\ C'C\sigma_r(\theta)\sqrt{\frac{d}{n^{(0)}}}\leq \delta \leq 1,
\end{align*}
then the second bound of the theorem takes the form
\begin{align*}
    &\left\|\tilde T_{f\circ \Psi^{-1}}(X_1,\cdots,X_n)-f(\theta)- \langle \bar T_1^{(1)}-\Psi(\theta), (f\circ \Psi^{-1})'(t)\rangle\right\|_{L_p(\mathbb{P}_{\theta})}
    \\
    &
    \lesssim_{s, C} \max_{2\leq k\leq m} \|(f\circ \Psi^{-1})^{(k)}\|_{L_\infty(U)} \left(\sigma_r(\theta)\sqrt{\frac{p}{n}}\Bigl(1\vee \sqrt{\frac{p}{d}}\Bigr)\right) 
    \left(\sigma_r(\theta)\sqrt{\frac{d}{n^{(0)}}}\vee \sigma_r(\theta)\sqrt{\frac{p}{n^{(0)}}}\Bigl(1\vee \sqrt{\frac{p}{d}}\Bigr)
    \right) 
 \\
 &
 +\|(f\circ \Psi^{-1})^{(m)}\|_{{\rm Lip}_\rho(U)} \left(\sigma_r^s(\theta)\Bigl(\frac{d}{n^{(0)}}\Bigr)^{s/2}\vee \sigma_r^s(\theta) \Bigl(\frac{p}{n^{(0)}}\Bigr)^{s/2} \Bigl(1\vee \sqrt{\frac{p}{d}}\Bigr)^s   
\right)
 \\
 &
 + 
\left(\|f\|_{L_\infty}+M + \|(f\circ \Psi^{-1})'(t)\|\left(\sigma_r(\theta)\sqrt{\frac{d}{n}}\vee \sigma_r(\theta)\sqrt{\frac{p}{n}}\Bigl(1\vee \sqrt{\frac{p}{d}}\Bigr)\right) \right)
  \exp\Bigl\{-c_1 \frac{\sqrt{n^{(0)}d}}{p} \frac{\delta}{\sigma_r(\theta)}\Bigr\}.
\end{align*}    
Moreover, if for large enough constant $C'>0,$
 \begin{align*}
r\geq \frac{1}{\sigma_r(\theta)}\sqrt{\frac{d}{n^{(0)}}}\ {\rm and}\ C'C\sigma_r(\theta)\sqrt{\frac{d}{n^{(0)}}}\Bigl(1\vee \frac{\log \frac{n^{(0)}}{\sigma_r^2(\theta)d}}{d}\Bigr)\leq \delta \leq 1,
\end{align*}
then, for $M=\|f\|_{L_{\infty}}+ \|f\|_{\rm Lip},$
\begin{align*}
    &
    \left\|\tilde T_{f\circ \Psi^{-1}}(X_1,\cdots,X_n)-f(\theta)- \langle \bar T_1^{(1)}-\Psi(\theta), (f\circ \Psi^{-1})'(t)\rangle\right\|_{L_2(\mathbb{P}_{\theta})}
    \\
    &
 \lesssim_{s, C} \|f\circ \Psi^{-1}\|_{C^{s}(U)} 
\Bigl[\sigma_{r}^2(\theta) 
\frac{1}{\sqrt{n}}
\sqrt{\frac{d}{n^{(0)}}}
\vee \sigma_{r}^s(\theta)\Bigl(\frac{d}{n^{(0)}}\Bigr)^{s/2}\Bigr].
\end{align*}    
\end{remark}

\begin{remark}
\label{exp_rem_4}
\normalfont
It is easy to see that the quantity $\sigma_r(\theta)$ in the statement of Theorem \ref{estim_exp_fam} and in remarks \ref{exp_rem_1}, \ref{exp_rem_2}
and \ref{exp_rem_3} could be replaced by an arbitrary upper bound.
\end{remark}

Note that Theorem \ref{estim_exp_fam_simple} of Section \ref{exponential_section} easily follows from bound \eqref{bd_exp_exp}.

Finally, we provide the proof of Corollary \ref{cor_asymp_norm}.

\begin{proof}
If $n^{(0)}\asymp \frac{n}{\log n},$ we would have $n_1^{(1)}=(1+o(1))n.$ 
Using the bounds on the rates of convergence in CLT in Wasserstein distance \cite{Rio}, we get 
\begin{align*}
\sup_{\theta\in V}\sup_{\|u\|\leq 1}W_{2, P_{\theta}} \Bigl(\frac{\langle n^{1/2}(\bar T_n -\Psi(\theta)), u\rangle}{\langle\Sigma_{\theta}u,u\rangle^{1/2}}, Z\Bigr)
\lesssim 
\sup_{\theta\in V}\sup_{\|u\|\leq 1}\frac{\|\langle T(X)-{\mathbb E}_{\theta} T(X),u\rangle\|_{L_4({\mathbb P}_{\theta})}^2}{\|\langle T(X)-{\mathbb E}_{\theta} T(X), u\rangle\|_{L_2({\mathbb P}_{\theta})}^2}
\frac{1}{\sqrt{n}} = \frac {\sqrt{\kappa (V)}}{\sqrt{n}}.
\end{align*}
This implies that 
\begin{align*}
\sup_{\theta\in V}\sup_{\|u\|\leq 1}W_{2, P_{\theta}} \Bigl(\langle n^{1/2}(\bar T_n -\Psi(\theta)), u\rangle, \langle\Sigma_{\theta}u,u\rangle^{1/2} Z\Bigr)
\lesssim \sigma_{\max}(W)\frac {\sqrt{\kappa (V)}}{\sqrt{n}}.
\end{align*}
Therefore, it easily follows that
\begin{align*}
\sup_{\|(f\circ \Psi^{-1})'\|_{L_{\infty}(G)}\leq 1}\sup_{\theta\in V}W_{2,{\mathbb P}_{\theta}}\Bigl(\sqrt{n}\Bigl\langle \bar T_1^{(1)}-\Psi(\theta), (f\circ \Psi^{-1})'(t)\Bigr\rangle, \sigma_{f,\Psi}(\Psi(\theta))Z\Bigr)\to 0\ {\rm as}\ n\to\infty.
\end{align*}
Together with the bound of part (ii) of Theorem \ref{estim_exp_fam_simple}, this implies, 
under the assumptions that $d\leq n^{\alpha}$ and $s>\frac{1}{1-\alpha},$ the asymptotic normality \eqref{asymp_normal_exp}
of estimator $\tilde T_{f\circ \Psi^{-1}}(X_1,\cdots,X_n)$
as well as the limit of its $L_2$-error \eqref{mean_square_exp}.

\qed
\end{proof}

\section{Proofs of the upper bounds for high-dimensional models with independent low-dimensional components}
\label{sec:HDLD}

We will provide below the proofs of the results of Section \ref{HD_comps}.
We start with the proof of Theorem \ref{cor_cor_th_1_asssume_AAA}.

\begin{proof}
The proof is based on the following proposition that provides a way to check Assumption \ref{assume_AAA} for estimator $\hat \theta_n$ in terms of similar 
assumptions on its components $\hat \theta_n^{(j)}, j=1,\dots, n.$

\begin{proposition}
\label{bd_ind_comp}
Let $p\geq 2.$ Then 
\begin{enumerate}[label=(\alph*)]
\item For all $P\in {\mathcal P},$
 \begin{align*}
 &
 \sup_{\|u\|\leq 1, u\in E^{\ast}} \Bigl\|\langle \hat \theta_n-\theta(P), u\rangle\Bigr\|_{L_p({\mathbb P}_P)} 
\\
&
 \leq 
 C\frac{p}{\log p} \Bigl(\sqrt{\frac{\max_{1\leq j\leq d} a_{p,j}(P_j)}{n}} + \frac{\max_{1\leq j\leq d}b_{j}(P_j)}{n}\Bigr) + 
\frac{\max_{1\leq j\leq d}b_{j}(P)}{\sqrt{n}} \sqrt{\frac{d}{n}},
 \end{align*}
 where $C>0$ is a numerical constant.
 \item 
 For all $P\in P,$
 \begin{align*}
 \Bigl\|\|\hat \theta_n-\theta(P)\|\Bigr\|_{L_{p}({\mathbb P}_P)} \leq 
  \sqrt{\max_{1\leq j\leq d}d_{p,j}(P_j)} \sqrt{\frac{d}{n}}.
 \end{align*}
 \end{enumerate}
\end{proposition}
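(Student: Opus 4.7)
My plan is to prove the two parts separately, both exploiting independence of the components $\hat\theta_n^{(j)}-\theta^{(j)}(P_j)$, $j=1,\dots,d$.

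For part (b), I would use the Euclidean structure of the norm on $E=E_1\times\dots\times E_d$: $\|\hat\theta_n-\theta(P)\|^2=\sum_{j=1}^d \|\hat\theta_n^{(j)}-\theta^{(j)}(P_j)\|^2$. For $p\ge 2$, apply the triangle inequality in $L_{p/2}$:
\begin{align*}
\Bigl\|\|\hat\theta_n-\theta(P)\|\Bigr\|_{L_p}^2
=\Bigl\|\|\hat\theta_n-\theta(P)\|^2\Bigr\|_{L_{p/2}}
\le\sum_{j=1}^d\Bigl\|\|\hat\theta_n^{(j)}-\theta^{(j)}(P_j)\|\Bigr\|_{L_p}^2
\le d\cdot\frac{\max_j d_{p,j}(P_j)}{n},
\end{align*}
by the definition of $d_{p,j}(P_j)$. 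Taking square roots gives the claim.

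For part (a), write $Y_j:=\langle\hat\theta_n^{(j)}-\theta^{(j)}(P_j),u^{(j)}\rangle$, so that $\langle\hat\theta_n-\theta(P),u\rangle=\sum_{j=1}^d Y_j$ and the $Y_j$ are independent. I split into the bias and centered parts: set $r_j:=\|u^{(j)}\|\leq 1$ (so $\sum r_j^2\le 1$), note that $|\mathbb{E}Y_j|\le r_j b_j(P_j)/n$, and estimate the total bias by Cauchy--Schwarz,
\begin{align*}
\Bigl|\sum_j\mathbb{E}Y_j\Bigr|\le\frac{\max_j b_j(P_j)}{n}\sum_j r_j\le\frac{\max_j b_j(P_j)}{n}\sqrt{d},
\end{align*}
which matches the last term on the right-hand side. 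For the centered part $\sum_j\tilde Y_j$, with $\tilde Y_j=Y_j-\mathbb{E}Y_j$, apply the Rosenthal-type inequality of Th.~1.5.11 in \cite{Pena_Gine} (with the sharp constant $K_p\asymp p/\log p$) to bound $\|\sum_j\tilde Y_j\|_{L_p}$ by $K_p$ times $(\sum_j\|\tilde Y_j\|_{L_2}^2)^{1/2}+(\sum_j\|\tilde Y_j\|_{L_p}^p)^{1/p}$.

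The definitions of $a_{p,j}$ and $b_j$ then yield $\|\tilde Y_j\|_{L_2}\le r_j\sqrt{a_{p,j}(P_j)/n}$ (using $a_{2,j}\le a_{p,j}$ for $p\ge 2$) and $\|\tilde Y_j\|_{L_p}\le r_j\sqrt{a_{p,j}(P_j)/n}+r_j b_j(P_j)/n$. Summing and using $\sum_j r_j^2\le 1$ together with $(\sum_j r_j^p)^{1/p}\le 1$ (since $r_j\le 1$ and $p\ge 2$ imply $r_j^p\le r_j^2$), I obtain
\begin{align*}
\Bigl(\sum_j\|\tilde Y_j\|_{L_2}^2\Bigr)^{1/2}\le\sqrt{\frac{\max_j a_{p,j}(P_j)}{n}},\quad
\Bigl(\sum_j\|\tilde Y_j\|_{L_p}^p\Bigr)^{1/p}\le\sqrt{\frac{\max_j a_{p,j}(P_j)}{n}}+\frac{\max_j b_j(P_j)}{n}.
\end{align*}
Combining the Rosenthal bound with the bias estimate gives the stated inequality.

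The only nontrivial point is tracking the constant $K_p\asymp p/\log p$ through the application of Rosenthal's inequality in Banach space (here, the real line applied to a linear functional of a vector), but this is exactly the content of Th.~1.5.11 in \cite{Pena_Gine} and requires no new work. The key trick, which drives the whole argument, is that the $\ell_2$-type norm on $E$ forces $\sum_j r_j^2\le 1$, and hence $(\sum_j r_j^p)^{1/p}\le 1$ for $p\ge 2$, which prevents any $d^{1/2}$ factor from appearing in front of $\sqrt{a_{p,j}/n}$ in the centered part.
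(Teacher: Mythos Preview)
Your proposal is correct and follows essentially the same approach as the paper's proof: for part (b), the triangle inequality in $L_{p/2}$ applied to $\sum_j\|\hat\theta_n^{(j)}-\theta^{(j)}(P_j)\|^2$; for part (a), the bias/centered decomposition together with the Rosenthal-type bound of Th.~1.5.11 in \cite{Pena_Gine} for the independent centered summands $\tilde Y_j$, and Cauchy--Schwarz for the bias. The only cosmetic differences are that the paper writes the Rosenthal bound with $\vee$ rather than $+$, bounds $\|\tilde Y_j\|_{L_2}$ via $\|\tilde Y_j\|_{L_p}$ rather than via ${\rm Var}\le{\mathbb E}Y_j^2$, and first introduces the auxiliary quantities $\gamma_{n,p}(P)$, $\beta_n(P)$, $\tau_{n,p}(P)$ before bounding them; none of this changes the argument.
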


\begin{proof}
First note that 
\begin{align}
\label{bou_1_one}
\Bigl\|\langle \hat \theta_n-\theta(P), u\rangle\Bigr\|_{L_p({\mathbb P}_P)}\leq 
 \Bigl\|\langle \hat \theta_n-{\mathbb E}_P \hat \theta_n, u\rangle\Bigr\|_{L_p({\mathbb P}_P)}  
 +
\Bigl\|\langle {\mathbb E}_P \hat \theta_n - \theta(P), u\rangle\Bigr\|_{L_p({\mathbb P}_P)}. 
\end{align}
We have 
\begin{align*}
\Bigl\|\langle \hat \theta_n-{\mathbb E}_P \hat \theta_n, u\rangle\Bigr\|_{L_p({\mathbb P}_P)}  =
\Bigl\|\sum_{j=1}^d \langle \hat \theta_n^{(j)}-{\mathbb E}_{P_j} \hat \theta_n^{(j)}, u^{(j)}\rangle\Bigr\|_{L_p({\mathbb P}_P)}.  
\end{align*}
Denote 
$
\zeta_j :=  \langle \hat \theta_n^{(j)}-{\mathbb E}_{P_j} \hat \theta_n^{(j)}, u^{(j)}\rangle, j=1,\dots, d.
$
Since $\zeta_j, j=1,\dots, d$ are independent mean zero r.v., we can use the bound of Th. 1.5.11 in \cite{Pena_Gine} to get that, for all $p\geq 2,$ 
\begin{align*} 
\Bigl\|\langle \hat \theta_n-{\mathbb E}_P \hat \theta_n, u\rangle\Bigr\|_{L_p({\mathbb P}_P)}  
&= \Bigl\|\sum_{j=1}^d \zeta_j\Bigr\|_{L_p}
\lesssim \frac{p}{\log p} \biggl(\biggl(\sum_{j=1}^d {\mathbb E} \zeta_j^2\biggr)^{1/2}\bigvee 
\biggl(\sum_{j=1}^d {\mathbb E} |\zeta_j|^p\biggr)^{1/p}\biggr).
\end{align*}
Note that, for $p\geq 2,$ 
\begin{align*}
\|\zeta_j\|_{L_2} \leq \|\zeta_j\|_{L_p}\leq \gamma_{n,p}(P)\|u^{(j)}\|, j=1,\dots, d,
\end{align*}
where 
\begin{align*}
\gamma_{n, p} (P) := \max_{1\leq j\leq d} \sup_{\|u_j\|\leq 1, u_j\in E_j^{\ast}}\Bigl\| \langle \hat \theta_n^{(j)}- {\mathbb E}_{P_j}\hat \theta_n^{(j)}, u_j\rangle\Bigr\|_{L_p({\mathbb P}_P)}, P\in {\mathcal P}.
\end{align*}
Therefore, 
\begin{align}
\label{bou_2_two}
\nonumber 
\Bigl\|\langle \hat \theta_n-{\mathbb E}_P \hat \theta_n, u\rangle\Bigr\|_{L_p({\mathbb P}_P)}  
&
\lesssim \frac{p}{\log p}  \gamma_{n,p}(P)\biggl(\biggl(\sum_{j=1}^d \|u^{(j)}\|^2\biggr)^{1/2}\bigvee \biggl(\sum_{j=1}^d \|u^{(j)}\|^p\biggr)^{1/p}\biggr)
\\
&
\nonumber
\lesssim \frac{p}{\log p}  \gamma_{n,p}(P)\biggl(\sum_{j=1}^d \|u^{(j)}\|^2\biggr)^{1/2} 
\\
&
= \frac{p}{\log p}  \gamma_{n,p}(P)\|u\|.
\end{align}
On the other hand,
\begin{align}
\label{bou_3_three}
&
\nonumber
\Bigl\|\langle {\mathbb E}_P \hat \theta_n - \theta(P), u\rangle\Bigr\|_{L_p({\mathbb P}_P)}
= \Bigl| \sum_{j=1}^d \langle {\mathbb E}_P \hat \theta_n^{(j)}-\theta^{(j)}(P_j), u^{(j)}\rangle\Bigr|
\\
&
\nonumber
\leq 
\biggl(\sum_{j=1}^d \Bigl\|{\mathbb E}_{P_j} \hat \theta_{n}^{(j)} - \theta^{(j)}(P_j)\Bigr\|^2\biggr)^{1/2}
\biggl(\sum_{j=1}^d \|u^{(j)}\|^2\biggr)^{1/2}
\\
&
= \beta_n(P) \|u\|,
\end{align}
where 
\begin{align*}
\beta_{n} (P):= \|{\mathbb E}_P \hat \theta_n-\theta(P)\|=
\biggl(\sum_{j=1}^d \Bigl\|{\mathbb E}_{P_j} \hat \theta_{n}^{(j)} - \theta^{(j)}(P_j)\Bigr\|^2\biggr)^{1/2}, P\in {\mathcal P}.
\end{align*}
It immediately follows from bounds \eqref{bou_1_one}, \eqref{bou_2_two}
and \eqref{bou_3_three} that 
\begin{align*}
\sup_{\|u\|\leq 1, u\in E^{\ast}} \Bigl\|\langle \hat \theta_n-\theta(P), u\rangle\Bigr\|_{L_p({\mathbb P}_P)} 
\leq C\frac{p}{\log p} \gamma_{n,p}(P) + \beta_{n}(P)
\end{align*}
with some numerical constant $C>0.$

Also note that, for $p\geq 2,$ 
\begin{align*}
&
\Bigl\|\|\hat \theta_n-\theta(P)\|\Bigr\|_{L_{p}({\mathbb P}_P)} = 
\Bigl\|\Bigl(\sum_{j=1}^d\|\hat \theta_n^{(j)}-\theta^{(j)}(P_j)\|^{2}\Bigr)^{1/2}\Bigr\|_{L_{p}({\mathbb P}_P)}
\\
&
=  \Bigl\|\sum_{j=1}^d\|\hat \theta_n^{(j)}-\theta^{(j)}(P_j)\|^{2}\Bigr\|_{L_{p/2}({\mathbb P}_P)}^{1/2}
\leq \Bigl(\sum_{j=1}^d \Bigl\|\|\hat \theta_n^{(j)}-\theta^{(j)}(P_j)\|^{2}\Bigr\|_{L_{p/2}({\mathbb P}_P)}\Bigr)^{1/2}
\\
&
= \Bigl(\sum_{j=1}^d \Bigl\|\|\hat \theta_n^{(j)}-\theta^{(j)}(P_j)\|\Bigr\|_{L_{p}({\mathbb P}_P)}^2\Bigr)^{1/2}
\leq \tau_{n,p}(P)\sqrt{d},
\end{align*}
where 
\begin{align*}
\tau_{n, p} (P) := \max_{1\leq j\leq d} \Bigl\| \|\hat \theta_n^{(j)}- \theta_j(P_j)\|\Bigr\|_{L_p({\mathbb P}_P)}, P\in {\mathcal P}.
\end{align*}

To complete the proof, it remains to further bound the quantities $\beta_n(P), \gamma_{n,p}(P)$ and $\tau_{n,p}(P)$
under the conditions of the proposition.
We easily get that 
\begin{align*}
\beta_{n} (P)= \|{\mathbb E}_P \hat \theta_n-\theta(P)\| \leq \frac{\max_{1\leq j\leq d}b_j(P_j)}{\sqrt{n}} \sqrt{\frac{d}{n}}.
\end{align*}
Also, 
\begin{align*}
\gamma_{n,p}(P) \leq \sqrt{\frac{\max_{1\leq j\leq d} a_{p,j}(P_j)}{n}} + \frac{\max_{1\leq j\leq d}b_j(P_j)}{n}.
\end{align*}
and 
\begin{align*}
\tau_{n,p}(P) \leq \sqrt{\frac{\max_{1\leq j\leq d}d_{p,j}(P_j)}{n}}.
\end{align*}
This implies the bounds of the proposition.

\qed
\end{proof}

Note that, if $d\lesssim n$ and  
\begin{align*}
\max_{1\leq j\leq d}b_j(P_j)\lesssim 1,\ \max_{1\leq j\leq d} a_{p,j}(P_j)\lesssim 1,\  \max_{1\leq j\leq d}d_{ps,j}(P_j)\lesssim 1,
\end{align*}
then, for estimator $\hat \theta_n,$ we have $a_p(P)\lesssim_p 1$ and $d_{ps}(P)\lesssim_{ps} d.$
The claims of the theorem now follow from the bounds of Theorem \ref{th_1_asssume_AAA}.

\qed
\end{proof}

Next we provide a proof of Corollary \ref{cor_cor_th_1_asssume_regular}.

\begin{proof}
First note that 
\begin{align*} 
\Bigl\|{\mathbb E}_{\theta_j} \hat \theta_{n}^{(j)} - \theta^{(j)}\Bigr\|\leq 
\frac{W_{1,{\mathbb P}_{\theta}} \Bigl(\sqrt{n}(\hat \theta_n^{(j)}-\theta^{(j)}), \xi^{(j)}(\theta^{(j)})\Bigr)}{\sqrt{n}} \leq \frac{C_{p,j}(\theta^{(j)})}{n}.
\end{align*}
Moreover,
\begin{align*}
&
\sup_{\|u^{(j)}\|\leq 1} \Bigl\|\langle \hat \theta_n^{(j)}-\theta^{(j)}, u^{(j)}\rangle\Bigr\|_{L_p({\mathbb P}_{\theta})} 
\\
&
\leq 
\frac{\sup_{\|u^{(j)}\|\leq 1}\Bigl\|\langle \xi^{(j)}(\theta^{(j)}), u^{(j)}\rangle\Bigr\|_{L_p({\mathbb P}_{P})}}{\sqrt{n}} +
\frac{W_{p,{\mathbb P}_{\theta}} 
\Bigl(\sqrt{n}(\hat \theta_n^{(j)}-\theta^{(j)}), \xi^{(j)}(\theta^{(j)})\Bigr)}{\sqrt{n}}
\\
&
\leq \frac{\sup_{\|u^{(j)}\|\leq 1}\|\langle  Z,I_j(\theta^{(j)})^{-1/2}u^{(j)}\rangle\|_{L_p}}{\sqrt{n}}
+  \frac{C_{p,j}(\theta^{(j)})}{n}
\\
&
\lesssim_p \frac{\|I_j(\theta^{(j)})^{-1/2}\|}{\sqrt{n}} + \frac{C_{p,j}(\theta^{(j)})}{n}
\end{align*}
and, similarly, 
\begin{align*}
\Bigl\|\|\hat \theta_n^{(j)}-\theta^{(j)}\|\Bigr\|_{L_{p}({\mathbb P}_{\theta})} \lesssim_p
\|I_j(\theta^{(j)})^{-1/2}\|\sqrt{\frac{l_j}{n}} + \frac{C_{p,j}(\theta^{(j)})}{n}.
\end{align*}
The above bounds allow one to check condition \eqref{cond_a_b_d} of Theorem \ref{cor_cor_th_1_asssume_AAA}, which yields the claims of the corollary. 

\qed
\end{proof}

Finally, we provide a proof of Proposition \ref{asym_eff_many_comp}.

\begin{proof}
We need to check that Assumption \ref{assume_normal_approx_A} holds for estimator 
$\hat \theta_n = (\hat \theta_n^{(1)}, \dots, \hat \theta_n^{(d)}).$ To this end, we obtain a bound on the accuracy of approximation 
of $\sqrt{n}(\hat \theta_n-\theta)$ by $\xi (\theta):= (\xi_1(\theta^{(1)}),\dots, \xi_d(\theta^{(d)}))$
in the $W_p$-distance.

\begin{lemma}
\label{prop_normal_appr_many_comp}
For all $p\geq 2,$ the following bound holds:
\begin{align*}
\sup_{\|u\|\leq 1} W_{p,{\mathbb P}_{\theta}}\Big(\sqrt{n}\langle \hat \theta_n-\theta, u\rangle, \langle \xi(\theta), u\rangle\Bigr)
\lesssim \frac{p}{\log p}\max_{1\leq j\leq d}C_{p,j}(\theta^{(j)})\sqrt{\frac{d}{n}}.
\end{align*}
Thus, under the assumption $\max_{1\leq j\leq d}\sup_{\theta^{(j)}\in \Theta_j}C_{p,j}(\theta^{(j)})\lesssim 1$ and $d=d_n=o(n)$ as $n\to \infty,$
we have 
\begin{align*}
\sup_{\|u\|\leq 1} W_{p,{\mathbb P}_{\theta}}\Big(\sqrt{n}\langle \hat \theta_n-\theta, u\rangle, \langle \xi(\theta), u\rangle\Bigr)\to 0.
\end{align*}
\end{lemma}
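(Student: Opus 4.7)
The plan is to reduce the problem to a Rosenthal-type bound for a sum of independent scalar random variables, exactly as in the proof of Proposition \ref{bd_ind_comp}, with the ingredients supplied by the near-optimal couplings guaranteed by the quantities $C_{p,j}(\theta^{(j)}).$

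First I would exploit the product structure of the model. Since $\hat\theta_n^{(1)},\dots,\hat\theta_n^{(d)}$ are independent under ${\mathbb P}_{\theta}$ and the Gaussian limits $\xi^{(j)}(\theta^{(j)})=I_j(\theta^{(j)})^{-1/2}Z_j$ can be realized with independent $Z_j,$ one may, for every $\varepsilon>0$ and every $j=1,\dots,d,$ select on an auxiliary probability space a coupling $(\tilde Y_j,\tilde\xi_j)$ with
$\tilde Y_j \stackrel{d}{=} \sqrt{n}(\hat\theta_n^{(j)}-\theta^{(j)}),$
$\tilde\xi_j \stackrel{d}{=} \xi^{(j)}(\theta^{(j)}),$ and
$\|\tilde Y_j-\tilde\xi_j\|_{L_p} \le W_{p,{\mathbb P}_{\theta}}\bigl(\sqrt{n}(\hat\theta_n^{(j)}-\theta^{(j)}),\xi^{(j)}(\theta^{(j)})\bigr)+\varepsilon \le C_{p,j}(\theta^{(j)})/\sqrt{n}+\varepsilon.$
Taking the product of these couplings across $j$ yields vectors $(\tilde Y_1,\dots,\tilde Y_d)\stackrel{d}{=}\sqrt{n}(\hat\theta_n-\theta)$ and $(\tilde\xi_1,\dots,\tilde\xi_d)\stackrel{d}{=}\xi(\theta),$ and the ``defects'' $\eta_j:=\tilde Y_j-\tilde\xi_j$ are independent across $j.$

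Next, fix $u\in E^{\ast}$ with $\|u\|\le 1$ and set $\zeta_j:=\langle\eta_j,u^{(j)}\rangle,$ so that the $\zeta_j$ are independent and
\begin{align*}
W_{p,{\mathbb P}_{\theta}}\bigl(\sqrt{n}\langle\hat\theta_n-\theta,u\rangle,\langle\xi(\theta),u\rangle\bigr)
\le \Bigl\|\sum_{j=1}^{d}\zeta_j\Bigr\|_{L_p}.
\end{align*}
I would estimate the centered and the mean parts separately. For the mean part, using the Kantorovich--Rubinstein duality $\|{\mathbb E}\eta_j\|\le W_{1}\le W_p\le C_{p,j}(\theta^{(j)})/\sqrt{n}+\varepsilon,$ Cauchy--Schwarz gives
\begin{align*}
\Bigl|\sum_{j=1}^d {\mathbb E}\zeta_j\Bigr|\le \sum_{j=1}^d \|u^{(j)}\|\,\|{\mathbb E}\eta_j\|
\le \frac{\max_j C_{p,j}(\theta^{(j)})+\varepsilon\sqrt{n}}{\sqrt{n}}\sqrt{d}\,\Bigl(\sum_j\|u^{(j)}\|^2\Bigr)^{1/2}.
\end{align*}
For the centered part I would invoke the Rosenthal-type inequality (Th.~1.5.11 in \cite{Pena_Gine}), as was already used in Proposition \ref{bd_ind_comp}:
\begin{align*}
\Bigl\|\sum_{j=1}^d(\zeta_j-{\mathbb E}\zeta_j)\Bigr\|_{L_p}\lesssim \frac{p}{\log p}\Bigl[\Bigl(\sum_j\|\zeta_j\|_{L_2}^2\Bigr)^{1/2}\vee \Bigl(\sum_j\|\zeta_j\|_{L_p}^p\Bigr)^{1/p}\Bigr],
\end{align*}
and then bound each $\|\zeta_j\|_{L_p}\le \|u^{(j)}\|(C_{p,j}(\theta^{(j)})/\sqrt{n}+\varepsilon).$ Using $\|u\|\le 1$ and $\bigl(\sum_j\|u^{(j)}\|^p\bigr)^{1/p}\le \bigl(\sum_j\|u^{(j)}\|^2\bigr)^{1/2}\le 1$ for $p\ge 2$ collapses both expressions into a multiple of $(\max_j C_{p,j}(\theta^{(j)})+\varepsilon\sqrt{n})/\sqrt{n}.$

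Combining the two contributions yields
\begin{align*}
\Bigl\|\sum_{j=1}^d\zeta_j\Bigr\|_{L_p}\lesssim \frac{p}{\log p}\cdot \frac{\max_j C_{p,j}(\theta^{(j)})}{\sqrt{n}}+\max_j C_{p,j}(\theta^{(j)})\sqrt{\frac{d}{n}}+O(\varepsilon),
\end{align*}
and the first term is dominated by the second since $\sqrt{d/n}\ge 1/\sqrt{n}$ (up to the $p/\log p$ factor, which one simply keeps in front of the whole bound). Letting $\varepsilon\downarrow 0$ and taking the supremum over $\|u\|\le 1$ gives the desired estimate. I do not foresee a genuine obstacle here: the only mildly delicate point is to maintain independence of the couplings across components (which follows from the product structure of both the sample and the limiting Gaussian vector), and the combinatorics of the Rosenthal bound matching the $\sqrt{d/n}$ scaling — both of which are handled exactly as in Proposition \ref{bd_ind_comp}.
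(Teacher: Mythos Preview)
Your proposal is correct and follows essentially the same route as the paper: build independent componentwise couplings, bound the scalar difference $\sum_j\langle\eta_j,u^{(j)}\rangle$ by splitting off the mean (Cauchy--Schwarz, picking up the $\sqrt{d}$) and applying the Rosenthal-type inequality of Th.~1.5.11 in \cite{Pena_Gine} to the centered part (picking up only the $p/\log p$ factor). The only cosmetic difference is that you fix $\varepsilon$-optimal couplings at the start and let $\varepsilon\downarrow 0$, while the paper works with generic couplings and minimizes at the end; the two are equivalent.
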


\begin{proof}
Let $(\eta^{(1)}, \zeta^{(1)}), \dots, (\eta^{(d)}, \zeta^{(d)})$ be independent r.v. such that $\eta^{(j)}\overset{d}{=} \sqrt{n}(\hat \theta_n^{(j)}-\theta^{(j)})$
and $\zeta^{(j)} \overset{d}{=} \xi^{(j)}(\theta^{(j)})$ for all $j=1,\dots, d.$ Denote $\eta:=(\eta^{(1)},\dots, \eta^{(d)})$ and $\zeta:=(\zeta^{(1)},\dots, \zeta^{(d)}).$
We have 
\begin{align*}
&
W_{p,{\mathbb P}_{\theta}}\Big(\sqrt{n}\langle \hat \theta_n-\theta, u\rangle, \langle \xi(\theta), u\rangle\Bigr)\leq 
\|\langle \eta-\zeta, u\rangle\|_{L_p} = \Bigl\|\sum_{j=1}^d \langle \eta^{(j)}-\zeta^{(j)}, u^{(j)}\rangle\Bigr\|_{L_p}
\\
&
\leq \Bigl|\sum_{j=1}^d \langle {\mathbb E}(\eta^{(j)}-\zeta^{(j)}), u^{(j)}\rangle\Bigr| + 
\Bigl\|\sum_{j=1}^d (\langle \eta^{(j)}-\zeta^{(j)}, u^{(j)}\rangle- {\mathbb E} \langle \eta^{(j)}-\zeta^{(j)}, u^{(j)}\rangle)\Bigr\|_{L_p}.
\end{align*}
The first term in the right hand side could be bounded as follows:
\begin{align*}
&
\Bigl|\sum_{j=1}^d \langle {\mathbb E}(\eta^{(j)}-\zeta^{(j)}), u^{(j)}\rangle\Bigr| \leq \Bigl(\sum_{j=1}^d \|{\mathbb E}\eta^{(j)}-{\mathbb E}\zeta^{(j)}\|^2\Bigr)^{1/2}
\Bigl(\sum_{j=1}^d \|u^{(j)}\|^2\Bigr)^{1/2}
\\
&
\leq \|u\|\Bigl(\sum_{j=1}^d W_{p, {\mathbb P}_{\theta}}^2\Bigl(\sqrt{n}(\hat \theta_n^{(j)}-\theta^{(j)}), \xi^{(j)}(\theta^{(j)})\Bigr)\Bigr)^{1/2} 
\leq \|u\|\max_{1\leq j\leq d}C_{p,j}(\theta^{(j)})\sqrt{\frac{d}{n}}.
\end{align*}
For the second term, we argue as in the proof of Proposition \ref{bd_ind_comp} to get, for $p\geq 2,$ the bound 
\begin{align*}
&
\Bigl\|\sum_{j=1}^d (\langle \eta^{(j)}-\zeta^{(j)}, u^{(j)}\rangle- {\mathbb E} \langle \eta^{(j)}-\zeta^{(j)}, u^{(j)}\rangle)\Bigr\|_{L_p}
\\
&
\lesssim \frac{p}{\log p} \Bigl(\Bigl(\sum_{j=1}^d {\mathbb E}\langle \eta^{(j)}-\zeta^{(j)}, u^{(j)}\rangle^2\Bigr)^{1/2}\vee 
\Bigl(\sum_{j=1}^d {\mathbb E}|\langle \eta^{(j)}-\zeta^{(j)}, u^{(j)}\rangle|^p\Bigr)^{1/p}\Bigr)
\\
&
\lesssim \frac{p}{\log p} \Bigl(\Bigl(\sum_{j=1}^d{\mathbb E}\|\eta^{(j)}-\zeta^{(j)}\|^2 \|u^{(j)}\|^2\Bigr)^{1/2} 
\vee \Bigl(\sum_{j=1}^d{\mathbb E}\|\eta^{(j)}-\zeta^{(j)}\|^p \|u^{(j)}\|^p \Bigr)^{1/p}\Bigr)
\\
&
\lesssim \frac{p}{\log p} \Bigl(\max_{1\leq j\leq d} {\mathbb E}^{1/2} \|\eta^{(j)}-\zeta^{(j)}\|^2 \Bigl(\sum_{j=1}^d \|u^{(j)}\|^2\Bigr)^{1/2} \vee 
\max_{1\leq j\leq d} {\mathbb E}^{1/p} \|\eta^{(j)}-\zeta^{(j)}\|^p \Bigl(\sum_{j=1}^d \|u^{(j)}\|^p\Bigr)^{1/p}\Bigr)
\\
&
\lesssim  \frac{p}{\log p} \|u\| \max_{1\leq j\leq d} {\mathbb E}^{1/p} \|\eta^{(j)}-\zeta^{(j)}\|^p.
\end{align*}
Thus, we get 
\begin{align*}
\sup_{\|u\|\leq 1} W_{p,{\mathbb P}_{\theta}}\Big(\sqrt{n}\langle \hat \theta_n-\theta, u\rangle, \langle \xi(\theta), u\rangle\Bigr)
\lesssim \max_{1\leq j\leq d}C_{p,j}(\theta^{(j)})\sqrt{\frac{d}{n}} + \frac{p}{\log p}\max_{1\leq j\leq d} {\mathbb E}^{1/p} \|\eta^{(j)}-\zeta^{(j)}\|^p.
\end{align*}
Minimizing the right hand side over all $\eta^{(j)}\overset{d}{=} \sqrt{n}(\hat \theta_n^{(j)}-\theta^{(j)})$
and $\zeta^{(j)} \overset{d}{=} \xi^{(j)}(\theta^{(j)})$ yields the bound 
\begin{align*}
&
\sup_{\|u\|\leq 1} W_{p,{\mathbb P}_{\theta}}\Big(\sqrt{n}\langle \hat \theta_n-\theta, u\rangle, \langle \xi(\theta), u\rangle\Bigr)
\\
&
\lesssim 
\max_{1\leq j\leq d}C_{p,j}(\theta^{(j)})\sqrt{\frac{d}{n}} + \frac{p}{\log p}\max_{1\leq j\leq d} 
W_{p,{\mathbb P}_{\theta}}\Bigl(\sqrt{n}(\hat \theta_n^{(j)}-\theta^{(j)}), \xi^{(j)}(\theta^{(j)})\Bigr) 
\\
&
\lesssim 
\max_{1\leq j\leq d}C_{p,j}(\theta^{(j)})\sqrt{\frac{d}{n}} + \frac{p}{\log p}\frac{\max_{1\leq j\leq d} C_{p,j}(\theta^{(j)})}{\sqrt{n}} 
\lesssim \frac{p}{\log p}\max_{1\leq j\leq d}C_{p,j}(\theta^{(j)})\sqrt{\frac{d}{n}}.
\end{align*}

\qed
\end{proof}

Applying this lemma for $u=f'(\theta)$ and observing that ${\mathbb E}\langle \xi(\theta), f'(\theta)\rangle^2= \sigma_f^2(\theta),$
it is easy to deduce the claim of Proposition \ref{asym_eff_many_comp} from Corollary \ref{cor_cor_th_1_asssume_regular}(ii).

\qed
\end{proof}

\section{Proofs of upper bounds for functionals of covariance}
\label{func_cov_proofs}

We provide a proof of Theorem \ref{th_1_Bern_AA_covariance} of Section \ref{func_cov_op}.

\begin{proof} We will use the following simple fact (probably, well known, but we will sketch its proof for completeness). 

\begin{proposition}
\label{subgauss_series}
Let $X$ be a centered subgaussian and pre-gaussian r.v. in $E$ with covariance operator $\Sigma$ and let $\{x_n\}$ be 
the sequence of vectors from representation of $Y\sim N(0,\Sigma)$ in Proposition \ref{Gauss_repres}. Denote by ${\mathbb H}_X$
the closure of the set $\{\langle X,u\rangle: u\in E^{\ast}\}$ in the space $L_2({\mathbb P})$ of real valued random variables. Then, there exists 
an orthonormal sequence $\{\xi_n\}\subset {\mathbb H}_X\subset L_2({\mathbb P})$ of r.v. such that, for all $u\in E^{\ast},$ 
\begin{align*}
\langle X,u\rangle = \sum_{n=1}^{\infty} \xi_n \langle x_n,u\rangle\ {\rm in}\ L_2({\mathbb P})
\end{align*}
and, moreover, 
\begin{align*}
{\mathbb E} \Bigl\|X-\sum_{n=1}^N \xi_n x_n\Bigr\|\to 0\ {\rm as}\ N\to\infty.
\end{align*}
\end{proposition}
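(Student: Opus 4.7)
The plan is to build the sequence $\{\xi_n\}$ by transporting the coordinates $g_n$ of the Gaussian representation across the canonical isometry between the linear Gaussian Hilbert spaces, and then to upgrade the $L_2(\mathbb{P})$-convergence to $L_1(E)$-convergence by a subgaussian-versus-Gaussian comparison.

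First I would set up the transfer isometry. Let $\mathbb{H}_Y\subset L_2(\mathbb{P})$ be the closed linear span of $\{\langle Y,u\rangle:u\in E^{\ast}\}$, and note that both $X$ and $Y$ have covariance $\Sigma$, so
\begin{align*}
\mathbb{E}\langle X,u\rangle\langle X,v\rangle=\langle \Sigma u,v\rangle=\mathbb{E}\langle Y,u\rangle\langle Y,v\rangle,\qquad u,v\in E^{\ast}.
\end{align*}
Hence the map $\langle Y,u\rangle\mapsto \langle X,u\rangle$ is a well-defined linear isometry on a dense subspace of $\mathbb{H}_Y$ and extends uniquely to an isometric isomorphism $T:\mathbb{H}_Y\to\mathbb{H}_X$. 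The linear independence hypothesis from Proposition~\ref{Gauss_repres} gives, via Hahn--Banach, functionals $u_n\in E^{\ast}$ with $\langle x_n,u_n\rangle=1$ and $\langle x_k,u_n\rangle=0$ for $k\neq n$; applied to $Y=\sum_k g_k x_k$ this yields $g_n=\langle Y,u_n\rangle\in\mathbb{H}_Y$. Define $\xi_n:=T(g_n)\in\mathbb{H}_X$. Since $T$ is isometric and $\{g_n\}$ is orthonormal in $L_2(\mathbb{P})$, $\{\xi_n\}$ is orthonormal in $L_2(\mathbb{P})$.

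Next I would derive the series representation in $L_2(\mathbb{P})$. For any fixed $u\in E^{\ast}$ the partial sums $\sum_{n=1}^N g_n\langle x_n,u\rangle$ are Cauchy in $L_2(\mathbb{P})$ because
$\sum_{n}\langle x_n,u\rangle^2=\langle \Sigma u,u\rangle<\infty$,
and by continuity of $u$ on $E$ and the a.s.\ convergence of $\sum_n g_n x_n$ in $E$, this series converges to $\langle Y,u\rangle$ in $L_2(\mathbb{P})$. Applying the bounded operator $T$ termwise,
\begin{align*}
\langle X,u\rangle=T\langle Y,u\rangle=\sum_{n=1}^{\infty}T(g_n)\langle x_n,u\rangle=\sum_{n=1}^{\infty}\xi_n\langle x_n,u\rangle\quad\text{in }L_2(\mathbb{P}),
\end{align*}
which is the first claim.

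Finally, for the convergence $\mathbb{E}\|X-S_N\|\to 0$ with $S_N:=\sum_{n=1}^N\xi_n x_n$, I would compare $X-S_N$ with $Y-Y_N$, where $Y_N:=\sum_{n=1}^N g_n x_n$. For every $u\in E^{\ast}$, using orthonormality of $\{\xi_n\}$,
\begin{align*}
\mathbb{E}|\langle X-S_N,u\rangle|^2=\sum_{n>N}\langle x_n,u\rangle^2=\mathbb{E}|\langle Y-Y_N,u\rangle|^2.
\end{align*}
Since linear combinations $\sum_j a_j\langle X,u_j\rangle=\langle X,\sum_j a_j u_j\rangle$ remain subgaussian with $\psi_2$-norm controlled by their $L_2$-norm, and this property survives $L_2$-limits, the random vector $X-S_N$ is subgaussian in $E$ with the above covariance. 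Writing $\|x\|=\sup_{u\in D}\langle x,u\rangle$ for a countable dense $D$ in the unit ball of $E^{\ast}$, Talagrand's majorizing-measure / generic-chaining comparison for subgaussian versus Gaussian processes with identical increment metrics yields
\begin{align*}
\mathbb{E}\|X-S_N\|=\mathbb{E}\sup_{u\in D}\langle X-S_N,u\rangle\lesssim_{C}\mathbb{E}\sup_{u\in D}\langle Y-Y_N,u\rangle=\mathbb{E}\|Y-Y_N\|.
\end{align*}
The right-hand side tends to $0$ because the Gaussian series $Y=\sum_n g_n x_n$ converges a.s.\ in $E$ and, by Fernique's theorem, also in $L_1(E)$. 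The main obstacle is this last step: justifying the subgaussian-to-Gaussian comparison of $\mathbb{E}\sup$ in a general separable Banach space. It requires either quoting the Talagrand comparison theorem for processes with matching subgaussian increment metrics or, equivalently, building an explicit majorizing measure from the Gaussian process and transporting chaining bounds to $X-S_N$; the rest of the argument is purely Hilbert-space bookkeeping.
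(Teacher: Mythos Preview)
Your argument is correct and follows the same outline as the paper's own proof: build $\{\xi_n\}$ by an isometric transfer of the Gaussian coordinates $g_n$, then use the Talagrand generic-chaining comparison (subgaussian upper bound versus Gaussian lower bound) to pass from $\mathbb{E}\|Y-Y_N\|\to 0$ to $\mathbb{E}\|X-S_N\|\to 0$. The only cosmetic difference is that the paper routes the isometry through $\ell_2$ (mapping $\bar u:=(\langle x_n,u\rangle)_n\mapsto \langle X,u\rangle$ and setting $\xi_n:=J e_n$), whereas you route it through $\mathbb{H}_Y$ via $T:\langle Y,u\rangle\mapsto \langle X,u\rangle$ and set $\xi_n:=T(g_n)$; these yield the same $\xi_n=\langle X,u_n\rangle$ once one notes $\bar u_n=e_n$.
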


\begin{proof}
It is easy to check that ${\mathbb H}_X$ is a linear subspace and all r.v. in ${\mathbb H}_X$ are centered and subgaussian: 
for all $\eta\in {\mathbb H}_X,$ ${\mathbb E}\eta=0$
and 
\begin{align*}
\|\eta\|_{\psi_2}\leq C\|\eta\|_{L_2}.
\end{align*}
For $u\in E^{\ast},$ define $\bar u:= (\langle x_n,u\rangle: n\geq 1)\in \ell_2.$ Since vectors $\{x_n: n\geq 1\}$ are linearly independent,
it is easy to check that the closure of the set $\{\bar u: u\in E^{\ast}\}$ coincides with $\ell_2.$ Recall that 
\begin{align*}
{\mathbb E}\langle X,u\rangle \langle X,v\rangle= \langle \Sigma u,v\rangle = \sum_{n\geq 1}\langle x_n,u\rangle \langle x_n,v\rangle = 
\langle \bar u, \bar v\rangle_{\ell_2}, u,v\in E^{\ast}.
\end{align*}
Thus, the mapping $\bar u\mapsto \langle X,u \rangle$ is an isometry between linear spaces $\{\bar u: u\in E^{\ast}\}$ 
and $\{\langle X,u\rangle: u\in E^{\ast}\},$ and it could be extended by continuity to an isometry $J$ between their closures $\ell_2$ and ${\mathbb H}_X.$
Let $\{e_n: n\geq 1\}$ be the canonical basis of $\ell_2$ and denote $\xi_n:= J e_n, n\geq 1.$ Then 
$\{\xi_n: n\geq 1\}$ is an orthonormal basis of subspace ${\mathbb H}_X$ of $L_2({\mathbb P}),$ and, moreover, for all $u\in E^{\ast},$
\begin{align*}
\langle X,u\rangle= J \bar u = \sum_{n\geq 1} \xi_n \langle x_n,u \rangle 
\end{align*}
with convergence of the series in $L_2({\mathbb P}).$

To conclude the proof, denote $X_{>N}:= X-\sum_{n=1}^N \xi_n x_n$ and $Y_{>N}:= Y-\sum_{n=1}^N g_n x_n.$
Note that $\langle X_{>N}, u\rangle, u\in E^{\ast}$ is a centered subgaussian process whereas $\langle Y_{>N}, u\rangle, u\in E^{\ast}$
is a centered Gaussian process. Moreover, we have 
\begin{align*}
&
\|\langle X_{>N}, u\rangle - \langle X_{>N}, v\rangle\|_{\psi_2} \leq C \|\langle X_{>N}, u\rangle - \langle X_{>N}, v\rangle\|_{L_2}
\\
&
=C \Bigl(\sum_{n>N} \langle x_n, u-v\rangle^2\Bigr)^{1/2}
=C \|\langle Y_{>N}, u\rangle - \langle Y_{>N}, v\rangle\|_{L_2},\ u, v\in E^{\ast}.
\end{align*}
Using Talagrand's upper generic chaining bound on subgaussian process$\langle X_{>N}, u\rangle, u\in E^{\ast}, \|u\|\leq 1$ along with his lower generic chaining bound on Gaussian process  $\langle Y_{>N}, u\rangle, u\in E^{\ast}, \|u\|\leq 1$ yields
\begin{align*}
{\mathbb E}\Bigl\|X-\sum_{n=1}^N \xi_n x_n\Bigr\|= {\mathbb E}\|X_{>N}\| = {\mathbb E}\sup_{\|u\|\leq 1}\langle X_{>N}, u\rangle
\lesssim C  {\mathbb E}\sup_{\|u\|\leq 1}\langle Y_{>N}, u\rangle = C{\mathbb E}\Bigl\|X-\sum_{n=1}^N g_n x_n\Bigr\|.
\end{align*}
It easily follows from the claims of Proposition \ref{Gauss_repres} and Gaussian concentration inequality that 
\begin{align*}
{\mathbb E}\Bigl\|X-\sum_{n=1}^N g_n x_n\Bigr\| \to 0\ {\rm as}\ n\to\infty,
\end{align*}
implying the last statement. 

\qed
\end{proof}

To apply Theorem \ref{th_1_Bern_AA}, we need to obtain bounds on the $L_p$-norms 
$
\sup_{\|U\|\leq 1} \|\langle \hat \Sigma_n-\Sigma, U\rangle\|_{L_p({\mathbb P}_P)}
$
and 
$\| \|\hat \Sigma_n-\Sigma\|\|_{L_p({\mathbb P}_P)},
$
in the first of these quantities, the supremum being over the unit ball of the dual space of $L(E^{\ast}, E).$
The following simple proposition holds.

\begin{proposition} 
\label{Prop_Bern_cov}
Under the above assumptions and notations, for all $p\geq 1,$
\begin{align}
\label{cov_odin}
\sup_{\|U\|\leq 1} \Bigl\|\langle \hat \Sigma_n-\Sigma, U\rangle\Bigr\|_{L_p({\mathbb P}_P)}
\lesssim C^2\|\Sigma\| \Bigl(\sqrt{\frac{p}{n}}\vee \frac{p}{n}\Bigr)
\end{align}
and 
\begin{align}
\label{cov_dva}
\Bigl\| \|\hat \Sigma_n-\Sigma\|\Bigr\|_{L_p({\mathbb P}_P)}
\lesssim C^2\|\Sigma\| \Bigl(\sqrt{\frac{{\bf r}(\Sigma)}{n}}\vee \frac{{\bf r}(\Sigma)}{n}\vee \sqrt{\frac{p}{n}}\vee \frac{p}{n}\Bigr).
\end{align}
\end{proposition}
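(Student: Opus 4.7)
The plan is to prove (1) by a Bernstein bound for i.i.d.\ sub-exponential sums together with a standard nuclear decomposition of dual elements, and to prove (2) by a Koltchinskii--Lounici type chaining argument that invokes the Gaussian-domination afforded by Proposition~\ref{subgauss_series}.

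For (1), I would first establish the bound for the building blocks $U=u\otimes v$ with $u,v\in E^\ast$, $\|u\|,\|v\|\leq 1$. For such $U$, $\langle X\otimes X,U\rangle=\langle X,u\rangle\langle X,v\rangle$, and the sub-gaussian hypothesis \eqref{sub-gauss} combined with $\|\langle X,u\rangle\|_{L_2}^2=\langle\Sigma u,u\rangle\leq\|\Sigma\|$ gives $\|\langle X,u\rangle\|_{\psi_2}\leq C\|\Sigma\|^{1/2}$, and similarly for $v$; the Orlicz H\"older inequality then yields $\|\langle X,u\rangle\langle X,v\rangle\|_{\psi_1}\leq C^2\|\Sigma\|$. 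Hence the centered summands $Z_j:=\langle X_j\otimes X_j-\Sigma,U\rangle$ are i.i.d.\ sub-exponential with $\|Z_j\|_{\psi_1}\lesssim C^2\|\Sigma\|$, and the classical Bernstein bound for sums of i.i.d.\ sub-exponentials gives
\begin{align*}
\Bigl\|n^{-1}\sum_{j=1}^n Z_j\Bigr\|_{L_p}\lesssim C^2\|\Sigma\|\Bigl(\sqrt{p/n}\vee p/n\Bigr).
\end{align*}
To upgrade to an arbitrary $U$ in the unit ball of the dual of $L(E^\ast,E)$, I would decompose $U$ as a projective sum $\sum_k\alpha_k u_k\otimes v_k$ with $\sum_k|\alpha_k|\|u_k\|\|v_k\|\leq 1+\epsilon$, apply the rank-one bound term by term, and use the triangle inequality for the $L_p$-norm, letting $\epsilon\downarrow 0$.

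For (2), I would use the dual representation $\|\hat\Sigma_n-\Sigma\|=\sup_{u,v\in E^\ast,\,\|u\|,\|v\|\leq 1}\langle(\hat\Sigma_n-\Sigma)u,v\rangle$. Writing $X=\sum_n\xi_n x_n$ as in Proposition~\ref{subgauss_series}, the centered process $\sqrt{n}\langle(\hat\Sigma_n-\Sigma)u,v\rangle$ decomposes into a sub-gaussian ``off-diagonal'' chaos $\sum_{m\neq n}\xi_m\xi_n\langle x_m,u\rangle\langle x_n,v\rangle$ and a sub-exponential ``diagonal'' part $\sum_m(\xi_m^2-1)\langle x_m,u\rangle\langle x_m,v\rangle$. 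For the sub-gaussian part, Proposition~\ref{subgauss_series} ensures that its intrinsic $\psi_2$-metric on the index set is dominated (up to $C$) by that of the corresponding Gaussian sample-covariance process built from $Y\sim N(0,\Sigma)$, so Talagrand's upper generic chaining transfers the classical Koltchinskii--Lounici bound $\mathbb{E}\|\hat\Sigma_n^Y-\Sigma\|\lesssim \|\Sigma\|(\sqrt{{\bf r}(\Sigma)/n}\vee{\bf r}(\Sigma)/n)$ to the sub-gaussian case. The diagonal sub-exponential part is handled by a Bernstein-chaining (``mixed tail'') argument, again producing the same effective-rank bound via ${\mathbb E}\|Y\|^2=\|\Sigma\|{\bf r}(\Sigma)$. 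Finally, to pass from expectation to $L_p$-norm, I would apply Talagrand's concentration inequality for an empirical process indexed by a class uniformly $\psi_1$-bounded by $C^2\|\Sigma\|$ (the bound from (1)); this supplies the extra $\sqrt{p/n}\vee p/n$ deviation term in (2).

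The main obstacle is the sub-exponential diagonal chaining in (2): a plain Dudley entropy integral in the $\psi_1$-metric would introduce a spurious $\log n$, so one has to use ``mixed tail'' chaining, in which the Gaussian and Bernstein contributions are added rather than multiplied. The Gaussian-domination provided by Proposition~\ref{subgauss_series}, together with the identity ${\mathbb E}\|Y\|^2=\|\Sigma\|{\bf r}(\Sigma)$, then identifies the two contributions with $\sqrt{{\bf r}(\Sigma)/n}$ and ${\bf r}(\Sigma)/n$ respectively, and delivers the clean bound (2).
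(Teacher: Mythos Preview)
Your argument for (1) has a gap at the extension from rank-one $U$ to general $U$. The step ``decompose $U$ as a projective sum $\sum_k\alpha_k u_k\otimes v_k$ with $\sum_k|\alpha_k|\|u_k\|\|v_k\|\leq 1+\epsilon$'' presupposes that the dual of $L(E^\ast,E)$ coincides with the projective tensor product $E^\ast\hat\otimes_\pi E^\ast$. For a general separable Banach space $E$ this fails, and even for Hilbert $E$ the dual $B(H)^\ast$ strictly contains the predual $S_1(H)$: there are singular functionals with no nuclear expansion at all. Since Assumption~\ref{assume_Bernstein_lin}, and hence Theorem~\ref{th_1_Bern_AA}, requires the bound for \emph{every} $U$ in the dual unit ball, this step cannot be closed by density. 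The paper bypasses the structure of $U$ entirely and exploits instead the structure of $X$: after reducing to $X=\sum_{j=1}^N\xi_j x_j$ via Proposition~\ref{subgauss_series}, one writes $\langle X\otimes X,U\rangle=\sum_{i,j}a_{ij}\xi_i\xi_j$ with $a_{ij}=\tfrac12\langle x_i\otimes x_j+x_j\otimes x_i,U\rangle$; a short duality computation gives the nuclear-norm bound $\|A\|_1\le\|U\|\|\Sigma\|$, and diagonalising $A$ yields $\|\langle X\otimes X,U\rangle\|_{\psi_1}\le\sum_j|\lambda_j|\,\|\langle\xi,\phi_j\rangle\|_{\psi_2}^2\le C^2\|A\|_1$. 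This works for arbitrary bounded $U$ with no structural hypothesis.

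For (2) your overall plan---mixed-tail generic chaining, Gaussian comparison via Proposition~\ref{subgauss_series}, then concentration for the $L_p$ upgrade---is indeed the known route, and the paper simply invokes it as Theorem~9 of \cite{Koltchinskii_Lounici}. One inaccuracy in your sketch: the variables $\xi_m$ furnished by Proposition~\ref{subgauss_series} are only $L_2$-orthonormal, not independent, so your ``off-diagonal sub-gaussian chaos'' is not a decoupled chaos in any usable sense and the diagonal/off-diagonal split does not carry the intended probabilistic meaning. The actual argument runs the chaining directly on the index set $\{(u,v):\|u\|,\|v\|\le1\}$, with Bernstein-type increment control coming from the rank-one case of (1), rather than through a chaos decomposition in the $\xi_m$'s.
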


\begin{proof}
Since 
\begin{align*}
\langle \hat \Sigma_n-\Sigma, U\rangle = n^{-1}\sum_{j=1}^n \langle X_j\otimes X_j, U\rangle - {\mathbb E}\langle X\otimes X, U\rangle, 
\end{align*}
it is enough to show that $\|\langle X\otimes X, U\rangle\|_{\psi_1}\lesssim C^2\|\Sigma\|\|U\|.$
Then, bound \eqref{cov_odin} would follow from Bernstein inequality for subexponential r.v. 
Moreover, it would be enough to prove the bound on $\|\langle X\otimes X, U\rangle\|_{\psi_1}$ only in the case 
of r.v. $X$ taking values in a finite-dimensional subspace $L\subset E.$ Indeed, in the general case, one can use approximation 
of $X$ by $X^{(N)}:=\sum_{n=1}^N \xi_n x_n.$ By Proposition \ref{subgauss_series}, ${\mathbb E}\|X-X^{(N)}\|\to 0$ as $N\to \infty.$
Thus, there exists a subsequence $N_k, k\geq 1$ such that $\|X-X^{(N_k)}\|\to 0$ as $k\to\infty$ a.s., which also 
implies $X^{(N_k)}\otimes X^{(N_k)}\to X\otimes X$ as $k\to \infty$ a.s. in the operator norm. If, for some numerical constant $D>0$
and for all $N\geq 1,$ we have  
\begin{align*}
\|\langle X^{(N)}\otimes X^{(N)}, U\rangle\|_{\psi_1}\leq D C^2\|\Sigma^{(N)}\|\|U\|,
\end{align*}
then we also have, for some numerical constant $D'>0$ and for all $p\geq 1, N\geq 1,$ 
\begin{align*}
\|\langle X^{(N)}\otimes X^{(N)}, U\rangle\|_{L_p}\leq D' p C^2\|\Sigma^{(N)}\|\|U\|,
\end{align*}
where $\Sigma^{(N)}= \sum_{k=1}^N x_k\otimes x_k$ is the covariance operator of $X^{(N)}.$ 
Since $\|\Sigma^{(N)}\|\leq \|\Sigma\|,$ we can write 
\begin{align*}
\|\langle X^{(N)}\otimes X^{(N)}, U\rangle\|_{L_p}\leq D' p C^2\|\Sigma\|\|U\|,
\end{align*}
and, setting $N=N_k,$ it is easy to justify passing to the limit as $k\to \infty$ to get 
\begin{align*}
\|\langle X\otimes X, U\rangle\|_{L_p}\leq D' p C^2\|\Sigma\|\|U\|.
\end{align*}
This, clearly, implies the bound $\|\langle X\otimes X, U\rangle\|_{\psi_1}\lesssim C^2\|\Sigma\|\|U\|.$

Thus, we can now consider the case when $X=\sum_{j=1}^N \xi_j x_j$ with $x_1,\dots, x_N\in E$ being linearly independent vectors and 
$\xi_1,\dots, \xi_N\in {\mathbb H}_X$ being centered orthonormal r.v.. In this case, we can write   
\begin{align*}
\langle X\otimes X, U\rangle = \sum_{i,j=1}^{N} \xi_i \xi_j \langle x_i\otimes x_j, U\rangle = \sum_{i,j=1}^{N} a_{ij} \xi_i \xi_j,
\end{align*}
where 
$
a_{ij} := \frac{1}{2}\langle x_i\otimes x_j+x_j\otimes x_i, U\rangle.
$
We will now prove the following bound on the nuclear norm of symmetric matrix $A:= (a_{ij})_{i,j=1}^{N}:$ 
$\|A\|_1 \leq \|U\|\|\Sigma\|.$ Indeed, by the duality between the operator norm and the nuclear norm,
$
\|A\|_1 = \sup\{\langle A,B\rangle: \|B\|\leq 1\}.
$
We have 
\begin{align*}
\langle A,B\rangle &=\sum_{i,j=1}^{N} a_{ij} b_{ij} = \frac{1}{2} \Bigl\langle \sum_{i,j=1}^{N} b_{ij}(x_i\otimes x_j+x_j\otimes x_i), U\Bigr\rangle
\\
&
=\Bigl\langle \sum_{i,j=1}^{N} b_{ij} x_i\otimes x_j, U\Bigr\rangle \leq \|U\| \Bigl\|\sum_{i,j=1}^{N} b_{ij} x_i\otimes x_j\Bigr\|
\\
&
\leq \|U\| \sup_{\|u\|, \|v\|\leq 1} \sum_{i,j} b_{ij} \langle x_i,u\rangle \langle x_i, v\rangle 
\leq \|U\|\|B\| \sup_{\|u\|\leq 1} \sum_{j=1}^{N} \langle x_j, u\rangle^2 
\\
&
= \|U\|\|B\| \sup_{\|u\|\leq 1}\langle \Sigma u, u\rangle \leq \|U\|\|B\|\|\Sigma\|,
\end{align*}
which implies that $\|A\|_1 \leq \|U\|\|\Sigma\|.$  

The condition that $X=\sum_{j=1}^n \xi_j x_j$ is a centered subgaussian random vector implies 
that $\xi:=(\xi_1,\dots, \xi_N)$ is a centered subgaussian r.v. in ${\mathbb R}^N,$ for which we have the bound 
\begin{align*}
\|\langle \xi, t\rangle\|_{\psi_2}\leq C \|t\|_{\ell_2}, t\in {\mathbb R}^N. 
\end{align*}
Let $\lambda_1\geq \dots \geq \lambda_N$
be the eigenvalues of symmetric matrix $A$ and $\phi_1,\dots, \phi_d$ be the corresponding orthonormal eigenvectors. 
Then 
$
 \sum_{i,j=1}^{N} a_{ij} \xi_i \xi_j = \sum_{j=1}^N \lambda_j \langle \xi, \phi_j \rangle^2.
$
Therefore, we get 
\begin{align*}
\|\langle X\otimes X, U\rangle\|_{\psi_1} &= \Bigl\| \sum_{j=1}^N \lambda_j \langle \xi, \phi_j \rangle^2\Bigr\|_{\psi_1} 
\leq \sum_{j=1}^N  |\lambda_j| \|\langle \xi, \phi_j \rangle^2\|_{\psi_1}
\\
&
= \sum_{j=1}^N  |\lambda_j| \|\langle \xi, \phi_j \rangle\|_{\psi_2}^2 \leq C^2 \|A\|_1 \leq C^2\|\Sigma\|\|U\|.
\end{align*}
As a result, we proved bound \eqref{cov_odin}.

Since r.v. $X$ is both subgaussian and pregaussian, the proof of \eqref{cov_dva} follows from the Bernstein type bounds on 
$\|\hat \Sigma_n-\Sigma\|$ of Theorem 9 in Koltchinskii and Lounici (2017) (which itself relies on generic chaining bounds obtained 
earlier in the papers by Mendelson, Bednorz and Dirksen): for all $t\geq 1$ with probability at least $1-e^{-t}$
\begin{align*}
\|\hat \Sigma_n-\Sigma\|\lesssim C^2\|\Sigma\| \Bigl(\sqrt{\frac{{\bf r}(\Sigma)}{n}}\vee \frac{{\bf r}(\Sigma)}{n}\vee \sqrt{\frac{t}{n}}\vee \frac{t}{n}\Bigr).
\end{align*}

\qed
\end{proof}

It follows from Proposition \ref{Prop_Bern_cov} that, for $X\sim P$ with covariance $\Sigma=\Sigma_P$ satisfying the above conditions,
Bernstein type bounds on $\sup_{\|U\|\leq 1}\|\langle \hat \Sigma_n-\Sigma, U\rangle\|_{L_p({\mathbb P}_P)}$ and on 
$\|\|\hat \Sigma_n-\Sigma\|\|_{L_p({\mathbb P}_P)}$ of Theorem \ref{th_1_Bern_AA} hold 
with $\sigma(P)=U(P)=\|\Sigma\|$ and $d_1(P)=\|\Sigma\|^2 {\bf r}(\Sigma), d_2(P)=\|\Sigma\|{\bf r}(\Sigma).$ Using Theorem \ref{th_1_Bern_AA}, we 
can conclude the proof.

\qed
\end{proof}

\section{Lower bounds}
\label{sec:lower_bounds}

In this section, we provide the proofs of minimax lower bounds of sections \ref{Main_results}, \ref{HD_comps} and \ref{exponential_section}. 
We start with the proof of local minimax lower bound of Theorem \ref{vanTrees_bd}.

\begin{proof}
Note that it is enough to prove the claim under the assumptions that $\|I(\theta_0)^{-1}\| \omega_I(\theta_0, \delta)\leq 1/2$ and 
$\frac{\omega_{f'}(\theta_0,\delta)}{\|f'(\theta_0)\|}\leq 1.$ Otherwise, since $D\geq 2$ and $\|I(\theta_0)\|\|I(\theta_0)^{-1}\|\geq 1,$
the right hand side of the inequality would be non-positive and the bound would trivially hold. 

Let $h:= I(\theta_0)^{-1} f'(\theta_0)$
and define $\theta_t := \theta_0+ \frac{t}{\sqrt{n}}h, t\in [-c,c].$ We will choose $c>0$ such that 
\begin{align}
\label{cond_c_delta}
\frac{c}{\sqrt{n}}\|I(\theta_0)^{-1}\|\|f'(\theta_0)\|=\delta,
\end{align}
which implies $\theta_t \in B(\theta_0,\delta), t\in [-c,c].$
Consider the following one-dimensional model: $X_1,\dots, X_n$ i.i.d. $\sim P_{\theta_t}, t\in [-c,c].$ It is easy to check that its Fisher 
information is 
\begin{align*}
J(t)= n \Bigl\langle \frac{h}{\sqrt{n}}, I(\theta_t) \frac{h}{\sqrt{n}}\Bigr\rangle = \langle h, I(\theta_t) h\rangle.
\end{align*}
We will now consider the problem of estimation of function $\varphi(t)=f(\theta_t), t\in [-c,c]$ based on $X_1,\dots, X_n\sim P_{\theta_t}$
and use van Trees inequality (see \cite{Tsybakov}, Theorem 2.13) to obtain a minimax lower bound for this estimation problem. 
Let $\pi$ be a smooth probability density on $[-1,1]$ with $\pi(1)=\pi(-1)=0$ and such that $J_{\pi}:= \int_{-1}^1 \frac{(\pi'(s))^2}{\pi(s)}ds<\infty.$
Denote $\pi_c(t):= \frac{1}{c}\pi\Bigl(\frac{t}{c}\Bigr), t\in [-c,c].$ Then, it follows from van Trees inequality that 
\begin{align}
\label{van_trees_bound}
\inf_{T_n} \sup_{t\in [-c,c]} {\mathbb E}_t (T_n(X_1,\dots, X_n)-\varphi (t))^2\geq  \frac{(\int_{-c}^c \varphi'(t) \pi_c(t) dt)^2}{\int_{-c}^c J(t) \pi_c(t) dt + 
\frac{J_{\pi}}{c^2}}. 
\end{align}
Note that 
$
\varphi'(t)= \frac{\langle h, f'(\theta_t)\rangle}{\sqrt{n}}
$
and 
\begin{align*}
\varphi'(0) = \frac{\langle h, f'(\theta_0)\rangle}{\sqrt{n}}= \frac{\langle I(\theta_0)^{-1} f'(\theta_0), f'(\theta_0)\rangle}{\sqrt{n}}= \frac{\sigma_f^2(\theta_0)}{\sqrt{n}}.
\end{align*}
Next, we have 
\begin{align*}
&
\sqrt{n}|\varphi'(t)-\varphi'(0)| = |\langle h, f'(\theta_t)-f'(\theta_0)\rangle| \leq \|h\|\|f'(\theta_t)-f'(\theta_0)\|
\leq \|I(\theta_0)^{-1}\| \|f'(\theta_0)\| \omega_{f'}(\theta_0, \delta).
\end{align*}
Therefore, 
\begin{align*}
&
n\Bigl(\int_{-c}^c \varphi'(t) \pi_c(t) dt\Bigr)^2 \geq  
\Bigl(\Bigl(\sigma_f^2(\theta_0) -\|I(\theta_0)^{-1}\| \|f'(\theta_0)\| \omega_{f'}(\theta_0, \delta)\Bigr)\vee 0\Bigr)^2
\\
&
=\sigma_f^4(\theta_0) \Bigl(\Bigl(1 -\frac{\|I(\theta_0)^{-1}\| \|f'(\theta_0)\| \omega_{f'}(\theta_0, \delta)\Bigr)}{\sigma_f^2(\theta_0)}\Bigr)\vee 0\Bigr)^2
\geq \sigma_f^4(\theta_0)\Bigl(1- \frac{2\|I(\theta_0)^{-1}\| \|f'(\theta_0)\| \omega_{f'}(\theta_0, \delta)}{\sigma_f^2(\theta_0)}\Bigr).
\end{align*}
Also, since $\|\theta_t-\theta_0\|\leq \delta,$ we have
\begin{align*}
&
\int_{-c}^c J(t) \pi_c(t) dt = \int_{-c}^c \langle  h,I(\theta_t) h\rangle \pi_c(t) dt = \langle h, I(\theta_0) h\rangle +\int_{-c}^c \langle  h,(I(\theta_t)-I(\theta_0)) h\rangle \pi_c(t) dt
\\
&
\leq \langle I(\theta_0)^{-1}f'(\theta_0), f'(\theta_0)\rangle + \omega_{I}(\theta_0, \delta)\|h\|^2\leq \sigma_f^2(\theta_0) + \|I(\theta_0)^{-1}\|^2\|f'(\theta_0)\|^2
\omega_{I}(\theta_0, \delta)
\\
&
= \sigma_f^2 (\theta_0)\Bigl(1 + \frac{\|I(\theta_0)^{-1}\|^2\|f'(\theta_0)\|^2}{\sigma_f^2(\theta_0)} \omega_{I}(\theta_0, \delta)\Bigr).
\end{align*}
It then follows from \eqref{van_trees_bound} that 
\begin{align}
\label{van_trees_3}
\inf_{T_n} \sup_{t\in [-c,c]} \frac{n{\mathbb E}_t (T_n(X_1,\dots, X_n)-\varphi (t))^2}{\sigma_f^2(\theta_0)}
\geq  
\frac{1- \frac{2\|I(\theta_0)^{-1}\| \|f'(\theta_0)\| \omega_{f'}(\theta_0, \delta)}{\sigma_f^2(\theta_0)}}
{1 + \frac{\|I(\theta_0)^{-1}\|^2\|f'(\theta_0)\|^2}{\sigma_f^2(\theta_0)} \omega_{I}(\theta_0, \delta)+ \frac{J_{\pi}}{c^2 \sigma_f^2(\theta_0)}}.
\end{align}

We will use the following simple lemma.

\begin{lemma}
For all $u\in E^{\ast},$ 
\begin{align*}
\langle I(\theta_0)^{-1} u,u\rangle \geq \|I(\theta_0)\|^{-1} \|u\|^2.
\end{align*}
\end{lemma}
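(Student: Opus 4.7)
The plan is to deduce the lemma from a Cauchy--Schwarz inequality for the symmetric positive semi-definite bilinear form
\[
(v_1,v_2)\mapsto \langle I(\theta_0) v_1, v_2\rangle,\qquad v_1,v_2\in E,
\]
which is well defined because $I(\theta_0): E\mapsto E^{\ast}$ is symmetric and positive (as a Fisher information operator). The strategy is to introduce the vector $w := I(\theta_0)^{-1} u\in E$ as a specific test element in the Cauchy--Schwarz inequality and then take a supremum over the other argument to recover $\|u\|$.

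First, I would apply Cauchy--Schwarz to the bilinear form above with arguments $w = I(\theta_0)^{-1}u$ and an arbitrary $v\in E$, giving
\[
\langle I(\theta_0) w, v\rangle^2 \leq \langle I(\theta_0) w, w\rangle\,\langle I(\theta_0) v, v\rangle.
\]
Because $I(\theta_0) w = I(\theta_0) I(\theta_0)^{-1} u = u$, the left-hand side is $\langle u, v\rangle^2$, while the first factor on the right is $\langle u, I(\theta_0)^{-1} u\rangle = \langle I(\theta_0)^{-1} u, u\rangle$ (the form is symmetric, and the duality bracket is bilinear between $E$ and $E^{\ast}$).

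Next, I would use the operator-norm bound $\langle I(\theta_0) v, v\rangle \leq \|I(\theta_0)\|\|v\|^2$, valid because $I(\theta_0): E\mapsto E^{\ast}$ has operator norm $\|I(\theta_0)\|$. Combining the two inequalities yields
\[
\langle u, v\rangle^2 \leq \|I(\theta_0)\|\,\|v\|^2\,\langle I(\theta_0)^{-1} u, u\rangle,\qquad v\in E.
\]
Taking the supremum over $v\in E$ with $\|v\|\leq 1$ gives $\|u\|^2$ on the left by the definition of the dual norm, so
\[
\|u\|^2 \leq \|I(\theta_0)\|\,\langle I(\theta_0)^{-1} u, u\rangle,
\]
which is exactly the claim after dividing by $\|I(\theta_0)\|$.

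There is no real obstacle here; the only subtlety to check carefully is the symmetry of the bilinear form $\langle I(\theta_0) v_1, v_2\rangle$ in $v_1, v_2\in E$ (needed to justify Cauchy--Schwarz), which follows from the stated symmetry of $I(\theta_0)$, and the identification of $\langle u, I(\theta_0)^{-1} u\rangle$ with $\langle I(\theta_0)^{-1} u, u\rangle$ via the canonical duality pairing used throughout the paper.
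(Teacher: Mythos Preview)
Your proof is correct and takes a genuinely different route from the paper's. The paper factorizes $I(\theta_0)=A^{\ast}A$ through an auxiliary Euclidean space ${\mathbb H}$ (using that $I(\theta_0)$ is a covariance operator), then writes $\|u\|^2=\|A^{\ast}(A^{\ast})^{-1}u\|^2\leq \|A^{\ast}\|^2\|(A^{\ast})^{-1}u\|^2=\|I(\theta_0)\|\langle I(\theta_0)^{-1}u,u\rangle$. Your argument bypasses the square-root factorization entirely, using Cauchy--Schwarz for the positive form $(v_1,v_2)\mapsto \langle I(\theta_0)v_1,v_2\rangle$ directly and then dualizing via $\sup_{\|v\|\leq 1}\langle u,v\rangle=\|u\|$. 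Your approach is more elementary: it needs only symmetry and positivity of $I(\theta_0)$ and the definition of the operator norm, without constructing ${\mathbb H}$ or $A$. The paper's factorization makes the inequality look like a standard operator-norm bound for $A^{\ast}$, which is perhaps more transparent once the factorization is in hand, but at the cost of that extra setup.
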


\begin{proof}
Since $I(\theta_0)$ is a covariance operator, there exists a Euclidean space ${\mathbb H},$ ${\rm dim}({\mathbb H})={\rm dim}(E),$
and a linear operator $A:E\mapsto {\mathbb H}$ such that $I(\theta_0)=A^{\ast} A.$ Since $I(\theta_0)$ is invertible, it is easy to check 
that operator $A$ is also invertible. This implies that its adjoint operator $A^{\ast}:{\mathbb H}\mapsto E^{\ast}$ is invertible and $(A^{\ast})^{-1}=(A^{-1})^{\ast}.$ 
Moreover, we have $I(\theta_0)^{-1}=A^{-1} (A^{-1})^{\ast}.$ It is also easy to see that $\|I(\theta_0)\|= \|A\|^2=\|A^{\ast}\|^2.$
Therefore, for all $u\in E^{\ast},$
\begin{align*}
\|u\|^2 &= \|A^{\ast} (A^{\ast})^{-1} u\|^2 \leq \|A^{\ast}\|^2 \|(A^{\ast})^{-1} u\|^2 = \|I(\theta_0)\| \langle (A^{\ast})^{-1}u, (A^{\ast})^{-1}u\rangle
\\
&
= \|I(\theta_0)\| \langle A^{-1} (A^{-1})^{\ast}u,u\rangle = \|I(\theta_0)\| \langle I(\theta_0)^{-1} u,u\rangle,
\end{align*}
implying the claim.
\qed
\end{proof}

It follows from the lemma that $\sigma_f^2(\theta_0)\geq \|I(\theta_0)\|^{-1}\|f'(\theta_0)\|^2.$ We plug in this lower bound in the right hand side of inequality \eqref{van_trees_3} to get 
\begin{align}
\label{van_trees_4}
\inf_{T_n} \sup_{t\in [-c,c]} \frac{n{\mathbb E}_t (T_n(X_1,\dots, X_n)-\varphi (t))^2}{\sigma_f^2(\theta_0)}
\geq  
\frac{1- 2\|I(\theta_0)\| \|I(\theta_0)^{-1}\| \frac{\omega_{f'}(\theta_0, \delta)}{\|f'(\theta_0)\|}}
{1 + \|I(\theta_0)\|\|I(\theta_0)^{-1}\|^2 \omega_{I}(\theta_0, \delta)+ \frac{J_{\pi}\|I(\theta_0)\|}{c^2 \|f'(\theta_0)\|^2}}.
\end{align}
Recall also that, by \eqref{cond_c_delta}, 
\begin{align*}
c^2 \|f'(\theta_0)\|^2= \frac{n\delta^2}{\|I(\theta_0)^{-1}\|^2}.
\end{align*}
Thus, we can rewrite \eqref{van_trees_4} as follows:
\begin{align}
\label{van_trees_5}
\inf_{T_n} \sup_{t\in [-c,c]} \frac{n{\mathbb E}_t (T_n(X_1,\dots, X_n)-\varphi (t))^2}{\sigma_f^2(\theta_0)}
\geq  
\frac{1- 2\|I(\theta_0)\| \|I(\theta_0)^{-1}\| \frac{\omega_{f'}(\theta_0, \delta)}{\|f'(\theta_0)\|}}
{1 + \|I(\theta_0)\|\|I(\theta_0)^{-1}\|^2 \omega_{I}(\theta_0, \delta)+ \frac{J_{\pi}\|I(\theta_0)\|\|I(\theta_0)^{-1}\|^2}{n\delta^2}},
\end{align}
which easily implies that
\begin{align}
\label{van_trees_6}
&
\nonumber
\inf_{T_n} \sup_{t\in [-c,c]} \frac{n{\mathbb E}_t (T_n(X_1,\dots, X_n)-\varphi (t))^2}{\sigma_f^2(\theta_0)}
\\
&
\geq  
1- D' \|I(\theta_0)\|\|I(\theta_0)^{-1}\| \Bigl( \frac{\omega_{f'}(\theta_0, \delta)}{\|f'(\theta_0)\|}+ \|I(\theta_0)^{-1}\| \omega_{I}(\theta_0, \delta)
+\frac{\|I(\theta_0)^{-1}\|}{n\delta^2}\Bigr)
\end{align}
with a constant $D'\geq 2.$ The last bound is trivial when the numerator of the right hand side of \eqref{van_trees_5} is negative. 
Otherwise, the bound is a consequence of the following simple inequality: for all $A\in [0,1]$ and $B\geq 0,$ $\frac{1-A}{1+B} \geq 1-A-2B.$
Since $\|\theta_t-\theta_0\|\leq \delta$ for all $t\in [-c,c],$ it immediately follows from \eqref{van_trees_6} that 
\begin{align}
\label{van_trees_7}
&
\nonumber
\inf_{T_n} \sup_{\|\theta-\theta_0\|\leq \delta} \frac{n{\mathbb E}_{\theta}(T_n(X_1,\dots, X_n)-f(\theta))^2}{\sigma_f^2(\theta_0)}
\\
&
\geq  
1- D' \|I(\theta_0)\|\|I(\theta_0)^{-1}\| \Bigl( \frac{\omega_{f'}(\theta_0, \delta)}{\|f'(\theta_0)\|}+ \|I(\theta_0)^{-1}\| \omega_{I}(\theta_0, \delta)
+\frac{\|I(\theta_0)^{-1}\|}{n\delta^2}\Bigr).
\end{align}

It remains to replace in bound \eqref{van_trees_7} $\sigma_f^2(\theta_0)$ with $\sigma_f^2(\theta).$ To this end, we 
will prove the following lemma.

\begin{lemma}
\label{ratio_bd}
Suppose that $\|I(\theta_0)^{-1}\|\omega_I(\theta_0,\delta)\leq 1/2$ and $\frac{\omega_{f'}(\theta_0,\delta)}{\|f'(\theta_0)\|}\leq 1.$
Then 
\begin{align*}
\sup_{\|\theta-\theta_0\|\leq \delta}
\Bigl|\frac{\sigma_f^2(\theta)}{\sigma_f^2(\theta_0)}-1\Bigr|
\leq 
2\|I(\theta_0)\|\|I(\theta_0)^{-1}\|^2 \omega_I(\theta_0,\delta) + 6 \|I(\theta_0)\|\|I(\theta_0)^{-1}\| \frac{\omega_{f'}(\theta_0,\delta)}{\|f'(\theta_0)\|}.
\end{align*}
\end{lemma}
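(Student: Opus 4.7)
The plan is to bound $\sigma_f^2(\theta)-\sigma_f^2(\theta_0)$ by a three-term decomposition that isolates the contributions of the perturbation of $I$ and of $f'$, respectively, and then to divide by the lower bound $\sigma_f^2(\theta_0)\geq \|I(\theta_0)\|^{-1}\|f'(\theta_0)\|^2$ already proved in the preceding inline lemma.

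The first step is a standard operator perturbation bound. Using the identity
\begin{align*}
I(\theta)^{-1}-I(\theta_0)^{-1} = I(\theta_0)^{-1}\bigl(I(\theta_0)-I(\theta)\bigr) I(\theta)^{-1},
\end{align*}
together with the Neumann-series bound
\begin{align*}
\|I(\theta)^{-1}\| \leq \frac{\|I(\theta_0)^{-1}\|}{1-\|I(\theta_0)^{-1}\|\,\omega_I(\theta_0,\delta)} \leq 2\|I(\theta_0)^{-1}\|,
\end{align*}
which is valid since $\|I(\theta_0)^{-1}\|\omega_I(\theta_0,\delta)\leq 1/2,$ one obtains
\begin{align*}
\|I(\theta)^{-1}-I(\theta_0)^{-1}\| \leq 2\|I(\theta_0)^{-1}\|^2\,\omega_I(\theta_0,\delta)
\quad\text{for all } \|\theta-\theta_0\|\leq \delta.
\end{align*}

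Next, I would split the difference of quadratic forms as
\begin{align*}
\sigma_f^2(\theta)-\sigma_f^2(\theta_0)
&= \langle (I(\theta)^{-1}-I(\theta_0)^{-1}) f'(\theta_0),\, f'(\theta_0)\rangle \\
&\quad + \langle I(\theta)^{-1}(f'(\theta)-f'(\theta_0)),\, f'(\theta)\rangle
+ \langle I(\theta)^{-1} f'(\theta_0),\, f'(\theta)-f'(\theta_0)\rangle,
\end{align*}
which is a direct algebraic identity. Crucially, the first summand is purely quadratic in $f'(\theta_0)$, so the operator-perturbation bound of the previous paragraph contributes exactly $2\|I(\theta_0)^{-1}\|^2\,\omega_I(\theta_0,\delta)\,\|f'(\theta_0)\|^2$, with no loss of constants. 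For the remaining two summands I would use $\|f'(\theta)-f'(\theta_0)\|\leq \omega_{f'}(\theta_0,\delta)$, the estimate $\|f'(\theta)\|\leq \|f'(\theta_0)\|+\omega_{f'}(\theta_0,\delta)\leq 2\|f'(\theta_0)\|$ (from the hypothesis $\omega_{f'}(\theta_0,\delta)\leq \|f'(\theta_0)\|$), and $\|I(\theta)^{-1}\|\leq 2\|I(\theta_0)^{-1}\|$. This gives a combined bound of $(4+2)\|I(\theta_0)^{-1}\|\,\omega_{f'}(\theta_0,\delta)\,\|f'(\theta_0)\| = 6\|I(\theta_0)^{-1}\|\,\omega_{f'}(\theta_0,\delta)\,\|f'(\theta_0)\|$.

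Finally, I would divide both sides by $\sigma_f^2(\theta_0)\geq \|I(\theta_0)\|^{-1}\|f'(\theta_0)\|^2$. The quadratic-in-$f'(\theta_0)$ structure of each term ensures that the factor $\|f'(\theta_0)\|^2$ either cancels outright or yields the ratio $\omega_{f'}(\theta_0,\delta)/\|f'(\theta_0)\|$, and the factor $\|I(\theta_0)\|^{-1}$ in the lower bound becomes the multiplicative $\|I(\theta_0)\|$ appearing in the claim. Summing the contributions gives the asserted inequality with constants $2$ and $6$. The main subtlety to track carefully is choosing the ``symmetric'' decomposition above (inserting $I(\theta)^{-1}$ rather than $I(\theta_0)^{-1}$ in the two $f'$-difference terms), since the alternative decomposition would bound the first term by $\|I(\theta)^{-1}-I(\theta_0)^{-1}\|\,\|f'(\theta)\|^2$ and inflate the constant from $2$ to $8$; all other steps are routine.
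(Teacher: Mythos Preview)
Your proof is correct and follows the same overall strategy as the paper: bound $\|I(\theta)^{-1}-I(\theta_0)^{-1}\|$ via a Neumann-series argument, split $\sigma_f^2(\theta)-\sigma_f^2(\theta_0)$ algebraically, and divide by the lower bound $\sigma_f^2(\theta_0)\geq \|I(\theta_0)\|^{-1}\|f'(\theta_0)\|^2$.

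The one difference is in the choice of decomposition. The paper splits as
\[
\sigma_f^2(\theta)-\sigma_f^2(\theta_0)
= \langle (I(\theta)^{-1}-I(\theta_0)^{-1})f'(\theta),f'(\theta)\rangle
+ \bigl(\langle I(\theta_0)^{-1}f'(\theta),f'(\theta)\rangle-\langle I(\theta_0)^{-1}f'(\theta_0),f'(\theta_0)\rangle\bigr),
\]
so the operator-difference term carries $\|f'(\theta)\|^2$, which is then expanded as $\|f'(\theta_0)\|^2(1+2\frac{\omega_{f'}}{\|f'(\theta_0)\|}+\frac{\omega_{f'}^2}{\|f'(\theta_0)\|^2})$; the resulting cross terms are absorbed into the $\omega_{f'}$ contribution using the hypothesis $\|I(\theta_0)^{-1}\|\omega_I\leq 1/2$. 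Your decomposition instead places $f'(\theta_0)$ in the operator-difference term and $I(\theta)^{-1}$ in the two $f'$-difference terms. This avoids the cross-term bookkeeping at the price of a factor $\|I(\theta)^{-1}\|\leq 2\|I(\theta_0)^{-1}\|$ in the $f'$-difference terms, and the arithmetic lands on the same constants $2$ and $6$. Your route is slightly cleaner; both are equally valid.
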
 

\begin{proof}
Let $B:= I(\theta)-I(\theta_0).$ Then 
$
I(\theta)^{-1} - I(\theta_0)^{-1} = I(\theta_0)^{-1}((I+ B I(\theta_0)^{-1})^{-1}-I),
$
where $I=I_{E^{\ast}}$ is the identity operator in $E^{\ast}.$ Under the assumptions that $\|\theta-\theta_0\|\leq \delta$ and $\|I(\theta_0)^{-1}\|\omega_I(\theta_0,\delta)\leq 1/2,$
we have $\|B I(\theta_0)^{-1}\|\leq 1/2$ and $(I+ B I(\theta_0)^{-1})^{-1}= I-B I(\theta_0)^{-1}+(B I(\theta_0)^{-1})^2-\dots .$ This easily implies 
that, for $\|\theta-\theta_0\|\leq \delta,$ 
\begin{align*}
\|I(\theta)^{-1} - I(\theta_0)^{-1}\| \leq  \|I(\theta_0)^{-1}\| \frac{\|B I(\theta_0)^{-1}\|}{1-\|B I(\theta_0)^{-1}\|}\leq 2\|I(\theta_0)^{-1}\|^2\|I(\theta)-I(\theta_0)\|
\leq 2\|I(\theta_0)^{-1}\|^2 \omega_{I}(\theta_0,\delta).
\end{align*}
We also have 
\begin{align*}
\sigma_f^2(\theta) - \sigma_f^2 (\theta_0) = \langle (I(\theta)^{-1} - I(\theta_0)^{-1})f'(\theta), f'(\theta)\rangle
+ \langle I(\theta_0)^{-1} f'(\theta), f'(\theta)\rangle -  \langle I(\theta_0)^{-1} f'(\theta_0), f'(\theta_0)\rangle.
\end{align*}
For $\|\theta-\theta_0\|\leq \delta,$ the first term in the right hand side of this equality is bounded as follows:
\begin{align*}
&
|\langle (I(\theta)^{-1} - I(\theta_0)^{-1})f'(\theta), f'(\theta)\rangle| \leq 2 \|f'(\theta)\|^2 \|I(\theta_0)^{-1}\|^2 \omega_{I}(\theta_0,\delta)
\\
&
\leq 2\|I(\theta_0)^{-1}\|^2 \omega_{I}(\theta_0,\delta)\Bigl( \|f'(\theta_0)\|^2  + \|f'(\theta)-f'(\theta_0)\|(2\|f'(\theta_0)\|+ \|f'(\theta)-f'(\theta_0)\|)\Bigr)
\\
&
\leq 2\|I(\theta_0)^{-1}\|^2 \omega_{I}(\theta_0,\delta)\Bigl( \|f'(\theta_0)\|^2  + 2\|f'(\theta_0)\|\omega_{f'}(\theta_0,\delta)+ \omega_{f'}^2(\theta_0,\delta)\Bigr)
\\
&
= 2\|f'(\theta_0)\|^2 \|I(\theta_0)^{-1}\|^2 \omega_{I}(\theta_0,\delta)\Bigl(1  + 2\frac{\omega_{f'}(\theta_0,\delta)}{\|f'(\theta_0)\|}+ 
\frac{\omega_{f'}^2(\theta_0,\delta)}{\|f'(\theta_0)\|^2}\Bigr).
\end{align*}
For the second term, the following bound holds:
\begin{align*}
&
|\langle I(\theta_0)^{-1} f'(\theta), f'(\theta)\rangle -  \langle I(\theta_0)^{-1} f'(\theta_0), f'(\theta_0)\rangle|
\\
&
\leq \|I(\theta_0)^{-1}\|(2 \|f'(\theta_0)\| \|f'(\theta)-f'(\theta_0)\|+ \|f'(\theta)-f'(\theta_0)\|)
\\
&
\leq  \|I(\theta_0)^{-1}\|(2 \|f'(\theta_0)\|\omega_{f'}(\theta_0,\delta)+ \omega_{f'}^2(\theta_0,\delta))
\\
&
= \|f'(\theta_0)\|^2 \|I(\theta_0)^{-1}\| \Bigl(2 \frac{\omega_{f'}(\theta_0,\delta)}{\|f'(\theta_0)\|}+ \frac{\omega_{f'}^2(\theta_0,\delta)}{\|f'(\theta_0)\|^2}\Bigr).
\end{align*}
Using these two bounds and the inequality $\sigma_f^2(\theta_0)\geq \|I(\theta_0)\|^{-1}\|f'(\theta_0)\|^2,$ we get, for $\|\theta-\theta_0\|\leq \delta,$ that 
\begin{align*}
&
\Bigl|\frac{\sigma_f^2(\theta)}{\sigma_f^2(\theta_0)}-1\Bigr| 
\\
&
\leq 
2\frac{\|f'(\theta_0)\|^2}{\sigma_f^2(\theta_0)} \|I(\theta_0)^{-1}\|^2 \omega_{I}(\theta_0,\delta)\Bigl(1  + 2\frac{\omega_{f'}(\theta_0,\delta)}{\|f'(\theta_0)\|}+ 
\frac{\omega_{f'}^2(\theta_0,\delta)}{\|f'(\theta_0)\|^2}\Bigr)
\\
&
+ \frac{\|f'(\theta_0)\|^2}{\sigma_f^2(\theta_0)} \|I(\theta_0)^{-1}\| \Bigl(2 \frac{\omega_{f'}(\theta_0,\delta)}{\|f'(\theta_0)\|}+ \frac{\omega_{f'}^2(\theta_0,\delta)}{\|f'(\theta_0)\|^2}\Bigr)
\\
&
\leq 
2\|I(\theta_0)\|\|I(\theta_0)^{-1}\|^2 \omega_{I}(\theta_0,\delta)\Bigl(1  + 2\frac{\omega_{f'}(\theta_0,\delta)}{\|f'(\theta_0)\|}+ 
\frac{\omega_{f'}^2(\theta_0,\delta)}{\|f'(\theta_0)\|^2}\Bigr)
\\
&
+ \|I(\theta_0)\| \|I(\theta_0)^{-1}\| \Bigl(2 \frac{\omega_{f'}(\theta_0,\delta)}{\|f'(\theta_0)\|}+ \frac{\omega_{f'}^2(\theta_0,\delta)}{\|f'(\theta_0)\|^2}\Bigr).
\end{align*}
Since $\|I(\theta_0)^{-1}\|\omega_I(\theta_0,\delta)\leq 1/2$ and $\frac{\omega_{f'}(\theta_0,\delta)}{\|f'(\theta_0)\|}\leq 1,$
we can simplify the bound to get
\begin{align*}
&
\Bigl|\frac{\sigma_f^2(\theta)}{\sigma_f^2(\theta_0)}-1\Bigr| 
\leq 
2\|I(\theta_0)\|\|I(\theta_0)^{-1}\|^2 \omega_I(\theta_0,\delta)
+6\|I(\theta_0)\| \|I(\theta_0)^{-1}\| \frac{\omega_{f'}(\theta_0,\delta)}{\|f'(\theta_0)\|}.
\end{align*}

\qed
\end{proof}

To complete the proof, note that, by \eqref{van_trees_7}  for all estimators $T_n(X_1,\dots, X_n),$ 
\begin{align*}
&
\sup_{\|\theta-\theta_0\|\leq \delta}
\frac{\sigma_f^2(\theta)}{\sigma_f^2(\theta_0)}
\sup_{\|\theta-\theta_0\|\leq \delta} \frac{n{\mathbb E}_{\theta}(T_n(X_1,\dots, X_n)-f(\theta))^2}{\sigma_f^2(\theta)}
\geq \sup_{\|\theta-\theta_0\|\leq \delta} \frac{n{\mathbb E}_{\theta}(T_n(X_1,\dots, X_n)-f(\theta))^2}{\sigma_f^2(\theta_0)}
\\
&
\geq 
1- D' \|I(\theta_0)\|\|I(\theta_0)^{-1}\| \Bigl( \frac{\omega_{f'}(\theta_0, \delta)}{\|f'(\theta_0)\|}+ \|I(\theta_0)^{-1}\| \omega_{I}(\theta_0, \delta)
+\frac{\|I(\theta_0)^{-1}\|}{n\delta^2}\Bigr).
\end{align*}
Since, by Lemma \ref{ratio_bd}, 
\begin{align*}
\sup_{\|\theta-\theta_0\|\leq \delta}
\frac{\sigma_f^2(\theta)}{\sigma_f^2(\theta_0)}\leq 1+2\|I(\theta_0)\|\|I(\theta_0)^{-1}\|^2 \omega_I(\theta_0,\delta) + 6 \|I(\theta_0)\|\|I(\theta_0)^{-1}\| \frac{\omega_{f'}(\theta_0,\delta)}{\|f'(\theta_0)\|},
\end{align*}
it follows from the last two bounds that 
\begin{align*}
&
\inf_{T_n}\sup_{\|\theta-\theta_0\|\leq \delta} \frac{n{\mathbb E}_{\theta}(T_n(X_1,\dots, X_n)-f(\theta))^2}{\sigma_f^2(\theta)}
\\
&
\geq 
\frac{1- D' \|I(\theta_0)\|\|I(\theta_0)^{-1}\| \Bigl( \frac{\omega_{f'}(\theta_0, \delta)}{\|f'(\theta_0)\|}+ \|I(\theta_0)^{-1}\| \omega_{I}(\theta_0, \delta)
+\frac{\|I(\theta_0)^{-1}\|}{n\delta^2}\Bigr)}
{1+2\|I(\theta_0)\|\|I(\theta_0)^{-1}\|\Bigl(6\frac{\omega_{f'}(\theta_0,\delta)}{\|f'(\theta_0)\|}+\|I(\theta_0)^{-1}\|\omega_I(\theta_0,\delta)\Bigr)},
\end{align*}
which easily implies the claim of the theorem.

\qed
\end{proof}

Next we prove the minimax lower bound of Proposition \ref{max_min_max_many}.

\begin{proof}
It is enough to prove a lower bound of the order $\frac{\rho^2}{n}$ and also a lower bound of the order $(\rho^2\frac{d}{n})^s$ in the case 
when $\rho\sqrt{\frac{d}{n}}\leq 1$ and of the order $\asymp 1$ when $\rho\sqrt{\frac{d}{n}}> 1.$  

The proof of the first bound is rather simple. It is based on the two hypotheses method (see \cite{Tsybakov}, Section 2.3) and it is enough to consider functionals 
of the form $f(\theta):= \langle \theta^{(1)}-\theta_0^{(1)}, u^{(1)}\rangle \phi(\theta^{(1)}-\theta_0^{(1)})$ (for $s\geq 1$), or $f(\theta):= |\langle \theta^{(1)}-\theta_0^{(1)}, u^{(1)}\rangle|^s \phi(\theta^{(1)}-\theta_0^{(1)})$ (for $s<1$). Here $\phi $ is a $C^{\infty}$ function on ${\mathbb R}^{l_1}$ supported in the ball of radius $2$
with center $0$ and equal to $1$ in the ball of radius $1$ with the same center; 
$u^{(1)}\in {\mathbb R}^{l_1}$ is a unit vector. The two hypotheses in question are $\theta= \theta_0$ and 
$\theta= (\theta_0^{(1)}+ \rho n^{-1/2} u^{(1)},\theta_0^{(2)}, \dots, \theta_0^{(d)}),$ so that $f(\theta)=\rho n^{-1/2}$ and $f(\theta_0)=0.$ 
We skip further details of this part of the proof. 

The proof of the second bound is more involved. 
The first step is to construct a set of ``well separated" vectors $\theta_{\omega}\in \Theta$ parametrized by points $\omega$ of binary 
cube $\{-1,1\}^d.$ Let $v^{(j)}\in {\mathbb R}^{l_j}, j=1,\dots, d$ be fixed unit vectors. For $\omega \in \{-1,1\}^d,$
define
\begin{align*}
\theta_{\omega}:=\Bigl(\theta_0^{(1)}+ \rho n^{-1/2} \omega^{(1)} v^{(1)}, \dots, \theta_0^{(d)}+ \rho n^{-1/2} \omega^{(d)}v^{(d)}\Bigr). 
\end{align*}
By the assumptions on $\theta_0$ and $\rho,$ 
$\theta_{\omega} \in \Theta_n(\theta_0,\rho).$
In addition,
\begin{align*}
\|\theta_{\omega}-\theta_{0}\| = \rho \sqrt{\frac{d}{n}}, \omega \in \{-1,1\}^d
\end{align*}
and also that 
\begin{align}
\label{Euclid-Hamming}
\|\theta_{\omega}-\theta_{\omega'}\|^2 = \frac{4\rho^2}{n} h(\omega, \omega'), \omega, \omega'\in \{-1,1\}^d,
\end{align}
where $h(\omega, \omega'):= \sum_{j=1}^d I(\omega^{(j)}\neq (\omega')^{(j)})$ is the Hamming distance.
Using Varshamov-Gilbert lemma, we can choose a subset $B\subset \{-1,1\}^d$ of cardinality at least $2^{d/8}$
such that 
\begin{align*}
h(\omega, \omega')\geq d/8,\ \omega, \omega'\in B, \omega\neq \omega'.
\end{align*}
This implies that 
\begin{align*}
\frac{\rho}{\sqrt{2}}\sqrt{\frac{d}{n}}\leq \|\theta_{\omega}-\theta_{\omega'}\| \leq 2\rho\sqrt{\frac{d}{n}},\ \omega, \omega'\in B, \omega\neq \omega'. 
\end{align*}

The second step is to obtain a minimax lower bound on mean squared error of estimation of $\theta_{\omega}, \omega \in B.$
To this end, note that 
\begin{align*}
K(P_{\theta_{\omega}}\|P_{\theta_0})\leq C^2\|\theta_{\omega}-\theta_0\|^2, \omega \in \{-1,1\}^d.  
\end{align*}
Therefore,
\begin{align*}
K(P_{\theta_{\omega}}^{\otimes n}\|P_{\theta_0}^{\otimes n})=n K(P_{\theta_{\omega}}\|P_{\theta_0})\leq C^2\rho^2 d \leq \alpha d/8\leq \alpha \log{\rm card}(B)
\end{align*}
for a small enough numerical constant $\alpha$ (provided that $\rho\leq \frac{\alpha}{2\sqrt{2}C}$). It follows from Theorem 2.3 in \cite{Tsybakov} that 
\begin{align}
\label{min_max_omega}
\inf_{\hat \theta} \max_{\omega\in B}{\mathbb E}_{\theta_{\omega}}\|\hat \theta-\theta_{\omega}\|^2 \gtrsim \rho^2 \frac{d}{n}.
\end{align}
where the infimum is taken over all estimators $\hat \theta\in \Theta_B:= \{\theta_{\omega}:\omega\in B\},$ based on i.i.d. observations $P_{\theta_{\omega}}, \omega\in B.$

In the third step, we construct ``least favorable" smooth functionals $f_k, k=1,\dots, d$ whose values 
on parameters $\theta_{\omega}$ are hard to estimate and for which the error rate $(\rho^2 \frac{d}{n})^s$ is attained.
The construction is based on an approach suggested by Nemirovski (see \cite{Nemirovski_1990, Nemirovski}). Let $\varphi:{\mathbb R}\mapsto {\mathbb R}$ be a $C^{\infty}$ function 
supported in the interval $[-1,1]$ with $\varphi(0)>0$ and let $\phi(x):= \varphi (\|x\|^2), x\in {\mathbb R}^l,$ $l=l_1+\dots +l_d.$  
Let $\eps:= \rho \sqrt{\frac{d}{n}}\wedge 1.$ 
Define 
\begin{align*}
f_k(\theta):= \sum_{\omega\in B} \omega_k \eps^s \phi\Bigl(\frac{\theta-\theta_{\omega}}{c\eps}\Bigr), \theta\in {\mathbb R}^l, k=1,\dots, d.
\end{align*}
Recall that $\|\theta_{\omega}-\theta_{\omega'}\|\geq \frac{\rho}{\sqrt{2}}\sqrt{\frac{d}{n}} \geq \frac{\eps}{\sqrt{2}}, \omega, \omega'\in B, \omega\neq \omega'.$
If constant $c>0$ is small enough (say, $c<0.1$), then each of the functionals $f_k$ is a sum of ``bumps" $\pm \eps^s \phi\Bigl(\frac{\theta-\theta_{\omega}}{c\eps}\Bigr)$
supported in disjoint balls with centers $\theta_{\omega}, \omega\in B$ and radius $c \eps,$ and, moreover, the distance between any two balls 
is at least $\eps/2.$ This easily implies that $\|f_k\|_{C^s}\lesssim 1.$ Moreover, by choosing $\varphi(0)$ small enough, one can guarantee that 
$\|f_k\|_{C^s}\leq 1.$ Most importantly, 
\begin{align*}
f_k(\theta_{\omega})= \omega_k \eps^s \varphi(0), \omega\in B, k=1,\dots, d.
\end{align*}
This means that the values 
of functionals $f_k, k=1,\dots, d$ on a vector $\theta_{\omega}, \omega\in B$ could be used to recover the binary coding $\omega$ of this vector. 
  
In the last, fourth step of the proof, we show how to use estimators $\hat T_j$ of $f_j(\theta_{\omega}), \omega \in B$ based on i.i.d. observations $X_1,\dots, X_n\sim P_{\theta_{\omega}}$ to construct an estimator $\hat \theta$ of $\theta_{\omega}.$ Comparing the upper bound on the mean squared error 
of this estimator with the minimax lower bound \eqref{min_max_omega} will allow us to complete the proof. Let $\delta>0$ and let $\hat T_j$ be estimators 
of $f_j(\theta_{\omega})$ such that, for all $j=1,\dots, d,$ 
\begin{align}
\label{upper_hat_T_j}
 \max_{\omega \in B} {\mathbb E}_{\theta_{\omega}} (\hat T_j(X_1,\dots, X_n)-f_j(\theta_{\omega}))^2 \leq \delta^2. 
\end{align}  
Let $\tilde \omega_j=\tilde \omega_j (X_1,\dots, X_n):= {\rm sign}(\hat T_j(X_1,\dots, X_n))$ and let $\tilde T_j(X_1,\dots, X_n)= \tilde \omega_j(X_1,\dots, X_n) \eps^s \varphi(0).$
Since $f_j(\theta_{\omega})= \omega_j \eps^s \varphi(0),$ it is easy to check that 
\begin{align*}
\eps^s \varphi (0) |\tilde \omega_j(X_1,\dots, X_n)-\omega_j| = |\tilde T_j(X_1,\dots, X_n)-f_j(\theta_{\omega})| \leq 2 |\hat T_j(X_1,\dots, X_n)- f_j(\theta_{\omega})|.
\end{align*}
This implies that 
\begin{align*}
\eps^{2s} \varphi^2(0) h(\tilde \omega, \omega) \leq \sum_{j=1}^d \Bigl(\hat T_j(X_1,\dots, X_n)- f_j(\theta_{\omega})\Bigr)^2.
\end{align*}
Let now $\hat \omega\in {\rm Argmin}_{\omega\in B} h(\tilde \omega,\omega)$ and define $\hat \theta:= \theta_{\hat \omega}.$
Clearly, we have for all $\omega\in B,$ 
\begin{align*}
h(\hat \omega, \omega)\leq h(\tilde \omega,\omega)+ h(\tilde \omega, \hat \omega)\leq 2h(\tilde \omega,\omega).
\end{align*}
Using \eqref{Euclid-Hamming}, we get
\begin{align*}
\|\hat \theta-\theta_{\omega}\|^2 &=\frac{4\rho^2}{n} h(\hat \omega, \omega) \leq  \frac{8\rho^2}{n} h(\tilde \omega, \omega)\leq \frac{8\rho^2}{n} \frac{1}{\eps^{2s} \varphi^2(0)}
\sum_{j=1}^d \Bigl(\hat T_j(X_1,\dots, X_n)- f_j(\theta_{\omega})\Bigr)^2
\\
&
=\frac{8}{\eps^{2s-2} \varphi^2(0)}
\frac{1}{d}\sum_{j=1}^d \Bigl(\hat T_j(X_1,\dots, X_n)- f_j(\theta_{\omega})\Bigr)^2.
\end{align*}
Recalling bound \eqref{upper_hat_T_j}, we can conclude that 
\begin{align*}
\max_{\omega\in B}{\mathbb E}_{\theta_{\omega}}\|\hat \theta-\theta_{\omega}\|^2 \leq 
\frac{4 \delta^2}{\eps^{2s-2} \varphi^2(0)}.
\end{align*}
Comparing the last bound with the minimax bound \eqref{min_max_omega}, we get that 
$\eps^2 \lesssim \frac{4 \delta^2}{\eps^{2s-2} \varphi^2(0)},$
or $\delta^2\gtrsim \eps^{2s}.$ If $\rho\sqrt{\frac{d}{n}}\leq 1,$ this yields the bound
$
\delta^2 \gtrsim \Bigl(\rho^2\frac{d}{n}\Bigr)^s,
$
otherwise we get $\delta^2 \gtrsim 1.$
This implies that 
\begin{align*}
\sup_{\|f\|_{C^s}\leq 1} \inf_{T_n} \sup_{\theta\in \Theta_n(\theta_0,\rho)} {\mathbb E}_{\theta} (T_n(X_1,\dots, X_n)-f(\theta))^2 \gtrsim  \rho^{2s} \Bigl(\frac{d}{n}\Bigr)^s
\end{align*}
when $\rho\sqrt{\frac{d}{n}}\leq 1,$ and the lower bound is of the order $1$ otherwise.  

Finally, note that, for 
$P_{\theta}=P_{\theta^{(1)}}\times \dots \times P_{\theta^{(d)}},$ the conditions 
$K(P_{\theta^{(j)}}\| P_{\theta_0^{(j)}}) \leq C^2\|\theta^{(j)}-\theta_0^{(j)}\|^2, j=1,\dots, d$ imply that 
\begin{align*}
K(P_{\theta}\|P_{\theta_0})= \sum_{j=1}^d K(P_{\theta^{(j)}}\|P_{\theta_0^{(j)}}) \leq C^2 \sum_{j=1}^d \|\theta^{(j)}-\theta_0^{(j)}\|^2=C^2 \|\theta-\theta_0\|^2,
\end{align*}
and the last claim follows. 

\qed
\end{proof}

Finally, we provide the proof of Proposition \ref{prop_low_exp} of Section \ref{exponential_section}. 

\begin{proof} First note that it will be enough to prove the minimax lower bound with the supremum over the ball $B=B_{\ell_{\infty}}(t_0,\delta)$
instead of set $G$ itself. 
Note that, for all $\theta, \theta'\in {\rm Int}\Theta,$
\begin{align*}
K(P_{\theta}\|P_{\theta'})= \psi(\theta')-\psi(\theta)-\langle\Psi(\theta), \theta'-\theta\rangle
\leq \langle \Psi(\theta')-\Psi(\theta), \theta'-\theta\rangle. 
\end{align*}
Therefore, for the model $\{P_{\Psi^{-1}(t)}: t\in B\},$ we have 
\begin{align*}
K(P_{\Psi^{-1}(t)}\|P_{\Psi^{-1}(t')}) \leq \langle t'-t, \Psi^{-1}(t')-\Psi^{-1}(t)\rangle
\leq \|\Psi^{-1}\|_{{\rm Lip}(B)}\|t'-t\|^2, \ t',t\in B. 
\end{align*}
Observe now that $(\Psi^{-1})'(t)= (\Psi'(\Psi^{-1}(t)))^{-1}= \Sigma_{\Psi^{-1}(t)}^{-1}.$
Hence, 
\begin{align*}
\|\Psi^{-1}\|_{{\rm Lip}(B)}=\sup_{t\in B}\|\Sigma_{\Psi^{-1}(t)}^{-1}\| = \sup_{\theta\in \Psi^{-1}(B)} 
\|\Sigma_{\theta}^{-1}\| \leq  \frac{1}{\sigma_{\rm min}^2(W)},
\end{align*}
and we have 
\begin{align*}
K(P_{\Psi^{-1}(t)}\|P_{\Psi^{-1}(t')}) \leq  \frac{1}{\sigma_{\rm min}^2(W)}\|t'-t\|^2, t',t\in B.
\end{align*}
Thus, we can use the bound of Proposition \ref{sup_min_max_bd} with $C^2= \frac{1}{\sigma_{\rm min}^2(W)}$ and $\rho^2= \gamma^2 \sigma_{\rm min}^2(W)$
to complete the proof. 
\qed
\end{proof}


\begin{thebibliography}{99}






\bibitem{Anastas} A.~Anastasiou and R.~Gaunt. 
\newblock Wasserstein distance error bounds for the multivariate normal approximation of the maximum likelihood 
estimator. 
\newblock {\em Electronic J. of Statistics}, 2021, 15, 5758--5810.  

\bibitem{Bentkus} V.~Bentkus, M.~Bloznelis and F.~ G\"otze. 
\newblock A Berry-Esseen Bound for $M$-estimators. 
\newblock {\em Scandinavian J. of Statistics}, 1997, 24, 4, 485--502.

\bibitem{Bickel_Ritov} P.~Bickel and Y.~Ritov.
\newblock Estimating integrated square density derivatives: sharp best order of convergence 
estimates. 
\newblock {\em Sankhya}, 1988, 50, 381--393. 


\bibitem{Birge} L.~Birg\'e and P.~Massart. 
\newblock Estimation of integral functionals of a density. 
\newblock {\em Annals of Statistics}, 1995, 23, 11-29.

\bibitem{Chencov} N.N.~\v Cencov. {\em Statistical decision rules and optimal inference}. American Mathematical Society. 1982. 

\bibitem{C_C_Tsybakov} O.~Collier, L.~Comminges and A.~Tsybakov.
\newblock
Minimax estimation of linear and quadratic functionals on sparsity classes.
{\em Annals of Statistics}, 2017, 45, 3, 923--958. 

\bibitem{Pena_Gine} V.H.~de la Pe\~na and E.~Gin\'e. 
\newblock{\em Decoupling. From Dependence to Independence.} Springer, 1999.



\bibitem{Hall_1} P.~Hall and M.A. Martin. 
\newblock On Bootstrap Resampling and Iteration.
\newblock {\em Biometrika}, 1988, 75, 4, 661--671.


\bibitem{Ibragimov} I. A.~Ibragimov and R.Z.~Khasminskii.
\newblock{\em Statistical Estimation: Asymptotic Theory}. 
\newblock Springer-Verlag, New York, 1981. 

\bibitem{Ibragimov_Khasm_Nemirov} I.A.~Ibragimov, A.S.~Nemirovski and R.Z.~Khasminskii. 
\newblock Some problems of nonparametric estimation in Gaussian white noise. 
\newblock {\em Theory of Probab. and Appl.}, 1987, 31, 391--406.  



\bibitem{Jiao} J.~Jiao and Y.~Han.
\newblock
Bias correction with Jackknife, Bootstrap and Taylor Series. {\em IEEE Trans. on Information Theory}, 2020, 66, 7, 4392--4418.

\bibitem{Kol_oracle} V.~Koltchinskii. {\em Oracle Inequalities in Empirical Risk Minimization and Sparse Recovery Problems}. 
Ecole d'Ete de Probabilit\'es de Saint-Flour, 
Lecture Notes in Mathematics 2033, Springer, 2011.

\bibitem{Koltchinskii_2017} V.~Koltchinskii. 
\newblock Asymptotically Efficient Estimation of Smooth Functionals of Covariance Operators. 
\newblock{\em J. European Mathematical Society}, 2021, 23, 3, 765--843. 

\bibitem{Koltchinskii_2018} V.~Koltchinskii. 
\newblock Asymptotic Efficiency in High-Dimensional Covariance Estimation.
\newblock {\em Proc. ICM 2018}, Rio de Janeiro, 2018, vol. 3, 2891--2912.

\bibitem{Koltchinskii_2021} V.~Koltchinskii. Estimation of smooth functionals in high-dimensional models: bootstrap chains and Gaussian approximation.
{\em Annals of Statistics}, 50, 4, 2386-2415.

\bibitem{Koltchinskii_2022} V.~Koltchinskii. Estimation of smooth functionals of covariance operators: jackknife bias reduction 
and bounds in terms of effective rank, {\em Ann. Inst. H. Poincar\'e Probab. Statist.}, 2023, to appear. {\em arXiv:2205.10280}.

\bibitem{Koltchinskii_Lounici_bilinear}
V.~Koltchinskii and K.~Lounici.
\newblock  Asymptotics and concentration bounds for bilinear forms of spectral
projectors of sample covariance. {\em Ann. Inst. H. Poincar\'e Probab. Statist.},
 2016, 52, 4, 1976--2013. 

\bibitem{Koltchinskii_Lounici}
V.~Koltchinskii and K.~Lounici.
\newblock Concentration inequalities and moment bounds for sample covariance
operators.
\newblock {\em Bernoulli}, 2017, 23, 1, 110--133.

\bibitem{Koltchinskii_Lounici_AOS}
V.~Koltchinskii and K.~Lounici.
\newblock Normal approximation and concentration of spectral projectors of sample covariance.
\newblock{\em Annals of Statistics}, 2017, 45, 1, 121--157. 

\bibitem{Koltchinskii_Nickl} V.~Koltchinskii, M.~L\"offler and R.~Nickl.
\newblock Efficient Estimation of Linear Functionals of Principal Components.
\newblock{\em Annals of Statistics}, 2020, 48, 1, 464--490. 

\bibitem{Koltchinskii_Wahl} V.~Koltchinskii and M.~Wahl. Functional estimation in log-concave location families. 
In: {\em High Dimensional Probability IX}, pp. 393--440, R. Adamczak. N. Gozlan, K. Lounici and M. Madiman (Eds), 
Progress in Probability 80, Birkh\"auser, 2023. 



\bibitem{Koltchinskii_Zhilova} V.~Koltchinskii and M.~Zhilova. 
\newblock Efficient estimation of 
smooth functionals in Gaussian shift models. \newblock
{\em Ann. Inst. H. Poincar\'e - Probab. et Statist.}, 2021, 57, 1, 351--386. 

\bibitem{Koltchinskii_Zhilova_19} V.~Koltchinskii and M.~Zhilova.
\newblock Estimation of Smooth Functionals in Normal Models: Bias Reduction and Asymptotic Efficiency, {\em Annals of Statistics},  2021, 49, 5, 2577--2610.

\bibitem{Koltchinskii_Zhilova_21} V.~Koltchinskii and M.~Zhilova. Estimation of smooth functionals of location parameter in Gaussian and Poincar\'e random shift models. 
{\em Sankhya A}, 2021, 83, 569--596. 

\bibitem{Kwapien} S.~Kwapien and B.~Szymanski. Some remarks on Gaussian measures in Banach spaces. 
\newblock 
{\em Probability and Mathematical Statistics}, 1980, 1, 1, 59--65.


\bibitem{Laurent} B.~Laurent.
\newblock Efficient estimation of integral functionals of a density. 
\newblock {\em Annals of Statistics}, 1996, 24, 659--681. 

\bibitem{Levit_1} B.~Levit. 
\newblock
On the efficiency of a class of non-parametric estimates.
\newblock {\em Theory of Prob. and applications}, 1975, 20(4), 723--740.

\bibitem{Levit_2} B.~Levit. 
\newblock 
Asymptotically efficient estimation of nonlinear functionals.
\newblock {\em Probl. Peredachi Inf. (Problems of Information Transmission)}, 1978, 14(3), 65--72. 

\bibitem{Lepski} O.~Lepski, A. Nemirovski and V. Spokoiny.
\newblock On estimation of the $L_r$ norm of a regression function. 
\newblock {\em Probab. Theory Relat. Fields}, 1999, 113, 221--253. 


\bibitem{Nemirovski_1990} A.~Nemirovski.
\newblock On necessary conditions for the efficient estimation of functionals of a nonparametric signal which is observed in white noise.
\newblock {\em Theory of Probab. and Appl.}, 1990, 35, 94--103.

\bibitem{Nemirovski} A.~Nemirovski.
\newblock{\em Topics in Non-parametric Statistics}.
\newblock{Ecole d'Ete de Probabilit\'es de Saint-Flour}.
\newblock
Lecture Notes in Mathematics, v. 1738, Springer, New York, 2000.

\bibitem{Pfanzagl} J.~Pfanzagl. 
\newblock The Berry-Esseen bound for minimum contrast estimates. {\em Metrika}, 1971, 17, 82--91. 

\bibitem{Pinelis} I.~Pinelis. Optimal order uniform and non-uniform bounds on the rates of convergence to normality for maximum likelihood 
estimators. {\em Electronic J. of Statistics}, 2017, 11, 1160--1179. 

\bibitem{Rio} E.~Rio. Upper bounds for minimal distances in the central limit theorem. {\em Ann. Inst. Henry Poincar\`e Probab. Stat.}, 2009, 45, 3, 802--817. 

\bibitem{Robins} J.~Robins, L.~Li, E. Tchetgen and A. van der Vaart.
\newblock Higher order influence functions and minimax estimation of nonlinear 
functionals. 
\newblock {\em IMS Collections Probability and Statistics: Essays in Honor of David. A. Freedman}, 
2008, vol. 2, 335-421.

\bibitem{Siri} P.~Siri and B.~Trivellato. 
\newblock Robust concentration inequalities in maximal exponential models. 
\newblock {\em Statistics and Probability Letters}, 2021, 170, 109001, {\em https://doi.org/10.1016/j.spl.2020.109001}.
\bibitem{Tsybakov} A.B.~Tsybakov. 
\newblock {\em Introduction to Nonparametric Estimation.} Springer, 2009. 

\bibitem{Trivellato} B.~Trivellato.
\newblock Sub-exponentiality in Statistical Exponential Models. {\em J. of Theoretical Probability}, 2023, to appear. 


\bibitem{Fan} F.~Zhou, P.~Li and C.-H.~Zhang. 
\newblock High-Order Statistical Functional Expansion and Its Application To Some Nonsmooth Problems.  
2021, {\em arXiv:2112.15591}.




\end{thebibliography}
\end{document}